\title[Hyper-ideal circle patterns]{Hyper-ideal circle patterns with cone singularities}
\author[Nikolay Dimitrov]{}
\subjclass[2010]{}
\keywords{Circle pattern, cell decomposition, hyperbolic
polyhedron}
\newtheorem{thm}{Theorem}
\newtheorem{lem}{Lemma}[section]
\newtheorem{prop}{Proposition}[section]
\newtheorem{Def}{Definition}[section]
\newtheorem{cor}{Corollary}[section]
\newtheorem*{problem}{Circle Pattern Problem}
\newtheorem{claim}{Claim}
\newcommand{\reals}{\mathbb{R}}
\newcommand{\CC}{\mathbb{C}}
\newcommand{\PP}{\mathbb{F}^2}
\newcommand{\Femb}{\mathbb{F}_{01}}
\newcommand{\Euclideanplane}{\mathbb{E}^2}
\newcommand{\hyperbolicspace}{\mathbb{H}^3}
\newcommand{\hyperbolicplane}{\mathbb{H}^2}
\newcommand{\cellcomplex}{\mathcal{C}}
\newcommand{\Hor}{\mathbb{E}_1}
\newcommand{\Hplane}{\mathbb{H}_0}
\newcommand{\CP}{\mathbf{HCP}}
\newcommand{\CPT}{\widetilde{\mathbf{HCP}}_{\Triang}}
\newcommand{\CPC}{\mathbf{HCP}_{\cellcomplex}}
\newcommand{\Triang}{\mathcal{T}}
\newcommand{\polytope}{\mathcal{P}_{S,\cellcomplex}}
\newcommand{\naturals}{\mathbb{N}}
\newcommand{\ADelta}{\mathcal{A}_{\Delta}}
\newcommand{\Alocaldelta}{\mathcal{A}}
\newcommand{\Vol}{\mathbf{V}}
\newcommand{\ER}{\mathcal{ER}}
\newcommand{\TED}{\mathcal{TE}_{\Delta}}
\newcommand{\ERD}{\mathcal{ER}_{\Delta}}
\newcommand{\TE}{\mathcal{TE}}
\newcommand{\UDelta}{\mathbf{U}_{\Delta}}
\newcommand{\Uglobal}{\mathbf{U}}
\newcommand{\GB}{\mathcal{GB}}
\newcommand{\PhiT}{\Phi_{\Triang}}
\newcommand{\PhiC}{\Phi_{\cellcomplex}}
\newcommand{\tPhiT}{\tilde{\Phi}_{\Triang}}
\newcommand{\angl}{\measuredangle}
\newcommand{\e}{\varepsilon}
\begin{document}

\centerline{\scshape Nikolay Dimitrov\footnote{Technische Universit\"at Berlin, 
Insitut f\"ur Mathematik, MA 8-4, Stra{\ss}e des 17. Juni 136,
10623 Berlin, Germany, \email{dimitrov@math.tu-berlin.de}}}

\let\thefootnote\relax\footnote{Research supported by the
DFG Collaborative Research Center TR 109 ``Discretization in
Geometry and Dynamics"}

\begin{abstract}
The main objective of this study is to understand how geometric
hyper-ideal circle patterns can be constructed from given
combinatorial angle data. We design a hybrid method consisting of
a topological/deformation approach augmented with a variational
principle. In this way, together with the question of
characterization of hyper-ideal patterns in terms of angle data,
we address their constructability via convex optimization. We
presents a new proof of the main results from Jean-Marc
Schlenker's work on hyper-ideal circle patterns by developing an
approach that is potentially more suitable for applications.
\end{abstract}

\maketitle

\section{Introduction} \label{Sec_Intro}

The current article focuses on the existence, uniqueness and
construction of hyper-ideal circle patterns from a given angle
data. In addition to that, it includes an explicit
characterization of all angle data which can be geometrically
realized as a hyper-ideal circle pattern.

There are a lot of papers related to circle patterns. Possibly one
of the prototypical results in this area of research is Andreev's
characterization of compact convex polyhedra with non-obtuse
dihedral angles in hyperbolic space \cite{And1}. It utilized (in
the proper context) the so called Alexandrov's topological /
deformation method \cite{Alex}. The paper was followed by a
generalization which included polyhedra with ideal vertices
\cite{And2}. As it was emphasized by Thurston \cite{ThuNotes},
circle patterns on the sphere are inherently linked to ideal
polyhedra in hyperbolic three-space. He used this fact to extend
Andreev's results to circle patterns on surfaces of non-positive
Euler characteristic \cite{ThuNotes}. Rivin, in his article
\cite{Riv96}, extended Andreev's theorem to the case of ideal
tetrahedra without any restriction to non-obtuse dihedral angles
and thus characterized all Delaunay circle patterns on the sphere.
As an alternative to the topological / deformation method, works
like Colin de Verdi\`ere's \cite{CdV}, Rivin's \cite{Riv94} and
\cite{Riv03}, Leibon's \cite{Leibon}, and Bobenko and Springborn's
\cite{BobSpr1} have developed variational methods for
characterization of circle patterns.

Hyper-ideal circle patterns are generalizations of the standard
(ideal) circle patterns discussed in the preceding paragraph.
Their characterization on the sphere was done by Bao and Bonahon
\cite{BaoBon} (in the context of hyper-ideal polyhedra in the
hyperbolic three-space). Schlenker gave another proof in
\cite{Schl}. With respect to the current article, there are two
papers that are most relevant to our study. These are Schlenkers's
work \cite{Sch1} and Springborn's \cite{S}. The former uses
Alexandrov's deformation approach, while the latter utilizes a
variational method. On the one hand, Schelnker characterizes the
angle data explicitly, in terms of linear inequalities and
equalities, but his proof of existence and uniqueness is not
constructive in an obvious way and thus is not suitable for actual
applications. Springborn on the other hand provides a constructive
method for establishing the existence and uniqueness of
hyper-ideal patterns, but his characterization of the angle data
is implicit (in terms of coherent angle systems) which again
restricts its applicability. Moreover, he addresses only the case
of Euclidean cone-metrics and does not include their hyperbolic
counterparts.

We would like to think of the current paper as a hybrid between a
topological / deformation method and a variational approach. More
precisely, we provide a new proof of Schlenker's results
\cite{Sch1} by applying our version of the topological /
deformation technique and in the process we develop a variational
method for explicit construction of hyper-ideal patterns, in the
spirit of \cite{S}. Thus, our goal is not so much to reprove
Schlenker's results, but rather to introduce a new approach to the
proof which repairs the shortcomings of \cite{Sch1} and \cite{S},
while bringing the two together. We have developed a different
description of the objects involved in this study, which we
believe is more explicit, natural and clear. This, in its own
turn, leads to a different functional than the one used in
\cite{S} and discussed in \cite{Sch1} (in fact, its Legendre
dual). Moreover, our functional is locally strictly convex on an
open subdomain of a certain vector space and can easily be
extended by linearity to a convex functional on the whole space
eliminating any restrictions. Consequently, the optimization
problem that arises is fairly straightforward and
application-friendly. It could be used for the design of numerical
computer algorithms that construct hyper-ideal patterns from given
angle data. Furthermore, we have slightly extended Schlenker's
results to incorporate hyper-ideal patterns with touching circles.
In particular, as a special case, our proof covers circle packings
on compact surfaces with cone metrics. We have tried to make the
article fairly self-contained, including mostly constructions from
``scratch" and avoiding complicated theorems like the
hyperbolization of Haken orbifolds used in \cite{Sch1}. We have
also added some details and corrected an inaccuracy present in
\cite{Sch1} (see the remark after situation 2.2 in the proof of
lemma \ref{Lem_necessary_conditions_general}). Finally, the
motivation for the current article comes from its potential to
provide tools for the construction of a discrete analog of the
classical uniformization theorem for higher genus Riemann
surfaces. We plan to show this in a subsequent paper.

\section{Definitions and notations} \label{Sec_def_and_notations}

We set up the stage for our explorations by fixing some
terminology and notations. For the rest of this article, it is
assumed that $S$ is a closed topological surface. Furthermore, we
denote by $d$ a metric of constant Gaussian curvature on $S$ with
finitely many cone singularities $\text{sing}(d)$. The metric $d$
is called \emph{a flat cone-metric} whenever (i) any point from
$S\setminus\text{sing}(d)$ has a neighborhood isometric to an open
subset of the Euclidean plane $\Euclideanplane$, and (ii) every
point from $\text{sing}(d)$ has a neighborhood isometric to a
neighborhood of the tip of a Euclidean cone. Analogously, the
metric $d$ is called \emph{a hyperbolic cone-metric} whenever (i)
any point from $S\setminus\text{sing}(d)$ has a neighborhood
isometric to an open subset of the hyperbolic plane
$\hyperbolicplane$, and (ii) every point from $\text{sing}(d)$ has
a neighborhood isometric to a neighborhood of the tip of a
hyperbolic cone. We will use $\mathbb{F}^2$ as a notation for both
$\Euclideanplane$ and $\hyperbolicplane$ and for the rest of the
article $d$ will be either a hyperbolic or a Euclidean cone-metric
on $S$.

\begin{figure}[ht]
\centering
\includegraphics[width=14cm]{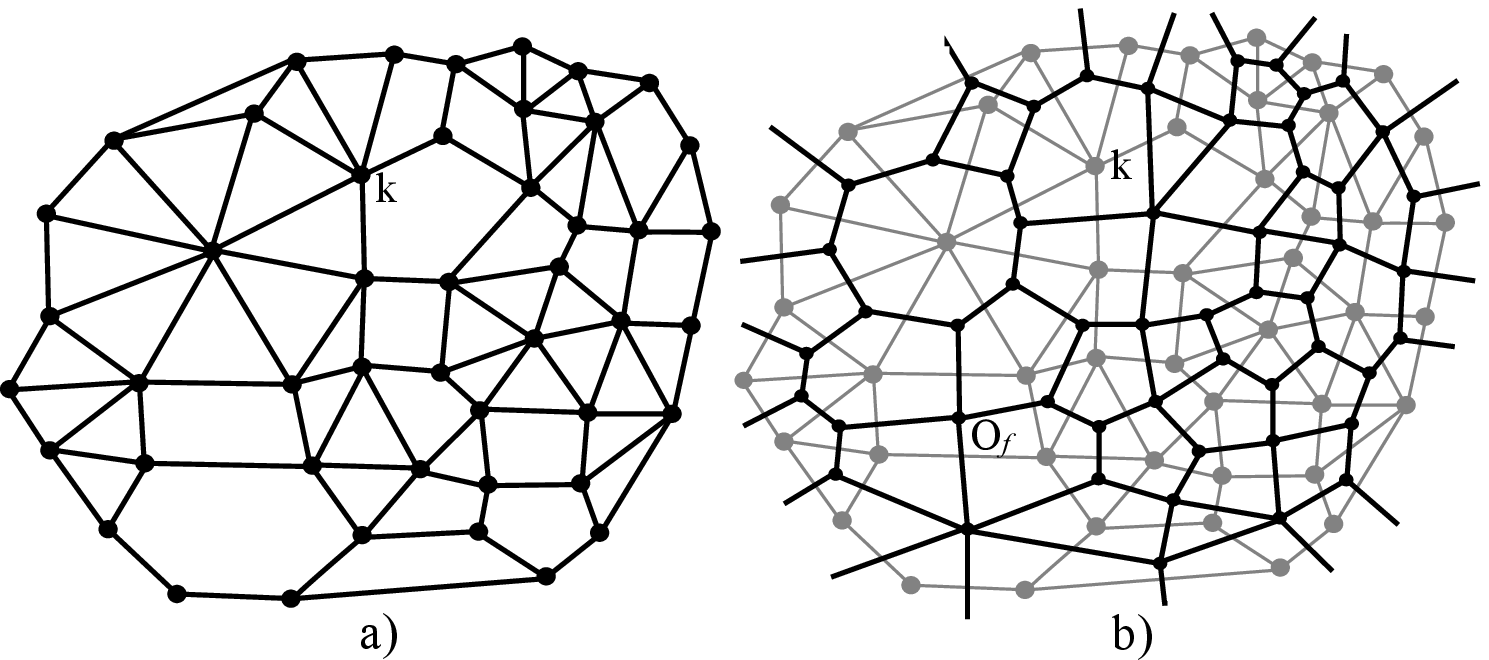}
\caption{a) The cell complex $\cellcomplex=(V,E,F)$ and b) its
dual $\cellcomplex^*=(V^*,E^*,F^*)$} \label{Fig1}
\end{figure}

For the rest of this article, $V$ will be a finite set of points
on $S$. Furthermore, by $\cellcomplex=(V,E,F)$ we will denote a
topological cell complex of $S$, where $V$ are the vertices, $E$
are the edges and $F$ are the faces of $\cellcomplex$ (see figure
\ref{Fig1}a). All three sets are assumed to be finite.
Furthermore, whenever a cone-metric $d$ is introduced on $S$, the
condition $\text{sing}(d) \subseteq V$ always holds. In order to
simplify notations, we will also assume that all cell complexes
involved in this study have the following regularity properties.

\medskip
\noindent $\bullet$ Any pair of edges from a cell complex either
(i) coincide, (ii) have exactly one vertex in common or (iii) are
disjoint with no vertices in common.

\medskip
\noindent $\bullet$ Any pair of faces either (i) coincide, (ii)
have exactly one vertex in common, (iii) have exactly one edge in
common, or (iv) are disjoint with no vertices or edges in common.

\medskip

\noindent This restriction is not essential and all results that
follow will also apply to more general cell complexes. However,
with this assumption in mind, the notations and the exposition
become much lighter. Indeed, let $i,j \in V$ be two vertices that
are endpoints of the same edge. Then, by assumption, $i$ and $j$
should be different and the notation $ij\in E$ uniquely determines
the edge, because there cannot be another edge with both $i$ and
$j$ as endpoints. Similarly, if $i_1,...,i_n$ are all the vertices
of a two-cell $f \in F$, then they are all different and the cell
is uniquely determined by the notation $f=i_1...i_n \in F$.

\begin{Def} A geodesic cell complex on $(S,d)$ is a cell complex
$\cellcomplex_d=(V,E_d,F_d)$ whose edges, with endpoints removed,
are geodesic arcs embedded in $S \setminus V$. Thus, each face
from $F_d$ is isometric to a compact geodesic polygon in $\PP$.
\end{Def}

In other words, we can think of a geodesic cell-complex
$\cellcomplex_d$ on a geometric surface $(S,d)$ as a two
dimensional manifold, obtained by gluing together geodesic
polygons along their edges. The edges that we identify should have
the same length and the identification should be an isometry.
Notice the difference between a topological cell complex
$\cellcomplex$ and a geodesic cell complex $\cellcomplex_d$. While
$\cellcomplex$ is just a purely topological (and hence
combinatorial) object, the geodesic one $\cellcomplex_d$ consists
of polygons with geodesic edges and thus provides the underlying
surface $S$ with a cone-metric $d$.

Assume three circles $c_i, c_j$ and $c_k$ with centers $i, j$ and
$k$ respectively, lie in the geometric plane $\PP$. Moreover, let
the circles' interiors be disjoint. Then, there exists a unique
forth circle $c_{\Delta}$ orthogonal to $c_i, c_j$ and $c_k$.
Furthermore, draw the geodesic triangle $\Delta=ijk$, spanned by
the centers $i, j$ and $k$. Then $\Delta$, together with the
circles $c_i, c_j, c_k$ and $c_{\Delta}$, is called a
\emph{decorated triangle} (see figure \ref{Fig2}a). The circles
$c_i, c_j$ and $c_k$ are called the \emph{vertex circles} of
$\Delta$, while $c_{\Delta}$ is called the \emph{face circle} of
$\Delta$. We point out here that in this article it is allowed for
one, two or all three vertex circles to degenerate to points. Even
in this more general set up, everything said above still applies.

\emph{Remark.} There is a slight subtlety in the case of
$\hyperbolicplane$. Although the vertex circles are always circles
in the usual, natural sense, the face circle may fit a more
general definition. For more details, see section
\ref{Sec_basic_geometry}.

Now, assume two non-overlapping decorated triangles, like
$\Delta_1=jis$ and $\Delta_2=uis$ from figure \ref{Fig2}a share a
common edge $is$. As usual, denote by $c_i, c_j, c_s$ and $c_u$
the vertex circles (some of which may be shrunk to points), and by
$c_{\Delta_1}$ and $c_{\Delta_2}$ the corresponding face circles
of the triangles. Although, in general, the two face circles
$c_{\Delta_1}$ and $c_{\Delta_2}$ are different, sometimes it may
happen that they coincide, i.e. $c_{\Delta_1}=c_{\Delta_2}=c_q$.
In that case all four vertex circles $c_i, c_j, c_s$ and $c_u$ are
orthogonal to $c_q$. Thus, we can erase the edge $is$ and obtain a
decorated geodesic quadrilateral $q=ijsu$ with vertex circles
$c_i, c_j, c_s$ and $c_u$, and a face circle $c_q$. Observe, that
in this case the quadrilateral is convex. If we continue this way,
we can obtain various decorated polygons, like for instance the
decorated pentagon $f'=ijvsu$ from figure \ref{Fig2}a.

\begin{figure}[ht]
\centering
\includegraphics[width=14cm]{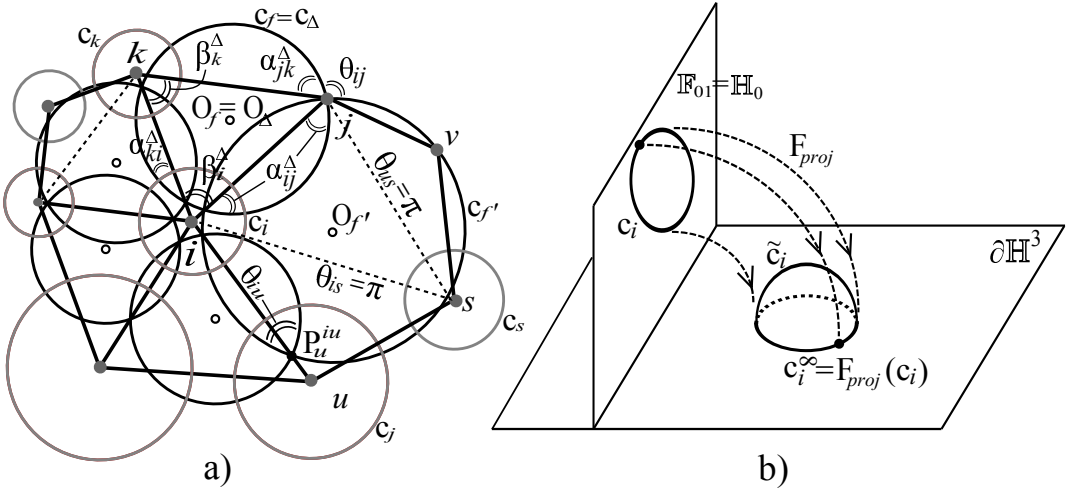}
\caption{a) A hyper-ideal circle pattern of decorated triangles,
polygons and labels; b) The projection $F_{proj}$ from the
hyperbolic plane $\Hplane \subset \hyperbolicspace$ to the ideal
boundary $\partial\hyperbolicspace$.} \label{Fig2}
\end{figure}

\begin{Def}\label{Def_decorated_polygon}
A decorated polygon is a convex geodesic polygon $p$ in $\PP$,
with vertices in labelled in cyclic order $i_1, i_2, ..., i_n$,
together with:

\medskip

\noindent $\bullet$ a set of circles $c_{i_1}, c_{i_2}, ...,
c_{i_n}$ with disjoint interiors such that each $c_{i_s}$ is
centered at vertex $i_s$ for $s=1,..,n$. Some or all of the
circles are allowed to be points, i.e. circles of radius zero;

\medskip

\noindent $\bullet$ another circle $c_p$ orthogonal to
$c_{i_1},...,c_{i_n}$.

\medskip

\noindent The circles $c_{i_1},...,c_{i_n}$ are called
\emph{vertex circles} and the additional orthogonal circle $c_p$
is called the \emph{face circle} of the decorated polygon $p$. 
\end{Def}

\noindent \emph{Remark:} Observe, that the vertex circles are
assumed to have disjoint interiors. That means that all vertex
circles could be either disjoint or some of them could be tangent
to one another.

Whenever two faces of a cell complex share a common edge, we will
say that the two faces are \emph{adjacent to each other}.
Furthermore, assume two decorated polygons $p_1$ and $p_2$ share a
common geodesic edge $ij$, where $i$ and $j$ are the endpoints of
$ij$, which also means that they are common vertices for both
$p_1$ and $p_2$. Then, the decorated polygons $p_1$ and $p_2$ are
called \emph{compatibly adjacent} whenever the vertex circles
$c_i^1, c_j^1$ of $p_1$ and $c_i^2, c_j^2$ of $p_2$ coincide
respectively, that is $c_i^1\equiv c_i^2$ and $c_j^1\equiv c_j^2$.
Furthermore, whenever two decorated polygons are compatibly
adjacent, we will say that their face circles are \emph{adjacent
to each other}. A situation like that is depicted on figure
\ref{Fig2}a for the edge $ij$ and the two faces $f\equiv\Delta$
and $f'$ with face circles $c_f$ and $c_{f'}$.

\begin{Def} \label{Def_local_Del_property}
Let $p_1$ and $p_2$ be two decorated polygons in $\PP$ that are
compatibly adjacent to each other. Let $ij$ be their common
geodesic edge. Furthermore, let $c_{p_1}$ and $c_{p_2}$ be the
face circles of $p_1$ and $p_2$ respectively.

\medskip

\noindent $\bullet$ We say that the edge $ij$ satisfies the
\emph{local Delaunay property} whenever each vertex circle of the
decorated polygon $p_2$ is either (i) disjoint from the interior
of the face circle $c_{p_1}$ of $p_1$, or (ii) if it is not, the
intersection angle between the vertex circle in question and the
face circle $c_{p_1}$ is less than $\pi /2$. See for instance edge
$ij$ on figure \ref{Fig2}a.

\medskip

\noindent $\bullet$ For the edge $ij$, which satisfies the local
Delaunay property, $\theta_{ij} \in [0,\pi)$ denotes the
intersection angle between the two adjacent face circles $c_{p_1}$
and $c_{p_2}$, measured between the circular arcs that bound the
region of common intersection. (See for example angles
$\theta_{ij}, \theta_{iu}, \theta_{is}$ and $\theta_{us}$ from
figure \ref{Fig2}a.)
\end{Def}

It is not difficult to see that the definition of a local Delaunay
property is symmetric in the sense that if the condition of
definition \ref{Def_local_Del_property} holds for the face circle
$c_{p_1}$ and the vertex circles of $p_2$, then it also holds for
the face circle $c_{p_2}$ and the vertex circles of $p_1$.

\begin{Def} \label{Def_hyperideal_circle_pattern}
A \emph{hyper-ideal circle pattern} on a given surface $S$ (figure
\ref{Fig2}a) is a hyperbolic or Euclidean cone-metric $d$ on $S$
together with a geodesic cell complex $\cellcomplex_d=(V,E_d,F_d)$
whose faces are decorated geodesic polygons such that any two
adjacent faces are compatibly-adjacent and each geodesic edge of
$\cellcomplex_d$ has the local Delaunay property. Whenever $d$ is
flat on $S\setminus V$, we call the circle pattern Euclidean, and
whenever $d$ is hyperbolic on $S\setminus V$, we call the pattern
hyperbolic.
\end{Def}

Intuitively speaking, a hyper-ideal circle pattern on a surface
$S$ is a surface homeomorphic to $S$, obtained by gluing together
decorated geodesic polygons along pairs of corresponding edges.
The edges that are being identified should have the same length,
the identification should be an isometry and the vertices that get
identified should have vertex-circles with same radii.


Observe that a hyper-ideal circle pattern on $S$ consists of (i) a
cone-metric $d$ on $S$, (ii) a set of vertices $V \supseteq
\text{sing}(d)$, (iii) an assignment of vertex radii $r$ on $V$,
and (iv) a geodesic cell complex $\cellcomplex_d$ together with
(v) a collection of vertex circles and (vi) a collection of face
circles. However, the geometric data $(S, d, V, r)$ is enough to
further identify uniquely the geodesic cell complex
$\cellcomplex_d$ and the collections of vertex and face circles.
This is done via the weighted Delaunay cell decomposition
construction. More precisely, given (i) a geometric surface
$(S,d)$, (ii) a finite set of points $V \supset \text{sing}(d)$ on
$S$ and (iii) an assignment of disjoint vertex circle radii $r : V
\to [0,\infty)$, one can uniquely generate (obtain) the
corresponding $r-$weighted Delaunay cell complex $\cellcomplex_d$,
where each edge satisfies the local Delaunay property. In the
process, the families of vertex and face circles naturally appear
as part of the construction \cite{BobIzm, S, Sch1}. Alternatively,
one can obtain the $r-$weighted Delaunay cell decomposition as the
geodesic dual to the $r$-weighted Voronoi diagram, also known as
the weighted power diagram with weights $r$ \cite{BobIzm}. A
Voronoi cell in the case when $d$ is Euclidean is defined as
$W_{d,r}(i)=\big\{ x \in S \, | \, d(x,i)^2-r_i^2 \leq
d(x,j)^2-r_j^2 \,\, \text{for all} \,\, j \in V \, \big\}$. A
Voronoi cell in the case when $d$ is hyperbolic is defined as
$W_{d,r}(i)=\big\{ x \in S \, | \, \cosh{(r_j)}\cosh{d(x,i)} \leq
\cosh{(r_i)}\cosh{d(x,j)} \,\, \text{for all} \,\, j \in V \,
\big\}$.

\section{The circle pattern problem and the main result} \label{Sec_problem_and_main_result}

Let us fix an arbitrary hyper-ideal circle pattern on $S$ and let
this pattern be determined by the data $(S, d, V, r)$. Figure
\ref{Fig2}a depicts (a portion of) a hyper-ideal circle pattern.
For each vertex $i \in V$ one can define $\Theta_i
> 0$ to be the cone angle of the cone-metric $d$ at vertex $i$.
Furthermore, since $S$ is a closed surface, each edge $ij \in E_d$
is the common edge of exactly two faces from the $r$-weighted
Delaunay cell-complex $\cellcomplex_d = (V, E_d, F_d)$. Call these
faces $f$ and $f' \in F_d$, one on each side of the edge.
Consequently, one can associate to each edge $ij$ the pair of
adjacent face circles $c_f$ and $c_{f'}$. As a result of this, one
can assign to $ij$ the intersection angle $\theta_{ij} \in
[0,\pi)$ between $c_f$ and $c_{f'}$, as explained in definition
\ref{Def_local_Del_property} and shown on figure \ref{Fig2}a.

Observe that given any hyper-ideal circle pattern on $S$, like the
one from the preceding paragraph,
one can always extract from it the combinatorial data
$(\cellcomplex,\theta,\Theta)$, where $\cellcomplex$ is the
$r-$weighted Delaunay cell decomposition $\cellcomplex_d$ viewed
as a purely topological complex, $\Theta : V \to (0, \infty)$ is
the assignment of cone angles at the vertices of the complex and
$\theta : E_d \to [0,\pi)$ is the assignment of intersection
angles between adjacent face circles of the pattern. In this case,
we will say that the given hyper-ideal circle pattern
\emph{realizes the (combinatorial angle) data}
$(\cellcomplex,\theta,\Theta)$.

The central scope of the current article is to answer the question
whether the procedure described in the previous paragraph can be
reversed. Compare with \cite{Sch1}. 

\begin{problem} \label{Circle_pattern_problem}

Assume the combinatorial data $(\cellcomplex, \theta, \Theta)$ is
provided, where

\begin{itemize}
\item $\cellcomplex = (V, E, F)$ is a topological cell complex on
a surface $S$;

\item $\theta : E \to [0,\pi) \,\,\,\, \text{ and } \,\,\,\,
\Theta : V \to (0,\infty)$.
\end{itemize}

\noindent Find a hyperbolic or flat cone metric $d$ on $S$,
together with a hyper-ideal circle pattern on it that realizes the
data $(\cellcomplex, \theta, \Theta)$.

\end{problem}

In this article, we provide a solution to the circle pattern
problem in the following form (see also \cite{Sch1}).

\begin{thm} \label{Thm_main}

Let $S$ be a closed surface with a topological cell complex
$\cellcomplex = (V, E, F)$ on it. There exist two convex polytopes
 $\polytope^{h}$ and $\polytope^{e}$, depending on the combinatorics of
 $\cellcomplex$ and containing points of type
 $(\theta, \Theta) \in \reals^E\times\reals^V$, for which the following
 statements hold:

\medskip

\noindent {\bf E.} The combinatorial data $(\cellcomplex, \theta,
\Theta)$ is realized by a Euclidean hyper-ideal circle pattern on
$S$ if and only if $(\theta, \Theta) \in \polytope^e$.
Furthermore, this pattern is unique up to scaling and isometry
between hyperbolic cone-metrics on $S$, isotopic to identity.

\medskip

\noindent {\bf H.} The combinatorial data $(\cellcomplex, \theta,
\Theta)$ is realized by a hyperbolic hyper-ideal circle pattern on
$S$ if and only if $(\theta, \Theta) \in \polytope^h$.
Furthermore, this pattern is unique up to isometry between
hyperbolic cone-metrics on $S$, isotopic to identity.



\medskip

\noindent In both cases, whenever the hyper-ideal circle pattern
exists, it can be reconstructed from the unique critical point of
a strictly convex functional defined on a suitably chosen open
subset of $\reals^N$ for some $N \in \naturals$.

\end{thm}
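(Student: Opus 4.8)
The plan is to establish existence, uniqueness and explicit constructibility at once, by exhibiting every hyper-ideal circle pattern as the unique critical point of a convex functional, and then to cut out the admissible regions $\polytope^{e}$ and $\polytope^{h}$ by the topological/deformation method. I would first reduce the cell-complex problem to a triangulated one: refine $\cellcomplex$ to a triangulation $\Triang$ by reinserting the diagonals that are erased when decorated triangles are merged into decorated polygons, imposing along each such diagonal the coincidence of the two adjacent face circles (the degenerate value of the intersection angle). For a triangulation the natural unknowns are geometric --- the geodesic edge lengths $\ell_{ij}$, $ij\in E_\Triang$, together with the vertex radii $r_i$, $i\in V$ (or suitable logarithmic/hyperbolic versions) --- collected into $x\in\reals^N$ with $N=|E_\Triang|+|V|$. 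These range over the open set $U\subseteq\reals^N$ on which every triangle $ijk$ is a nondegenerate decorated triangle with disjoint vertex-circle interiors. The decisive feature of these coordinates is that the cone-metric is assembled directly from the triangles, so edge-length matching and compatible adjacency of vertex circles hold by construction; the realized cone angles $\Theta_i(x)$ and the realized intersection angles $\theta_{ij}(x)$ are smooth functions of $x$, and solving the Circle Pattern Problem becomes the inversion of the realization map $x\mapsto(\theta(x),\Theta(x))$.

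Next I would set up the variational principle. To each triangle I associate the hyperbolic volume of the truncated hyper-ideal simplex it spans, and --- following the Legendre-dual viewpoint announced in the introduction --- I assemble from these a convex functional $W(x)$ on $U$ whose gradient is precisely the realized angle data, i.e.\ $\partial W/\partial\ell_{ij}=\theta_{ij}(x)$ and $\partial W/\partial r_i=\Theta_i(x)$ up to universal factors (this is the Schl\"afli formula for the building blocks). Consequently, for prescribed $(\theta,\Theta)$ the functional
\[
  \mathcal{F}(x)\;=\;W(x)\;-\;\sum_{ij\in E_\Triang}\!\theta_{ij}\,\ell_{ij}\;-\;\sum_{i\in V}\Theta_i\,r_i
\]
has its critical points in exact bijection with hyper-ideal circle patterns realizing $(\cellcomplex,\theta,\Theta)$, the polygon structure of $\cellcomplex$ being recovered from the coincidence of face circles along the reinserted diagonals. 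The analytic heart is to prove that $W$ is strictly convex on $U$ --- modulo the one-dimensional scaling direction present only in the Euclidean case --- which I would obtain by reducing the positive-definiteness of the global Hessian to that of the explicit per-triangle Hessians. Strict convexity then yields uniqueness of the critical point, hence of the pattern, up to isometry in the hyperbolic case and up to scaling and isometry, after passing to the slice transverse to the scaling direction, in the Euclidean case. As indicated in the introduction, $\mathcal{F}$ can finally be extended by linearity beyond $\partial U$ to a globally convex functional, rendering the optimization unconstrained.

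It remains to identify the image of the realization map and thereby to prove existence, showing that a (necessarily unique) critical point exists precisely when $(\theta,\Theta)\in\polytope^{e}$, respectively $\polytope^{h}$. I would first extract the defining linear inequalities of $\polytope$ as necessary conditions on the realized data: positivity of all triangle angles, the triangle inequalities, the disjoint-interior condition $\ell_{ij}\ge r_i+r_j$ and the local Delaunay property yield elementary linear inequalities on $(\theta,\Theta)$ which, summed over vertex stars and over closed paths of faces --- together with the Gauss--Bonnet relation, an equality in the Euclidean case --- assemble into the polytope. For sufficiency I would use the topological/deformation half of the method: the realization map is a local diffeomorphism by the nondegenerate Hessian, and its image lies in the interior of $\polytope$, so it suffices to prove it is proper onto that interior. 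I would establish properness by a boundary analysis of $U$, checking that whenever $x\to\partial U$ --- a triangle degenerating, a radius vanishing, or a pair of vertex circles separating --- the realized datum $(\theta(x),\Theta(x))$ tends to $\partial\polytope$. Properness, together with the connectedness of the open polytope and an open-and-closed (degree) argument, then gives surjectivity onto the interior, completing both the characterization and the existence statements in the two cases.

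The step I expect to be the main obstacle is the strict convexity of $W$, i.e.\ the positive-definiteness of its Hessian: this demands a delicate second-variation computation of the hyper-ideal volumes through Schl\"afli-type formulas, and --- because the paper deliberately admits degenerate configurations --- it must be shown to persist uniformly as vertex circles shrink to points or become tangent, exactly where the building blocks collapse. The second delicate point is the properness of the realization map toward $\partial\polytope$: one must exclude the scenario in which the realized angle data stays interior to $\polytope$ while the geometric variables $x$ escape to $\partial U$, since precisely this exclusion guarantees that every interior datum is realized by a genuine, nondegenerate hyper-ideal circle pattern.
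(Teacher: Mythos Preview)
Your proposal is essentially the paper's own strategy: triangulate $\cellcomplex$, build a convex functional from hyper-ideal tetrahedral volumes via Schl\"afli, identify critical points with circle patterns realizing $(\theta,\Theta)$, deduce uniqueness from strict convexity, derive the necessary inequalities from Gauss--Bonnet over admissible subdomains, and close the argument by showing the image of the realization map is open and relatively closed in the polytope.

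One technical point deserves emphasis. You take the unknowns to be the decorated-triangle data $(\ell_{ij},r_i)$ (with a hedge toward ``logarithmic/hyperbolic versions''), and assert $\partial W/\partial\ell_{ij}=\theta_{ij}$, $\partial W/\partial r_i=\Theta_i$. The paper explicitly notes that the $(\ell,r)$ coordinates are \emph{not} the right ones for this: Schl\"afli's formula pairs dihedral angles with the \emph{principal edge-lengths $(a_{ij},b_k)$ of the hyper-ideal tetrahedron}, not with $(\ell_{ij},r_k)$. The paper therefore passes from $(\ell,r)$ to $(a,b)$ via the explicit hyperbolic-trigonometric change of variables of Lemma~\ref{Lem_local_link_edges_radii_to_tetrahedral_edges}, and it is in $(a,b)$ coordinates that $\partial\mathbf U_{\Delta}/\partial a_{ij}=\alpha^{\Delta}_{ij}-\check\alpha^{\Delta}_{ij}$ and $\partial\mathbf U_{\Delta}/\partial b_k=\beta^{\Delta}_k-\check\beta^{\Delta}_k$ hold cleanly, and that strict convexity follows from the strict concavity of the volume as a function of dihedral angles (the Legendre-dual relationship you allude to). Your hedge covers this, but be aware that working literally in $(\ell,r)$ would not give the clean gradient identities or the convexity directly.
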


\noindent \emph{Remark:} The two polytopes $\polytope^{h}$ and
$\polytope^{e}$ are called \emph{angle data polytopes}. Their
explicit definition is given in the next section. For both of them
we will use the common notation $\polytope$.



\section{Description of the angle data polytopes} \label{Sec_angle_data_polytope}

In this section we give an explicit description of the two
polytopes $\polytope^{h}$ and $\polytope^{e}$ from theorem
\ref{Thm_main}.

Assume a cell complex $\cellcomplex = (V, E, F)$ is fixed on the
surface $S$ (see figure \ref{Fig1}a). Denote by $\cellcomplex^* =
(V^*, E^*, F^*)$ the cell complex dual to $\cellcomplex$, where
$V^*$ are the dual vertices, $E^*$ are the dual edges and $F^*$
are the dual faces (see figure \ref{Fig1}b). The dual vertices are
in bijective correspondence with the faces of $\cellcomplex$. To
simplify things, we can assume that each face $f \in F$ contains
exactly one vertex $O_f \in V^*$ in its interior. The dual edges
are obtained as follows: if $f$ and $f' \in F$ are two adjacent
faces of $\cellcomplex$ and $ij \in E$ is their common edge, then
there exists a dual edge $ij^* = O_fO_{f'} \in E^*$ which connects
the dual vertices $O_f$ and $O_{f'} \in V^*$. Just like with the
dual vertices, the dual faces are in bijective correspondence with
the vertices of $\cellcomplex$ and again we can assume that the
former contain the latter in their interiors. On figure
\ref{Fig1}b the elements of the original complex $\cellcomplex$
are drawn in grey, while the elements of the dual complex
$\cellcomplex^*$ are in black.

\begin{figure}[ht]
\centering
\includegraphics[width=14cm]{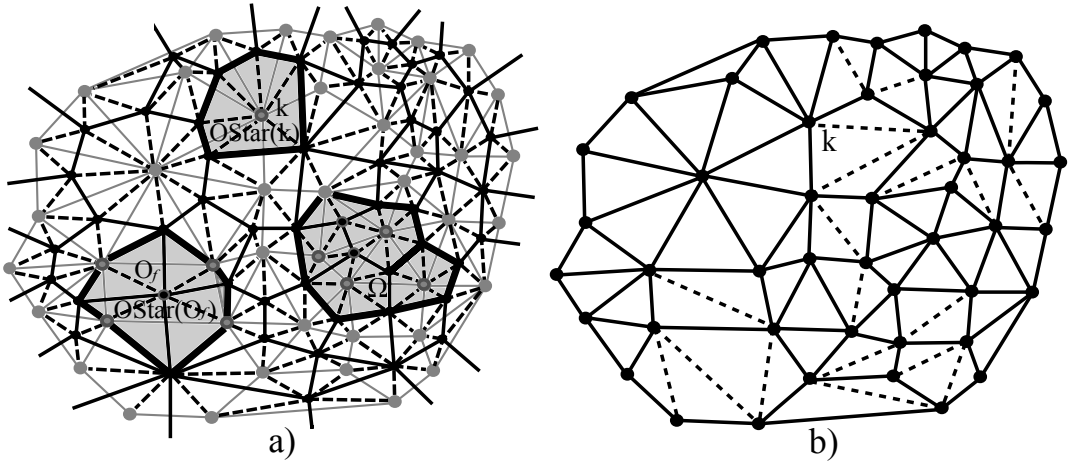}
\caption{a) The triangulation
$\hat{\Triang}=(\hat{V},\hat{E},\hat{F})$ together with two
examples of open stars and one admissible domain $\Omega$; b) The
subtriangulation $\Triang=(V,E_T,F_T)$ of $\cellcomplex$ whose
dashed edges are the auxiliary edges from $E_{\pi}$.} \label{Fig3}
\end{figure}

Next, define the subdivision $\hat{\Triang}=(\hat{V}, \hat{E},
\hat{F})$ of $\cellcomplex^*$, depicted on figure \ref{Fig3}a,
where

\medskip


\noindent $\bullet$ $\hat{V} = V \cup V^*$, i.e. the vertices of
$\hat{\Triang}$ consist of all vertices of $\cellcomplex$ and all
dual vertices. These are all black and grey vertices from figures
\ref{Fig1}b and \ref{Fig3}a;

\medskip

\noindent $\bullet$ $\hat{E} = E^* \cup \big\{\, iO_f \,\, | \,\,
O_f \in V^* \,\, \text{and $i$ is a vertex of $f$ } \big\}$, i.e.
the edges of $\hat{\Triang}$ consist of all dual edges and all
edges, obtained by connecting a dual vertex $O_f \in f$ to all the
vertices of the face $f \in F$ it belongs to. The latter type of
edges will be called \emph{corner edges}. The dual edges can be
seen on both figures \ref{Fig1}b and \ref{Fig3}a painted solid
black, while the corner edges are the black dashed edges from
figure \ref{Fig3}a.

\medskip

\noindent $\bullet$ $\hat{F} = \bigl\{ iO_fO_{f'} \, | \, \text{
$ij \in E$ common edge for $f$ and $f'$ from $F$ } \bigr\},$ i.e.
the faces of $\hat{\Triang}$ are the topological triangles
obtained by looking at the connected components of the complement
of the topological graph $(\hat{V}, \hat{E})$ on $S$. On figure
\ref{Fig3}a these are the triangles with one solid black and two
dashed black edges. They also have two black (dual) vertices and
one grey vertex.



\medskip

The next important notion to be defined is, what we call in this
paper, the \emph{open star} of a vertex from $\hat{\Triang}$.
\begin{Def} \label{Def_open_star}
Let $\hat{v} \in \hat{V}$ be an arbitrary vertex of
$\hat{\Triang}.$ Then its \emph{open star}
$\text{\emph{OStar}}(\hat{v})$ is defined as the open interior of
the union of all closed triangles from $\hat{\Triang}$ which
contain $\hat{v}$.
\end{Def}
In particular, whenever $\hat{v} = k \in V$ is a vertex of
$\cellcomplex$, then its open star is simply the open interior of
the face from $\cellcomplex^*$ 
dual to $k$. An example denoted by $\text{OStar}(k)$ and colored
in grey is shown on figure \ref{Fig3}a. Then, as one can see, the
boundary of $\text{OStar}(k)$ consists entirely of dual edges from
$E^*$. If we denote by $E_k$ the set of all edges of
$\cellcomplex$ which have vertex $k$ as an endpoint, then
$\partial \, \text{OStar}(k) = \cup \bigl\{ ik^* \in E^* \,\, |
\,\, ik \in E_k \, \bigr\}$. If $\hat{v} = O_f \in V^*$ is a
vertex from the dual complex $\cellcomplex^*$, then the boundary
of its open star consists entirely of corner edges from
$\hat{\Triang}$ (see the grey region $\text{OStar}(O_f)$ on figure
\ref{Fig3}a).

Before we continue, let us go back to the original cell complex
$\cellcomplex = (V, E, F)$. We are going to partition the set of
its vertices and edges depending on where we want the circle
pattern realizations of $\cellcomplex$ to have vertex circles of
radius zero and edges with tangent vertex circles centered at
their endpoints. Let

\medskip


\noindent $\bullet$  $V = V_0 \cup V_1$, where $V_0 \cap V_1 =
\varnothing$;

\medskip

\noindent $\bullet$ $E = E_0 \cup E_1$, where $E_0 \cap E_1 =
\varnothing$ and for any $ij \in E_0$ both $i$ and $j$ belong to
$V_1$.

\medskip

Following the terminology of \cite{Sch1}, one can define what
Schlenker calls an admissible domain. Our definition however is
more restrictive than his in the sense that we select a much
smaller collection of admissible domains than the ones described
in \cite{Sch1}. Thus we have decreased the number of conditions on
the angle data that appear in theorem
\ref{Thm_description_of_polytopes} below.

\begin{Def} \label{Def_admissible_domain}
An open connected subdomain $\Omega$ of the surfaces $S$ is called
an \emph{admissible domain of $(S,\cellcomplex)$ } whenever the
following conditions hold:

\medskip

\noindent {\bf 1.} There exists a subset $\hat{V}_0 \subseteq
\hat{V}$, such that $\Omega = \cup \big\{
\text{\emph{OStar}}(\hat{v}) \,\, | \,\, \hat{v} \in \hat{V}_0
\big\};$

\medskip

\noindent {\bf 2.} $\Omega \neq \varnothing$ and $\Omega \neq S$
and $\Omega \cap V \neq \varnothing$;









\end{Def}

A special example of an admissible domain is the open star of a
vertex of $\cellcomplex$. The open star of a dual vertex however
is not an admissible domain because it is disjoint from $V$. An
example of an admissible domain can be seen on figure \ref{Fig3}a,
denoted by the symbol $\Omega$ and shaded in grey. On this picture
$\Omega$ is simply connected but in general it doesn't have to be.

The boundary of an admissible domain $\Omega$ is a disjoint union
of immersed in $S$ topologically polygonal curves, consisting
entirely of edges from the triangulation $\hat{\Triang}$. In other
words, the boundary of $\Omega$ consists of dual edges and/or
corner edges from $\hat{E}$, but all of its connected components
are interpreted as immersed closed paths in the one-skeleton of
$\hat\Triang$, so that some of the edges could be traced (counted)
twice (see figure \ref{Fig3}a). That happens exactly when an edge
of $\hat{\Triang}$ is disjoint from $\Omega$, but the interiors of
the two topological triangles from $\hat{\Triang}$, lying on both
sides of the edge, are contained in $\Omega$. We denote this
immersed version of the boundary of $\Omega$ by $\partial\Omega$.

\begin{thm} \label{Thm_description_of_polytopes}

In the setting of theorem \ref{Thm_main}, the polytopes
$\polytope^{h}$ and $\polytope^{e}$ are defined as follows
(compare to \cite{Sch1}):

\medskip

\noindent {\bf E.} \emph{Euclidean case}. A point
 $(\theta, \Theta) \in \reals^{E_1} \times \reals^{V_1}$ belongs
 to $\polytope^{e}$ exactly when:

\medskip

\noindent E1) For any $ij \in E_0$ let $\theta_{ij} = 0$ while
$\theta_{ij} \in (0,\pi)$ for $ij \in E_1$;

\medskip

\noindent E2) For any $k \in V_0$ let $\Theta_k = \sum_{ik \in
E_k} (\pi - \theta_{ik})$. This is equivalent to $\Theta_k =
\sum_{ik^* \subset
\partial\Omega} (\pi - \theta_{ik})$ for $\Omega = \text{\emph{OStar}}(k)$.
Also $\Theta_k > 0$ for all $k \in V_1$;
\medskip

\noindent E3) $\sum_{k \in V} (2 \pi - \Theta_k) = 2 \pi \chi(S)$;

\medskip

\noindent E4) For any admissible domain $\Omega$ of $(S,
\cellcomplex)$, such that $\Omega \neq \text{\emph{OStar}}(k)$ for
some $k \in V_0$,

\begin{equation} \label{Eqn_first_condition}
\sum_{ij^* \subset \partial\Omega} (\pi - \theta_{ij}) + \sum_{k
\in \Omega \cap V} (2 \pi - \Theta_k) > 2 \pi \chi(\Omega) - \pi
|\partial\Omega \cap V|.
\end{equation}

\medskip

\noindent {\bf H.} \emph{Hyperbolic case}. A point
 $(\theta, \Theta) \in \reals^{E_1} \times \reals^{V_1}$ belongs
 to $\polytope^{h}$ exactly when:

\medskip

\noindent H1) For any $ij \in E_0$ let $\theta_{ij} = 0$ while
$\theta_{ij} \in (0,\pi)$ for $ij \in E_1$;

\medskip

\noindent H2) For any $k \in V_0$ let $\Theta_k = \sum_{ik \in
E_k} (\pi - \theta_{ik})$. Put in another way, $\Theta_k =
\sum_{ik^* \subset
\partial\Omega} (\pi - \theta_{ik})$ for $\Omega = \text{\emph{OStar}}(k)$.
Also $\Theta_k > 0$ for all $k\in V_1$;

\medskip

\noindent H3) $\sum_{k \in V} (2 \pi - \Theta_k) > 2 \pi \chi(S)$;

\medskip

\noindent H4) For any admissible domain $\Omega$ of $(S,
\cellcomplex)$, such that $\Omega \neq \text{\emph{OStar}}(k)$ for
some $k \in V_0$,

\begin{equation*} 
\sum_{ij^* \subset \partial\Omega} (\pi - \theta_{ij}) + \sum_{k
\in \Omega \cap V} (2 \pi - \Theta_k) > 2 \pi \chi(\Omega) - \pi
|\partial\Omega \cap V|.
\end{equation*}

Here $\chi(S)$ and $\chi(\Omega)$ are the Euler characteristics of
$S$ and $\Omega$ respectively.

\end{thm}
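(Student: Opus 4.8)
The plan is to prove the theorem by establishing, in both the Euclidean and hyperbolic settings, that the abstract realizability polytopes produced by Theorem~\ref{Thm_main} coincide with the sets cut out by the explicit conditions E1--E4 and H1--H4. Each of these conditions is an affine equality (E2, E3) or a strict or non-strict linear inequality (E1, E4) in the coordinates $(\theta,\Theta)\in\reals^{E_1}\times\reals^{V_1}$, and the surface carries only finitely many admissible domains since $\hat{V}$ is finite; hence the described sets are intersections of finitely many half-spaces and hyperplanes, and so are convex. Boundedness, and therefore the polytope property, follows from the box constraints $\theta_{ij}\in[0,\pi)$ together with E2--E3 (resp. H2--H3), which force $0<\Theta_k<2\pi|V|-2\pi\chi(S)$. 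Thus the real content is the equivalence ``the data $(\cellcomplex,\theta,\Theta)$ is realizable $\iff$ E1--E4 hold'', which I would split into a necessity half and a sufficiency half.

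For necessity I would argue directly from the local geometry of a realizing pattern, which is the content of Lemma~\ref{Lem_necessary_conditions_general}. Conditions E1 and E2 are local: $\theta_{ij}=0$ on $E_0$ expresses that tangent vertex circles force the two adjacent face circles to be tangent, while E2 is the angle balance at a radius-zero (ideal) vertex, where the cone angle equals $\sum_{ik}(\pi-\theta_{ik})$. Conditions E3/H3 and E4/H4 are then two instances of the Gauss--Bonnet theorem for cone-surfaces,
\[
\int_\Omega K\,dA+\sum_{k\in\Omega\cap V}(2\pi-\Theta_k)+\big(\text{boundary turning of }\partial\Omega\big)=2\pi\chi(\Omega),
\]
applied with $\Omega=S$ (closed, no boundary) to obtain the global statements E3 ($K\equiv 0$) and H3 ($K\equiv -1$, whence the strict inequality from $\mathrm{Area}(S)>0$), and applied to a general admissible domain $\Omega$ to obtain E4/H4. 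For the latter I would decompose the boundary turning along $\partial\Omega$ into contributions of dual edges $ij^*$, each bounded in terms of $\pi-\theta_{ij}$, and contributions at the vertices of $\cellcomplex$ lying on $\partial\Omega$, which produce the correction $-\pi|\partial\Omega\cap V|$; the inequality is \emph{strict} because the hyper-ideal configuration is locally strictly convex, so each boundary turning is strictly smaller than its combinatorial bound. I expect the delicate point here to be the precise enumeration of admissible domains and the exact form of the boundary correction term --- this is exactly where the inaccuracy of \cite{Sch1} must be repaired, and where restricting to the smaller family of admissible domains of Definition~\ref{Def_admissible_domain} pays off.

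For sufficiency --- the hard direction --- I would use the hybrid topological/variational machinery developed in the rest of the paper. Fixing the combinatorics $\cellcomplex$, one parametrizes hyper-ideal patterns by their vertex radii (equivalently by a point in a suitable open subset of $\reals^N$) and considers the realization map sending a pattern to its angle data $(\theta,\Theta)$. The strategy is to show this map is a local diffeomorphism onto the interior of the polytope --- its derivative being nondegenerate because it is the Hessian of the strictly convex functional of Theorem~\ref{Thm_main} --- and that it is proper as a map onto the open polytope. Properness, combined with a degree argument, yields surjectivity onto the whole open polytope, while strict convexity of the functional yields injectivity, giving existence and uniqueness simultaneously and exhibiting the pattern as the unique critical point of the functional.

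The main obstacle I anticipate is the properness step. One must show that any sequence of patterns whose angle data converges to a boundary point of the polytope must degenerate, and conversely that the only possible degenerations correspond to the failure of one of the inequalities E4/H4 for some admissible domain. Matching each geometric degeneration --- a vertex circle escaping, a face collapsing, an edge length blowing up --- to the violation of precisely one admissible-domain inequality is where the careful, restricted choice of admissible domains is essential, and is the crux that makes the explicit description of $\polytope^e$ and $\polytope^h$ correct.
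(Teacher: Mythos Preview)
Your proposal is correct and follows essentially the same architecture as the paper: necessity via Gauss--Bonnet applied to admissible domains (Lemma~\ref{Lem_necessary_conditions_general}), and sufficiency via showing that the angle-extraction map $\PhiC$ is a diffeomorphism onto an open subset of $\polytope$ whose image is relatively closed, hence equal to $\polytope$ by connectedness (Lemma~\ref{Lem_closeness_of_image}). Two small remarks: the paper phrases the surjectivity step as open~$+$~relatively closed~$+$~connected rather than as a degree/properness argument, which is slightly cleaner since $\PhiC$ is already known to be injective; and for the necessity direction the paper's treatment of the boundary turning is more delicate than ``local strict convexity'' --- it splits the contribution at each boundary vertex $k\in\partial\Omega\cap V$ into two subcases (situations 2.1 and 2.2 in the proof, depending on whether the interior angle at $k$ is below or above $\pi$), and this case split is precisely where the inaccuracy of \cite{Sch1} is repaired.
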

Condition (\ref{Eqn_first_condition}) can be also written as

$$\sum_{ij^* \subset \partial\Omega \cap E_1} (\pi - \theta_{ij}) + \sum_{k
\in \Omega \cap V} (2 \pi - \Theta_k) > 2 \pi \chi(\Omega) - \pi
|\partial\Omega \cap V| - \pi |\partial \Omega \cap E_0|.$$

We are going to assume that the vector space
$\reals^{E_1}\times\reals^{V_1}$ is an affine subspace of
$\reals^{E}\times\reals^{V}$ by assuming that any $(\theta,
\Theta) \in \reals^{E_1}\times\reals^{V_1}$ is extended to a point
in $\reals^{E}\times\reals^{V}$ by letting $\theta_{ij}=0$ for all
$ij \in E \setminus E_1 = E_0$, and $\Theta_k = \sum_{ik \in E_k}
(\pi - \theta_{ik})$ for all $k \in V \setminus V_1=V_0$. Thus,
one can naturally assume that the polytopes $\polytope^{h}$ and
$\polytope^{e}$ are in the larger space
$\reals^{E}\times\reals^{V}$.

To optimize the conditions from theorem
\ref{Thm_description_of_polytopes} a bit more, one can define the
so called \emph{strict admissible domain}.

\begin{Def} \label{Def_strict_admissible_domain}
An open connected subdomain $\Omega$ of the surfaces $S$ is called
a \emph{strict admissible domain of $(S,\cellcomplex)$ } whenever
$\Omega$ is admissible and $\partial\Omega \cap V_0 =
\varnothing$.
\end{Def}

As it turns out, the angle data polytopes can be described via
strict admissible domains instead of admissible domains. In fact,
the admissible domains which are not strict do not add more
restrictions to the angle data, i.e. they produce redundant
conditions.

\begin{cor} \label{Cor_polytopes_general_admissible_domains}
The statements of theorem \ref{Thm_description_of_polytopes} still
hold even if the expression ``admissible domain" in points E4 and
H4 of theorem \ref{Thm_description_of_polytopes} is replaced by
the expression ``strict admissible domain".
\end{cor}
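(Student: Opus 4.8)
The plan is to reformulate both E4 and H4 through a single ``deficiency'' functional and then to show that the inequality attached to a non-strict admissible domain is a consequence of the inequalities attached to strict ones together with the defining equalities. For an admissible domain $\Omega$ set
\[
D(\Omega) := \sum_{ij^* \subset \partial\Omega}(\pi - \theta_{ij}) + \sum_{k \in \Omega\cap V}(2\pi - \Theta_k) - 2\pi\chi(\Omega) + \pi\,|\partial\Omega\cap V|,
\]
so that \eqref{Eqn_first_condition} (and its hyperbolic twin) is exactly $D(\Omega) > 0$. The two guiding observations are that, by E2/H2, $D(\mathrm{OStar}(k)) = 0$ for every $k \in V_0$ --- which is precisely why those stars are excluded in E4/H4 --- and that E3/H3 is the ``global'' instance $D(S) = 0$ (resp.\ $D(S) > 0$). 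Since every strict admissible domain is admissible (Definition \ref{Def_strict_admissible_domain}), one inclusion is free; the task is to prove that each non-strict admissible $\Omega$ still satisfies $D(\Omega) > 0$ once the strict inequalities and the equalities E2/E3 (H2/H3) are assumed.

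First I would establish an inclusion--exclusion identity for $D$. Fix $k \in V_0 \cap \partial\Omega$ and put $\Psi = \mathrm{OStar}(k)$. The claim is
\[
D(\Omega) = D(\Omega \cup \Psi) + D(\Omega \cap \Psi),
\]
where $D(\Psi) = 0$ has already been used. I would verify it term by term: the interior-vertex sum and the term $-2\pi\chi$ obey inclusion--exclusion (the latter by Mayer--Vietoris for the open sets $\Omega,\Psi$), while the boundary contribution $\sum_{ij^*\subset\partial\Omega}(\pi-\theta_{ij}) + \pi|\partial\Omega\cap V|$ needs care because the boundary is immersed: passing to $\Omega\cup\Psi$ may turn corner edges at $k$ from doubly traced to interior and alters the multiplicity with which $k$ is counted in $|\partial\Omega\cap V|$. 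The decisive point is that, upon adding $\Psi$, the change in the boundary multiplicity of each dual edge $ki^*$ of the star is $+1$ \emph{regardless} of whether the triangle of $\hat\Triang$ on the far side of $ki^*$ lies in $\Omega$; this cancellation of the ``outside'' dependence is what makes the identity local at $k$, and hence valid.

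With the identity in hand I would induct on $s = |\partial\Omega \cap V_0|$, the number of radius-zero vertices on the boundary; the base case $s=0$ is exactly strictness. Absorbing $\Psi$ produces $\Omega' = \Omega\cup\Psi$ in which $k$ has become interior, and since $\partial\Psi$ contains no vertex of $V$ no new boundary $V_0$-vertex is created, so $s$ strictly decreases. The hard part --- and the real content of the corollary --- is the sign of the correction: a short computation gives $D(\Omega\cap\Psi) = \sum_{\text{sectors of }\Omega\text{ at }k}(\pi - \theta_{ki}) - \pi c$, where $c$ is the number of connected sectors of $\Omega$ around $k$, and this expression can be negative (already for a two-triangle sector with two large angles), so $D(\Omega')>0$ does not by itself yield $D(\Omega)>0$. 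To overcome this I would exploit the equality $D(\Psi)=0$ as a gluing relation between the in-sector and out-sector deficiencies at $k$, and run the same reduction simultaneously on the complementary region, which shares the boundary vertex $k$; combining the two through the global identity E3/H3 lets the possibly-negative in-sector terms cancel against the out-sector ones in pairs. In effect this yields an explicit nonnegative combination of strict-domain inequalities and the equalities E2/E3 (H2/H3) that equals the $\Omega$-inequality --- a Farkas certificate obtained by telescoping the inclusion--exclusion identity over the sectors at each boundary $V_0$-vertex. Finally I would dispose of the degenerate cases: if $\Omega\cup\Psi = S$ the conclusion comes directly from E3/H3, and if the enlarged domain collapses onto an excluded star $\mathrm{OStar}(k')$ one uses $D=0$ there together with the strict inequality for an adjacent genuine admissible domain.
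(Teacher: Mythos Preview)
Your approach is fundamentally different from the paper's. The paper does not attempt any direct combinatorial reduction from general admissible domains to strict ones. Instead, the corollary falls out as a byproduct of the proof of the main theorem: writing $\widetilde{\polytope}$ for the polytope cut out by all admissible-domain inequalities and $\polytope$ for the one cut out by only the strict ones, Lemma~\ref{Lem_necessary_conditions_general} gives $\PhiC(\CPC) \subseteq \widetilde{\polytope} \subseteq \polytope$ (the second inclusion being trivial since strict admissible domains form a subcollection), and Lemma~\ref{Lem_closeness_of_image} shows that $\PhiC(\CPC)$ is open and relatively closed in $\polytope$, hence equals $\polytope$ by connectedness. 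The chain then collapses to $\PhiC(\CPC) = \widetilde{\polytope} = \polytope$. The point is that the degeneration argument in Lemma~\ref{Lem_closeness_of_image} produces only \emph{strict} admissible domains (the $\Omega$ constructed there satisfies $\partial\Omega \cap V_0 = \varnothing$ by design), so only the strict inequalities are ever needed on the sufficiency side.

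Your direct approach has a genuine gap at precisely the point you flag. You correctly observe that $D(\Omega \cap \Psi)$ can be negative, so the naive induction on $|\partial\Omega \cap V_0|$ fails, and you then propose to repair this by ``running the same reduction simultaneously on the complementary region'' and invoking E3/H3 so that ``in-sector terms cancel against out-sector ones in pairs.'' But this is not an argument: you never say what the complementary region is (the complement of $\Omega$ need not be admissible, connected, or meet $V$), you do not exhibit the promised nonnegative combination, and in the hyperbolic case H3 is a strict inequality rather than an identity, so it cannot serve as the gluing relation you appeal to. The ``Farkas certificate obtained by telescoping'' is asserted, not produced. The paper's indirect route sidesteps all of this: it never compares the two families of inequalities algebraically, because both polytopes are shown to coincide with the same geometric image $\PhiC(\CPC)$.
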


\section{Some basic geometric facts} \label{Sec_basic_geometry}

In what follows, we state some basic facts from Euclidean and
hyperbolic geometry, which will be useful in our investigation.

The primary models of the hyperbolic plane $\hyperbolicplane$,
used in this article, are the two standard conformal models - the
upper half-plane and the Poincar\'e disc. The term ``conformal"
means that in both of these models, the measure of angle with
respect to the hyperbolic metric equals the measure of angle with
respect to the underlying Euclidean metric.
Although the notion of a circle in Euclidean geometry is
well-known, circles in the hyperbolic plane require some
attention. Let a \emph{regular circle} in $\hyperbolicplane$ be
defined as a curve in $\hyperbolicplane$ consisting of all points
which are equidistant from a given point, called the center of the
circle. In addition to that, let a \emph{hyper-circle} in
$\hyperbolicplane$ be defined as a curve in $\hyperbolicplane$,
equidistant from a given geodesic, called the \emph{central
geodesic}, and lying on one side of that geodesic. Then the word
\emph{circle} in $\hyperbolicplane$ (or alternatively
\emph{hyperbolic circle}) is the common term we use for regular
circles, horocycles (see \cite{ThuNotes, ThuBook, Bus} or
\cite{BenPetr}) and hyper-circles. Furthermore, regular circles
and horocycles have well defined interiors, i.e. they have discs.
Consequently, a regular circle is the boundary curve of a
\emph{regular disc} and a horocycle is the boundary curve of a
\emph{horodisc}. Analogously, a \emph{hyper-disc} is a connected
subdomain of $\hyperbolicplane$, whose boundary curve is a
hyper-circle with a central geodesic contained in the hyper-disc.
Consequently, the word \emph{disc} in $\hyperbolicplane$ (or
alternatively \emph{hyperbolic disc}) is the common term for
regular discs, horodiscs and hyper-discs. Thus, \emph{inside a
circle} means inside the disc of the circle in question.

A very useful property of the Poincar\'e disc and the upper-half
plane is that hyperbolic circles are exactly the intersections of
the model with ordinary Euclidean circles (circles in the
underlying Euclidean geometry). In this line of thoughts, a
regular circle in $\hyperbolicplane$ is in fact a Euclidean circle
fully contained in $\hyperbolicplane$. The only peculiarity here
is that, generically speaking, the hyperbolic center of a regular
circle in $\hyperbolicplane$ is different from its Euclidean
center. Furthermore, a hyper-circle in $\hyperbolicplane$ is the
circular arc obtained from the intersection of a Euclidean circle
with $\hyperbolicplane$, where the Euclidean circle intersects
$\partial\hyperbolicplane$ at exactly two points. The geodesic
between these two ideal points is the central geodesic of the
hyper-circle. In particular, hyperbolic geodesics are a special
type of hyper-circles, orthogonal to $\partial\hyperbolicplane$,
i.e. we can think that their ``hyperbolic radius" is equal to
zero. Finally, a horocycle is in general a Euclidean circle inside
$\hyperbolicplane$ tangent to $\partial\hyperbolicplane$.

Since circle patterns are traditionally linked to polyhedral
objects in the hyperbolic three-space $\hyperbolicspace$, a fact
which we will exploit a lot in this article, we will strongly rely
on the upper half-space model 
of $\hyperbolicspace$. Just like in the case of $\hyperbolicplane$
the latter is a conformal model and shares analogous properties
with the upper-half plane.

It is worth mentioning that the disc can be transformed into the
upper-half plane by a planar M\"obius transformation. Here is one
way to do this. For notational simplicity, identify the plane with
$\CC$. Let $\hyperbolicplane_{up}$ be the upper-half plane
$\{\text{Im}(z)
> 0\}$ and $\hyperbolicplane_d$ be the unit disc $\{|z| < 1\}$.
Furthermore, draw circle $c_0$ centered at $-i = -\sqrt{-1}$ and
passing through the points $-1$ and $1$. Then it is immediate to
see that the inversion in $c_0$ maps $\hyperbolicplane_{d}$ to
$\hyperbolicplane_{up}$. Consequently, one can easily carry
constructions from the unit disc to the upper-half plane and vice
versa. From now on by $l_{\hyperbolicplane}(MN)$ and
$l_{\hyperbolicspace}(MN)$ we denote the hyperbolic length of a
geodesic segment $MN$, and by $|MN|$ we denote the length of the
straight-line segment $MN$ with respect to the background
Euclidean geometry. Also, we would denote by $l_{\PP}(MN)$ the
distance between $M$ and $N$ in the plane $\PP$. For more details
on two and three dimensional hyperbolic geometry, one can consult
for example \cite{ThuNotes, ThuBook, Bus} or \cite{BenPetr}.

\emph{Remark.} In this section, we have chosen to present proofs
based on compass and straightedge constructions with the
presumption that these may turn out useful for certain
applications, such as computer realizations for instance.


\begin{prop} \label{Prop_radical_axis}
Let circles $c_f$ and $c_{f'}$ in $\PP$ intersect in exactly two
points $P_i^{ij}$ and $P_{j}^{ij}$. Let $Ra_{ij}$ be the geodesic
passing through both points $P_i^{ij}$ and $P_{j}^{ij}$.
Furthermore, denote by $\tilde{R}a_{ij}$ the geodesic $Ra_{ij}$
with the closed geodesic segment $P_i^{ij}P_{j}^{ij}$ removed from
it. Then

\smallskip

\noindent {\bf 1.} Any point form $\tilde{R}a_{ij}$ is the center
of exactly one circle orthogonal to both $c_{f}$ and $c_{f'}$;

\smallskip

\noindent {\bf 2.} If the point $O$ from $\tilde{R}a_{ij}$, is the
center of a circle $c$ orthogonal to $c_{f}$, then $c$ is also
orthogonal to $c_{f'}$;

\smallskip

\noindent {\bf 3.} If a circle $c$ is orthogonal to both circles
$c_{f}$ and $c_{f'}$, then its center $O$ lies on
$\tilde{R}a_{ij}$. In the case of $\PP=\hyperbolicplane$, the
circle $c$ is assumed to be regular.


\end{prop}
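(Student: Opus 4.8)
The plan is to reduce the statement to the classical notion of the radical axis and its hyperbolic analog. The starting observation is that orthogonality of two circles is equivalent to a metric relation between their centers and radii. Indeed, if a circle $c$ with center $O$ and radius $\rho$ meets $c_f$ (center $C_f$, radius $r_f$) orthogonally at a point $P$, then the radii $OP$ and $C_fP$ are perpendicular at $P$, so the triangle $OPC_f$ is right-angled at $P$. Hence $c$ is orthogonal to $c_f$ exactly when $l_{\PP}(OC_f)^2 = \rho^2 + r_f^2$ for $\PP = \Euclideanplane$, and when $\cosh l_{\PP}(OC_f) = \cosh\rho\,\cosh r_f$ for $\PP = \hyperbolicplane$ (the Euclidean, resp. hyperbolic, Pythagorean theorem). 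I read each of these as a formula for the radius of the unique circle centered at $O$ orthogonal to $c_f$: its squared radius is the power $p_f(O):=l_{\PP}(OC_f)^2-r_f^2$ in the Euclidean case, and $\cosh\rho = p_f(O):=\cosh l_{\PP}(OC_f)/\cosh r_f$ in the hyperbolic case. Such a circle exists, and is then unique, precisely when $p_f(O)>0$ (resp. $p_f(O)\ge 1$), which is exactly when $O$ lies outside the disc bounded by $c_f$. This already gives the uniqueness half of part 1 and the construction of the orthogonal circle, by erecting its radius as the tangent length from $O$ to $c_f$.

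Next I locate the set of admissible centers, that is, the points $O$ carrying a single circle orthogonal to both $c_f$ and $c_{f'}$; by the previous paragraph this is the locus where $p_f(O)=p_{f'}(O)$, subject to the positivity constraint. In the Euclidean case $|OC_f|^2-r_f^2=|OC_{f'}|^2-r_{f'}^2$ has cancelling quadratic terms and reduces to a linear equation, so the locus is a straight line, the radical axis. In the hyperbolic case I pass to the hyperboloid model, where $\cosh l_{\PP}(OX)=-\langle O,X\rangle$; the equation $\cosh l_{\PP}(OC_f)/\cosh r_f=\cosh l_{\PP}(OC_{f'})/\cosh r_{f'}$ becomes the linear condition $\langle O,\ \cosh r_{f'}\,C_f-\cosh r_f\,C_{f'}\rangle=0$, whose intersection with the hyperboloid is a geodesic. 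In both geometries the two intersection points $P_i^{ij},P_j^{ij}$ satisfy this equation, since each lies on both circles and hence has $p_f=p_{f'}$ (equal to $0$, resp. to $1$). Thus the equal-power locus is exactly the geodesic $Ra_{ij}$ through $P_i^{ij}$ and $P_j^{ij}$, and part 2 follows immediately: if $O\in\tilde{R}a_{ij}\subset Ra_{ij}$ carries a circle orthogonal to $c_f$, then $\rho$ realizes $p_f(O)=p_{f'}(O)$, so the same circle is orthogonal to $c_{f'}$.

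It remains to analyze the power along $Ra_{ij}$ to isolate $\tilde{R}a_{ij}$, finishing parts 1 and 3. The chord $P_i^{ij}P_j^{ij}$ is common to both discs, so a point strictly between $P_i^{ij}$ and $P_j^{ij}$ lies inside both circles and has $p_f<0$ (resp. $p_f<1$), whereas a point of $Ra_{ij}$ beyond the endpoints lies outside both circles and has $p_f>0$ (resp. $p_f>1$); at the endpoints $p_f$ attains its boundary value and the orthogonal circle degenerates to the point $P$. Hence a nondegenerate circle orthogonal to both exists exactly over the two rays constituting $\tilde{R}a_{ij}$, it is unique there, and conversely the center of any such circle must lie on $\tilde{R}a_{ij}$; the same sign computation applies to a regular circle $c$ in the hyperbolic case, which is why part 3 is stated for regular $c$. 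The main technical point to get right is the hyperbolic step: verifying that the equal-power locus is a genuine complete geodesic through both intersection points rather than merely some curve, for which the linear reformulation in the hyperboloid model is the cleanest device (it is moreover insensitive to the type of circle), and checking that the argument degrades gracefully when the face circles $c_f,c_{f'}$ are not regular circles in the sense of Section \ref{Sec_basic_geometry} — the reason part 3 restricts attention to regular orthogonal circles.
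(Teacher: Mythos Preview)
Your proof is correct and takes a genuinely different route from the paper. The paper dispatches the Euclidean case by an appeal to classical radical-axis theory, and then reduces the hyperbolic case to the Euclidean one: it moves the center $O$ of the orthogonal circle to the Euclidean center $O_{\hyperbolicplane}$ of the Poincar\'e disc by a hyperbolic isometry, observes that hyperbolic circles are then Euclidean circles with the same center $O_{\hyperbolicplane}$, applies the Euclidean radical-axis statement in the background geometry, and notes that a Euclidean line through $O_{\hyperbolicplane}$ is a hyperbolic geodesic. By contrast, you work intrinsically in each geometry: you identify the orthogonality condition with a power function $p_f$, show directly (via the hyperboloid model in the hyperbolic case) that the equal-power locus $\{p_f=p_{f'}\}$ is a geodesic through the two intersection points, and then read off the positivity regime along that geodesic to single out $\tilde{R}a_{ij}$.

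What each approach buys: the paper's reduction is shorter and immediately accommodates the generalized circles (horocycles, hyper-circles) of Section~\ref{Sec_basic_geometry}, since in the Poincar\'e disc these are still arcs of Euclidean circles and the Euclidean radical axis applies verbatim to the underlying Euclidean picture. Your approach is more explicit and self-contained, and the hyperboloid-model linearization you invoke is indeed the natural way to make it uniform over all circle types---though you only sketch that extension. Your sign analysis along $Ra_{ij}$ also makes the role of the excluded segment $P_i^{ij}P_j^{ij}$ transparent, whereas the paper leaves this implicit in the phrase ``properties of the radical axis.''
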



\begin{proof}

\noindent \emph{Euclidean case}. When $\PP$ is the Euclidean
plane, the statement of this proposition follows from the
properties of the radical axis of a pair of intersecting circles.

\smallskip
\noindent \emph{Hyperbolic case}. The hyperbolic case follows from
the Euclidean case, combined with some basic properties of the
Poincar\'e disc model. Denote by $O_{\hyperbolicplane}$ the
Euclidean center of the Poincar\'e disc $\hyperbolicplane$.
Whenever the hyperbolic center $O$ of a circle $c$ in
$\hyperbolicplane$ coincides with $O_{\hyperbolicplane}$ then
$O\equiv O_{\hyperbolicplane}$ is also the Euclidean center of
$c$. Thus, in order to complete the proof of the current
proposition, it is enough to move the point $O$, via a hyperbolic
isometry of $\hyperbolicplane$, to $O_{\hyperbolicplane}$. Then
one can apply the Euclidean case. After that, one can use the fact
that the straight line $Ra_{ij}$ passes through
$O_{\hyperbolicplane}$ and consequently its intersection with
$\hyperbolicplane$ is a hyperbolic geodesic.
\end{proof}

\begin{cor} \label{Cor_two_adjacent_triangles}
Let $f$ and $f'$ be two compatibly adjacent decorated polygons in
$\PP$, sharing a common edge $ij$. Let their corresponding face
circles be $c_f$ and $c_{f'}$. Then the two points of intersection
$P^{ij}_i$ and $P^{ij}_j$ of $c_{f}$ and $c_{f'}$ lie on the
common edge $ij$.
\end{cor}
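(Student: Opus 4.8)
The plan is to read the orthogonality relations straight off the definitions, feed them into Proposition~\ref{Prop_radical_axis}, and then use the disjoint-interiors hypothesis to upgrade ``lies on the radical geodesic'' to ``lies on the segment $ij$''. Throughout I take for granted, as the statement does, that $c_f$ and $c_{f'}$ meet in the two points $P_i^{ij},P_j^{ij}$.

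First I would unwind the hypotheses. Because $f$ and $f'$ are compatibly adjacent along $ij$, their vertex circles at $i$ and at $j$ coincide; call these common circles $c_i$ and $c_j$. Since $i$ and $j$ are vertices of both decorated polygons, Definition~\ref{Def_decorated_polygon} says that each face circle is orthogonal to each of these vertex circles, so $c_i$ and $c_j$ are each orthogonal to \emph{both} $c_f$ and $c_{f'}$. Thus the vertex $i$ (the center of $c_i$) and the vertex $j$ (the center of $c_j$) are centers of circles orthogonal to both face circles. Writing $Ra_{ij}$ for the geodesic through $P_i^{ij}$ and $P_j^{ij}$, part~3 of Proposition~\ref{Prop_radical_axis} then forces $i$ and $j$ to lie on $\tilde{R}a_{ij}$, that is, on $Ra_{ij}$ but strictly outside the closed segment $P_i^{ij}P_j^{ij}$. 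In particular $i$ and $j$ both lie on the single geodesic $Ra_{ij}$; since the edge $ij$ is by definition the geodesic segment joining $i$ and $j$, the entire edge lies along $Ra_{ij}$, and the two intersection points lie on the geodesic carrying the edge. In the hyperbolic case I would, exactly as in the proof of Proposition~\ref{Prop_radical_axis}, move a suitable point to $O_{\hyperbolicplane}$ and transport the conclusion from the Euclidean model, so that $Ra_{ij}$ is a genuine geodesic of $\hyperbolicplane$.

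The last and main point is to show that $P_i^{ij}$ and $P_j^{ij}$ lie on the segment $[i,j]$, not merely on the line through it, and this is where the disjoint-interiors hypothesis enters. Since $c_i$ and $c_j$ are vertex circles of the single decorated polygon $f$, their interiors are disjoint, so $l_{\PP}(ij)\ge r_i+r_j$. Orthogonality of $c_i$ with $c_f$ pins the (signed) distances from $i$ to the endpoints $P_i^{ij},P_j^{ij}$ of the chord, and similarly for $j$, placing each of $i,j$ strictly outside the chord, on one of its two sides. A short computation shows that if $i$ and $j$ lay on the \emph{same} side of the chord $P_i^{ij}P_j^{ij}$, then one would get $l_{\PP}(ij)<r_i+r_j$, contradicting disjointness; hence $i$ and $j$ lie on opposite sides, the chord $P_i^{ij}P_j^{ij}$ is contained in $[i,j]$, and both intersection points lie on the edge. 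I expect the only delicate step to be this same-side/opposite-side comparison, and within it the hyperbolic subcase, where the Euclidean distance inequality is replaced by its hyperbolic analogue; but it follows the same monotonicity and reduces to the Euclidean model via the conformal picture used above.
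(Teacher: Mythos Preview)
Your argument is correct and is exactly what the paper intends: the corollary is stated immediately after Proposition~\ref{Prop_radical_axis} with no proof, and your use of part~3 to place $i,j$ on $\tilde{R}a_{ij}$, followed by the disjoint-interiors check to rule out the same-side configuration, supplies precisely the omitted details. The only case you should also mention is when a vertex circle has radius zero, where part~3 of Proposition~\ref{Prop_radical_axis} does not literally apply---but then that vertex lies on both $c_f$ and $c_{f'}$ and is therefore one of the two intersection points, so the conclusion is immediate.
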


\noindent This last corollary allows us to define the intersection
angle between the face circles of two adjacent decorated polygons.

\begin{Def} \label{Def_angle_between_2_face_circles}
Let two compatibly adjacent decorated polygons $f$ and $f'$ share
a common edge $ij$ and have corresponding face circles $c_f$ and
$c_{f'}$. In accordance with corollary
\ref{Cor_two_adjacent_triangles}, let $P^{ij}_i$ and $P^{ij}_j$ be
the two intersection points of the circles $c_{f}, c_{f'}$ and the
edge $ij$. Point $P^{ij}_i \in ij$ is the closer one to vertex
$i$, while $P^{ij}_j \in ij$ is the closer on to vertex $j$. The
points $P^{ij}_i$ and $P^{ij}_j \in c_f$ split the circle $c_f$
into two circular arcs. Denote by $c_f(ij) \subset c_f$ the arc
whose interior is disjoint from the edges of $f$. In the same way,
define the arc $c_{f'}(ij) \subset c_{f'}$. Then, the
\emph{intersection angle $\theta_{ij} \in [0, 2\pi)$ between $c_f$
and $c_{f'}$} is defined to be the angle between the arcs
$c_f(ij)$ and $c_{f'}(ij)$ measured inside the bounded region the
two arcs enclose (see figure \ref{Fig2}a).
\end{Def}

\noindent A straightforward consequence of definition
\ref{Def_angle_between_2_face_circles} is the following statement

\begin{prop} \label{Prop_special_angles_bw_face_circles}
In the setting of definition 
\ref{Def_angle_between_2_face_circles}, the angle $\theta_{ij} =
0$ if and only if the face circles $c_f$ and $c_{f'}$ are tangent
to the edge $ij$ and to each other at the point $P^{ij}_i \equiv
P^{ij}_j$. Furthermore, $\theta_{ij}=\pi$ if and only if the two
face circles coincide, i.e. $c_f \equiv c_{f'}$. (See for example
edges $is$ and $js$ on figure \ref{Fig2}a.)
\end{prop}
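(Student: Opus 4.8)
The plan is to reduce the statement to the elementary geometry of two circles meeting along their radical axis, the key observation being that this radical axis is exactly the geodesic line carrying the edge $ij$. Indeed, the common vertex circles $c_i$ and $c_j$ (centered at $i$ and $j$) are orthogonal to both face circles $c_f$ and $c_{f'}$, so by Proposition~\ref{Prop_radical_axis} their centers $i$ and $j$ lie on the geodesic $\tilde{R}a_{ij}$; hence the geodesic through the intersection points $P^{ij}_i$ and $P^{ij}_j$ is precisely the line carrying $ij$, which is the content of Corollary~\ref{Cor_two_adjacent_triangles}. Since both conformal models render hyperbolic angle equal to background Euclidean angle, and since the normalization argument of Proposition~\ref{Prop_radical_axis} lets me move the relevant center to the Euclidean center of the Poincar\'e disc, it suffices to carry out the angle analysis for two Euclidean circles passing through $P^{ij}_i$ and $P^{ij}_j$ on a common straight line.

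First I would treat the case $\theta_{ij}=0$. If the interior angle between the arcs $c_f(ij)$ and $c_{f'}(ij)$ vanishes, the arcs leave their common endpoint in the same tangent direction, so $c_f$ and $c_{f'}$ are mutually tangent; two tangent circles meet in a single point, which forces $P^{ij}_i\equiv P^{ij}_j$. Their common tangent at this point passes through $P^{ij}_i$ and is perpendicular to the line of centers, hence coincides with the radical axis, identified above with the line carrying $ij$; thus both face circles are tangent to the edge $ij$, and to each other, at $P^{ij}_i\equiv P^{ij}_j$. Conversely, if $c_f$ and $c_{f'}$ are tangent to $ij$ and to one another at a single point $P^{ij}_i\equiv P^{ij}_j$, the enclosed region degenerates and the two arcs share a tangent direction there, so by Definition~\ref{Def_angle_between_2_face_circles} we get $\theta_{ij}=0$.

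Next I would treat $\theta_{ij}=\pi$. For the easy direction, if $c_f\equiv c_{f'}=:c$, then, since $f$ and $f'$ sit on opposite sides of $ij$, the arcs $c_f(ij)$ and $c_{f'}(ij)$ are exactly the two complementary arcs into which $P^{ij}_i$ and $P^{ij}_j$ cut $c$, one on each side of $ij$; together they trace the smooth circle $c$, so at each endpoint their tangent directions are opposite and the interior angle of the enclosed disc is the straight angle $\pi$. For the converse, if $\theta_{ij}=\pi$ then the arcs meet at $P^{ij}_i$ without a corner, so $c_f$ and $c_{f'}$ share a tangent line there; two circles with a common tangent at a common point are tangent and meet only there, so if they were distinct we would be forced into $P^{ij}_i\equiv P^{ij}_j$ and a degenerate, zero-angle region, contradicting $\theta_{ij}=\pi$. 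Hence $c_f\equiv c_{f'}$.

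The step I expect to be the main obstacle is making the phrases ``angle measured inside the enclosed region'', ``smooth junction'' and ``tangent'' precise enough to guarantee that the only degenerate values of $\theta_{ij}$ in $[0,2\pi)$ are $0$ and $\pi$, with no spurious outcome. I would pin this down by an explicit normalization: placing $P^{ij}_i=(-1,0)$ and $P^{ij}_j=(1,0)$ and putting the two centers on the $y$-axis at heights $a$ and $b$, the relevant arc tangents at $P^{ij}_i$ become $(-a,1)$ and $(b,-1)$, and a direct computation shows the enclosed angle equals $\pi$ exactly when $a=b$, i.e. when the circles coincide, and tends to $0$ only as the two intersection points merge. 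Finally, I would remark that in the hyperbolic setting the ``face circle'' may be a hyper-circle rather than a regular circle, but the conformality of the model reduces the tangency and angle claims to the Euclidean statement already established.
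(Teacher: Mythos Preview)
Your proof is correct. The paper itself does not actually prove this proposition; it simply prefaces it with ``A straightforward consequence of definition~\ref{Def_angle_between_2_face_circles} is the following statement'' and moves on. So there is no ``paper's own proof'' to compare against beyond that one-line declaration.

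What you have done is exactly what the paper intends the reader to fill in: identify the line through $P^{ij}_i,P^{ij}_j$ with the radical axis via Proposition~\ref{Prop_radical_axis} and Corollary~\ref{Cor_two_adjacent_triangles}, reduce to the Euclidean picture by conformality, and then read off the two degenerate cases $\theta_{ij}=0$ and $\theta_{ij}=\pi$ from the elementary geometry of two circles meeting on a common chord. Your explicit coordinate check at the end is more than the paper asks for but is a clean way to nail down the ``only if'' direction for $\theta_{ij}=\pi$, which is the only place where a careless reader might worry about the distinction between internal tangency and coincidence.
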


A \emph{ray} in $\PP$ is a geodesic half-line. In other words,
this is a Euclidean half-line, in the case of $\Euclideanplane$,
and a hyperbolic half-geodesic, in the case of $\hyperbolicplane$.
We denote a ray by $A\overrightarrow{r}$, where $A$ is the ray's
point of origin. If a ray starts from a point $A$ and passes
through a point $B$, then it can be also denoted by
$\overrightarrow{AB}$. Let $A\overrightarrow{r_1}$ and
$A\overrightarrow{r_2}$ be two rays with a common origin $A$.
Denote by $\alpha_0=\measuredangle r_1Ar_2$ the angle between
them, fixed so that $\alpha_0 \in (0,\pi)$. Then $Dom_{12}$ is
chosen to be the closed convex domain
 bounded by the two rays (infinite sector).
 Observe that the angle $\alpha_0$ is measured
inside $Dom_{12}$. Let $c$ be a circle which intersects both
$A\overrightarrow{r_1}$ and $A\overrightarrow{r_2}$. The angle
between $c$ and $A\overrightarrow{r_j}$ is the angle $\alpha_j$
measured inside $Dom_{12}$ and outside $c$, or equivalently,
measured inside $c$ and outside $Dom_{12}$. Finally, a
\emph{homothety (uniform scaling)} is a special similarity
transformation of the Euclidean plane $\Euclideanplane$ which
fixes a single point, called the center of the homothety, and maps
each line through that point onto itself. In fact, the group of
similarities of $\Euclideanplane$ is generated by all Euclidean
isometries and homotheties. The latter preserve angles but not
lengths. However, they preserve ratios of lengths.

\begin{lem} \label{Lem_two_rays_and_one_circle}

Let $A$ be an arbitrary point in $\PP$, and let
$A\overrightarrow{r_1}$ and $A\overrightarrow{r_2}$ be two rays
with common origin $A$. Let $\alpha_0 \in (0,\pi)$ be the angle
between the rays and let $\alpha_1, \alpha_2 \in (0,\pi)$ be such
that $\alpha_0 + \alpha_1 + \alpha_2 < \pi$. Then there is a
unique ruler and compass constructible pair of rays
$A\overrightarrow{t_{1}}$ and $A\overrightarrow{t_{2}}$ with a
common origin $A$ that have the following properties:

\smallskip

\noindent {\bf 1.} A circle is tangent to both rays
$A\overrightarrow{t_1}$ and $A\overrightarrow{t_2}$ if and only if
its intersection angles with $A\overrightarrow{r_1}$ and
$A\overrightarrow{r_2}$ are $\alpha_1$ and $\alpha_2$
respectively. When $\PP = \hyperbolicplane$, the circle is
allowed to be a geodesic. 

\smallskip

\noindent {\bf 2.} If a circle is tangent to
$A\overrightarrow{t_1}$ and has intersection angle $\alpha_1$ with
$A\overrightarrow{r_{1}}$, then it is tangent to
$A\overrightarrow{t_2}$ and its intersection angle with
$A\overrightarrow{r_{2}}$ is necessarily $\alpha_2$.



\end{lem}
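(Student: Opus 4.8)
The plan is to reduce both statements to a pair of elementary trigonometric identities and then to solve a small system of angular equations that is manifestly ruler-and-compass solvable. First I would record two metric facts, each provable by a single right-triangle computation (and hence by compass and straightedge, in the spirit of the remark above). (A) A circle of radius $\rho$ centered at $O$ meets a geodesic $\ell$ at intersection angle $\alpha$ if and only if $\sinh(\operatorname{dist}(O,\ell)) = \cos\alpha\,\sinh\rho$ in the case $\PP=\hyperbolicplane$, and $\operatorname{dist}(O,\ell) = \rho\cos\alpha$ in the case $\PP=\Euclideanplane$; tangency is the special case $\alpha=0$. (B) If $O$ lies at distance $s$ from $A$ along a ray making angle $\gamma$ at $A$ with a geodesic $\ell$ through $A$, then $\sinh(\operatorname{dist}(O,\ell)) = \sinh(s)\sin\gamma$ (respectively $\operatorname{dist}(O,\ell)=s\sin\gamma$). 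Both follow from the standard right-triangle relations once the foot of the perpendicular from $O$ to $\ell$ is dropped.

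Combining (A) and (B) yields the observation that drives the whole argument: for a circle whose center $O$ lies on a fixed ray through $A$, the condition that it meet a geodesic through $A$ at a prescribed angle is \emph{purely angular} — the factor $\sinh(s)$ (or $s$) cancels, so the condition depends only on the direction of the ray and not on where $O$ sits along it. Accordingly I would build $t_1,t_2$ as follows. Let $\gamma_1,\gamma_2$ be the angles a ray $b$ from $A$ makes with $r_1,r_2$, subject to $\gamma_1+\gamma_2=\alpha_0$. The two angle conditions $\sinh(\operatorname{dist}(O,r_i))=\cos\alpha_i\sinh\rho$ force $\sin\gamma_1/\sin\gamma_2=\cos\alpha_1/\cos\alpha_2$, which after expansion gives $\tan\gamma_1=\dfrac{\cos\alpha_1\sin\alpha_0}{\cos\alpha_2+\cos\alpha_1\cos\alpha_0}$ and hence a unique constructible direction $b$. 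Setting $\sin\psi=\sin\gamma_1/\cos\alpha_1=\sin\gamma_2/\cos\alpha_2$ and laying off $t_1,t_2$ symmetrically about $b$ at angle $\psi$ on either side produces the candidate rays; every step is a ruler-and-compass operation, and the two defining relations for $\psi$ agree precisely because $b$ was chosen to make $\sin\gamma_1/\cos\alpha_1=\sin\gamma_2/\cos\alpha_2$.

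With $t_1,t_2$ so defined, both claims reduce to a center-locus argument. Circles tangent to both $t_1$ and $t_2$ are exactly those whose center is equidistant from the two geodesics, i.e. lies on their bisector $b$; for such a center the angular identity above, together with the defining relation for $\psi$, forces the intersection angles with $r_1$ and $r_2$ to be $\alpha_1$ and $\alpha_2$. Conversely, a circle meeting $r_1$ at $\alpha_1$ and $r_2$ at $\alpha_2$ has, by (A) and (B), a center whose direction from $A$ satisfies $\sin\gamma_1/\sin\gamma_2=\cos\alpha_1/\cos\alpha_2$, which pins the direction to $b$; the circle is then inscribed in the angle $\angl t_1 A t_2$ and tangent to both rays. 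This establishes statement 1 in both directions. The same reasoning proves statement 2: tangency to $t_1$ together with angle $\alpha_1$ at $r_1$ already forces the center onto $b$ (the $s$-free angular equation singles out $b$), after which tangency to $t_2$ and the angle $\alpha_2$ with $r_2$ are automatic. Uniqueness of $(t_1,t_2)$ follows because both the direction $b$ and the half-angle $\psi$ are uniquely determined by the construction.

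The hard part will be the bookkeeping showing the construction is valid in exactly the stated range. I expect the main obstacle to be verifying that the hypothesis $\alpha_0+\alpha_1+\alpha_2<\pi$ is precisely what guarantees (i) that $\gamma_1,\gamma_2$ land in admissible ranges, (ii) that $\sin\psi=\sin\gamma_1/\cos\alpha_1\le 1$, so that $\psi$ exists with $\psi\in(0,\tfrac{\pi}{2})$, and (iii) that the resulting rays $t_1,t_2$ lie on the correct sides, so the tangent circles realize $\alpha_1,\alpha_2$ inside the sector rather than their supplements. Extra care is needed when $\alpha_1$ or $\alpha_2$ is obtuse: then $\cos\alpha_i<0$ flips the sign of the corresponding $\gamma_i$ and moves $b$ outside the sector between $r_1$ and $r_2$, so the formulas persist but the configuration picture must be checked case by case. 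This sign-and-range analysis, rather than any of the underlying geometry, is where the real work lies.
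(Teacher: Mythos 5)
Your angular-cancellation idea is correct and gives a genuinely different, more computational route than the paper's: the paper constructs one explicit circle $c_{12}$ realizing the angles $\alpha_1,\alpha_2$, takes $t_1,t_2$ to be its tangents from $A$, and then propagates the property to every circle inscribed in $\angl t_1At_2$ by homotheties centered at $A$ (the hyperbolic case being reduced to the Euclidean one through the conformal disc model). Your observation that the conditions $\sinh s\,\sin\gamma_i=\cos\alpha_i\sinh\rho$ are scale-free in $s$ and $\rho$ is the intrinsic counterpart of that homothety argument, and for circles with a center the relations $\sin\gamma_1/\cos\alpha_1=\sin\gamma_2/\cos\alpha_2$ and $\sin\psi=\sin\gamma_1/\cos\alpha_1$ do pin down the bisector direction $b$ and the half-angle $\psi$; your formula for $\tan\gamma_1$ checks out.

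The genuine gap is in the hyperbolic case: your facts (A) and (B) are statements about a circle with a center $O$ at finite distance, i.e.\ about \emph{regular} circles only, whereas the lemma, in the paper's terminology, must hold for all hyperbolic circles --- regular circles, horocycles, hyper-circles, and (as the statement itself says explicitly) geodesics. None of the latter three has a center to which your locus-of-centers argument applies: a hyper-circle is equidistant from a geodesic and a horocycle is ``centered'' at an ideal point, so (A) and (B) would have to be replaced by different identities (with $\cosh$ and $\sinh$ interchanged, respectively a Busemann-function limit), and the characterization of ``tangent to both $t_1$ and $t_2$'' as ``center on the bisector $b$'' breaks down entirely. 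This is not cosmetic: the lemma is invoked in proposition \ref{Prop_3_angles_triangle} for a circle $\kappa_{12}$ that is a geodesic tangent to $t_1,t_2$ at ideal points, and in proposition \ref{Prop_angles_to_dec_triangle} the face circle produced may be a hyper-circle. The paper sidesteps all of this by transferring the whole configuration to the underlying Euclidean geometry of the conformal model, where every hyperbolic circle of any type is an honest Euclidean circle; you would either need to do the same or supply the three extra trigonometric cases. Separately, the sign-and-range analysis you defer (obtuse $\alpha_i$, verifying $\sin\gamma_1/\cos\alpha_1\le 1$, and intersection with the ray rather than the full geodesic) is exactly where the hypothesis $\alpha_0+\alpha_1+\alpha_2<\pi$ must enter, so the proof is not complete until that bookkeeping is actually carried out.
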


\begin{proof}
\noindent \emph{Euclidean case.} The proof of the Euclidean case
will help us prove the hyperbolic version. We are aware of several
ways one could go about the construction of the rays
$A\overrightarrow{t_1}$ and $A\overrightarrow{t_2}$. However, we
present just one of them.

\smallskip

\noindent 1. 
Fix an arbitrary number $R>0$. Construct two isosceles triangles
$\triangle X_1Y_1Z_1$ and $\triangle X_2Y_2Z_2$ such that 
(i) points $X_j$ and $Y_j$ lie on $A\overrightarrow{r_j}$, (ii)
$|X_jZ_j|=|Y_jZ_j|=R$, (iii) $\measuredangle X_jZ_jY_j =
2\alpha_j$ and (iv) either point $Z_j \in Dom_{12}$, when
$\alpha_j \in (0,\pi/2]$,  or $Z_j$ and $Dom_{12}$ are separated
by the line $X_jY_j$, when $\alpha_j \in (\pi/2,\pi)$ (here $j\neq
k = 1,2$). Notice that at most one $\alpha_j$ can be greater or
equal to $\pi/2$. Draw lines $l_1$ and $l_2$ such that $l_1$
passes through $Z_1$ and is parallel to $A\overrightarrow{r_1}$,
while $l_2$ passes through $Z_2$ and is parallel to
$A\overrightarrow{r_2}$. Let $O_{12}$ be the intersection point of
$l_1$ and $l_2$. Draw a circle $c_{12}$ of radius $R$ with center
$O_{12}$. The inequality $\alpha_0 + \alpha_1 + \alpha+2 < \pi$ is
equivalent to the fact that $A$ is outside $c_{12}$ which, in its
own turn, is equivalent to the fact that $c_{12}$ intersects each
ray $A\overrightarrow{r_j}$ at exactly two points, $j=1,2$. Then
one can easily check that the intersection angles of the circle
$c_{12}$ with the rays $A\overrightarrow{r_1}$ and
$A\overrightarrow{r_2}$ are $\alpha_1$ and $\alpha_2$
respectively. Denote by $A\overrightarrow{r_{12}}$ the ray
$\overrightarrow{AO_{12}}$. Furthermore, construct
$A\overrightarrow{t_1}$ and $A\overrightarrow{t_2}$ as the two
rays with a common point of origin $A$ and tangent to circle
$c_{12}$. 
The indices $j=1,2$ are chosen so that the ray
$A\overrightarrow{r_1}$ is the one between rays
$A\overrightarrow{t_1}$ and $A\overrightarrow{r_2}$. Observe that
$A\overrightarrow{r_{12}}$ bisects the angle $\measuredangle
t_1At_2 = \gamma_0 \in (0,\pi)$, formed by the tangent rays
$A\overrightarrow{t_1}$ and $A\overrightarrow{t_2}$.

Now let $c$ be any circle tangent to both $A\overrightarrow{t_1}$
and $A\overrightarrow{t_2}$. Then its center $O$ necessarily lies
on the angle bisector $A\overrightarrow{r_{12}}$. Apply to
$c_{12}$ the unique homothety with center $A$ which sends point
$O_{12}$ to point $O$. Since this homothety maps the three rays
$A\overrightarrow{t_1}, A\overrightarrow{t_2}$ and
$A\overrightarrow{r_{12}}$ to themselves, the image of the circle
$c_{12}$ is again a circle, centered at $O \in
A\overrightarrow{r_{12}}$ and tangent to both
$A\overrightarrow{t_1}$ and $A\overrightarrow{t_2}$. But the
circle $c$ is the unique circle centered at $O$ and tangent to
$A\overrightarrow{t_1}$ (and consequently tangent to
$A\overrightarrow{t_2}$ sa well). Hence $c$ is the image of
$c_{12}$. By observing that the homothety also maps the rays
$A\overrightarrow{t_1}$ and $A\overrightarrow{r_2}$ to themselves,
as well as it preserves angles, we conclude that the intersection
angles of $c$ with $A\overrightarrow{t_1}$ and
$A\overrightarrow{r_2}$ equal the intersection angles of its
preimage $c_{12}$ with $A\overrightarrow{t_1}$ and
$A\overrightarrow{r_2}$, which by construction are $\alpha_1$ and
$\alpha_2$ respectively.

Conversely, let a circle $c$, with a center $O$, intersect both
rays $A\overrightarrow{r_1}$ and $A\overrightarrow{r_2}$ at angles
$\alpha_1$ and $\alpha_2$ respectively. For $j=1,2$ let $Q_j$ be
the farthest from $A$ intersection point of $c$ and
$A\overrightarrow{r_j}$. Similarly, let $Q^{12}_j$ be the farthest
from $A$ intersection point of the ray $A\overrightarrow{r_j}$ and
the circle  $c_{12}$ constructed above. Recall, $O_{12} \in
A\overrightarrow{r_{12}}$ is the center of $c_{12}$. Since
$\measuredangle OQ_jA = \pi/2 - \alpha_j = \measuredangle
O_{12}Q^{12}_jA$, the lines $OQ_j$ and $O_{12}Q^{12}_j$ are
parallel. Let points $A_1$ and $A_2$ be the intersection points of
the line $O_{12}O$ with the lines $Q^{12}_1Q_1$ and $Q^{12}_2Q_2$
respectively. Then $\frac{|A_1O|}{|A_1O_{12}|} =
\frac{|OQ_1|}{|O_{12}Q^{12}_1|} = \frac{|OQ_2|}{|O_{12}Q^{12}_2|}
= \frac{|A_2O|}{|A_2O_{12}|}$, coming from the fact that
$|OQ_1|=|OQ_2|$ and $|O_{12}Q^{12}_1|=|O_{12}Q^{12}_2|$. Therefore
$\frac{|A_1O|}{|OO_{12}|} = \frac{|A_2O|}{|OO_{12}|}$, hence
$|A_1O| = |A_2O|$. The last equality means that $A_1\equiv A_2$
and thus the three lines $Q^{12}_1Q_1, \, Q^{12}_2Q_2$ and
$OO_{12}$ have a common point of intersection. But since, for
$j=1,2$ each line $Q^{12}_jQ_j$ contains the ray
$A\overrightarrow{r_j}$, the two lines $Q^{12}_1Q_1$ and
$Q^{12}_2Q_2$ already meet at the point $A$. Thus, point $A$ also
lies on the line $OO_{12}$, which leads to the conclusion that $O
\in A\overrightarrow{r_{12}}$. Therefore, there exists a unique
homothety with center $A$ that maps $O_{12}$ to $O$. Since the
line $O^{12}Q^{12}_1$ is parallel to the line $OQ_1$, then the
homothety maps $O^{12}Q^{12}_1$ to $OQ_1$, which in its own turn
means that $Q_1$ is the homothetic image of $Q^{12}_1$. Therefore,
the homothety sends the circle $c_{12}$ to a circle centered at
$O$ and passing through $Q_1$. But $c$ is the unique circle with
center $c$ and radius $|OQ_1|$ so $c$ is the image of $c_{12}$.
Hence, $c$ is tangent to both $A\overrightarrow{t_1}$ and
$A\overrightarrow{t_2}$.

\smallskip

\noindent 2. Let us have a circle $c$ with center $O$ tangent to
the ray $A\overrightarrow{t_1}$ and intersecting the ray
$A\overrightarrow{r_{1}}$ at an angle $\alpha_1$. Also, recall the
circle $c_{12}$ with center $O_{12} \in A\overrightarrow{r_{12}}$
used in the construction of $A\overrightarrow{t_1}$ and
$A\overrightarrow{t_2}$. Let $T^{12}_1$ and $T_1$ be the points of
tangency between $A\overrightarrow{t_1}$ and the circles $c_{12}$
and $c$ respectively. Then the lines $O_{12}T^{12}_1$ and $OT_1$
are orthogonal to $A\overrightarrow{t_1}$ and hence are parallel
to each other. Let $Q_1$ be the farthest from $A$ intersection
point of $c$ and $A\overrightarrow{r_1}$. Also recall that
$Q^{12}_1$ is the farthest from $A$ intersection point of
$A\overrightarrow{r_1}$ and $c_{12}$. Since $\measuredangle OQ_1A
= \pi/2 - \alpha_1 = \measuredangle O_{12}Q^{12}_1A$, the lines
$OQ_1$ and $O_{12}Q^{12}_1$ are parallel. Let points $A_r$ and
$A_t$ be the intersection points of the line $O_{12}O$ with the
lines $Q^{12}_1Q_1$ and $T^{12}_1T_1$ respectively. Then
$\frac{|A_rO|}{|A_rO_{12}|} = \frac{|OQ_1|}{|O_{12}Q^{12}_1|} =
\frac{|OT_1|}{|O_{12}T^{12}_1|} = \frac{|A_tO|}{|A_tO_{12}|}$, due
to the equalities $|OQ_1|=|OT_1|$ and
$|O_{12}Q^{12}_1|=|O_{12}T^{12}_1|$. Therefore
$\frac{|A_rO|}{|OO_{12}|} = \frac{|A_tO|}{|OO_{12}|}$, hence
$|A_rO| = |A_tO|$. The last equality means that $A_r\equiv A_t$
and thus the three lines $Q^{12}_1Q_1, \, T^{12}_1T_1$ and
$OO_{12}$ have a common point of intersection. But since, line
$Q^{12}_1Q_1$ contains the ray $A\overrightarrow{r_1}$ and the
line $T^{12}_1T_1$ contains the ray $A\overrightarrow{t_1}$, the
two lines $Q^{12}_1Q_1$ and $T^{12}_1T_1$ already meet at the
point $A$. Thus, point $A$ also lies on the line $OO_{12}$, which
leads to the conclusion that $O \in \overrightarrow{AO_{12}} =
A\overrightarrow{r_{12}}$. Consequently, there exists a unique
homothety with center $A$ that maps $O_{12}$ to $O$. Since the
lines $O^{12}Q^{12}_1$ and $O^{12}T^{12}_1$ are parallel to the
lines $OQ_1$ and $OT_1$ respectively, then the homothety maps
$Q^{12}_1$ to $Q_1$ and $T^{12}_1$ to $T_1$. Therefore, the
homothety sends the circle $c_{12}$ to a circle centered at $O$
and passing through $Q_1$ and $T_1$. Since $c$ is the unique
circle with this property, it is the image of $c_{12}$. Hence, $c$
is also tangent to $A\overrightarrow{t_2}$ and its angle of
intersection with $A\overrightarrow{r_2}$ is $\alpha_2$.

\smallskip

\noindent \emph{Hyperbolic case.} One can directly argue that
point $A$ can be moved to the Euclidean center of the unit circle
$\partial \hyperbolicplane$ by a hyperbolic isometry. Then the
hyperbolic rays $A\overrightarrow{r_1}$ and
$A\overrightarrow{r_2}$ become directed Euclidean segments on a
pair of Euclidean rays. As hyperbolic circles are in fact
intersections of Euclidean circles with $\hyperbolicplane$, the
Euclidean version of the current lemma applies and proves the
hyperbolic case. However, we also present a more direct
construction, which may be helpful in applications.

Let us work in the underlying Euclidean geometry. Denote by
$\kappa_1$ and $\kappa_2$ the circles determined by the hyperbolic
rays $A\overrightarrow{r_1}$ and $A\overrightarrow{r_2}$. Then
$\kappa_1$ and $\kappa_2$ are orthogonal to
$\partial\hyperbolicplane$ and $A \in \kappa_1 \cap \kappa_2$. Let
$A^*$ be the second intersection point of $\kappa_1$ and
$\kappa_2$, lying outside $\hyperbolicplane$. Thus, $A^*$ is the
inverse image of $A$ with respect to $\partial\hyperbolicplane$.
Draw the Euclidean rays $A\overrightarrow{\rho_1}$ and
$A\overrightarrow{\rho_2}$ tangent at the point $A$ to $\kappa_1$
and $\kappa_2$ respectively, where the orientation of
$A\overrightarrow{\rho_1}$ and $A\overrightarrow{\rho_2}$ is
induced by the orientation of $A\overrightarrow{r_1}$ and
$A\overrightarrow{r_2}$. Apply the Euclidean version of the
current lemma and construct the Euclidean rays
$A\overrightarrow{\tau_1}$ and $A\overrightarrow{\tau_2}$ as the
tangents to all Euclidean circles intersecting
$A\overrightarrow{\rho_1}$ and $A\overrightarrow{\rho_2}$ at
angles $\alpha_1$ and $\alpha_2$ respectively.

For each $j=1,2$ draw the circle $\kappa^{\tau}_j$ tangent to
$A\overrightarrow{\tau_j}$ at the point $A$ and orthogonal to
$\partial\hyperbolicplane$. In order to do that, define $O_j$ to
be the intersection point of the orthogonal bisector of segment
$AA^*$ and the line orthogonal to $A\overrightarrow{\tau_j}$ at
point $A$. The circle $\kappa^{\tau}_j$ is defined by its center
$O_j$ and its radius $|O_jA|$. Then, in the hyperbolic plane, the
ray $A\overrightarrow{t_j}$ is in fact the hyperbolic ray starting
from $A$ and lying on the hyperbolic geodesic $\kappa^{\tau}_j
\cap \hyperbolicplane$, in the direction induced by the tangent
Euclidean ray $A\overrightarrow{\tau_j}$.

In order to verify that we have constructed the right objects, we
define a suitable hyperbolic isometry. First, from the point $A^*$
draw the pair of tangents to $\partial\hyperbolicplane$ and then
draw the circle $\kappa$ with center $A^*$ so that it passes
through the touching points of the tangents with
$\partial\hyperbolicplane$. Denote by $I_{\kappa}$ the inversion
with respect to $\kappa$. Then $\partial\hyperbolicplane$ and
$\kappa$ are orthogonal, i.e.
$I_{\kappa}(\partial\hyperbolicplane) = \partial\hyperbolicplane$,
and $I_{\kappa}(A) = O_{\hyperbolicplane}$, where
$O_{\hyperbolicplane}$ is the Euclidean center of
$\partial\hyperbolicplane$. Furthermore, draw the Euclidean line
$\varrho$ through $O_{\hyperbolicplane}$ orthogonal to
$AO_{\hyperbolicplane}$. Let $R_{\varrho}$ be the Euclidean
reflection in $\varrho$. The composition $R_{\varrho} \circ
I_{\kappa} : \Euclideanplane \to \Euclideanplane$ restricts to an
orientation-preserving hyperbolic isometry of $\hyperbolicplane$.
Moreover, let $\varsigma : \Euclideanplane \to \Euclideanplane$ be
the Euclidean translation which maps point $A$ to point
$O_{\hyperbolicplane}$. Then $R_{\varrho} \circ
I_{\kappa}(A\overrightarrow{t_1}) =
\varsigma(A\overrightarrow{\tau_1}) \cap \hyperbolicplane, \,\,\,$
$R_{\varrho} \circ I_{\kappa}(A\overrightarrow{r_1}) =
\varsigma(A\overrightarrow{\rho_1}) \cap \hyperbolicplane, \,\,\,$
$R_{\varrho} \circ I_{\kappa}(A\overrightarrow{r_2}) =
\varsigma(A\overrightarrow{\rho_2}) \cap \hyperbolicplane $ and $
R_{\varrho} \circ I_{\kappa}(A\overrightarrow{t_2}) =
\varsigma(A\overrightarrow{\tau_2}) \cap \hyperbolicplane.$ Let
$\tilde{c}$ be a hyperbolic circle. Then there exists a Euclidean
circle $c$ such that $\tilde{c} = c \cap \hyperbolicplane$. In
particular, $\tilde{c}$ can be a geodesic, i.e. $c$ could be
orthogonal to $\partial\hyperbolicplane$. Assume $\tilde{c}$
satisfies the premises of the hyperbolic version of the current
lemma with respect to the configuration $A\overrightarrow{t_1},
\,\, A\overrightarrow{r_1}, \,\, A\overrightarrow{r_2}$ and
$A\overrightarrow{t_2}$. As $R_{\varrho} \circ I_{\kappa}$ is
conformal, the image $R_{\varrho} \circ I_{\kappa}(c) = c'$
satisfies the same premises with respect the Euclidean rays
$\varsigma(A\overrightarrow{\tau_1}), \,\,
\varsigma(A\overrightarrow{\rho_1}), \,\,
\varsigma(A\overrightarrow{\rho_2})$ and $
\varsigma(A\overrightarrow{\tau_2}).$ Therefore, the conclusions
of the Euclidean version of the current lemma apply to $c'$, and
thus the corresponding conclusions of the hyperbolic version apply
to $c$ and $\tilde{c}$.
\end{proof}

The next statement concerns geodesic triangles in $\PP$.

\begin{prop} \label{Prop_3_angles_triangle}
Let $\alpha, \beta$ and $\gamma \in (0, \pi)$. Then

\smallskip

\noindent {\bf 1.} $\alpha + \beta + \gamma = \pi$ if and only if
there exists a Euclidean triangle, unique up to Euclidean isometry
and scaling, with angles $\alpha, beta$ and $\gamma$;

\smallskip

\noindent {\bf 2.} $\alpha + \beta + \gamma < \pi$ if and only if
there exists a geodesic triangle in $\hyperbolicplane$, unique up
to hyperbolic isometry, with angles $\alpha, beta$ and $\gamma$.
\end{prop}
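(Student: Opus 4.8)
The plan is to treat the two cases separately, in each case first establishing necessity of the angle condition and then existence and uniqueness. For the \emph{Euclidean case}, necessity is the classical fact that the interior angles of a triangle sum to $\pi$, equivalent to the parallel postulate and proved by drawing through one vertex the line parallel to the opposite side. For existence, given $\alpha+\beta+\gamma=\pi$ with all three angles in $(0,\pi)$, I would fix an arbitrary segment $AB$ as base, erect at $A$ a ray making angle $\alpha$ with $AB$ and at $B$ a ray making angle $\beta$ with $BA$, both on the same side; since $\gamma>0$ forces $\alpha+\beta<\pi$, the two rays meet in a unique point $C$, and the angle there is $\pi-\alpha-\beta=\gamma$. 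Uniqueness up to isometry and scaling is exactly the AAA similarity criterion, which I would obtain by superimposing a shared vertex and its angle and noting that the two remaining sides are then parallel, hence proportional.

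For the \emph{hyperbolic case}, necessity comes from the Gauss--Bonnet theorem applied to a geodesic triangle in $\hyperbolicplane$ (constant curvature $-1$): its area equals $\pi-(\alpha+\beta+\gamma)$, and since a nondegenerate triangle has positive area, the angle sum is strictly less than $\pi$.

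Existence and uniqueness in $\hyperbolicplane$ form the substantive part. I would fix a vertex $A$ in the Poincar\'e disc and two geodesic rays from $A$ enclosing the prescribed angle $\alpha$. Moving a point $B$ out along the first ray at geodesic distance $t>0$, I draw from $B$ the geodesic ray making the prescribed angle $\beta$ with $BA$ on the side of the second ray, and record the angle $\gamma(t)$ produced at the point $C(t)$ where that ray meets the second ray from $A$. The key claims are that $\gamma(t)$ depends continuously and strictly monotonically on $t$; that as $t\to 0^+$ the triangle shrinks and the metric is locally Euclidean, so $\gamma(t)\to\pi-\alpha-\beta$; and that as $t$ increases to the critical value at which the ray from $B$ becomes asymptotically parallel to the second ray, $C(t)$ escapes to the ideal boundary and $\gamma(t)\to 0$. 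Granting these, the intermediate value theorem shows $\gamma(t)$ sweeps out the whole interval $(0,\pi-\alpha-\beta)$, so the hypothesis $\alpha+\beta+\gamma<\pi$ yields a position of $B$ realizing the desired third angle, while strict monotonicity makes this $B$, and hence the triangle, unique up to the hyperbolic isometry fixing the initial configuration.

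The hardest step will be establishing the monotonicity together with the two limiting regimes cleanly. Should a purely synthetic treatment prove awkward, an equivalent route using only elementary trigonometry is the dual (second) hyperbolic law of cosines, which expresses each side length through the three angles; one checks that the resulting value of $\cosh$ of a side exceeds $1$ precisely when $\alpha+\beta+\gamma<\pi$, giving existence, while the same formula determines all three sides from the angles and hence uniqueness via hyperbolic SSS congruence.
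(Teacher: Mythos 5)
Your proposal is correct, but it takes a genuinely different route from the paper. The paper dismisses the Euclidean case as classical and, for the hyperbolic case, gives a ruler-and-compass construction: starting from two rays $A\overrightarrow{r_1},A\overrightarrow{r_2}$ enclosing the angle $\alpha$, it invokes lemma \ref{Lem_two_rays_and_one_circle} (whose hypothesis $\alpha+\beta+\gamma<\pi$ is exactly where the angle condition enters) to build the auxiliary tangent rays $A\overrightarrow{t_1},A\overrightarrow{t_2}$ for the data $(\beta,\gamma)$, then takes the geodesic $\kappa_{12}$ through the ideal endpoints of those two rays; being tangent to both, $\kappa_{12}$ meets $A\overrightarrow{r_1}$ and $A\overrightarrow{r_2}$ at angles $\beta$ and $\gamma$, and the two intersection points are the remaining vertices, with uniqueness inherited from the uniqueness statement of that lemma. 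You instead fix $\alpha$ at $A$ and $\beta$ at a sliding vertex $B(t)$ and run a continuity/monotonicity argument on the third angle $\gamma(t)$, with Gauss--Bonnet supplying both the necessity of $\alpha+\beta+\gamma<\pi$ and (via nested triangles and monotone area) the strict monotonicity; this is the classical Legendre--Lobachevsky-style proof, more self-contained and independent of the auxiliary-rays machinery, but non-constructive, whereas the paper's version is explicitly compass-and-straightedge (a feature the author emphasizes for applications) and reuses a lemma that reappears in the proof of proposition \ref{Prop_angles_to_dec_triangle}. Two small points on your side: the limiting claims ($\gamma(t)\to\pi-\alpha-\beta$ as $t\to 0^+$, and $\gamma(t)\to 0$ at the critical parameter where the ray from $B$ becomes asymptotic to the second ray) do require the short Gauss--Bonnet and angle-of-parallelism arguments you allude to, so they should be written out; and in your alternative trigonometric route the condition ``$\cosh$ of a side exceeds $1$'' characterizes $\alpha+\beta+\gamma<\pi$ only if it is imposed on all three sides (for a single side it can hold spuriously, e.g.\ $\alpha=\beta=3\pi/4$, $\gamma=\pi/4$ gives $\cosh c=1+\sqrt{2}>1$ for the side opposite $\gamma$, while the other two formulas then return values below $1$), after which hyperbolic SSS and the first law of cosines complete existence and uniqueness as you say.
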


\begin{proof}
The Euclidean case is a standard elementary result from classical
planar geometry. That is why we focus on the hyperbolic case.

Let $\hyperbolicplane$ be the Poincar\'e disc model with
$\partial\hyperbolicplane$ being the unit circle, which is the
boundary at infinity of $\hyperbolicplane$. Let $A$ be an
arbitrary point in $\hyperbolicplane$. Draw two hyperbolic rays
$A\overrightarrow{r_1}$ and $A\overrightarrow{r_2}$, both starting
from $A$, so that the angle between them equals $\alpha$.
By applying the hyperbolic version of lemma
\ref{Lem_two_rays_and_one_circle} point 1, construct the auxiliary
hyperbolic rays $A\overrightarrow{t_1}$ and
$A\overrightarrow{t_2}$ for the angles $\beta$ and $\gamma$.
Denote by $\infty_1$ and $\infty_2$ the ideal points of
$A\overrightarrow{t_1}$ and $A\overrightarrow{t_2}$ respectively.
Draw the unique Euclidean circle $\kappa_{12}$ that passes through
$\infty_1$ and $\infty_2$, and is orthogonal to
$\partial\hyperbolicplane$. Since $\kappa_{12}$ is tangent to both
$A\overrightarrow{t_1}$ and $A\overrightarrow{t_2}$, again by
point 1 of lemma \ref{Lem_two_rays_and_one_circle}, its angles
with $A\overrightarrow{r_1}$ and $A\overrightarrow{r_2}$ are
$\beta$ and $\gamma$. Therefore if $B \in \hyperbolicplane$ is the
intersection point of $\kappa_{12}$ and $A\overrightarrow{r_1}$,
and $C \in \hyperbolicplane$ is the intersection point of
$\kappa_{12}$ and $A\overrightarrow{r_2}$, then the hyperbolic
triangle $\triangle ABC$ has angles $\alpha, \beta$ and $\gamma$
at the vertices $A, B$ and $C$ respectively. By construction
$\triangle ABC$ is unique up to a hyperbolic isometry.
\end{proof}

Next, we focus on decorated triangles. Let $\Delta=ijk$ be a
decorated triangle in $\PP$, with vertex circles $c_i, c_j, c_k$
(some of which could be points) and a face circle $c_{\Delta}$
(see decorated triangle $\Delta=ijk$ on figure \ref{Fig2}a). From
now on, let $V_{\Delta} = \{i,j,k\}$ be the set of vertices and
$E_{\Delta}=\{ij, jk, ki\}$ be the set of edges of $\Delta$. Let
$u \neq v \neq w \in V_{\Delta}$ be some permutation of the
vertices $i, j$ and $k$. In relation to definition
\ref{Def_angle_between_2_face_circles}, denote by $P_{u}^{uv}$ and
$P_{v}^{uv}$ the two intersection points of the face circle
$c_{\Delta}$ with the edge $uv$ of $\Delta$. Point $P_{u}^{uv}$ is
the closer one to vertex $u$, while $P_{v}^{uv}$ is the closer one
to vertex $v$.


Define $\alpha^{\Delta}_{uv} \in (0,\pi)$ to be the angle at
$P_{u}^{uv}$ (or equivalently at $P_{v}^{uv}$) between the
geodesic segment $uv$ and the face-circle $c_{\Delta}$ measured
inside $c_{\Delta}$ and outside the (undecorated) triangle
$\triangle ijk$ (see figure \ref{Fig2}a). The following statement
follows directly from definition
\ref{Def_angle_between_2_face_circles} and corollary
\ref{Cor_two_adjacent_triangles}.

\begin{prop} \label{Prop_alpha_alpha_equals_theta}
Let $\Delta$ and $\Delta'$ be two compatibly adjacent decorated
triangles in $\PP$, sharing a common edge $ij$. Then $\theta_{ij}
= \alpha^{\Delta}_{ij} + \alpha^{\Delta'}_{ij}$.
\end{prop}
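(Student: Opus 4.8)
The plan is to reduce the identity to a single local computation at one of the two common intersection points and then to read the decomposition of the angle straight off the definitions. First I would invoke Corollary \ref{Cor_two_adjacent_triangles}: the two face circles $c_{\Delta}$ and $c_{\Delta'}$ meet exactly at the points $P_i^{ij}$ and $P_j^{ij}$, both lying on the shared geodesic edge $ij$. Since $\Delta$ and $\Delta'$ are non-overlapping and share only the edge $ij$, their third vertices lie on opposite sides of the geodesic carrying $ij$; hence the two triangles occupy opposite half-planes (half-discs, in the hyperbolic case) determined by that geodesic, and in particular the centres of $c_{\Delta}$ and $c_{\Delta'}$ sit on opposite sides of $ij$. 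Because the models of $\hyperbolicplane$ used here are conformal, every angle between the curves involved is the Euclidean angle between their tangent lines at the relevant point, so the hyperbolic case follows from the very same local picture as the Euclidean one. I would therefore argue once, working entirely with tangent directions at $P_i^{ij}$.

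Next I would identify the two arcs. By the defining property in Definition \ref{Def_angle_between_2_face_circles}, $c_{\Delta}(ij)$ is the sub-arc of $c_{\Delta}$ whose interior avoids the edges of $\Delta$; as the centre of $c_{\Delta}$ lies on the $\Delta$-side of $ij$, this is precisely the arc that bulges across $ij$ into the half-plane containing $\Delta'$. Symmetrically, $c_{\Delta'}(ij)$ bulges into the half-plane containing $\Delta$. Thus the two arcs bow to opposite sides of the segment $P_i^{ij}P_j^{ij}$ and together enclose a lens-shaped region through whose interior the edge $ij$ passes as a chord. The intersection angle $\theta_{ij}$ is, by definition, the interior angle of this lens at $P_i^{ij}$ (equivalently at $P_j^{ij}$).

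The key step is then the additivity of adjacent angles across the chord $ij$. On the side of $ij$ away from the third vertex $k$ of $\Delta$, i.e.\ inside the disc of $c_{\Delta}$ and outside $\triangle ijk$, the angle between the tangent to $ij$ and the tangent to $c_{\Delta}$ at $P_i^{ij}$ is exactly $\alpha^{\Delta}_{ij}$, by the definition of that quantity. On the opposite side of $ij$, inside the disc of $c_{\Delta'}$ and outside $\Delta'$, the corresponding angle is $\alpha^{\Delta'}_{ij}$. Since these two angular sectors lie on opposite sides of the chord and together fill the interior lens angle at $P_i^{ij}$, their sum equals $\theta_{ij}$, giving $\theta_{ij} = \alpha^{\Delta}_{ij} + \alpha^{\Delta'}_{ij}$.

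The step I expect to require the most care is confirming that the sector measuring $\alpha^{\Delta}_{ij}$ (inside $c_{\Delta}$, outside $\Delta$) genuinely coincides with the half of the lens lying on the non-$k$ side of $ij$ --- in particular that $\theta_{ij}$ is measured in the bounded lens region and not in its complement, and that no angle is double-counted or omitted when the two sectors are joined across the chord. This reduces to pinning down the sides consistently, for which the observation that the two face-circle centres lie on opposite sides of $ij$ is decisive. Once the sides are fixed, the elementary additivity of the two adjacent sectors finishes the Euclidean case, and the conformality remark extends the conclusion verbatim to $\PP = \hyperbolicplane$.
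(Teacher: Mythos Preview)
Your approach is correct and matches the paper's, which gives no detailed argument but simply states that the proposition follows directly from Definition~\ref{Def_angle_between_2_face_circles} and Corollary~\ref{Cor_two_adjacent_triangles}; you have merely filled in the local angle-additivity picture that the paper leaves implicit.

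One small caveat: you justify the identification of the arc $c_{\Delta}(ij)$ by asserting that ``the centre of $c_{\Delta}$ lies on the $\Delta$-side of $ij$,'' but this need not hold---e.g.\ when the vertex radii are zero and $\triangle ijk$ is obtuse at $k$, the face circle is the circumcircle and its centre lies on the side of $ij$ opposite $k$. The identification of the arc does not require this, however: the arc of $c_{\Delta}$ lying on the $k$-side of $ij$ necessarily contains the intersection points of $c_{\Delta}$ with the edges $jk$ and $ki$, so the arc whose interior avoids all edges of $\Delta$ is automatically the one on the $\Delta'$-side. With this replacement your additivity argument at $P_i^{ij}$ goes through unchanged.
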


\noindent Furthermore, let $\beta^{\Delta}_w = \measuredangle
uwv$, i.e. the angle of $\Delta$ at the vertex $w \in V_{\Delta}$.
Consequently, we conclude that a decorated triangle $\Delta=ijk$
in $\PP$ determines two groups of three angles each
$$(\alpha^{\Delta},\beta^{\Delta}) = (\alpha^{\Delta}_{ij},
\alpha^{\Delta}_{jk}, \alpha^{\Delta}_{ki}, \beta^{\Delta}_{k},
\beta^{\Delta}_{i}, \beta^{\Delta}_{j})$$ satisfying the
inequalities
\begin{align} 
&0 \leq \alpha^{\Delta}_{uv} < \pi \,\,\,\,\, \text{for} \,\,\, uv
\in \{ij,jk,ki\},  \,\,\,\,\,\,\,\,\,\,\, 0 < \beta^{\Delta}_w <
\pi \,\,\,\,\,\,
\text{for} \,\, w \in \{i,j,k\}\label{Formula_Inequalities_bounded_angles}\\
          &\beta^{\Delta}_k + \alpha^{\Delta}_{jk}
          + \alpha^{\Delta}_{ki} \leq \pi, \,\,\,\,\,\,
\alpha^{\Delta}_{ij} + \beta^{\Delta}_{i}
          + \alpha^{\Delta}_{ki} \leq \pi, \,\,\,\,\,\,
\alpha^{\Delta}_{ij} + \alpha^{\Delta}_{jk} + \beta^{\Delta}_j
\leq \pi, \label{Formula_Inequalities_between_angles_alpha_beta}
\end{align}
as well as the restriction
\begin{align} 
&\beta^{\Delta}_k + \beta^{\Delta}_i + \beta^{\Delta}_j = \pi
\,\,\,\,\, \text{whenever} \,\,\, \PP=\Euclideanplane \,\,\,\,
\text{or}   \label{Formula_angles_euclidean_triangle} \\
&\beta^{\Delta}_k + \beta^{\Delta}_i + \beta^{\Delta}_j < \pi
\,\,\,\,\, \text{whenever} \,\,\, \PP=\hyperbolicplane.
\label{Formula_angles_hyperbolic_triangle}
\end{align}
We call the six angles $(\alpha^{\Delta}, \beta^{\Delta})$ the
\emph{angles of the decorated triangle} $\Delta$. On figure
\ref{Fig2}a they are included in the labels of the triangular face
$\Delta=ijk$. Next, define $\Alocaldelta$ to be the set of all six
real numbers $(\alpha^{\Delta}, \beta^{\Delta})$ which satisfy
conditions (\ref{Formula_Inequalities_bounded_angles}),
(\ref{Formula_Inequalities_between_angles_alpha_beta}) and either
(\ref{Formula_angles_euclidean_triangle}), when
$\PP=\Euclideanplane$, or
(\ref{Formula_angles_hyperbolic_triangle}) when
$\PP=\hyperbolicplane$. Notice that for a decorated triangles it
is possible that some of its vertex circles are collapsed to
points or some pairs of vertex circles are tangent. Then in the
case of collapsed vertex circles the corresponding inequalities
from (\ref{Formula_Inequalities_between_angles_alpha_beta}) become
identities, and in the case of a tangency the corresponding
$\alpha^{\Delta}_{uv}$ becomes $0$. Clearly, the angles of a
decorated triangle belong to the set $\Alocaldelta$. The converse
is also true.

\begin{prop} \label{Prop_angles_to_dec_triangle}
Let $(\alpha^{\Delta}_{ij}, \alpha^{\Delta}_{jk},
\alpha^{\Delta}_{ki}, \beta^{\Delta}_{k}, \beta^{\Delta}_{i},
\beta^{\Delta}_{j}) \in \Alocaldelta$. Then, these six angles
determine a decorated triangle $\Delta=ijk$ in $\PP$. 
Furthermore, if $\PP=\hyperbolicplane$, then $\Delta$ is unique up
to hyperbolic isometry. If $\PP=\Euclideanplane$, then $\Delta$ is
unique up to Euclidean isometry and scaling. Conversely, the six
angles of a decorated triangle belong to $\Alocaldelta.$
\end{prop}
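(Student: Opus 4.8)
The plan is to separate the construction of the underlying geodesic triangle from the construction of its decoration, using Lemma~\ref{Lem_two_rays_and_one_circle} as the main tool for the latter. First I would build the undecorated triangle $\triangle ijk$ directly from the three angles $\beta^{\Delta}_i,\beta^{\Delta}_j,\beta^{\Delta}_k$. By hypothesis these lie in $(0,\pi)$ and satisfy either $\beta^{\Delta}_i+\beta^{\Delta}_j+\beta^{\Delta}_k=\pi$ or this sum is $<\pi$, so Proposition~\ref{Prop_3_angles_triangle} produces such a triangle: a Euclidean one (unique up to isometry and scaling) when (\ref{Formula_angles_euclidean_triangle}) holds, and a hyperbolic one (unique up to isometry) when (\ref{Formula_angles_hyperbolic_triangle}) holds. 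This already fixes $\PP$, the vertices $i,j,k$, their edges, and the numbers $\beta^{\Delta}_w$.

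Next I would produce the face circle $c_{\Delta}$ as the circle in $\PP$ whose intersection angles with the three edge lines $ij$, $jk$, $ki$ equal $\alpha^{\Delta}_{ij}$, $\alpha^{\Delta}_{jk}$, $\alpha^{\Delta}_{ki}$, measured inside $c_{\Delta}$ and outside $\triangle ijk$ as in the definition of $\alpha^{\Delta}_{uv}$. Assuming for the moment that all inequalities in (\ref{Formula_Inequalities_between_angles_alpha_beta}) are strict and all $\alpha^{\Delta}_{uv}>0$, I would invoke $\beta^{\Delta}_k+\alpha^{\Delta}_{jk}+\alpha^{\Delta}_{ki}<\pi$ and apply Lemma~\ref{Lem_two_rays_and_one_circle} at the vertex $k$ (with $A=k$, rays along $ki,kj$, and $\alpha_0=\beta^{\Delta}_k$). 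This yields rays $t_1,t_2$ from $k$ such that a circle meets $ki,kj$ at the angles $\alpha^{\Delta}_{ki},\alpha^{\Delta}_{jk}$ exactly when it is tangent to both $t_1$ and $t_2$; such circles form a one-parameter family centered on the bisector of $\measuredangle t_1kt_2$ and parametrized by radius (the center running from the apex out to infinity, in $\hyperbolicplane$ possibly producing a geodesic or hyper-circle, as the lemma permits). Within this family I would select the member whose intersection angle with the remaining edge line $ij$ equals $\alpha^{\Delta}_{ij}$. That circle realizes all three prescribed edge angles at once, so Lemma~\ref{Lem_two_rays_and_one_circle} need not be re-applied at $i,j$ and no separate angular ``closing-up'' condition arises.

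Finally, with $c_{\Delta}$ in hand I would recover the vertex circles: since each vertex lies outside $c_{\Delta}$, there is exactly one circle centered there and orthogonal to $c_{\Delta}$, its radius being fixed by the power of the vertex with respect to $c_{\Delta}$ (orthogonality, in the spirit of Proposition~\ref{Prop_radical_axis}). I would then check that this data is a genuine decorated triangle, i.e.\ that the three vertex circles have disjoint interiors, with tangency precisely when the corresponding $\alpha^{\Delta}_{uv}=0$. Uniqueness follows by reversing the argument: the triangle is determined up to isometry (and scaling in the Euclidean case) by $\beta^{\Delta}$, the face circle is the unique circle making the prescribed angles with the three edges on the correct sides, and the vertex circles are then forced by orthogonality. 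The converse inclusion, that the six angles of any decorated triangle satisfy (\ref{Formula_Inequalities_bounded_angles}), (\ref{Formula_Inequalities_between_angles_alpha_beta}) and (\ref{Formula_angles_euclidean_triangle})/(\ref{Formula_angles_hyperbolic_triangle}), is exactly the discussion preceding the statement, so those angles lie in $\Alocaldelta$.

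The main obstacle I anticipate is the selection step for $c_{\Delta}$: one must show that, as the radius runs through the family tangent to $t_1,t_2$, the intersection angle with the edge $ij$ varies monotonically and attains $\alpha^{\Delta}_{ij}$ at a circle of positive radius that genuinely crosses all three edges with every vertex strictly outside it. This is where the inequalities (\ref{Formula_Inequalities_between_angles_alpha_beta}) at $i$ and $j$ (not merely at $k$) must enter, guaranteeing both that $\alpha^{\Delta}_{ij}$ lies in the attainable range and that $i,j$ end up outside $c_{\Delta}$. The second delicate point is the boundary behaviour: the degenerate cases where some inequality in (\ref{Formula_Inequalities_between_angles_alpha_beta}) is an equality (a vertex circle collapses to a point and the relevant vertex lies on $c_{\Delta}$, so Lemma~\ref{Lem_two_rays_and_one_circle} no longer applies verbatim) or where some $\alpha^{\Delta}_{uv}=0$ (two vertex circles become tangent and $c_{\Delta}$ touches an edge), which I would handle by a continuity argument from the strict case or by a direct degenerate construction.
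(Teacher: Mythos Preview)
Your overall strategy---first build $\triangle ijk$ from the $\beta^{\Delta}$ via Proposition~\ref{Prop_3_angles_triangle}, then construct the face circle, then recover the vertex circles by orthogonality---matches the paper's hyperbolic argument. The difference lies in how the face circle is produced, and this is exactly where your flagged obstacle sits.

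You apply Lemma~\ref{Lem_two_rays_and_one_circle} at a \emph{single} vertex $k$, obtain a one-parameter family tangent to $t_1,t_2$, and then want to select the member hitting $ij$ at angle $\alpha^{\Delta}_{ij}$ by a monotonicity/intermediate-value argument. That argument is plausible but not immediate, and you would still need to verify that the selected circle meets the \emph{rays} $ij$ and $ji$ (not just the line) and leaves all three vertices outside. The paper sidesteps this entirely: it applies Lemma~\ref{Lem_two_rays_and_one_circle} at \emph{two} vertices, say $i$ and $j$, obtaining four auxiliary tangent rays. Three of the underlying circles (orthogonal to $\partial\hyperbolicplane$) are fed into Apollonius' problem, which yields a unique inscribed tangent circle $c_{\Delta}$ by a classical construction. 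Tangency to the three rays forces the correct angles $\alpha^{\Delta}_{ij},\alpha^{\Delta}_{ki}$ at $i$; then part~2 of Lemma~\ref{Lem_two_rays_and_one_circle} at $j$ (tangency to one ray plus the already-established angle $\alpha^{\Delta}_{ij}$) forces tangency to the fourth ray and hence the remaining angle $\alpha^{\Delta}_{jk}$. So no monotone selection is needed: Apollonius gives existence and uniqueness simultaneously, and part~2 of the lemma closes up the construction.

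Note also that the paper's \emph{Euclidean} argument is quite different from both your approach and its own hyperbolic one: it starts not from $\triangle ijk$ but from an auxiliary triangle $\triangle IJK$ with angles $\beta^{\Delta}$ and takes its \emph{circumscribed} circle as $c_{\Delta}$; the actual vertices $i,j,k$ are then located by marking off arcs of angular measure $2\alpha^{\Delta}_{uv}$ on $c_{\Delta}$ and intersecting the resulting chords. Your uniform treatment of both geometries is conceptually cleaner, but the price is the unresolved selection step.
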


\begin{proof}
As discussed above, the set $\Alocaldelta$ is defined so that the
six angles $(\alpha^{\Delta},\beta^{\Delta})$ of any decorated
triangle satisfy the defining conditions of $\Alocaldelta$. That
is way we focus on the proof of the converse statement.

\smallskip
\noindent\emph{Euclidean case.} The Euclidean case is the simpler
one. Here is a compass and straightedge construction. On the plane
$\Euclideanplane$, draw a triangle $\triangle IJK$ with interior
angles $\beta^{\Delta}_i, \beta^{\Delta}_j$ and $\beta^{\Delta}_k$
at the vertices $I, J$ and $K$ respectively. Observe that
$\triangle IJK$ is unique up to similarity. Let $c_{\Delta}$ be
its superscribed circle, where $O_{\Delta}$ is its center. Let
$M_k, M_i$ and $M_j$ be the midpoints of edges $IJ, JK$ and $KI$
respectively. Let $N_k$ be the intersection point of the circle
$c_{\Delta}$ with the line $O_{\Delta}M_k$ (which is orthogonal to
$IJ$), so that $N_k$ and the vertex $K$ are on different sides of
the line $IJ$. Analogously, construct the points $N_i$ and $N_j$.
Take two points $P^{ij}_i$ and $P^{ij}_j$ on $c_{\Delta}$ such
that $\measuredangle N_kO_{\Delta}P^{ij}_i= \measuredangle
N_kO_{\Delta}P^{ij}_j = \alpha^{\Delta}_{ij}$. Analogously,
construct the points $P^{jk}_j$ and $P^{jk}_k$ using angle
$\alpha^{\Delta}_{jk}$, as well as the points $P^{ki}_k$ and
$P^{ki}_i$ via angle $\alpha^{\Delta}_{ki}$. Let the line
$P^{ij}_iP^{ij}_j$ intersects the lines $P^{ij}_jP^{ij}_k$ and
$P^{ij}_kP^{ij}_i$ at the points $i$ and $j$ respectively. Let $k$
be the intersection of lines $P^{jk}_jP^{jk}_k$ and
$P^{ki}_kP^{ki}_i$. We obtain the triangle $\triangle ijk$. Draw
the circles $c_i, c_j$ and $c_k$ centered at the vertices $i, j$
and $k$ respectively so that each of them is orthogonal to
$c_{\Delta}$. Thus, we have constructed the desired decorated
triangle. By construction, it is unique up to similarity.

\smallskip

\noindent \emph{Hyperbolic case.} With the help of proposition
\ref{Prop_3_angles_triangle}, construct a hyperbolic triangle
$\triangle ijk$ in $\hyperbolicplane$ with angles
$\beta^{\Delta}_{i}, \beta^{\Delta}_{j}, \beta^{\Delta}_{k}$ at
the vertices $i, j, k$ respectively. $\triangle ijk$ is unique up
to $\hyperbolicplane-$isometry. Then apply lemma
\ref{Lem_two_rays_and_one_circle} point 1 to construct the
auxiliary hyperbolic rays $i\overrightarrow{t_j}$ and
$i\overrightarrow{t_k}$ playing the role of
$A\overrightarrow{t_{1}}$ and $A\overrightarrow{t_2}$ for the pair
of rays $\overrightarrow{ij}$ and $\overrightarrow{ik}$ with
respective angles $\alpha^{\Delta}_{ij}$ and
$\alpha^{\Delta}_{ki}$. Analogously, construct the auxiliary rays
$j\overrightarrow{\tau_k}$ and $j\overrightarrow{\tau_i}$ playing
the role of $A\overrightarrow{t_{1}}$ and $A\overrightarrow{t_2}$
for the pair of rays $\overrightarrow{jk}$ and
$\overrightarrow{ji}$ with respective angles
$\alpha^{\Delta}_{jk}$ and $\alpha^{\Delta}_{ij}$. In the
underlying Euclidean geometry, $i\overrightarrow{t_j}, \,
i\overrightarrow{t_k}$ and $j\overrightarrow{\tau_i}$ are three
directed circular arcs, determining three respective circles
$\kappa^{t}_j, \, \kappa^{t}_k$ and $\kappa^{\tau}_i$ orthogonal
to $\partial\hyperbolicplane$. By the famous Apollonius' problem,
there exists a unique Euclidean circle $\tilde{c}_{\Delta}$
tangent to the three circles $\kappa^{t}_j, \, \kappa^{t}_k$ and
$\kappa^{\tau}_i$, while contained in the domain they cut out
containing $\triangle ijk$. Moreover, $\tilde{c}_{\Delta}$ is
ruler and compass constructible. Let us go back to the geometry of
$\hyperbolicplane$. Then $c_{\Delta} = \tilde{c}_{\Delta} \cap
\hyperbolicplane$ is a hyperbolic circle tangent to the hyperbolic
rays $i\overrightarrow{t_j}, \, i\overrightarrow{t_k}$ and
$j\overrightarrow{\tau_i}$. By point 1 of lemma
\ref{Lem_two_rays_and_one_circle}, the intersection angles of
$c_{\Delta}$ with $\overrightarrow{ij}$ and $\overrightarrow{ik}$
are $\alpha^{\Delta}_{ij}$ and $\alpha^{\Delta}_{ki}$.
Consequently, since $c_{\Delta}$ is tangent to
$j\overrightarrow{\tau_i}$ and its angle of intersection with
$\overrightarrow{ji}$ is $\alpha^{\Delta}_{ij}$, by point 2 of
lemma \ref{Lem_two_rays_and_one_circle} the circle $c_{\Delta}$ is
tangent to $j\overrightarrow{\tau_k}$ and its intersection angle
with $\overrightarrow{jk}$ is necessarily $\alpha^{\Delta}_{jk}.$
Thus, the intersection angles of $c_{\Delta}$ with the geodesic
edges $ij, jk$ and $ki$ of triangle $\triangle ijk$ are
$\alpha^{\Delta}_{ij}, \alpha^{\Delta}_{jk}$ and
$\alpha^{\Delta}_{ki}$ as required. Therefore $c_{\Delta}$ is the
face circle we have been looking for. Now, to finish the
construction, for each vertex $u$ of the triangle $\Delta$ we
simply draw the unique circle $c_u$ centered at $u$ and orthogonal
to $c_{\Delta}$. Since after fixing the hyperbolic triangle
$\triangle ijk$, the face circle $c_{\Delta}$ is constructed in a
unique way, the decorated triangle $\Delta=ijk$ is unique up to a
hyperbolic isometry and has prescribed angles $(\alpha^{\Delta},
\beta^{\Delta}) \in \Alocaldelta$.
\end{proof}

Before we continue, we make the following assumption. Let
$\Delta=ijk$ be a topological triangle with $V_{\Delta} =
\{i,j,k\}$ and $E_{\Delta} = \{ij,jk,ki\}$. By proposition
\ref{Prop_angles_to_dec_triangle}, an assignment of six angles
from $\Alocaldelta$ turns $\Delta$ into a unique decorated
triangle. From now on, we assume that any topological triangle
$\Delta$ comes with a priori prescribed (combinatorial) data, in
the form of a partition of its set of edges $E_{\Delta} =
E^1_{\Delta} \sqcup E^0_{\Delta}$ and a partition of its set of
vertices $V_{\Delta} = V^1_{\Delta} \sqcup V^0_{\Delta}$. These
partitions tell us that whenever we realize $\Delta$
geometrically, we always have to make sure that only the vertices
from $V_{\Delta}^0$ necessarily have vertex circles collapsed to
points, and that exactly the edges from $E^0_{\Delta}$ correspond
to pairs of touching vertex circles. Then, depending on this data,
conditions (2) and (3) defining $\ADelta = \Alocaldelta$ may
include both strict inequalities and equalities but never
non-strict inequalities.






\section{The space of generalized circle patterns} \label{Sec_Generalized_space_of_patterns}


Let $S$ be a fixed compact surface with a cell complex
$\cellcomplex = (V, E, F)$ on it. Let us first subdivide
$\cellcomplex$ until we obtain a topological triangulation of $S$.
Define $\Triang = (V, E_T, F_T)$ by subdividing the faces of
$\cellcomplex$ via diagonals so that no two diagonals intersect
except possibly at one common vertex (see figure \ref{Fig3}b).
More precisely, let $f \in F$ be a face of $\cellcomplex$ with
vertices $i_1,..., i_n$. The subscripts represent the cyclic order
of the vertices, i.e. $f=i_1i_2...i_n$ so that each $i_si_{s+1}$
is an edge of $\cellcomplex$, i.e. $i_si_{s+1} \in E$ for
$i=1,..,n$ with $n+1 = 1$. Then one way to subdivide $f$ is to
introduce the new edges $i_1i_3, \, i_1i_4,\, ..., \, i_1i_{n-1}$
and put them into the set of new edges $E_{\pi}$. Thus $f$ gets
subdivided into the topological triangles $\Delta=i_1i_si_{s+1}
\in F_T$ for $s=2,.., n-1$. Observe that no new vertices are
introduced. As a result of this procedure we obtain the desired
triangulation $\Triang = (V, E_T, F_T)$, where
 $E_T = E \cup E_{\pi} = E_0 \cup E_1 \cup E_{\pi}$. On figure
 \ref{Fig3}b the edges of $\cellcomplex$ are in solid black, while
 the new edges of $\Triang$ (the ones from $E_{\pi}$) are dashed.

In order to understand better the space of hyper-ideal circle
patterns with combinatorics $\cellcomplex$, first we would like to
introduce the more general space of generalized hyper-ideal circle
patterns with combinatorics $\Triang$, whose edges may not
necessarily satisfy the local Delaunay property from definition
\ref{Def_local_Del_property}. Later, the space we are interested
in will turn out to be a submanifold embedded in the generalized
space.

To define a generalized hyper-ideal circle pattern, one can simply
assign appropriate edge-lengths $l : E_T \to (0,\infty)$ to the
edges of $\Triang$ and radii $r : V \to (0,\infty)$ to its
vertices. The assignment of edge-lengths $l$ represents the
underlying cone-metric $d$ on $S$ by associating to each
triangular face of $\Triang$ an actual geometric triangle, unique
up to isometry. Moreover, $l$ represents not only $d$ but, in
fact, the whole class of \emph{marked} cone-metrics isometric to
$d$. We say marked because, by fixing $\Triang$, the isometry
class of $d$ is defined via all isometries between cone-metrics
which preserve the combinatorics of $\Triang$. Hence, these are
the isometries isotopic to identity on $S$. This last observation
explains the expression ``isotopic to identity" in the statements
of theorem \ref{Thm_main}. Furthermore, the assignment of radii
$r$ makes each triangle $\Delta$, geometrized by $l$, into a
decorated triangle by making it possible to draw the three vertex
circles of $\Delta$ and then uniquely determine the orthogonal
face circle.

One way to define the space of generalized hyper-ideal circle
patterns is presented next. An edge-length and radius assignment
$(l,r) \in \reals^{E_1 \cup E_{\pi}} \times \reals^{V_1}$ belongs
to $\ER$ exactly when

\medskip

\begin{itemize}

\item $l_{ij} > 0$ for all $ij \, \in \, E_1 \cup E_{\pi} \,$ and
 $\, r_k > 0$ for all $k \in V_1$;

\medskip

\item Let $r_k = 0$ for all $k \in V_0 \,$ and $\, l_{ij} = r_i +
r_j$ for all $ij \in E_0$. Notice $l_{ij} > 0$ since $i, j \in
V_1$ by assumption;

\medskip

\item $l_{ij} > r_i + r_j$ for all $ij \in E_1 \cup E_{\pi}$;

\medskip

\item $l_{ij} < l_{jk} + l_{ki}, \,\,\,\,  l_{jk} < l_{ki} +
l_{ij}, \,\,\,\, l_{ki} < l_{ij} + l_{jk}$ for all $\Delta=ijk \in
F_T$.

\end{itemize}

\medskip

\noindent As usual, we are going to assume that $\reals^{E_1 \cup
E_{\pi}} \times \reals^{V_1}$ is a vector subspace of
$\reals^{E_T}\times\reals^{V}$ by assuming that any $(l, r) \in
\reals^{E_1 \cup E_{\pi}} \times \reals^{V_1}$ is extended to a
point in $\reals^{E_T}\times\reals^{V}$ by letting $r_k=0$ for all
$k \in V \setminus V_1 = V_0$, and $l_{ij} = r_i + r_j$ for all
$ij \in  E_0$.

By definition, the space $\ER$ is clearly an open convex polytope
of $\reals^{E_1 \cup E_{\pi}} \times \reals^{V_1}$ and hence a
convex polytope of $\reals^{E_T} \times \reals^{V}$ of dimension
\begin{equation} \label{Eqn_LR_dimension}
\dim \ER = \dim \Bigl( \reals^{E_1 \cup E_{\pi}} \times
\reals^{V_1} \Bigr) = |E_1| + |E_{\pi}| + |V_1|.
\end{equation}
In the Euclidean case, there are scaling transformations of a
circle pattern which do not exist in the hyperbolic case. In other
words, one can zoom in and out a circle pattern but that does not
change its essential geometry. In terms of edge-lengths and radii,
such a geometric scaling is equivalent to the $\reals-$action
$(l,r) \mapsto (e^t l, e^t r)$ on $\ER$ for $t \in \reals$. To
account for scaling, in the Euclidean case we will work with the
space
\begin{equation*}
\ER_1 = \Bigl\{ (l,r) \in \ER \,\, \Big| \,\, \sum_{ij \in E_T}
l_{ij} = 1 \Bigr\}, \,\,\,\,\,  \dim \ER_1 = |E_1| + |E_{\pi}| +
|V_1| -1.
\end{equation*}
In the hyperbolic case, we simply set $\ER_1 = \ER$, so that we
can use $\ER_1$ as a common notation for both Euclidean and
hyperbolic patterns. For $\Delta=ijk \in F_T$ let $V_{\Delta}=\{i,
j, k\}$ and $E_{\Delta}=\{ij, jk, ki\}$. Furthermore, let
$E^1_{\Delta} = E_{\Delta} \cap \big(E_1 \cup E_{\pi}\big)$ and
$E^0_{\Delta} = E_{\Delta} \setminus E^1_{\Delta}$. Together with
that, let $V^1_{\Delta} = V_{\Delta} \cap V_1$ and $V^0_{\Delta} =
V_{\Delta} \setminus V^1_{\Delta}$. We also use the notation
$(l,r)_{\Delta} = \bigl(l_{ij}, l_{jk}, l_{ki}, r_k, r_i,
r_j\bigr)$. So the space $\ER$ for just one triangle is denoted by
$\ERD$. More precisely, $\bigl(l_{ij}, l_{jk}, l_{ki}, r_k, r_i,
r_j\bigr) \in \ERD$ exactly when

\medskip

\begin{itemize}

\item $l_{ij} > 0$ for all $ij \, \in \, E_{\Delta}$ and
 $\, r_k > 0$ for all $k \in V_{\Delta}\cap V_1$;

\medskip

\item $r_k = 0$ for all $k \in V_{\Delta} \cap V_0 \,$ and $\,
l_{ij} = r_i + r_j$ for all $ij \in E_{\Delta} \cap E_0$.

\medskip

\item $l_{ij} > r_i + r_j$ for all $ij \in E_{\Delta} \cap (E_1
\cup E_{\pi})$;

\medskip

\item $l_{ij} < l_{jk} + l_{ki}, \,\,\,\,  l_{jk} < l_{ki} +
l_{ij}, \,\,\,\, l_{ki} < l_{ij} + l_{jk}$.

\end{itemize}

\medskip

\noindent Just like before, $\reals$ acts on $\ERD$ by
$(l,r)_{\Delta} \mapsto (e^t l, e^t r)_{\Delta}$ for $t \in
\reals$. To factor out this action, we could similarly consider
the space $$\ER_{1, \Delta} = \Big\{ \, (l,r)_{\Delta} \in \ERD
\,\,  \big{|} \,\,  \sum_{ij \in E_{\Delta}} l_{ij} = 1 \Big\}.$$
\begin{prop} \label{Prop_corr_triangles_ER}
The lengths of the three edges and the radii of the three vertex
circles of a decorated triangle in $\PP$ belong to the set $\ERD$.
Conversely, any six numbers $(l,r)$ from the set $\ERD$ determine
a decorated triangle in $\PP$ uniquely up to isometry.
\end{prop}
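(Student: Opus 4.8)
The plan is to read this proposition as a second parametrization of decorated triangles, by side lengths and vertex radii, complementing the angular parametrization already established in Proposition \ref{Prop_angles_to_dec_triangle}. The forward direction is the routine one. Starting from a decorated triangle $\Delta=ijk$ in $\PP$, its underlying geodesic triangle is nondegenerate, so the three side lengths $l_{ij},l_{jk},l_{ki}$ are positive and obey the three strict triangle inequalities. The defining requirement that the vertex circles have disjoint interiors translates directly into $l_{uv}\ge r_u+r_v$ for every edge $uv$, with equality precisely along the tangent edges — exactly the edges of $E^0_{\Delta}$ by the prescribed combinatorial data — and with $r_w=0$ precisely at the point-vertices, i.e.\ the vertices of $V^0_{\Delta}$. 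Together with the triangle inequalities these are exactly the relations cutting out $\ERD$, so the six numbers lie in $\ERD$.

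For the converse I would reconstruct the decorated triangle from $(l,r)\in\ERD$ in three steps. First, the strict triangle inequalities furnish a geodesic triangle $\triangle ijk$ in $\PP$ with the prescribed side lengths: the (Euclidean or hyperbolic) law of cosines produces three interior angles in $(0,\pi)$ whose sum is $\pi$ in the flat case and strictly less than $\pi$ in the hyperbolic case, consistent with Proposition \ref{Prop_3_angles_triangle}, and the triangle is then determined up to isometry of $\PP$ by its side lengths. Second, I draw at each vertex $w$ the circle $c_w$ of radius $r_w$; the inequalities $l_{uv}\ge r_u+r_v$ say precisely that each pair of centers is at distance at least the sum of the corresponding radii, so the vertex circles have disjoint interiors, are tangent exactly along the $E^0_{\Delta}$ edges, and shrink to points exactly at the $V^0_{\Delta}$ vertices. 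Third, I construct the face circle as the unique circle orthogonal to $c_i,c_j,c_k$, and a vertex circle of radius zero is simply required to be passed through.

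The substantive point — and the one I expect to be the main obstacle — is the existence and uniqueness of this orthogonal face circle. Uniqueness is clean: the center of any circle orthogonal to two of the vertex circles is confined to their radical axis (the locus of Proposition \ref{Prop_radical_axis}), so the center of a common orthogonal circle must be the radical center, a single point, after which the radius is forced. For existence I would pass to the underlying Euclidean geometry, in the hyperbolic case via the conformal disk model in which hyperbolic circles are intersections of Euclidean circles with $\hyperbolicplane$ and orthogonality is preserved, and show that the common power $p$ of the radical center relative to the three vertex circles is positive, so that $\sqrt{p}$ is a genuine radius. The key observation is that a point of negative power with respect to a circle lies in its interior; since the radical center has equal power with respect to all three vertex circles, a negative value would place it in the intersection of all three interiors, contradicting their disjointness. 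Hence $p\ge 0$, and the nondegeneracy of the triangle excludes the degenerate case $p=0$ (which would force all three circles through one point), giving $p>0$. In the hyperbolic case, intersecting the resulting Euclidean orthogonal circle with $\hyperbolicplane$ yields the face circle, which may be a regular circle, a horocycle, a hyper-circle, or a geodesic — exactly the generalized notion of face circle anticipated in Section \ref{Sec_basic_geometry}. Assembling the triangle, the vertex circles, and this face circle produces a decorated triangle realizing $(l,r)$, and since each ingredient was determined uniquely it is unique up to isometry of $\PP$.
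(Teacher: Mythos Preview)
Your proof is correct and follows essentially the same route as the paper: verify the defining inequalities of $\ERD$ in the forward direction, and for the converse build the geodesic triangle from the side lengths, draw the vertex circles, then produce the unique orthogonal face circle. The only difference is one of detail: the paper simply invokes the fact, stated already in Section~\ref{Sec_def_and_notations}, that three circles with disjoint interiors admit a unique common orthogonal circle, whereas you supply the standard radical-center/power argument for it. That extra justification is sound (in particular your handling of the $p=0$ case: a common point on all three boundaries forces mutual tangency there, hence collinear centers, contradicting nondegeneracy of the triangle).
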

\begin{proof}
From the discussion above, the set $\ERD$ is defined so that the
six-tuple of its edge-lengths and vertex circle radii always
belong to $\ERD$. Conversely, let $(l,r) \in \ERD$. Since the
three positive numbers $l_{ij},l_{jk},l_{ki}$ satisfy the three
triangle inequalities, one can draw in $\PP$ a unique up to
isometry triangle with these numbers as edge-lengths. After that
one can use $r_i,r_j,r_k$ to draw the three vertex circles. The
restrictions imposed on $(l,r)$ guarantee that the interiors of
the circles do not intersect (but may touch). Finally, there is a
unique forth circle (the face circle) orthogonal to the three
vertex circles. This construction works even if some radii are
zero. \end{proof} Although quite simple and natural, the
description of circle patterns in terms of edge-lengths and radii
is not suitable for our purposes. It will mostly paly an
intermediate role. In order to motivate the right parametrization
for the space of circle patterns, we use hyperbolic geometry.

\section{Decorated triangles and hyper-ideal tetrahedra} \label{Sec_hyperbolic_tetrahedra}

Our next step is to establish a natural correspondence between
decorated triangles in $\PP$ and hyper-ideal tetrahedra in
$\hyperbolicspace$. 

\smallskip

\noindent {\bf Construction \ref{Sec_hyperbolic_tetrahedra}.1.}
Let $\Hor$ be a fixed horosphere in $\hyperbolicspace$. Then
$\Hor$ with the hyperbolic metric restricted on it is isometric to
the Euclidean plane $\Euclideanplane$. Define the projection
$E_{proj} : \Hor \to
\partial\hyperbolicspace$ by following down to the ideal boundary
the geodesics emanating from the ideal point of contact between
$\Hor$ and $\partial\hyperbolicspace$. More explicitly, if
$\hyperbolicspace$ is the upper-half space, then by applying a
hyperbolic isometry, we can arrange for $\Hor$ to be the plane
$\reals^2 \times\{1\}$, which is parallel to
$\partial\hyperbolicspace = \reals^2\times\{0\}$ and lying inside
$\hyperbolicspace$. Observe that $\Hor$ is at Euclidean distance
$1$ from $\partial\hyperbolicspace$. Then $E_{proj}$ is simply the
usual orthogonal projection onto $\partial\hyperbolicspace =
\reals^2\times\{0\}$, restricted to $\Hor$. Consequently,
$E_{proj}$ can also be interpreted as the Euclidean vertical
translation from $\Hor$ down to the parallel plane
$\partial\hyperbolicspace$ (i.e. just changing the third
coordinate from $1$ to $0$).

\begin{figure}[ht]
\centering
\includegraphics[width=14cm]{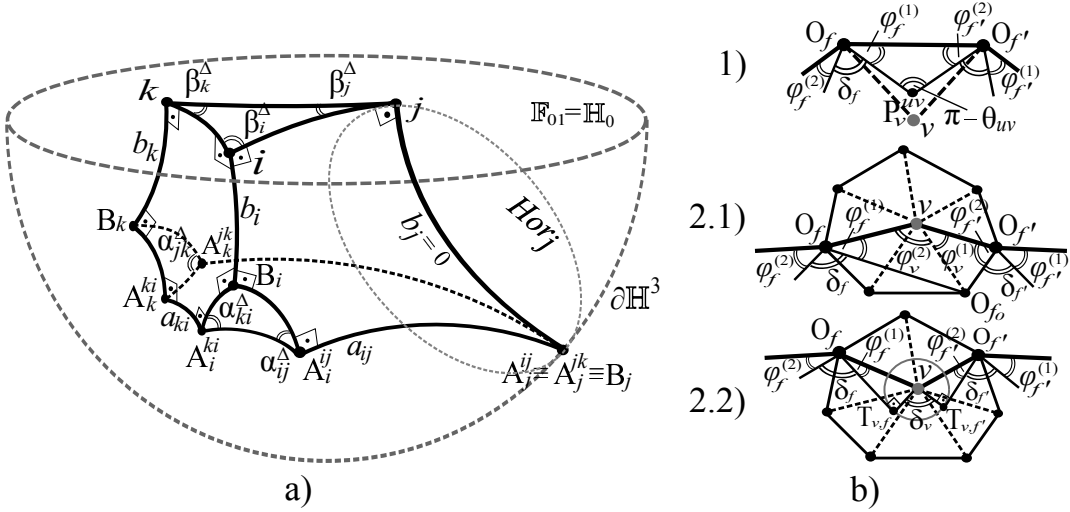}
\caption{a) A hyper-ideal tetrahedron $\tau_{\Delta}$; b) The
three different cases of edges on the boundary of an admissible
domain $\Omega$.} \label{Fig4}
\end{figure}

Similarly, let $\Hplane$ be a hyperbolic plane in
$\hyperbolicspace$. Then naturally, $\Hplane$ is isometric to the
hyperbolic plane $\hyperbolicplane$. Again define a projection
$H_{proj} : \Hplane \to \partial\hyperbolicspace$ by following
down to the ideal boundary the geodesics orthogonal to $\Hplane$
on one side of it. In the upper-half space model of
$\hyperbolicspace$ we can use a hyperbolic isometry to arrange for
$\Hplane$ to be the vertical orthogonal half-plane $\Hplane =
\{(x,y,z) \in \reals^3 \, : \, \, y=0, \,\, z > 0 \}$. Then
$H_{proj}$ is simply the ninety-degree Euclidean rotation around
the $x-$axis which rotates $\Hplane$ to the horizontal half-plane
$\{(x,y,0) \in \reals^3 \, : \,\, y>0\} \, \subset \,
\partial\hyperbolicspace$. The map $H_{proj}$ is depicted on
figure \ref{Fig2}b.

Use the common symbol $\Femb$ to denote both the horosphere $\Hor$
and the hyperbolic plane $\Hplane$. Similarly, use the notation
$F_{proj}$ for both maps $E_{proj}$ and $H_{proj}$. Now, let
$\Delta=ijk$ be a decorated triangle in $\PP$, with vertex circles
$c_i, c_j, c_k$ and a face circle $c_{\Delta}$. Then without loss
of generality we can think that $\Delta$ is in fact in $\Femb$.
Let $\hat{\Delta} = F_{proj}(\Delta) \, \subset\,
\partial\hyperbolicspace$ with vertex circles
$F_{proj}(c_i), \, F_{proj}(c_j), \, F_{proj}(c_k)$ and a face
circle $F_{proj}(c_{\Delta})$. Due to the nature of $F_{proj}$
(compare with figure \ref{Fig2}b), the decorated triangle
$\hat{\Delta}$ is an identical copy of $\Delta$. The edges of
$\hat{\Delta}$ can be completed to either
circles or straight lines. 
Each straight line 
can be extended vertically to a
half-plane 
orthogonal to $\partial\hyperbolicspace$. Thus, it gives rise to a
hyperbolic plane in $\hyperbolicspace$. Analogously, each circle,
whether a vertex circle, the face circle or a circle coming from
an edge of $\hat{\Delta}$, can be extended to a half sphere in
$\hyperbolicspace$, centered at a point on
$\partial\hyperbolicspace$. Each such half sphere is a hyperbolic
plane in $\hyperbolicspace$. To fix notations, for each $u \in
V_{\Delta}$, the hyperbolic plane extending the projected vertex
circle $F_{proj}(c_u)$ is denoted by $\tilde{c}_{u}$ (see figure
\ref{Fig2}b) and the hyperbolic plane extending the projected face
circle $F_{proj}(c_{\Delta})$ is denoted by $\tilde{c}_{\Delta}$.
Furthermore, for each edge $uv \in E_{\Delta}$, the completion of
$F_{proj}(uv)$ to either a straight line or a whole circle
(whichever applies) extends to the hyperbolic plane
$\widetilde{uv}$. As a result of this construction, to each
decorated triangle $\Delta \subset \Femb$ we associate the finite
set of all hyperbolic planes constructed above. In the case of
$\PP = \hyperbolicplane$ we add to that set the plane $\Hplane$.
Subsequently, all these hyperbolic planes bound a convex
hyperbolic polyhedron of finite volume, denoted by $\tau_{\Delta}$
(see figure \ref{Fig4}a).

For each vertex $u \in V_{\Delta}$ let $B_u$ be the intersection
point of the three hyperbolic planes $\widetilde{wu}, \,
\widetilde{uv}$ and $\tilde{c}_u$, where $u \neq v \neq w \in
V_{\Delta}$. Furthermore, let $A^{uv}_u$ be the intersection point
of the three hyperbolic planes $\tilde{c}_{\Delta}, \,
\tilde{c}_u$ and $\widetilde{uv}$, and let $A^{uv}_v$ be the
intersection point of $\tilde{c}_{\Delta}, \, \tilde{c}_v$ and
$\widetilde{uv}$. In the case when $c_{u}$ is collapsed to a
point, the hyperbolic plane $\tilde{c}_u$ degenerates to an ideal
point, and thus the three points $B_u, \, A^{wu}_u$ and $A^{uv}_u$
merge into one ideal point of $\tau_{\Delta}$. Also, if the vertex
circles $c_u$ and $c_v$ touch, then the two points $A^{uv}_u$ and
$A^{uv}_v$ become one ideal point of $\tau_{\Delta}$. When $\Femb
= \Hplane$, in the most general case, the polyhedron
$\tau_{\Delta}$ has the combinatorics of a tetrahedron with all
four vertices truncated so that the four \emph{truncating faces}
are disjoint triangles with no vertices in common. One of these
truncating faces is the decorated triangle $\Delta=ijk \subset
\Hplane$. Whenever a vertex circle of the corresponding decorated
triangle is shrunk to a point, the tetrahedral vertex it
corresponds to is not truncated. It becomes an ideal vertex (see
figure \ref{Fig4}a). Whenever two vertex circles touch, the
corresponding truncating faces share a common ideal vertex.
Furthermore, when $\Femb = \Hor$, in the most general case, the
polyhedron $\tau_{\Delta}$ has the combinatorics of a tetrahedron
with one ideal vertex and the three remaining vertices truncated
so that the three truncating faces are again disjoint triangles
with no vertices in common. The rest of the cases are analogous to
the ones discussed above. Notice that for $\Femb = \Hor$, the
polyhedron $\tau_{\Delta}$ is \emph{decorated} with the horosphere
$\Hor$, which we call a \emph{decorating horosphere} and the
decorated triangle $\Delta=ijk$ lies on it.

Now, assume that for $u \in V_{\Delta}$ the vertex circle $c_u$ is
a point. Then we add to the polyhedron $\tau_{\Delta}$ the unique
horosphere $Hor_u$ tangent to $\partial\hyperbolicspace$ at the
ideal point $B_u\equiv A^{wu}_u \equiv A^{uv}_u$ and at the same
time tangent to $\Femb$ at the point $u$ (as shown on figure
\ref{Fig4}a).
\newline \noindent{\bf End of construction
\ref{Sec_hyperbolic_tetrahedra}.1.}
\begin{lem} \label{Lem_tetrahedron_angles_edge_lengths}
Let $\tau_{\Delta}$ be the polyhedron obtained from the decorated
triangle $\Delta=ijk$ in $\Femb \subset \hyperbolicspace$,
according to construction \ref{Sec_hyperbolic_tetrahedra}.1. Then
for each $u \neq v \neq w \in V_{\Delta}$ the following statements
hold (see figure \ref{Fig4}a):

\smallskip
\noindent {\bf 1.}  The geodesic edge $A^{uv}_uA^{uv}_v$ of the
polyhedron $\tau_{\Delta}$ is orthogonal to both truncating
triangular faces $A^{wu}_uA^{uv}_uB_u \,\subset \, \tilde{c}_{u}$
and $A^{uv}_vA^{vw}_vB_v \,\subset \, \tilde{c}_v$. Its hyperbolic
length is denoted by $a_{uv} =
l_{\hyperbolicspace}(A^{uv}_uA^{uv}_v)$. The interior dihedral
angle of $\tau_{\Delta}$ at the edge $A^{uv}_uA^{uv}_v$ is equal
to the value $\alpha^{\Delta}_{uv} = \angl A^{wu}_uA^{uv}_uB_u =
\angl A^{vw}_vA^{uv}_vB_v $.

\smallskip
\noindent {\bf 2.} The geodesic edge $uB_u$ of
$\tau_{\Delta}$ is orthogonal to the truncating face
$A^{wu}_uA^{uv}_uB_u \,\,\subset \,\, \tilde{c}_{u}$ and to
$\Femb$. Its length is denoted by $l_{\hyperbolicspace}(uB_u) =
b_{u}$. The interior dihedral angle of $\tau_{\Delta}$ at the edge
$uB_u$ is equal to $\beta^{\Delta}_{u} = \angl A^{wu}_uB_uA^{uv}_u
= \angl wuv$.

\smallskip
\noindent {\bf 3.} Let $c_u$ be a point. If $c_v$ has a non-zero
radius, then the dihedral angle at the edge $A^{uv}_uA^{uv}_v$ is
equal to $\alpha^{\Delta}_{uv} = \angl A^{vw}_vA^{uv}_vB_v$ and
the edge itself is perpendicular to $Hor_u$ and $\tilde{c}_v$. The
oriented distance between $Hor_u$ and $\tilde{c}_v$ along the
geodesic $A^{uv}_uA^{uv}_v$ is denoted by $a_{uv}$, where its sign
is positive whenever $Hor_u$ and $\tilde{c}_v$ are disjoint, and
negative if they intersect. If $c_v$ is also a point then the
dihedral angle at the edge $A^{uv}_uA^{uv}_v$ is still
$\alpha^{\Delta}_{uv}$ and the edge is perpendicular to both
$Hor_u$ and $Hor_v$. The oriented distance between $Hor_u$ and
$Hor_v$ along $A^{uv}_uA^{uv}_v$ is $a_{uv}$, with a positive sign
if the two horospheres are disjoint and negative otherwise.

\smallskip
\noindent {\bf 4.} If $c_u$ is a point, then $b_u = 0$. The edge
$uB_u$ is orthogonal to $Hor_u$ and $\Femb$ and its interior
dihedral angle is $\angl wuv = \beta^{\Delta}_u$.

\smallskip
\noindent {\bf 5.} If $c_u$ and $c_v$ touch, then $A^{uv}_u\equiv
A^{uv}_v \, \in \, \partial\hyperbolicspace$ and so $a_{uv}=0$.
Moreover, the dihedral angle at that ideal point is
$\alpha^{\Delta}_{uv}=0$.
\end{lem}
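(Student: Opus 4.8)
The plan is to carry out all the geometry inside the upper half-space model of $\hyperbolicspace$, where the whole statement reduces to two conformally invariant facts recalled in Section \ref{Sec_basic_geometry}. First, a Euclidean circle or line on $\partial\hyperbolicspace$ extends to a unique hyperbolic plane (a hemisphere or a vertical half-plane), and two such planes are orthogonal, or more generally meet at a prescribed dihedral angle, exactly when their boundary circles are orthogonal, resp. meet at that angle, on $\partial\hyperbolicspace$; this is immediate from the conformality of the model, since the dihedral angle is constant along the edge and may be read off at the ideal endpoint where the boundary circles cross. Second, the projection $F_{proj}$ is a conformal bijection carrying $\Delta \subset \Femb$ onto the identical copy $\hat\Delta \subset \partial\hyperbolicspace$, so that every incidence, orthogonality and angle recorded among the vertex circles, the face circle and the edges of $\Delta$ is reproduced verbatim among their images, and hence among the extending planes $\tilde{c}_u$, $\tilde{c}_{\Delta}$, $\widetilde{uv}$.

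From here the orthogonality assertions follow from the incidence pattern alone. Since $c_u$ is centered at $u$, each edge through $u$ is orthogonal to $c_u$, and since $c_{\Delta}$ is orthogonal to every vertex circle, I obtain $\widetilde{uv}\perp\tilde{c}_u$, $\widetilde{wu}\perp\tilde{c}_u$ and $\tilde{c}_{\Delta}\perp\tilde{c}_u$. For point 1 the edge $A^{uv}_uA^{uv}_v$ lies on the geodesic $\tilde{c}_{\Delta}\cap\widetilde{uv}$, the intersection of two planes each orthogonal to $\tilde{c}_u$; applying at the common point $A^{uv}_u$ the infinitesimal (hence Euclidean) fact that the intersection line of two planes orthogonal to a third is orthogonal to the third gives that the edge meets the truncating face in $\tilde{c}_u$, and symmetrically the one in $\tilde{c}_v$, at a right angle. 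For point 2 the edge $uB_u$ lies on $\widetilde{uv}\cap\widetilde{wu}$, again two planes orthogonal to $\tilde{c}_u$, so $uB_u\perp\tilde{c}_u$; moreover $\widetilde{uv}$ and $\widetilde{wu}$ are also orthogonal to $\Femb$ (in the model $\Femb=\Hor$ both are vertical and meet the horizontal horosphere orthogonally, and in the model $\Femb=\Hplane$ the shared ideal endpoints of $uv$ and $wu$ on $\partial\Hplane$ are fixed by $H_{proj}$, forcing their boundary circles to be centered on $\partial\Hplane$ and hence orthogonal to it), and the same fact applied at $u$ yields $uB_u\perp\Femb$.

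The dihedral angles are then read off at the ideal boundary. The angle of $\tau_{\Delta}$ along $A^{uv}_uA^{uv}_v$ is the angle between its faces $\tilde{c}_{\Delta}$ and $\widetilde{uv}$, which by the first fact equals the boundary angle between $F_{proj}(c_{\Delta})$ and $F_{proj}(uv)$, and by the second fact this is the planar angle $\alpha^{\Delta}_{uv}$ between $c_{\Delta}$ and the edge $uv$; measuring it on the perpendicular plane $\tilde{c}_u$ identifies it with the truncating-triangle angle $\angl A^{wu}_uA^{uv}_uB_u$. Likewise the angle along $uB_u$ is the angle between $\widetilde{uv}$ and $\widetilde{wu}$, equal to the boundary angle between the two edges at $u$, namely $\angl wuv=\beta^{\Delta}_u$, and reading it on $\tilde{c}_u$ gives $\angl A^{wu}_uB_uA^{uv}_u$. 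The degenerate statements 3--5 I would obtain as limits of this picture together with the explicit construction of the horospheres $Hor_u$: when $c_u$ shrinks to a point, $\tilde{c}_u$ collapses to the ideal point $B_u\equiv A^{wu}_u\equiv A^{uv}_u$, the finite edge length along $A^{uv}_uA^{uv}_v$ is replaced by the signed distance, measured along the same geodesic, to the remaining object $\tilde{c}_v$ or $Hor_v$, while the dihedral angle survives the limit as $\alpha^{\Delta}_{uv}$ because it is still read off the boundary; since $Hor_u$ is constructed tangent to $\Femb$ at $u$, the oriented distance from $u$ to $Hor_u$ vanishes, which is precisely $b_u=0$ in point 4; and when $c_u$ and $c_v$ are tangent, their boundary images are tangent, so $A^{uv}_u$ and $A^{uv}_v$ coalesce into the single ideal tangency point, whence $a_{uv}=0$ and $\alpha^{\Delta}_{uv}=0$, in agreement with the convention built into $\Alocaldelta$.

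The main obstacle I anticipate is not any single orthogonality or angle identity but the bookkeeping of orientation and sign: one must verify that the \emph{interior} dihedral angle of the convex polyhedron $\tau_{\Delta}$ corresponds to the planar angle measured inside $c_{\Delta}$ and outside $\triangle ijk$, and not to its supplement, and that the signs in the oriented distances $a_{uv}$ of point 3 match the disjoint/intersecting dichotomy of the horospheres. I would control this by consistently describing $\tau_{\Delta}$ as the intersection of the half-spaces bounded by the constructed planes on the side containing $\Delta$, so that on each face it is unambiguous which half-space is the interior, and then checking the two reference configurations (a non-degenerate $\Delta$ and one with a single point vertex circle) by hand to fix all the orientations at once.
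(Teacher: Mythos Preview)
Your proposal is correct and takes essentially the same approach as the paper, which dispatches the lemma in a single sentence as ``a straightforward consequence of the conformal properties of the upper half-space model of $\hyperbolicspace$, combined with construction \ref{Sec_hyperbolic_tetrahedra}.1.'' You have simply supplied the details that the paper leaves implicit: reading orthogonality and dihedral angles off the boundary circles via conformality, and using the elementary fact that the intersection of two planes orthogonal to a third is itself orthogonal to the third.
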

\begin{proof}
The proof is a straightforward consequence of the conformal
properties of the upper half-space model of $\hyperbolicspace$,
combined with construction \ref{Sec_hyperbolic_tetrahedra}.1
above. 
\end{proof}
We fix some terminology. The edges $A^{ij}_iA^{ij}_j, \,\,
A^{jk}_jA^{jk}_k$ and $ A^{ki}_kA^{ki}_i$, as well as the edges
$iB_i, \, jB_j$ and $kB_k$ of the polyhedron $\tau_{\Delta}$ are
called \emph{principal edges}. The rest of the edges are called
\emph{auxiliary edges}. The lengths of the principal edges are
called \emph{principal edge-lengths} of $\tau_{\Delta}$. For short
sometimes we will also call them just \emph{edge-lengths} of
$\tau_{\Delta}.$ The interior dihedral angles at the principal
edges are called \emph{principal dihedral angles} of
$\tau_{\Delta}$. For short, often we will call them simply
\emph{dihedral angles} of $\tau_{\Delta}$. Observe that the
dihedral angles at the auxiliary edges are all equal to $\pi/2$.

\begin{Def} \label{Def_hyperideal_tetrahedron}
A \emph{hyper-ideal tetrahedron} (see \cite{Sch1, S} and figure
\ref{Fig4}a) is a geodesic polyhedron in $\hyperbolicspace$ that
has the combinatorics of a tetrahedron with some (possibly all) of
its vertices truncated by triangular \emph{truncating faces}. Each
truncating face is orthogonal to the faces and the edges it
truncates. Furthermore, a pair of truncating faces either do not
intersect or share only one vertex. Finally, the non-truncated
vertices are all ideal.
\end{Def}

The polyhedron $\tau_{\Delta}$, constructed above, is a
hyper-ideal tetrahedron. This terminology comes from the
interpretation that in the Klein projective model or the Minkowski
space-time model of $\hyperbolicspace$ \cite{ThuBook, BenPetr, S},
$\tau_{\Delta}$ can be represented by an actual tetrahedron with
some vertices lying outside $\hyperbolicspace$ (hence the term
\emph{hyper-ideal vertices}). The dual to each hyper-ideal vertex
is the orthogonal truncating plane.

\medskip

\noindent{\bf Construction \ref{Sec_hyperbolic_tetrahedra}.2.} We
already know how to construct a hyper-ideal tetrahedron from a
decorated triangle. Now we explain how to do the opposite. That
is, we can take a hyper-ideal tetrahedron and associate to it a
decorated triangle. Let $\tau$ be a hyper-ideal tetrahedron
carrying the notations from construction
\ref{Sec_hyperbolic_tetrahedra}.1. Then either $ijk$ is a
truncating triangular face of $\tau$, defining a hyperbolic plane
$\Hplane$, as shown on figure \ref{Fig4}a, or it lies on a
horosphere $\Hor$ centered at an ideal vertex of $\tau$. Either
way, we denote $\Hplane$ and $\Hor$ with the common letter
$\Femb$. If $\Femb = \Hplane$ then $ijk$ is a hyperbolic triangle.
If $\Femb = \Hor$ then the hyperbolic metric restricted on $\Hor$
makes $\Hor$ isometric to the Euclidean plane and the triangle
$ijk$ is then a Euclidean triangle. Furthermore, each triangular
truncating face $A^{wu}_uA^{uv}_uB_u$ determines a hyperbolic
plane $\tilde{c}_u$ whose ideal points form a circle $c^{\infty}_u
\, \subset \,
\partial\hyperbolicspace$ (see
figure \ref{Fig2}b). Similarly, the face
$A^{ki}_iA^{ij}_iA^{ij}_jA^{jk}_jA^{jk}_kA^{ki}_i$ determines a
hyperbolic plane $\tilde{c}_{\Delta}$, whose ideal points form a
circle $c^{\infty}_{\Delta} \, \subset \,
\partial\hyperbolicspace$. 
Then the preimages $c_i = F^{-1}_{proj}(c^{\infty}_i), \, c_j =
F^{-1}_{proj}(c^{\infty}_j)$ and $c_k =
F^{-1}_{proj}(c^{\infty}_k)$ are vertex circles for $\triangle
ijk$. Furthermore, $c_{\Delta} =
F^{-1}_{proj}(c^{\infty}_{\Delta})$ is the corresponding face
circle, orthogonal to the three vertex circles, since the face
plane $\tilde{c}_{\Delta}$ of $\tau$ is by definition orthogonal
to the truncating planes $\tilde{c}_i, \tilde{c}_j, \tilde{c}_k$.
Because the upper half-space model is conformal, the six principal
dihedral angles of $\tau$ equal the angles of the constructed
decorated triangle. Observe that this construction is the converse
of construction \ref{Sec_hyperbolic_tetrahedra}.1.
\newline \noindent{\bf End of construction
\ref{Sec_hyperbolic_tetrahedra}.2.}

\begin{lem} \label{Lem_two_triang_two_tetrahedra}
Let construction \ref{Sec_hyperbolic_tetrahedra}.1 produce two
hyper-ideal tetrahedra $\tau_{\Delta}$ and $\tau_{\Delta}$ in
$\hyperbolicspace$ from two isometric (or similar if applicable)
decorated triangles $\Delta$ and $\Delta'$ in $\PP$. Then
$\tau_{\Delta}$ and $\tau_{\Delta'}$ are isometric. Conversely,
let construction \ref{Sec_hyperbolic_tetrahedra}.2 produce two
decorated triangles $\Delta_{\tau}$ and $\Delta_{\tau'}$ from two
isometric hyper-ideal tetrahedra $\tau$ and $\tau'$ in
$\hyperbolicspace$. Then $\Delta_{\tau}$ and $\Delta_{\tau'}$ are
isometric (or similar if applicable).
\end{lem}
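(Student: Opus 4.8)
The plan is to exploit the fact that constructions \ref{Sec_hyperbolic_tetrahedra}.1 and \ref{Sec_hyperbolic_tetrahedra}.2 are phrased entirely in terms of notions invariant under the full isometry group of $\hyperbolicspace$: geodesics, hyperbolic planes, orthogonality, horospheres, ideal points and the ideal boundary $\partial\hyperbolicspace$. Consequently both constructions are \emph{natural} (equivariant). In particular the projection $F_{proj}$ is itself specified by an isometry-invariant recipe (follow geodesics emanating from the prescribed ideal point of $\Hor$, or geodesics orthogonal to the prescribed plane $\Hplane$), and likewise every plane $\tilde{c}_u,\tilde{c}_{\Delta},\widetilde{uv}$, every auxiliary horosphere $Hor_u$, and every distinguished vertex $B_u, A^{uv}_u$ is singled out by incidence, orthogonality and tangency conditions that commute with any ambient isometry. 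Therefore, if $\Phi$ is an isometry of $\hyperbolicspace$ and $(\Femb,\Delta)$ is a decorated triangle on an embedded copy of $\PP$, then running construction \ref{Sec_hyperbolic_tetrahedra}.1 on the transported data $(\Phi(\Femb),\Phi(\Delta))$ yields exactly $\Phi(\tau_{\Delta})$, and symmetrically for construction \ref{Sec_hyperbolic_tetrahedra}.2. Granting this naturality, the whole lemma reduces to one point: \emph{every isometry or similarity between the planar decorated triangles is the restriction of an ambient isometry of $\hyperbolicspace$.}

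For the forward direction, let $\phi:\Delta\to\Delta'$ be the given isometry (similarity). When $\PP=\hyperbolicplane$ we have $\Femb=\Hplane$, a totally geodesic plane, and the stabilizer of $\Hplane$ in $\mathrm{Isom}(\hyperbolicspace)$ surjects onto $\mathrm{Isom}(\hyperbolicplane)$; so $\phi$ extends to an ambient $\Phi$ fixing $\Hplane$ setwise, and by naturality $\Phi(\tau_{\Delta})=\tau_{\Delta'}$. When $\PP=\Euclideanplane$ we have $\Femb=\Hor$ and $\phi$ is a Euclidean similarity. In the upper half-space model the stabilizer of the ideal point of contact of $\Hor$ acts on $\partial\hyperbolicspace=\reals^2$ exactly as the Euclidean similarity group: translations and rotations fix the height coordinate, while the homothety $p\mapsto\lambda p$ of $\reals^2$ is induced by the hyperbolic isometry $(x,z)\mapsto(\lambda x,\lambda z)$. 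Hence $\phi$ again extends to an ambient $\Phi$, which now slides $\Hor$ to a parallel horosphere but, by naturality, still carries $\tau_{\Delta}$ and all its auxiliary horospheres $Hor_u$ onto those of $\tau_{\Delta'}$. This proves the first assertion.

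For the converse, let $\Phi$ be an isometry with $\Phi(\tau)=\tau'$. Then $\Phi$ sends the distinguished face $\Femb$ of $\tau$ used in construction \ref{Sec_hyperbolic_tetrahedra}.2 to the corresponding face of $\tau'$. In the hyperbolic case $\Femb=\Hplane$ is an honest truncating face, so $\Phi|_{\Hplane}$ is a hyperbolic isometry carrying $\Delta_{\tau}$ to $\Delta_{\tau'}$, which are therefore isometric. In the Euclidean case $\Femb=\Hor$ is a decorating horosphere centered at the ideal vertex; choosing $\Phi(\Hor)$ as decorating horosphere for $\tau'$, the restriction $\Phi|_{\Hor}:\Hor\to\Phi(\Hor)$ is an isometry of the induced Euclidean metrics taking $\Delta_{\tau}$ to $\Delta_{\tau'}$. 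If instead $\Delta_{\tau'}$ is built from a different (for instance height-normalized) horosphere at the same ideal vertex, the two choices differ by the homothety relating the two horospheres, so the recovered triangles are merely similar — which is precisely the alternative recorded in the statement.

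The step demanding the most care is the Euclidean/scaling case, on two counts. First, one must correctly identify the stabilizer of an ideal point with the Euclidean similarity group and check that a genuine scaling of the horosphere (not just an isometry of it) is induced by an \emph{ambient isometry}, since $\hyperbolicspace$ admits no nontrivial similarities of its own. Second, because the decorating horosphere at an ideal vertex is not canonical, the triangle recovered by construction \ref{Sec_hyperbolic_tetrahedra}.2 is well defined only up to scaling; tracking this ambiguity is exactly what forces the ``or similar'' clause and explains the asymmetry between the two directions. Everything else is a routine appeal to the isometry-invariance of the defining incidences, already implicit in lemma \ref{Lem_tetrahedron_angles_edge_lengths}.
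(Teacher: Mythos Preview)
Your proof is correct and follows essentially the same approach as the paper's own proof: both arguments rest on the equivariance of constructions \ref{Sec_hyperbolic_tetrahedra}.1 and \ref{Sec_hyperbolic_tetrahedra}.2 under ambient isometries of $\hyperbolicspace$, together with the observation that an isometry (resp.\ similarity) between the two copies of $\PP$ extends to such an ambient isometry, while in the converse direction the possible mismatch of decorating horospheres at the ideal vertex is exactly what produces the ``or similar'' clause. The paper carries this out by explicitly matching the projected configurations $F_{proj}(\Delta)$ and $F'_{proj}(\Delta')$ on $\partial\hyperbolicspace$ and then invoking the conformal extension of boundary maps, whereas you phrase the same step as identifying the stabilizer of the ideal point with the Euclidean similarity group; these are two descriptions of the same Poincar\'e-extension mechanism.
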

\begin{proof}
Let $\Femb$ and $\Femb'$ be two isometrically embedded copies of
$\PP$ in $\hyperbolicspace$. Both of these are ether two
hyperbolic planes or two horospheres. Without loss of generality
one can think that $\Delta$ lies in $\Femb$ and $\Delta'$ lies in
$\Femb'$. Throughout this proof $F_{proj} : \Femb \to
\partial\hyperbolicspace$ and $F'_{proj} : \Femb' \to
\partial\hyperbolicspace$ are the corresponding maps defined in
construction \ref{Sec_hyperbolic_tetrahedra}.1 and also used in
construction \ref{Sec_hyperbolic_tetrahedra}.2.

First, assume $\Femb$ and $\Femb'$ are horospheres. In this case,
the two triangles are similar. Therefore, there exists an isometry
$g$ of $\hyperbolicspace$ such that $g(\Femb)$ and $\Femb'$ have a
common point at infinity and
$g(F_{proj}(\Delta))=F'_{proj}(\Delta')$. Here, one invokes the
property that hyperbolic isometries of $\hyperbolicspace$
naturally extend to conformal automorphisms of
$\partial\hyperbolicspace$ (see \cite{ThuBook, BenPetr}). Observe
that by construction \ref{Sec_hyperbolic_tetrahedra}.1, $\,
g(F_{proj}(\Delta))$ and $F'_{proj}(\Delta')$ give rise to the two
hyper-ideal tetrahedra $\tau_{g(\Delta)}$ and $\tau_{\Delta'}$, so
$\tau_{g(\Delta)}=\tau_{\Delta'}$. Since construction
\ref{Sec_hyperbolic_tetrahedra}.1 is entirely defined in terms of
the geometry of $\hyperbolicspace$, it commutes with
$\hyperbolicspace-$isometries, i.e. $\tau_{g(\Delta)} =
g(\tau_{\Delta})$. Hence $g(\tau_{\Delta}) =\tau_{\Delta'}$, i.e.
$\tau_{\Delta}$ and $\tau_{\Delta'}$ are isometric.

Next, let us assume that $\Femb$ and $\Femb'$ are hyperbolic
planes. Then $\Delta$ and $\Delta'$ are isometric. Therefore,
there exists and isometry $g \in Isom(\hyperbolicspace)$ such that
$g(\Femb) = \Femb'$, $\, g(\Delta)=\Delta'$ and $g$ maps the side
of $\Femb$ on which $F_{proj}$ is defined to the side of $\Femb'$
on which $F_{proj}'$ is defined. Again, since construction
\ref{Sec_hyperbolic_tetrahedra}.1 is purely geometric, $g \circ
F_{proj} = F'_{proj} \circ g$. Therefore, $g(F_{proj}(\Delta)) =
F'_{proj}(g(\Delta)) = F'_{proj}(\Delta')$. By construction
\ref{Sec_hyperbolic_tetrahedra}.1 $\, \tau_{\Delta'} =
\tau_{g(\Delta)}=g(\tau_{\Delta})$, i.e. $\tau_{\Delta}$ and
$\tau_{\Delta'}$ are isometric.

Conversely, let $\tau$ and $\tau'$ be two hyper-ideal tetrahedra
in $\hyperbolicspace$ and $g \in Isom(\hyperbolicspace)$ be a
hyperbolic isometry, such that $g(\tau)=\tau'$.

If $\PP=\hyperbolicplane$ then $\tau$ and $\tau'$ have a pair of
triangular truncating faces $\Delta$ and $\Delta'$ respectively,
where $\Delta'=g(\Delta)$. The faces $\Delta$ and $\Delta'$ define
the hyperbolic planes $\Femb \supset \Delta$ and $\Femb' \supset
\Delta'$ respectively, thus $\Femb'=g(\Femb)$. Construction
\ref{Sec_hyperbolic_tetrahedra}.2 makes $\Delta$ into a decorated
triangle $\Delta_{\tau}$  and $\Delta'$ into a decorated triangle
$\Delta_{\tau'}$. Since construction
\ref{Sec_hyperbolic_tetrahedra}.2 is purely geometric in nature,
it commutes with $g$. In other words,
$\Delta_{\tau'}=\Delta_{g(\tau)}=g(\Delta_{\tau})$, i.e. the
decorated triangles $\Delta_{\tau'}$ and $\Delta_{\tau}$ are
isometric.

Finally, let $\PP=\Euclideanplane$. Then $\tau$ and $\tau'$ have a
corresponding pair of ideal vertices $\infty$ of $\tau$ and
$\infty' \in
\partial\hyperbolicspace$ of $\tau'$ such that $g(\infty)=\infty'$.
There are two horospheres $\Femb$ and $\Femb'$ of $\tau$
respectively tangent to $\partial\hyperbolicspace$ at these two
points. Construction \ref{Sec_hyperbolic_tetrahedra}.2 produces
two decorated triangles $\Delta_{\tau} \subset \Femb$ and
$\Delta_{\tau'} \subset \Femb'$. Observe, that in this case
$g(\Delta_{\tau})$ might not be equal to $\Delta_{\tau'}$ because
$g(\Femb)$ might not be equal to $\Femb'$. Instead, in general we
have two horospheres $\Femb'$ and $\Femb'' = g(\Femb)$ tangent to
infinity at the same ideal vertex $\infty'$. As pointed out
already, construction \ref{Sec_hyperbolic_tetrahedra}.2 commutes
with $g$, so
$g(\Delta_{\tau})=\Delta_{g(\tau)}''=\Delta_{\tau'}''$, where the
decorated triangle $\Delta_{\tau'}''$ is the result of
construction \ref{Sec_hyperbolic_tetrahedra}.2  on the horosphere
$\Femb''$ with respect to $\tau'$. Now, shifting the horosphere
$\Femb''$ to the horosphere $\Femb'$ slides the decorated triangle
$\Delta_{\tau'}''$ onto $\Femb'$ so that it matches the decorated
triangle $\Delta_{\tau'}$. As the restriction of $g$ onto $\Femb$
is an Euclidean isometry between $\Femb$ and $\Femb''$, and the
shifting of $\Femb''$ onto $\Femb'$ is scaling, the two decorated
triangles $\Delta_{\tau'}$ and $\Delta_{\tau}$ are similar.
\end{proof}

\begin{prop} \label{Prop_corr_triangles_polyhedra_angles}
Each hyper-ideal tetrahedron in $\hyperbolicspace$ is defined
uniquely up to isometry by its six principal dihedral angles,
belonging to the set $\ADelta$.
\end{prop}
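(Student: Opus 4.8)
The plan is to transfer the classification of decorated triangles by their angles, already obtained in Proposition \ref{Prop_angles_to_dec_triangle}, over to hyper-ideal tetrahedra by means of the mutually inverse constructions \ref{Sec_hyperbolic_tetrahedra}.1 and \ref{Sec_hyperbolic_tetrahedra}.2. The bridge between the two pictures is the fact, recorded in Lemma \ref{Lem_tetrahedron_angles_edge_lengths} and in the remark closing construction \ref{Sec_hyperbolic_tetrahedra}.2, that the conformality of the upper half-space model forces the six principal dihedral angles of a tetrahedron $\tau_{\Delta}$ to coincide with the six angles $(\alpha^{\Delta},\beta^{\Delta})$ of the associated decorated triangle $\Delta$.

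First I would check that the six principal dihedral angles of an arbitrary hyper-ideal tetrahedron $\tau$ in $\hyperbolicspace$ indeed lie in $\ADelta$. Applying construction \ref{Sec_hyperbolic_tetrahedra}.2 to $\tau$ produces a decorated triangle $\Delta_{\tau}$ in $\PP$, where $\PP=\hyperbolicplane$ if we read the triangle off a truncating face and $\PP=\Euclideanplane$ if we read it off a horosphere at an ideal vertex. By the angle identification above, the sextuple $(\alpha^{\Delta_{\tau}},\beta^{\Delta_{\tau}})$ equals the principal dihedral angles of $\tau$, and the converse half of Proposition \ref{Prop_angles_to_dec_triangle} guarantees that these angles belong to $\ADelta=\Alocaldelta$. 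The degenerate configurations, namely a vertex circle collapsed to a point (hence an untruncated ideal vertex) or a pair of tangent vertex circles (hence two truncating faces sharing an ideal vertex), are covered by parts 3--5 of Lemma \ref{Lem_tetrahedron_angles_edge_lengths}, so the identification of angles persists in those cases as well.

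For uniqueness, I would suppose two hyper-ideal tetrahedra $\tau$ and $\tau'$ have the same six principal dihedral angles. Running construction \ref{Sec_hyperbolic_tetrahedra}.2 on each yields decorated triangles $\Delta_{\tau}$ and $\Delta_{\tau'}$ whose six angles agree, since both sextuples equal the common dihedral angles. Proposition \ref{Prop_angles_to_dec_triangle} then makes $\Delta_{\tau}$ and $\Delta_{\tau'}$ isometric when $\PP=\hyperbolicplane$, and similar when $\PP=\Euclideanplane$. Because construction \ref{Sec_hyperbolic_tetrahedra}.1 is the inverse of construction \ref{Sec_hyperbolic_tetrahedra}.2, I may recover $\tau=\tau_{\Delta_{\tau}}$ and $\tau'=\tau_{\Delta_{\tau'}}$, and then the forward implication of Lemma \ref{Lem_two_triang_two_tetrahedra} upgrades the isometry (respectively similarity) of the triangles to an isometry of $\tau$ and $\tau'$. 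Existence for a prescribed sextuple in $\ADelta$ is the same chain read backwards: Proposition \ref{Prop_angles_to_dec_triangle} realizes the angles by a decorated triangle, and construction \ref{Sec_hyperbolic_tetrahedra}.1 turns that triangle into a hyper-ideal tetrahedron having exactly those dihedral angles.

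The step I expect to require the most care is the passage through the Euclidean case, where a decorated triangle is determined only up to similarity rather than up to isometry. Here one must verify that the scaling freedom of the Euclidean triangle corresponds precisely to sliding the decorating horosphere along its ideal vertex, an operation that leaves the underlying geodesic polyhedron $\tau$ unchanged. This is exactly what the last paragraph of the proof of Lemma \ref{Lem_two_triang_two_tetrahedra} establishes, and it is what makes ``uniqueness up to similarity'' of $\Delta_{\tau}$ match ``uniqueness up to isometry'' of the undecorated tetrahedron $\tau$ from Definition \ref{Def_hyperideal_tetrahedron}.
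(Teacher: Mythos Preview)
Your proof is correct and follows essentially the same route as the paper's own argument: reduce to decorated triangles via constructions \ref{Sec_hyperbolic_tetrahedra}.1 and \ref{Sec_hyperbolic_tetrahedra}.2, invoke Proposition \ref{Prop_angles_to_dec_triangle} for the angle classification, and transfer isometry back via Lemma \ref{Lem_two_triang_two_tetrahedra}. Your added remarks on the degenerate cases and on why Euclidean similarity suffices are more explicit than the paper's treatment, but the logical skeleton is identical.
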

\begin{proof}
The principal dihedral angles of a hyper-ideal tetrahedron are
also the angles of its corresponding decorated triangle obtained
by construction \ref{Sec_hyperbolic_tetrahedra}.2, so they belong
to $\ADelta$. Conversely, assume we are given a vector of six
numbers $(\alpha^{\Delta},\beta^{\Delta})$ from $\ADelta$.
Proposition \ref{Prop_angles_to_dec_triangle} allows us to
construct a decorated triangle with
$(\alpha^{\Delta},\beta^{\Delta})$ as angles. Construction
\ref{Sec_hyperbolic_tetrahedra}.1 allows us to extend the
decorated triangle to a hyper-ideal tetrahedron with
$(\alpha^{\Delta},\beta^{\Delta})$ as principal dihedral angles, a
fact established in lemma
\ref{Lem_tetrahedron_angles_edge_lengths}. In order to prove
uniqueness, let $(\alpha^{\Delta},\beta^{\Delta})$ give rise to
two hyper-ideal tetrahedra. Then by construction
\ref{Sec_hyperbolic_tetrahedra}.2 these two tetrahedra correspond
to two decorated triangles with equal angles. By proposition
\ref{Prop_angles_to_dec_triangle}, the two decorated triangles are
isometric (or similar). Therefore, by lemma
\ref{Lem_two_triang_two_tetrahedra}, and the fact that
constructions \ref{Sec_hyperbolic_tetrahedra}.1 and
\ref{Sec_hyperbolic_tetrahedra}.2 are converse to each other, the
two tetrahedra are isometric.\end{proof}

\noindent Consequently, we can geometrize a combinatorial triangle
in two ways. We can either turn it into a decorated triangle or we
can turn it into a hyper-ideal tetrahedron. The association with
hyper-ideal tetrahedra will provide us with the right quantitative
description of the space of generalized hyper-ideal circle
patterns. Given a tetrahedron $\tau_{\Delta}$ arising from a
decorated triangle $\Delta=ijk$ via construction
\ref{Sec_hyperbolic_tetrahedra}.1, we can extract the six
principal edge-lengths $(a,b)_{\Delta}= \big(a_{ij}, a_{jk},
a_{ki}, b_k, b_i, b_j\big) \in \reals^{E_{\Delta}}\times
\reals^{V_{\Delta}}$. Recall that these are the numbers determined
in lemma \ref{Lem_tetrahedron_angles_edge_lengths} (see also
figure \ref{Fig4}a).
Notice that some of them could be zero. 
Given a combinatorial triangle $\Delta=ijk$, define the space of
tetrahedral edge-lengths $\TED$ to be the set of all six numbers
$\big(a_{ij}, a_{jk}, a_{ki}, b_k, b_i, b_j\big)$ which are the
principal edge-lengths of hyper-ideal tetrahedrons with fixed
combinatorics provided by $\Delta$.


Our goal is to describe generalized hyper-ideal circle patterns in
terms of principal edge-lengths of the corresponding tetrahedra.
For the topological triangulation $\Triang=(V, E_T, F_T)$ on the
surface $S$ assign to its edges $a : E_T \to \reals$ and to its
vertices $b : V \to \reals$ so that for each face $\Delta \in F_T$
the six numbers $(a, b)_{\Delta}$, with possible zeroes among
them, are principal edge-lengths of a hyper-ideal tetrahedron $
\tau_{\Delta}$. Thus, one can define the space $\TE$ as the set of
all assignments $(a, b) \in \reals^{E_{\pi} \cup E_1} \times
\reals^{V_1} \, \subset \, \reals^{E_T}\times\reals^{V}$ such that
for any $\Delta \in F_T$ the six numbers $(a, b)_{\Delta}$, some
of which could be fixed to be zero, belong to $\TED$.

\begin{lem} \label{Lem_local_link_edges_radii_to_tetrahedral_edges}
Let $\Delta = ijk$ be a decorated triangle in $\PP \cong \Femb$
and let $\tau_{\Delta}$ be its corresponding hyper-ideal
tetrahedron (see constructions \ref{Sec_hyperbolic_tetrahedra}.1
and \ref{Sec_hyperbolic_tetrahedra}.2, as well as figure
\ref{Fig4}a). Let $(l, r)_{\Delta}=\big(l_{ij}, l_{jk}, l_{ki},
r_k, r_i, r_j\big) \in \ERD$ be the the three edge-lengths and
three vertex radii of $\Delta$, and let $(a,
b)_{\Delta}=\big(a_{ij}, a_{jk}, a_{ki}, b_k, b_i, b_j\big) \in
\TED$ be the six principal edge-lengths of $\tau_{\Delta}$. Then
for $v \in V_{\Delta}$ and $uv \in E_{\Delta}$ the following
formulas hold:

\smallskip
\noindent {\bf 1.} $\PP=\Euclideanplane$.
\begin{align}
r_v &= e^{- b_v} \,\, \text{ if } \,\, v \in V_1 \,\,\,\, \text{
and } \,\,\,\, r_v=b_v=0 \,\, \text{ if } \,\, v
\in V_0 \label{Eqn_Eucl_r_b}\\
l_{uv} &= \sqrt{e^{-2 b_u} + e^{-2 b_v} + 2
e^{-b_u-b_v}\cosh{a_{uv}}} \,\,\,\, \text{ if } \,\, uv \in E_1
\cup E_{\pi} \, \text{ and } \, u,v \in V_1 \label{Eqn_Eucl_l_a_b_b}\\
l_{uv} &= \sqrt{e^{-2 b_v} + e^{a_{uv} - b_v}} \,\,\,\, \text{ if
} \,\, uv \in E_1 \cup E_{\pi} \,\,\text{ and } \,\, u \in
V_0, \,\, v \in V_1\label{Eqn_Eucl_l_a_b}\\
l_{uv} &= e^{a_{uv}/2} \,\, \text{ if } \,\,\,\, uv \in E_1 \cup
E_{\pi} \,\, \text{ and } \,\, u,v \in
V_0 \label{Eqn_Eucl_l_a}\\
l_{uv} &= e^{ - b_u} + e^{ - b_v} \,\,\,\, \text{ if } \,\, uv \in
E_0 \label{Eqn_Eucl_l_b_Eo}
\end{align}

\smallskip
\noindent {\bf 2.} $\PP = \hyperbolicplane$.
\begin{align}
r_v &= \sinh^{-1}{\Big(\frac{1}{\sinh{b_v}}\Big)} \,\, \text{ if }
\,\, v \in V_1 \,\,\,\, \text{ and } \,\,\,\, r_v=b_v=0 \,\,
\text{ if } \,\, v
\in V_0 \label{Eqn_Hyp_r_b}\\
l_{uv} &= \cosh^{-1}{\Big(\frac{\cosh{a_{uv}} + \cosh{b_u}
\cosh{b_v}}{\sinh{b_u}\sinh{b_v}}\Big)} \,\,\,\, \text{ if } \,\,
uv \in E_1
\cup E_{\pi} \, \text{ and } \, u,v \in V_1  \label{Eqn_Hyp_l_a_b_b}\\
l_{uv} &= \cosh^{-1}{\Big(\frac{e^{a_{uv}} +
\cosh{b_v}}{\sinh{b_v}}\Big)}\,\,\,\, \text{ if } \,\, uv \in E_1
\cup E_{\pi} \,\,\text{ and } \,\, u \in
V_0, \,\, v \in V_1  \label{Eqn_Hyp_l_a_b}\\
l_{uv} &= 2 \sinh^{-1}{\big(e^{a_{uv}/2}\big)}
\,\,\,\, \text{ if } \,\, uv \in E_1 \cup E_{\pi} \,\, \text{ and
} \,\, u,v \in
V_0  \label{Eqn_Hyp_l_a}\\
l_{uv} &= \sinh^{-1}{\Big(\frac{1}{\sinh{b_u}}\Big)} +
\sinh^{-1}{\Big(\frac{1}{\sinh{b_v}}\Big)} \,\,\,\, \text{ if }
\,\, uv \in E_0. \label{Eqn_Hyp_l_b_Eo}
\end{align}
\end{lem}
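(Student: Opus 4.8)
The plan is to carry out every computation in the upper half-space model of $\hyperbolicspace$, exploiting its conformality exactly as in the proof of Lemma~\ref{Lem_tetrahedron_angles_edge_lengths}. In each of the two cases the five formulas split into two independent geometric relations that I would establish separately. The first relates a vertex radius $r_v$ to the length $b_v = l_{\hyperbolicspace}(vB_v)$ and involves only the point $v$, the surface $\Femb$, and the single truncating plane $\tilde c_v$. The second expresses $l_{uv}$ through $a_{uv}, b_u, b_v$ and is governed entirely by the geometry inside the totally geodesic plane $\widetilde{uv}$, which contains all of $u, v, B_u, B_v, A^{uv}_u, A^{uv}_v$. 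Thus both relations reduce to two-dimensional hyperbolic trigonometry, and once the generic (non-degenerate) formulas are in hand I would read off the degenerate ones as limits.

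For the Euclidean case I would normalize $\Femb = \Hor$ to the horosphere $\{(x,y,1)\}$, so that $F_{proj}$ is vertical projection and the metric induced on $\Hor$ is Euclidean. A vertex $v$ then carries a circle of Euclidean (hence hyperbolic) radius $r_v$ whose projection extends to the half-sphere $\tilde c_v$ of Euclidean radius $r_v$. The geodesic $vB_v$ is the vertical line through $v$; since it passes through the center of $\tilde c_v$ it meets it orthogonally at the top of the half-sphere, so $b_v = |\log r_v|$, giving $r_v = e^{-b_v}$ after fixing the scale so that all radii are at most $1$, which is (\ref{Eqn_Eucl_r_b}). Inside the vertical half-plane $\widetilde{uv}$ the planes $\tilde c_u,\tilde c_v$ appear as two disjoint semicircular geodesics whose centers lie a Euclidean distance $l_{uv}$ apart and whose radii are $r_u,r_v$; their common-perpendicular length $a_{uv}$ is then their inversive distance,
\begin{equation*}
\cosh a_{uv} = \frac{l_{uv}^2 - r_u^2 - r_v^2}{2\,r_u r_v},
\end{equation*}
which rearranges to (\ref{Eqn_Eucl_l_a_b_b}). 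Tangent vertex circles force $a_{uv}=0$ and hence $l_{uv}=r_u+r_v$, i.e.\ (\ref{Eqn_Eucl_l_b_Eo}), while (\ref{Eqn_Eucl_l_a_b}) and (\ref{Eqn_Eucl_l_a}) arise by letting $r_u\to 0$ (equivalently $b_u\to\infty$) and re-measuring $a_{uv}$ from the horocycle $Hor_u$.

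For the hyperbolic case I would place $\Femb=\Hplane$ as the vertical half-plane $\{y=0\}$ and recall that in the half-plane model a hyperbolic circle of radius $r_v$ is a Euclidean circle of Euclidean radius $\sinh r_v$; projecting and extending yields $\tilde c_v$. The common perpendicular from $v$ to $\tilde c_v$ is the unit semicircle orthogonal to $\Hplane$ at $v$, and a direct arc-length computation along it gives $b_v = \log\coth(r_v/2)$, equivalently $\sinh b_v\,\sinh r_v = 1$, which is (\ref{Eqn_Hyp_r_b}). The key observation for the length formula is that $u, B_u, A^{uv}_u, A^{uv}_v, B_v, v$ bound a right-angled hexagon in $\widetilde{uv}$ with consecutive sides $l_{uv}, b_v, \ast, a_{uv}, \ast, b_u$, the six right angles being precisely the orthogonalities recorded in Lemma~\ref{Lem_tetrahedron_angles_edge_lengths}. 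Applying the right-angled-hexagon cosine law with $l_{uv}$ and $a_{uv}$ in opposite positions yields
\begin{equation*}
\cosh a_{uv} = \sinh b_u \sinh b_v \cosh l_{uv} - \cosh b_u \cosh b_v,
\end{equation*}
which is exactly (\ref{Eqn_Hyp_l_a_b_b}) after solving for $\cosh l_{uv}$. As before, (\ref{Eqn_Hyp_l_b_Eo}) is the tangency specialization $a_{uv}=0$ (equivalently $l_{uv}=r_u+r_v$), while (\ref{Eqn_Hyp_l_a_b}) and (\ref{Eqn_Hyp_l_a}) follow by sending one or both of $b_u,b_v$ to infinity, using $\cosh,\sinh\sim\tfrac12 e^{(\cdot)}$ together with the horocyclic re-normalization of $a_{uv}$.

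I expect the degenerate cases, rather than the generic ones, to be the main obstacle. When a vertex circle shrinks to a point the truncating plane recedes to an ideal point, $b_v$ diverges, and the hexagon (or the pair of geodesics) degenerates into a figure with an ideal vertex decorated by the horosphere $Hor_v$; one must check that the oriented distance $a_{uv}$ introduced in Lemma~\ref{Lem_tetrahedron_angles_edge_lengths} parts~3--5 is exactly the quantity that survives the limit, i.e.\ that replacing the truncating plane by $Hor_v$ subtracts precisely $b_v$ from the old common-perpendicular length. Keeping the sign convention for these oriented distances consistent (positive when the horosphere and the opposing plane are disjoint, negative when they meet) and confirming the tangency degeneration $A^{uv}_u\equiv A^{uv}_v$ are the only delicate points; the remainder is routine hyperbolic trigonometry.
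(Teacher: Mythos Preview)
Your approach is correct and matches the paper's: both identify the face $uB_uA^{uv}_uA^{uv}_vB_vv$ of $\tau_{\Delta}$ as a right-angled hexagon in the totally geodesic plane $\widetilde{uv}$, obtain (\ref{Eqn_Hyp_l_a_b_b}) from the hexagon cosine law, and derive the Euclidean formulas by direct computation in the upper half-space with $\Hor$ at height $1$. The only stylistic difference is that the paper reads the degenerate cases (\ref{Eqn_Eucl_l_a_b})--(\ref{Eqn_Eucl_l_a}) and (\ref{Eqn_Hyp_l_a_b})--(\ref{Eqn_Hyp_l_a}) directly as cosine laws for right-angled pentagons and quadrilaterals with ideal vertices, whereas you obtain them as limits of the generic formula under $b_u\to\infty$ with the horocyclic renormalization of $a_{uv}$; both routes are standard and equivalent.
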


\begin{proof}
Constructions \ref{Sec_hyperbolic_tetrahedra}.1 and
\ref{Sec_hyperbolic_tetrahedra}.2 
reveal that both six-tuples $(l,r)_{\Delta}$ and $(a,b)_{\Delta}$
are naturally assigned to the same polyhedron $\tau_{\Delta}$
(figure \ref{Fig4}a). Then, for any $uv \in E_{\Delta}$, the face
that contains the points $uB_uA^{uv}_uA^{uv}_vB_vv$ can be treated
as a geodesic polygon in $\hyperbolicplane$. Observe that some of
the points that determine the face may actually merge together
into ideal points. First, one can derive formulas
(\ref{Eqn_Eucl_r_b}) by a direct integration in the upper
half-plane. Equality (\ref{Eqn_Hyp_r_b}) comes from a standard
formula from hyperbolic trigonometry (see \cite{Bus}). Then, in
order to derive the rest of the expressions from the list
(\ref{Eqn_Eucl_r_b}) - (\ref{Eqn_Hyp_l_b_Eo}), one could look at
the face $uB_uA^{uv}_uA^{uv}_vB_vv$ and simply apply various
formulas from hyperbolic trigonometry to express the length
$l_{uv}=l_{\hyperbolicplane}(uv)$ as a function of the given
lengths $a_{uv}, b_u$ and $b_v$. For instance,
(\ref{Eqn_Hyp_l_a_b_b}) follows from the hyperbolic law of cosines
for a right-angled hexagon, while (\ref{Eqn_Hyp_l_a_b}) and
(\ref{Eqn_Hyp_l_a}) could be respectively interpreted as the
reduction of that cosine law to right-angled pentagons with one
ideal vertex and to right-angled quadrilaterals with two ideal
vertices. In fact most of the equalities (\ref{Eqn_Eucl_r_b}) -
(\ref{Eqn_Hyp_l_b_Eo}) could be found in the texts \cite{Bus,
BenPetr, ThuBook, S}. Those that might not be easy to come across
in the literature could be derived by combining the hyperbolic
geometry of the upper half-plane model with the underlying
Euclidean geometry.  
\end{proof}

Recall that in the case of $\PP=\Euclideanplane$, decorated
triangles are considered up to Euclidean motions and scaling. In
the polyhedral interpretation, the corresponding hyper-ideal
tetrahedra come decorated with a choice of a horosphere $\Hor$.
The rescaling of the Euclidean decorated triangle corresponds to a
shift of $\Hor$ closer to or further from its ideal vertex. If
there are other ideal vertices, their horoshperes $Hor_u$ are
adjusted accordingly. This rescaling manifests itself as a free
$\reals-$action on both spaces $\TE$ and $\TED$. For $t \in
\reals,$ the actions $(a,b) \mapsto ACT_t(a,b)$ and
$(a,b)_{\Delta} \mapsto ACT^{\Delta}_t(a,b)$ are expressed with
the formulas
\begin{align*}
&b_k \mapsto b_k - t \,\,\,\,  \text{ if } \, k \in V_1\\
&a_{ij} \mapsto a_{ij} + t \,\,\,\, \text{ if } i \in V_0, \, j
\in
V_1\\
&a_{ij} \mapsto a_{ij} + 2t \,\,\,\, \text{ if } i,\, j \in V_0.
\end{align*}
To factor out the $\reals-$action define the cross-sections
\begin{align*}
\TE_0 &= \Big\{(a,b) \in \TE \,\, \big{|} \,\, \sum_{i \text{ or }
j \in V_0} a_{ij} - \sum_{k \in V_1} b_k = 0 \,\, \Big\}\\
\TE_{0,\Delta} &= \Big\{(a,b)_{\Delta} \in \TED \,\, \big{|} \,\,
\sum_{i \text{ or } j \in V^0_{\Delta}} a_{ij} - \sum_{k \in
V_{\Delta}^1} b_k = 0 \,\, \Big\}.
\end{align*}
\noindent In the case $\PP = \hyperbolicplane$, we simply take
$\TE_0=\TE$ and $\TE_{0,\Delta} = \TE_{\Delta},$ i.e. we assume
that the $\reals-$action is trivial. Let $PR : \TE \to \TE_0$ and
$PR_{\Delta} : \TED \to \TE_{0,\Delta}$ be the linear projection
maps along the orbits of $\reals$. When $\PP=\hyperbolicplane$,
the maps $PR$ and $PR_{\Delta}$ are actually the identity, because
$\reals$ acts trivially. When $\PP=\Euclideanplane$, the maps $PR$
and $PR_{\Delta}$ are linear maps with one dimensional kernels.
Recall that whenever $\PP = \Euclideanplane$, $\, \reals$ also
acts on $\ER$ and $\ERD$ by $(l, r) \mapsto (e^tl, e^tr)$ and $(l,
r)_{\Delta} \mapsto (e^tl, e^tr)_{\Delta}$ respectively. In the
case $\PP=\hyperbolicplane$, we can again assume that $\reals$
acts trivially. Either way, denote these actions by $(l,r) \mapsto
E_t(l,r)$ and $(l, r)_{\Delta} \mapsto E^{\Delta}_t(l, r)$.

\begin{lem} \label{Lem_map_between_TE_and_ER} There are real analytic
diffeomorphisms $ \tilde{\Psi} \, : \, \TE \, \to \, \ER$ and
$\tilde{\Psi}_{\Delta}  \, : \, \TED \, \to \, \ERD$, defined by
the formulas in lemma
\ref{Lem_local_link_edges_radii_to_tetrahedral_edges}.
Geometrically speaking, $\tilde{\Psi}_{\Delta}$ maps the principal
edge-lengths of a hyper-ideal tetrahedron with combinatorics
$\Delta$ to the edge-lengths and vertex radii of its corresponding
decorated triangle (see constructions
\ref{Sec_hyperbolic_tetrahedra}.1 and
\ref{Sec_hyperbolic_tetrahedra}.2).
Furthermore, 
$\tilde{\Psi} \circ ACT_t = E_t
\circ \tilde{\Psi}$ and $\tilde{\Psi}_{\Delta} \circ
ACT^{\Delta}_t = E^{\Delta}_t \circ \Psi_{\Delta}$. Consequently,
there is a pair of real analytic diffeomorphisms $ \Psi \, : \,
\TE_0 \, \to \, \ER_1$ and $\Psi_{\Delta} \, : \, \TE_{0,\Delta}
\, \to \, \ER_{1,\Delta}$. Finally, $\TE = \tilde{\Psi}^{-1}(\ER)$
and $\TE_0 = \Psi^{-1}(\ER_1)$ as well as $\TE_{\Delta} =
\tilde{\Psi}^{-1}_{\Delta}(\ER_{\Delta})$ and $\TE_{0,\Delta} =
\Psi^{-1}_{\Delta}(\ER_{1,\Delta})$.
\end{lem}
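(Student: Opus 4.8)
The plan is to prove the local statement for $\tilde{\Psi}_{\Delta} : \TED \to \ERD$ first, then assemble the global map $\tilde{\Psi}$ vertex-by-vertex and edge-by-edge, deduce the $\reals$-equivariance by direct substitution, pass to the cross-sections, and finally read off the preimage identities. For bijectivity I would avoid the formulas entirely and argue geometrically. By definition $\TED$ is the set of principal edge-length vectors of hyper-ideal tetrahedra with combinatorics $\Delta$, while by Proposition \ref{Prop_corr_triangles_ER} the set $\ERD$ is in bijection with decorated triangles in $\PP$ up to isometry. Constructions \ref{Sec_hyperbolic_tetrahedra}.1 and \ref{Sec_hyperbolic_tetrahedra}.2 are mutually inverse (see the end of Construction \ref{Sec_hyperbolic_tetrahedra}.2 and Lemma \ref{Lem_two_triang_two_tetrahedra}), so their composition is a bijection between hyper-ideal tetrahedra and decorated triangles, hence between $\TED$ and $\ERD$. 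Lemma \ref{Lem_local_link_edges_radii_to_tetrahedral_edges} identifies this bijection with $\tilde{\Psi}_{\Delta}$, giving bijectivity for free.

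The analyticity of the forward map is immediate: the formulas of Lemma \ref{Lem_local_link_edges_radii_to_tetrahedral_edges} are compositions of $\exp$, $\cosh$, $\sinh$, square roots and the functions $\sinh^{-1}$, $\cosh^{-1}$, and on $\TED$ the arguments stay in the open ranges where these are real analytic ($b_v > 0$ for $v \in V_1$, so $\sinh b_v > 0$; the radicands are positive; the arguments of $\cosh^{-1}$ exceed $1$). For the inverse I would invert the formulas in closed form: the radius equations give $b_v = -\log r_v$ in the Euclidean case and $b_v = \sinh^{-1}(1/\sinh r_v)$ in the hyperbolic one, and substituting these into the length equations lets one solve for each $a_{uv}$ through $l_{uv}, b_u, b_v$ by a logarithm or a $\cosh^{-1}$. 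The strict inequalities defining $\ERD$, notably $l_{uv} > r_u + r_v$ together with the triangle inequalities, are precisely what keep these inverse expressions strictly inside the analytic ranges, so the inverse is real analytic on $\ERD$ and, simultaneously, lands in $\TED$. (Alternatively the inverse function theorem applies once the Jacobian is checked to be nonsingular, but the explicit inversion is cleaner and settles the domain matching at once.)

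The global map $\tilde{\Psi}$ is given by the same formulas, with $r_v$ depending only on $b_v$ and $l_{uv}$ only on $a_{uv}, b_u, b_v$, all global data attached to the single vertex $v$ or the single edge $uv$. Thus $\tilde{\Psi}$ restricts to $\tilde{\Psi}_{\Delta}$ on the coordinates of each face $\Delta \in F_T$, a shared edge receives the same length from both adjacent triangles, and since $(a,b) \in \TE$ exactly when every face block $(a,b)_{\Delta} \in \TED$ (and likewise $(l,r) \in \ER$ exactly when every $(l,r)_{\Delta} \in \ERD$), the local diffeomorphisms assemble into a global real analytic diffeomorphism $\tilde{\Psi} : \TE \to \ER$. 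This already yields $\TE = \tilde{\Psi}^{-1}(\ER)$ and $\TED = \tilde{\Psi}_{\Delta}^{-1}(\ERD)$. The equivariance $\tilde{\Psi} \circ ACT_t = E_t \circ \tilde{\Psi}$ is a one-line substitution in the Euclidean formulas: under $ACT_t$ every $b_v$ drops by $t$ while every $a_{uv}$ rises by $t$ or $2t$ according to how many endpoints lie in $V_0$, and a case check shows each $r_v$ and each $l_{uv}$ is multiplied by $e^t$, which is exactly $E_t$; in the hyperbolic case both actions are trivial.

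Finally, in the Euclidean case $\reals$ acts freely and $\TE_0$, $\ER_1$ are real analytic cross-sections meeting each orbit once, since the defining linear functional of $\TE_0$ (respectively $\sum_{ij \in E_T} l_{ij}$ for $\ER_1$) changes by a nonzero multiple of $t$ along orbits. By the equivariance just established, $\tilde{\Psi}$ descends to a real analytic diffeomorphism $\TE/\reals \to \ER/\reals$; transporting through the identifications $\TE_0 \cong \TE/\reals$ and $\ER_1 \cong \ER/\reals$ produces the real analytic diffeomorphism $\Psi : \TE_0 \to \ER_1$, and likewise $\Psi_{\Delta}$, together with $\TE_0 = \Psi^{-1}(\ER_1)$ and $\TE_{0,\Delta} = \Psi_{\Delta}^{-1}(\ER_{1,\Delta})$; in the hyperbolic case $\Psi = \tilde{\Psi}$ and there is nothing to factor out. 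I expect the main obstacle to be the analyticity of the inverse together with the exact matching of domains, that is, checking that the strict defining inequalities of $\ERD$ place the arguments of the inverted formulas strictly inside the analytic ranges of $\log$ and $\cosh^{-1}$ so that no boundary degeneracy occurs, and dually that the inverse image is $\TED$ rather than a larger set. The bijectivity itself is comparatively painless, being inherited wholesale from the mutually inverse Constructions \ref{Sec_hyperbolic_tetrahedra}.1 and \ref{Sec_hyperbolic_tetrahedra}.2.
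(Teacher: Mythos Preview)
Your proposal is correct and follows essentially the same approach as the paper: invert the explicit formulas of Lemma~\ref{Lem_local_link_edges_radii_to_tetrahedral_edges} to exhibit a real analytic inverse, check $\reals$-equivariance by direct substitution, and then pass to the cross-sections. The paper is terser (it defines $\Psi^{-1} = PR \circ \tilde{\Psi}^{-1}|_{\ER_1}$ directly rather than going through quotients), and your separate geometric argument for bijectivity via Constructions~\ref{Sec_hyperbolic_tetrahedra}.1 and~\ref{Sec_hyperbolic_tetrahedra}.2 is redundant once the explicit inverse is in hand, but none of this is wrong.
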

\begin{proof}
The geometric interpretation follows from lemma
\ref{Lem_local_link_edges_radii_to_tetrahedral_edges}. The
formulas from lemma
\ref{Lem_local_link_edges_radii_to_tetrahedral_edges} are real
analytic expressions, so the maps $\tilde{\Psi}$ and
$\tilde{\Psi}_{\Delta}$ are real analytic. It is straightforward
to invert formulas (\ref{Eqn_Eucl_r_b}) and (\ref{Eqn_Hyp_r_b})
and express $b_v$ as a function of $r_v$. After that, one can
easily invert the rest of the formulas and express $a_{uv}$ in
terms of $r_u, r_v$ and $l_{uv}$ explicitly. Thus, one obtains
well defined real analytic expressions, which define the inverse
maps $\tilde{\Psi}^{-1}$ and $\tilde{\Psi}_{\Delta}^{-1}$.
Furthermore, having in mind how $\reals$ acts on the spaces $\TE$
and $\ER$ (resp. $\TED$ and $\ERD$), it is straight forward to
check the $\reals-$equivariance of the maps $\tilde{\Psi}$ and
$\tilde{\Psi}_{\Delta}$. Finally, one can define $\Psi$ and
$\Psi_{\Delta}$ so that $\Psi^{-1}= PR \circ
\tilde{\Psi}^{-1}|_{\ER_1} \, : \, \ER_1 \, \to \, \TE_0$ and
$\Psi^{-1}_{\Delta}= PR_{\Delta} \circ
\tilde{\Psi}^{-1}_{\Delta}|_{\ER_1} \, : \, \ER_{1,\Delta} \, \to
\, \TE_{0,\Delta}$.
\end{proof}

\begin{prop} \label{Prop_corr_HTet_and_TE}
A hyper-ideal tetrahedron in $\hyperbolicspace$ is defined
uniquely up to isometry by its principal edge-lengths, belonging
to the set $\TE_{0,\Delta}$.
\end{prop}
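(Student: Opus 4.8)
The plan is to establish the stated bijection by composing the correspondences already set up in the previous sections, exactly as was done for the dihedral-angle parametrization in Proposition \ref{Prop_corr_triangles_polyhedra_angles}, but now with the principal edge-lengths playing the role of the angles. Membership of the principal edge-lengths of a hyper-ideal tetrahedron with combinatorics $\Delta$ in the set $\TED$ holds by the very definition of $\TED$; in the Euclidean case one first normalizes the decorating horospheres so that the resulting six-tuple lands on the cross-section $\TE_{0,\Delta}$, which is possible since the only ambiguity in the decoration is precisely the free $\reals$-action $(a,b)_\Delta \mapsto ACT^{\Delta}_t(a,b)_\Delta$. Thus the content to be proved is that the assignment sending a hyper-ideal tetrahedron to its (normalized) principal edge-lengths is both surjective onto $\TE_{0,\Delta}$ and injective up to isometry of $\hyperbolicspace$.

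For surjectivity, I would start from an arbitrary point $(a,b)_\Delta \in \TE_{0,\Delta}$ and apply the real analytic diffeomorphism $\tilde{\Psi}_\Delta$ of Lemma \ref{Lem_map_between_TE_and_ER} to obtain edge-lengths and radii $(l,r)_\Delta = \tilde{\Psi}_\Delta(a,b)_\Delta \in \ERD$. Proposition \ref{Prop_corr_triangles_ER} then produces from $(l,r)_\Delta$ a decorated triangle $\Delta$ in $\PP$, unique up to isometry (or similarity in the Euclidean case). Feeding $\Delta$ into construction \ref{Sec_hyperbolic_tetrahedra}.1 yields a hyper-ideal tetrahedron $\tau_\Delta$, and because $\tilde{\Psi}_\Delta$ is by construction the map carrying the principal edge-lengths of a tetrahedron to the edge-data of its associated decorated triangle (Lemma \ref{Lem_local_link_edges_radii_to_tetrahedral_edges}), the principal edge-lengths of $\tau_\Delta$ are $\tilde{\Psi}_\Delta^{-1}(l,r)_\Delta = (a,b)_\Delta$, as required.

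For injectivity, suppose two hyper-ideal tetrahedra $\tau$ and $\tau'$ with combinatorics $\Delta$ share the same normalized principal edge-lengths $(a,b)_\Delta \in \TE_{0,\Delta}$. Applying construction \ref{Sec_hyperbolic_tetrahedra}.2 to each produces decorated triangles $\Delta_\tau$ and $\Delta_{\tau'}$, whose edge-lengths and radii are in both cases equal to $\tilde{\Psi}_\Delta(a,b)_\Delta \in \ERD$. By Proposition \ref{Prop_corr_triangles_ER}, $\Delta_\tau$ and $\Delta_{\tau'}$ are therefore isometric (or similar). Since constructions \ref{Sec_hyperbolic_tetrahedra}.1 and \ref{Sec_hyperbolic_tetrahedra}.2 are mutually inverse, Lemma \ref{Lem_two_triang_two_tetrahedra} lets me conclude that $\tau$ and $\tau'$ are isometric. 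Equivalently, the whole argument can be packaged as the observation that $\Psi_\Delta : \TE_{0,\Delta} \to \ER_{1,\Delta}$, composed with the bijections supplied by Proposition \ref{Prop_corr_triangles_ER} and Lemma \ref{Lem_two_triang_two_tetrahedra}, is a bijection onto the set of isometry classes of hyper-ideal tetrahedra.

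The step I expect to require the most care is the bookkeeping of the scaling freedom in the Euclidean case: I must check that factoring out the $\reals$-action to pass to the cross-section $\TE_{0,\Delta}$ matches, under $\tilde{\Psi}_\Delta$, the normalization defining $\ER_{1,\Delta}$ and the passage from similarity classes to a single representative of the decorated triangle. This is exactly guaranteed by the $\reals$-equivariance $\tilde{\Psi}_\Delta \circ ACT^{\Delta}_t = E^{\Delta}_t \circ \tilde{\Psi}_\Delta$ recorded in Lemma \ref{Lem_map_between_TE_and_ER}, so once that equivariance is invoked the hyperbolic and Euclidean cases are handled uniformly and no genuinely new geometric input is needed beyond the cited results.
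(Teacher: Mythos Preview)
Your proposal is correct and follows essentially the same route as the paper's proof: pass through $\tilde{\Psi}_\Delta$ (or $\Psi_\Delta$) to decorated-triangle data, invoke Proposition~\ref{Prop_corr_triangles_ER} for existence and uniqueness of the decorated triangle, then use constructions \ref{Sec_hyperbolic_tetrahedra}.1 and \ref{Sec_hyperbolic_tetrahedra}.2 together with Lemma~\ref{Lem_two_triang_two_tetrahedra} to transfer the conclusion to tetrahedra. Your explicit discussion of the $\reals$-equivariance to handle the Euclidean normalization is a bit more detailed than the paper's, but the argument is the same.
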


\begin{proof}
By definition, the principal edge-lengths of a hyper-ideal
tetrahedron belong to $\TED$. If given edge-lengths
$(a,b)_{\Delta} \in \TE_{0,\Delta}$, then take $(l,r)_{\Delta} =
\Psi_{\Delta}(a,b) \in \ER_{1,\Delta}$. Choose an arbitrary $\Femb
\subset \hyperbolicspace$ and by proposition
\ref{Prop_corr_triangles_ER} construct in $\Femb$ a decorated
triangle with edge-lengths and vertex radii $(l,r)_{\Delta}$. Then
apply construction \ref{Sec_hyperbolic_tetrahedra}.1 to obtain a
hyper-ideal tetrahedron. By lemma \ref{Lem_map_between_TE_and_ER}
the tetrahedron has principal edge-lengths $(a,b)_{\Delta} \in
\TE_{0,\Delta}$. In order to prove uniqueness, let $(a,b)_{\Delta}
\in \TE_{0,\Delta}$ give rise to two hyper-ideal tetrahedra. Then
by construction \ref{Sec_hyperbolic_tetrahedra}.2 these two
tetrahedra correspond to two decorated triangles with equal
edge-lengths and vertex radii. By proposition
\ref{Prop_corr_triangles_ER}, the two decorated triangles are
isometric. Therefore, by lemma
\ref{Lem_two_triang_two_tetrahedra}, and the fact that
constructions \ref{Sec_hyperbolic_tetrahedra}.1 and
\ref{Sec_hyperbolic_tetrahedra}.2 are converse to each other, the
two tetrahedra are isometric. \end{proof}


\begin{lem} \label{Lem_diffeo_TEdelta_angles}
For a given combinatorial triangle $\Delta$, there exists an
$\reals-$invariant real analytic map $\tilde{\Phi}_{\Delta} \, :
\, \TE_{\Delta} \, \to \, \ADelta$ such that its restriction
$\tilde{\Phi}_{\Delta}|_{\TE_{0,\Delta}} = \Phi_{\Delta} \, : \,
\TE_{0, \Delta} \, \to \, \ADelta$ is a real analytic
diffeomorphism. The maps $\tilde{\Phi}_{\Delta}$ and
$\Phi_{\Delta}$ associate to the tetrahedral edge-lengths
$(a,b)_{\Delta} \in \TE_{0,\Delta}$ of a hyper-ideal tetrahedron
with combinatorics $\Delta$ its corresponding dihedral angles
$(\alpha^{\Delta}, \beta^{\Delta}) \in \ADelta$. In particular,
the angles $\alpha^{\Delta}_{ij} = \alpha_{ij}^{\Delta}(a,b)$ for
$ij \in E_{\Delta}$ and $\beta^{\Delta}_{k} =
\beta_{k}^{\Delta}(a,b)$ for $k \in V_{\Delta}$ are
$\reals-$invariant and depend analytically on the edge-length
variables $(a,b)_{\Delta} \in \TED$ and can be written explicitly
in terms of compositions of hyperbolic trigonometric formulas.
\end{lem}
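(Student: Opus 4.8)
The plan is to define $\tilde{\Phi}_{\Delta}$ geometrically and then read off every required property from the correspondences already established in this section. To a point $(a,b)_{\Delta}\in\TED$ I would associate the six principal dihedral angles of the hyper-ideal tetrahedron $\tau_{\Delta}$ that, by Proposition \ref{Prop_corr_HTet_and_TE}, is determined up to isometry by these principal edge-lengths. That the resulting six-tuple $(\alpha^{\Delta},\beta^{\Delta})$ lands in $\ADelta$ is precisely the content of Proposition \ref{Prop_corr_triangles_polyhedra_angles}, so the target space is correct and the defining inequalities of $\ADelta$ need no separate verification. The $\reals$-invariance is then immediate: the action $ACT_t$ only slides the decorating horosphere $\Hor$ and the auxiliary horospheres $Hor_u$ toward or away from their ideal vertices, leaving the underlying tetrahedron $\tau_{\Delta}$ and hence its dihedral angles unchanged; in the hyperbolic case the action is trivial and there is nothing to check.

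For analyticity I would produce explicit formulas, and the cleanest route is to factor through the decorated triangle. By Lemma \ref{Lem_map_between_TE_and_ER} the map $\tilde{\Psi}_{\Delta}:\TED\to\ERD$ is a real analytic diffeomorphism, and the six angles of a decorated triangle are real analytic functions of its edge-lengths and radii $(l,r)_{\Delta}$: the face angles $\beta^{\Delta}_{u}$ come from the (Euclidean, resp. hyperbolic) law of cosines applied to the geodesic triangle with sides $l_{ij},l_{jk},l_{ki}$, while each $\alpha^{\Delta}_{uv}$ is the intersection angle of the face circle with the edge $uv$ and is an explicit trigonometric expression in $r_u,r_v,l_{uv}$. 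Composing yields $(\alpha^{\Delta},\beta^{\Delta})$ as compositions of hyperbolic trigonometric functions of $(a,b)_{\Delta}$, as claimed. Equivalently, one may compute the angles directly inside $\tau_{\Delta}$, applying the cosine laws for right-angled hexagons, pentagons and quadrilaterals (exactly as in Lemma \ref{Lem_local_link_edges_radii_to_tetrahedral_edges}) to the truncating triangle at each vertex $u$, whose three angles are $\alpha^{\Delta}_{wu}$, $\alpha^{\Delta}_{uv}$ and $\beta^{\Delta}_{u}$ by Lemma \ref{Lem_tetrahedron_angles_edge_lengths}.

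It remains to show that $\Phi_{\Delta}=\tilde{\Phi}_{\Delta}|_{\TE_{0,\Delta}}$ is a real analytic diffeomorphism onto $\ADelta$. Bijectivity is essentially free: by Proposition \ref{Prop_corr_HTet_and_TE} the cross-section $\TE_{0,\Delta}$ is a complete isometry invariant for hyper-ideal tetrahedra with combinatorics $\Delta$, and by Proposition \ref{Prop_corr_triangles_polyhedra_angles} so is $\ADelta$; since $\Phi_{\Delta}$ sends one invariant of a tetrahedron to the other, it is a bijection between equidimensional real analytic manifolds. To upgrade this to a diffeomorphism I would exhibit an explicit analytic inverse: given $(\alpha^{\Delta},\beta^{\Delta})\in\ADelta$, Proposition \ref{Prop_angles_to_dec_triangle} reconstructs a decorated triangle uniquely up to isometry (resp.\ similarity) by an analytic ruler-and-compass / Apollonius construction, Proposition \ref{Prop_corr_triangles_ER} reads off its data $(l,r)_{\Delta}$, and $\tilde{\Psi}_{\Delta}^{-1}$ followed by $PR_{\Delta}$ returns a point of $\TE_{0,\Delta}$. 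Because constructions \ref{Sec_hyperbolic_tetrahedra}.1 and \ref{Sec_hyperbolic_tetrahedra}.2 are mutually inverse, this map is the set-theoretic inverse of $\Phi_{\Delta}$, so $\Phi_{\Delta}$ is a real analytic diffeomorphism.

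The main obstacle is making the inverse genuinely analytic \emph{across the degenerate strata}. The trigonometric formulas for the angles change form (hexagon versus pentagon versus quadrilateral) according to whether vertex circles collapse to points or become mutually tangent, i.e.\ according to the fixed partitions $V_{\Delta}=V^{1}_{\Delta}\sqcup V^{0}_{\Delta}$ and $E_{\Delta}=E^{1}_{\Delta}\sqcup E^{0}_{\Delta}$; one must confirm that on each prescribed stratum the relevant expressions are smooth and that $\Phi_{\Delta}$ stays a local diffeomorphism there. I expect the technical heart to be verifying nonsingularity of the differential on these boundary configurations, either by a direct determinant computation from the explicit formulas or by a Schläfli-type argument showing the linear map relating variations of dihedral angles to variations of principal edge-lengths is invertible.
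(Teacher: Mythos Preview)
Your approach is essentially the same as the paper's: define the map geometrically via Propositions \ref{Prop_corr_HTet_and_TE} and \ref{Prop_corr_triangles_polyhedra_angles}, then verify analyticity in both directions by writing down explicit hyperbolic-trigonometric formulas (the paper works out one combinatorial type in full and declares the others analogous). Two small clarifications: first, the angle $\alpha^{\Delta}_{uv}$ is not a function of $r_u,r_v,l_{uv}$ alone (the face circle depends on all six data), so your second route via the truncating triangles of $\tau_{\Delta}$ is the one that actually delivers clean formulas; second, your worry about ``crossing degenerate strata'' is moot, since the partitions $V_{\Delta}=V_{\Delta}^1\sqcup V_{\Delta}^0$ and $E_{\Delta}=E_{\Delta}^1\sqcup E_{\Delta}^0$ are fixed once and for all with $\Delta$, so you only ever need one set of formulas per combinatorial type and never have to patch across types.
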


\begin{proof}
Let $(a,b)_{\Delta} \in \TE_{0, \Delta}$. By applying proposition
\ref{Prop_corr_HTet_and_TE}, take a hyper-ideal tetrahedron
$\tau_{\Delta}$ whose principal edge-lengths are $(a,b)_{\Delta}$
and record its six angles $(\alpha^{\Delta}, \beta^{\Delta}) \in
\ADelta$. Denote this map by $\Phi_{\Delta}(a,b) =
(\alpha^{\Delta},\beta^{\Delta})$. Proposition
\ref{Prop_corr_HTet_and_TE} guarantees the correctness of the
map's definition, because if we choose another tetrahedron
$\tau'_{\Delta}$ with the same principal edge-lengths, its
principal dihedral angles will also be $(\alpha^{\Delta},
\beta^{\Delta}) \in \ADelta$ since $\tau_{\Delta}$ and
$\tau'_{\Delta}$ have to be isometric. Furthermore, proposition
\ref{Prop_corr_triangles_polyhedra_angles} implies that
$\Phi_{\Delta} \, : \, \TE_{0,\Delta} \, \to \, \ADelta$ has a
well defined inverse $\Phi_{\Delta}^{-1} \, : \, \ADelta \, \to \,
\TE_{0,\Delta}$. The interpretation of $(\alpha^{\Delta},
\beta^{\Delta})$ as the principal dihedral angles of a hyper-ideal
tetrahedron and $(a,b)_{\Delta}$ as the corresponding principal
edge-lengths of the same tetrahedron follows immediately form the
construction of the map $\Phi_{\Delta}$.
The map $\tilde{\Phi}_{\Delta}$ is defined as
$\tilde{\Phi}_{\Delta} = \Phi_{\Delta} \circ PR_{\Delta}.$ Recall
that in the hyperbolic case, $RP_{\Delta}$ is the identity and so
$\tilde{\Phi}_{\Delta} = \Phi_{\Delta}$.

Next, one needs so show that both $\Phi_{\Delta}$ and
$\Phi^{-1}_{\Delta}$ depend analytically on their respective
variables. This simply means that it is enough to make sure that
the angles can be expressed as real analytic function of the
edge-lengths and vice versa. This follows directly from hyperbolic
trigonometry and the fact that the dependence in each direction
can be written down explicitly. To illustrate this fact, we work
out only one case in detail. The rest are analogous. In addition
to the exposition that follows, we encourage the reader to look at
figure \ref{Fig4}a and follow the notations there.

Before we continue, we would like to emphasize that there are
several alternative ways of writing down the analytic dependence
of the dihedral angles on the edge-lengths and vice versa in terms
of compositions of different hyperbolic trigonometric formulas
(such as the hyperbolic law of sines and the various cases of the
hyperbolic laws of cosines). However, we are presenting just one
of them, while all the rest yield the same results, simply written
down as combinations of different formulas.

Let $\tau_{\Delta}$ be a hyper-ideal tetrahedron with principal
edge-lengths $(a,b)_{\Delta}$ and dihedral angles
$(\alpha^{\Delta},\beta^{\Delta})$. Moreover, let $\tau_{\Delta}$
be labelled as in construction \ref{Sec_hyperbolic_tetrahedra}.1
and figure \ref{Fig4}a. Furthermore, assume that $\tau_{\Delta}$
has one ideal vertex $A^{ij}_j\equiv A^{jk}_j \equiv B_j \in
\partial\hyperbolicspace$ and three triangular truncating faces
without ideal vertices (figure \ref{Fig4}a).
Therefore $b_j=0$ and $\alpha^{\Delta}_{ij} + \beta^{\Delta}_j +
\alpha^{\Delta}_{jk} = \pi$. Let $\zeta_{14}(a_{uv},b_v)$ and
$\zeta_{13}(a_{uv},b_u,b_v)$ be the respective right hand sides of
equations (14) and (13) from lemma
\ref{Lem_local_link_edges_radii_to_tetrahedral_edges}. Then,
according to lemma
\ref{Lem_local_link_edges_radii_to_tetrahedral_edges}, the edge
lengths of the triangular face $ijk$ are
$$l_{ij}=\zeta_{14}(a_{ij},b_i), \,\,\,\,
l_{jk}=\zeta_{14}(a_{jk},b_k), \,\,\,\,
l_{ki}=\zeta_{13}(a_{ki},b_k,b_i).$$ For an arbitrary geodesic
triangle with edge-lengths $x_1, x_2, x_3$ and an angle $\gamma_3$
opposite to $x_3$, the hyperbolic law of cosines in terms of edges
(see \cite{Bus}) gives the formula
$$\gamma_3 = \zeta(x_1,x_2,x_3) = \arccos{\Big(\frac{\cosh{x_1}\cosh{x_2} - \cosh{x_3}}
{\sinh{x_1}\sinh{x_2}}\Big)}.$$ Applying this equality to the
triangle $\triangle ijk$, one obtains the expressions
$$\beta^{\Delta}_{i}=\zeta(l_{ki}, l_{ij}, l_{jk}), \,\,\,\,
\beta^{\Delta}_{j}=\zeta(l_{ij}, l_{jk}, l_{ki}), \,\,\,\,
\beta^{\Delta}_{k}=\zeta(l_{jk}, l_{ki}, l_{ij}).$$ Thus, the
angles $\beta^{\Delta}_i, \beta^{\Delta}_j$ and $\beta^{\Delta}_k$
are analytic functions with respect to the variables
$(a,b)_{\Delta}$.

For any permutation $u \neq v \neq w \in V_{\Delta}$ such that
$v\neq j$, define the lengths $\sigma^{vw}_v =
l_{\hyperbolicspace}(A^{vw}_vB_v)$ and $\sigma_v =
l_{\hyperbolicspace}(A^{uv}_vA^{vw}_v).$ Since the face
$iB_iA^{ki}_iA^{ki}_kB_kk$ is a right-angled hexagons, one finds
$\sigma^{ki}_i = \zeta_{13}(b_k,a_{ki},b_i)$. Furthermore, the
face $iB_iA^{ij}_iB_jj$ is a right-angled pentagon with ideal
vertex $B_j\equiv A^{ij}_j\equiv A^{jk}_j$. Then $\sigma^{ij}_i =
\zeta_{14}(-a_{ij},b_i)$. Analogously, by considering the
right-angled pentagon $A^{ki}_iA^{ij}_iB_jA^{jk}_kA^{ki}_k$ with
ideal vertex $B_j$, one comes to the equality $\sigma_i =
\zeta_{14}(a_{jk} - a_{ij}, \, a_{ki})$. By applying the
hyperbolic law of sines (see \cite{Bus}) to the triangular face
$A^{ki}_iA^{ij}_iB_i$ one comes to the expressions
$$\alpha^{\Delta}_{ij} = \arcsin{\left(
\frac{\sinh{\sigma^{ki}_i}}{\sinh{\sigma_i}}\sin{\beta^{\Delta}_i}\right)}
\,\,\,\, \text{ and } \,\,\,\, \alpha^{\Delta}_{ki} =
\arcsin{\left(
\frac{\sinh{\sigma^{ij}_i}}{\sinh{\sigma_i}}\sin{\beta^{\Delta}_i}\right)}$$
which are real analytic functions with respect to
$(a,b)_{\Delta}$. Subsequently, $\alpha^{\Delta}_{jk} = \pi -
\alpha^{\Delta}_{ij} - \beta^{\Delta}_j$ is also a real analytic
function with respect to $(a,b)_{\Delta}.$

Next, we express the edge-lengths in terms of the dihedral angles.
For that we need the law of cosines for a hyperbolic triangle in
terms of its angles \cite{Bus}
$$x_3 = \nu(\gamma_1,\gamma_2,\gamma_3)
= \cosh^{-1}{\Big(\frac{\cos{\gamma_1}\cos{\gamma_2} +
\cos{\gamma_3}} {\sin{\gamma_1}\sin{\gamma_2}}\Big)},$$ where
$\gamma_1, \gamma_2$ and $\gamma_3$ are the angles of the triangle
and $x_3$ is the length of the edge opposite to $\gamma_3$.
Applying this formula to the triangle $\triangle ijk$
$$l_{ij} = \nu(\beta^{\Delta}_i, \beta^{\Delta}_j, \beta^{\Delta}_k),
\,\,\,\, l_{jk} = \nu(\beta^{\Delta}_j, \beta^{\Delta}_k,
\beta^{\Delta}_i), \,\,\,\, l_{ki} = \nu(\beta^{\Delta}_k,
\beta^{\Delta}_i, \beta^{\Delta}_j).$$ By applying the same
argument to the triangles $\triangle A^{ki}_iA^{ij}_iB_i$ and
$\triangle A^{jk}_kA^{ki}_kB_k$ one obtains the respective
formulas \begin{align*}\sigma^{ij}_i &= \nu(\beta^{\Delta}_i,
\alpha^{\Delta}_{ij}, \alpha^{\Delta}_{ki}), \,\,\,\,\,
\sigma^{ki}_i = \nu(\beta^{\Delta}_i, \alpha^{\Delta}_{ki},
\alpha^{\Delta}_{ij}) \,\,\,\, \text{ and }\\
\sigma^{jk}_k &= \nu(\beta^{\Delta}_k, \alpha^{\Delta}_{jk},
\alpha^{\Delta}_{ki}), \,\,\,\,\, \sigma^{ki}_k =
\nu(\beta^{\Delta}_k, \alpha^{\Delta}_{ki},
\alpha^{\Delta}_{jk})\end{align*} Since $iB_i$ and
$A^{ki}_iA^{ki}_k$ are edges of the right-angled hexagon
$iB_iA^{ki}_iA^{ki}_kB_kk$, the equalities $b_i =
\zeta_{13}(\sigma^{ki}_k, l_{ki},\sigma^{ki}_i)$ and $a_{ki} =
\zeta_{13}(l_{ki},\sigma^{ki}_i,\sigma^{ki}_k) $ hold. Same
argument can be used to find $b_k$. However, we present a
different formula, which could be useful in the case of
hyper-ideal tetrahedra of other combinatorial types. For the
right-angled pentagon $kB_kA^{jk}_kB_jj$ with one ideal vertex
$B_j$ the following law of cosines applies \cite{Bus}:
$$b_k = \cosh^{-1}{\left(\frac{\cosh{l_{jk}}\cosh{\sigma^{jk}_k} + 1}
{\sinh{l_{jk}}\sinh{\sigma^{jk}_k}}\right)}.$$ Thus, the
edge-lengths $b_i$ and $b_k$ are real analytic functions with
respect to the dihedral angles $(\alpha^{\Delta},\beta^{\Delta}).$
So is $b_j \equiv 0$. Considering the pentagonal faces
$iB_iA^{ij}_iB_jj$ and $kB_kA^{jk}_kB_jj$, one can invert formula
(14) in order to obtain
$$a_{ij} = \log{\big( \cosh{l_{ij}}\sinh{b_i} - \cosh{b_i} \big)}\,\, \text{ and }
\,\, a_{jk} = \log{\big( \cosh{l_{ij}}\sinh{b_i} - \cosh{b_i}
\big)}.$$
\end{proof}

\section{Volumes of hyper-ideal tetrahedra} \label{Sec_Volumes_of_tetrahedra}

Assume we are given a combinatorial triangle $\Delta=ijk$.  
For $\PP = \hyperbolicplane$ define the affine injective map
$J_{\Delta} \, : \, \reals^{E_{\Delta}^1} \times
\reals^{V_{\Delta}^1} \, \to \, \reals^{E_{\Delta}} \times
\reals^{V_{\Delta}}$ by
\begin{align} \label{Formula_map_J}
\beta^{\Delta}_v &= \tilde{\beta}^{\Delta}_v \,\,\,\,
\text{ for } \,\, v \in V_{\Delta}^1 \nonumber\\
\beta^{\Delta}_v &= \pi - \tilde{\alpha}^{\Delta}_{uv} -
\tilde{\alpha}^{\Delta}_{vw}\,\,\,\, \text{ for } \,\, v \in
V_{\Delta}^0\\
\alpha^{\Delta}_{uv} &= \tilde{\alpha}^{\Delta}_{uv} \,\,\,\,
\text{
for } uv \in E_{\Delta}^1 \nonumber\\
\alpha^{\Delta}_{uv} &= 0 \,\,\,\, \text{ for } uv \in
E_{\Delta}^0, \nonumber
\end{align}
where $u\neq v\neq w \in V_{\Delta}$. For the case $\PP =
\Euclideanplane$, let us first exclude the case of $V^0_{\Delta} =
V_{\Delta}$ . Thus, without loss of generality, we may assume that
$k\in V_{\Delta}^1$ and $V_{k,\Delta}^1 =
V_{\Delta}^1\setminus\{k\}$. The corresponding affine injective
map $J_{\Delta} \, : \, \reals^{E_{\Delta}^1} \times
\reals^{V_{k,\Delta}^1} \, \to \, \reals^{E_{\Delta}} \times
\reals^{V_{\Delta}}$ is defined again by formulas
(\ref{Formula_map_J}) with the additional condition that when
$v=k$ we replace $\beta^{\Delta}_k = \tilde{\beta}^{\Delta}_k$ by
$\beta^{\Delta}_k = \pi -
\tilde{\beta}^{\Delta}_i-\tilde{\beta}^{\Delta}_j$. Now, let us
consider the case $\PP=\Euclideanplane$ with $V_{\Delta} =
V_{\Delta}^0$. Just in this case, let $E_{\Delta}^1 = E_{\Delta}
\setminus \{ki\}$. Then the affine embedding is $J_{\Delta} \, :
\, \reals^{E_{\Delta}^1} \, \to \, \reals^{E_{\Delta}} \times
\reals^{V_{\Delta}}$ is $\alpha^{\Delta}_{ij} =
\tilde{\alpha}^{\Delta}_{ij},\,\, \alpha^{\Delta}_{jk} =
\tilde{\alpha}^{\Delta}_{jk}, \,\, \alpha^{\Delta}_{ki} = \pi -
\tilde{\alpha}^{\Delta}_{ij} - \tilde{\alpha}^{\Delta}_{jk}$ and
$\beta^{\Delta}_w=\alpha^{\Delta}_{uv}$ for all $u\neq v\neq w \in
V_{\Delta}.$

Since $\ADelta\, \subset \, image(J_{\Delta})$, let $\ADelta^1 =
J_{\Delta}^{-1}(\ADelta).$ Then $J_{\Delta} \, : \, \ADelta^1 \,
\to \, \ADelta$ is an affine isomorphism between two convex
polytopes. Therefore, whatever function is convex on one of them,
it is mapped to a convex function on the other. For the points of
$\ADelta^1$ we use the notation $(\tilde{\alpha}^{\Delta},
\tilde{\beta}^{\Delta}) \in \ADelta$ having in mind that in some
cases either all coordinates $\tilde{\alpha}^{\Delta}$ or all
$\tilde{\beta}^{\Delta}$ could be absent.

Now, let us revise the space $\TE_{0,\Delta}$. First, let
$\PP=\Euclideanplane$. If $V_{\Delta}^1 \neq \varnothing$ and $k
\in V_{\Delta}^1$ then define
$$\TE_{1,\Delta} = \big\{ \, (a,b) \in \TED \,\,
| \,\, b_k=0 \, \big\}.$$ If $V_{\Delta}^1 = \varnothing$, then
$$\TE_{1,\Delta} = \big\{ \, (a,b) \in \TED \,\,
| \,\, a_{ki}=0 \, \big\}.$$ Furthermore $\TE_{1,\Delta} =
\TE_{\Delta}$ when $\PP = \hyperbolicplane$. Then, define the
linear projection
$$PR_1 \, : \, \TED \, \to \, \TE_{1,\Delta},$$
by $PR_1(a,b)_{\Delta} = ACT^{\Delta}_{b_k}(a,b),$ i.e. $PR_1$
projects onto $\TE_{1,\Delta}$ along the $\reals-$orbits. Notice
that in the hyperbolic case $PR_1$ is identity. Furthermore,
observe that $PR_1|_{\TE_{0,\Delta}} \, : \, \TE_{0,\Delta} \, \to
\, TE_{1,\Delta}$ is a linear diffeomorphism between the two
spaces. Consequently, one can define the real analytic
diffeomorphism $\Phi_{1,\Delta} \, : \, \TE_{1,\Delta} \, \to \,
\ADelta^1$ by $\Phi_{1,\Delta}^{-1} = PR_{1} \circ
\Phi_{\Delta}^{-1} \circ J_{\Delta}$.

Let for any $(\alpha^{\Delta},\beta^{\Delta}) \in \ADelta$ the
function $\Vol_{\Delta}(\alpha^{\Delta},\beta^{\Delta})$ be the
hyperbolic volume of the unique (up to isometry) hyper-ideal
tetrahedron with principal dihedral angles
$(\alpha^{\Delta},\beta^{\Delta})$ (see proposition
\ref{Prop_corr_triangles_polyhedra_angles}). Then let the volume
$\Vol_{1,\Delta}(\tilde{\alpha}^{\Delta},\tilde{\beta}^{\Delta}) =
\Vol_{\Delta}\big(J_{\Delta}(\tilde{\alpha}^{\Delta},\tilde{\beta}^{\Delta})\big)$
be defined for all
$(\tilde{\alpha}^{\Delta},\tilde{\beta}^{\Delta}) \in \ADelta^1$.

\begin{lem} \label{Lem_volume_concavity}
The functions $\Vol_{1,\Delta} \, : \, \ADelta^{1} \, \to \,
\reals$ and $\Vol_{\Delta} \,  : \, \ADelta \, \to \, \reals $ are
real analytic and strictly concave. Furthermore,
$\Phi_{1,\Delta}^{-1}(\tilde{\alpha}^{\Delta},\tilde{\beta}^{\Delta})
= - 2 \, \big(\nabla
\Vol_{1,\Delta}\big)_{(\tilde{\alpha}^{\Delta},\tilde{\beta}^{\Delta})}$
for $(\tilde{\alpha}^{\Delta},\tilde{\beta}^{\Delta}) \in
\ADelta^1$.
\end{lem}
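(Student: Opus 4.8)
The plan is to derive all three assertions from the Schläfli differential formula, using the edge--angle duality already packaged in the diffeomorphism $\Phi_{1,\Delta}$ of Lemma \ref{Lem_diffeo_TEdelta_angles}. I would first record Schläfli's formula for the finite-volume truncated polyhedron $\tau_{\Delta}$: under any smooth deformation,
\[ d\Vol_{\Delta} = -\frac{1}{2}\sum_{e}\ell(e)\,d\theta(e), \]
where the sum runs over all edges of $\tau_{\Delta}$, with $\ell(e)$ the length and $\theta(e)$ the interior dihedral angle. The auxiliary edges all carry the constant dihedral angle $\pi/2$ (as recorded after Lemma \ref{Lem_tetrahedron_angles_edge_lengths}), so $d\theta(e)=0$ on them and they drop out; only the principal edges remain, contributing the lengths $a_{uv}$ and $b_{w}$ paired with the differentials $d\alpha^{\Delta}_{uv}$ and $d\beta^{\Delta}_{w}$. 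When a vertex circle degenerates to a point the corresponding vertex of $\tau_{\Delta}$ is genuinely ideal, and one truncates with the decorating horospheres $Hor_u$, so that $\Vol_{\Delta}$ is the finite horospherically truncated volume and the matching lengths are the signed distances $a_{uv}$ of part~3 of Lemma \ref{Lem_tetrahedron_angles_edge_lengths}. The dependence of this truncated volume on the choice of horospheres is precisely the $\reals$-action $ACT_t$, and it is eliminated on the cross-section cut out by $J_{\Delta}$.

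This yields the third assertion. On $\ADelta$ the formula above is the differential identity
\[ d\Vol_{\Delta} = -\frac{1}{2}\Big(\sum_{uv}a_{uv}\,d\alpha^{\Delta}_{uv}+\sum_{w}b_{w}\,d\beta^{\Delta}_{w}\Big), \]
so the differential of $\Vol_{\Delta}$ is $-\tfrac12\Phi_{\Delta}^{-1}$ viewed as a covector, since $\Phi_{\Delta}^{-1}$ sends dihedral angles to the principal edge-lengths $(a,b)_{\Delta}$. Pulling back along the affine embedding $J_{\Delta}$ and using $\Phi_{1,\Delta}^{-1}=PR_{1}\circ\Phi_{\Delta}^{-1}\circ J_{\Delta}$, the chain rule together with the defining relations of $J_{\Delta}$ gives $\Phi_{1,\Delta}^{-1}=-2\,\nabla\Vol_{1,\Delta}$. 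Here one must verify that in the Euclidean case the constraint substitutions --- for example $\beta^{\Delta}_{k}=\pi-\tilde{\beta}^{\Delta}_{i}-\tilde{\beta}^{\Delta}_{j}$, which feeds $b_{k}\,d\beta^{\Delta}_{k}$ into the pull-back --- recombine correctly, and that projecting along the $\reals$-orbit via $PR_{1}$ is compatible with the differential of the truncated volume. This is a routine but bookkeeping-heavy check that I would run through case by case following the three definitions of $J_{\Delta}$.

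Analyticity is then immediate. By Lemma \ref{Lem_diffeo_TEdelta_angles} the map $\Phi_{1,\Delta}$, hence its inverse $\Phi_{1,\Delta}^{-1}$, is a real analytic diffeomorphism, so the $1$-form $-\tfrac12\,\Phi_{1,\Delta}^{-1}$ is real analytic on the convex, in particular simply connected, set $\ADelta^{1}$; its primitive $\Vol_{1,\Delta}$ is therefore real analytic. Because $\Vol_{\Delta}=\Vol_{1,\Delta}\circ J_{\Delta}^{-1}$ and $J_{\Delta}$ is an affine isomorphism onto $\ADelta$, the function $\Vol_{\Delta}$ is real analytic as well.

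The strict concavity is the main obstacle. Differentiating the gradient identity once more, the Hessian of $\Vol_{1,\Delta}$ equals $-\tfrac12\,D\Phi_{1,\Delta}^{-1}$; it is symmetric, being the Hessian of the analytic function $\Vol_{1,\Delta}$, and it is invertible at every point, since $\Phi_{1,\Delta}$ is a diffeomorphism and hence $\det D\Phi_{1,\Delta}^{-1}\neq 0$ throughout $\ADelta^{1}$. A continuous family of symmetric matrices over the connected set $\ADelta^{1}$ with nowhere-vanishing determinant has constant signature, because the eigenvalues vary continuously and none can cross zero; thus it suffices to determine the signature at one configuration. I would obtain it by a degeneration argument, letting the truncating faces shrink so that $\tau_{\Delta}$ tends to a genuine ideal tetrahedron, whose volume $\Lambda(\alpha)+\Lambda(\beta)+\Lambda(\gamma)$ (with $\Lambda$ the Lobachevsky function and $\Lambda''=-\cot$) is classically strictly concave, while treating the directions transverse to this limit separately. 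The part I expect to require the most care is making the boundary limit rigorous while keeping control of every eigenvalue sign; should the limit prove awkward, an alternative is to compute the Hessian directly at a single symmetric hyper-ideal tetrahedron from the explicit trigonometric expressions in Lemma \ref{Lem_diffeo_TEdelta_angles} and check negative definiteness there.
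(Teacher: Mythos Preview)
Your proposal is correct and follows essentially the same route as the paper: Schl\"afli's formula gives the gradient identity, analyticity follows from that of $\Phi_{1,\Delta}^{-1}$, and strict concavity is reduced to a single-point signature check via the constant-rank argument for $\mathrm{Hess}(\Vol_{1,\Delta})=-\tfrac12\,D\Phi_{1,\Delta}^{-1}$. The only divergence is your primary suggestion to verify the signature by degenerating to an ideal tetrahedron; the paper instead goes directly to your stated alternative---computing $D\Phi_{1,\Delta}^{-1}$ at a maximally symmetric hyper-ideal tetrahedron (e.g.\ all dihedral angles $\pi/4$, or $\beta=\pi/3$, $\alpha=\pi/6$) using the explicit formulas of Lemma~\ref{Lem_diffeo_TEdelta_angles}---which avoids the boundary-limit bookkeeping you yourself flag as delicate.
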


\begin{proof}
This lemma represents a crucial step in this article. We can
safely say that it is the ``engine" of the proof. In its own turn,
its proof relies on the famous Schl\"afli's formula
\cite{Schlafli, Milnor, Sch1, Sch2}. In this article we use its
version for hyper-ideal tetrahedra with fixed combinatorics
$\Delta=ijk$
\begin{align}
d \Vol_{\Delta} &= - \frac{1}{2}\left( \sum_{uv \in E_{\Delta}}
a_{uv} \, d \alpha^{\Delta}_{uv} \, + \, \sum_{v \in V_{\Delta}}
b_v \, d
\beta^{\Delta}_v \right) \nonumber\\
&= - \frac{1}{2}\left( \sum_{uv \in E_{\Delta}^1} a_{uv} \, d
\alpha^{\Delta}_{uv} \, + \, \sum_{v \in V_{\Delta}^1} b_v \, d
\beta^{\Delta}_v  \right) \label{Formula_Schlafli_1}
\end{align}
with $(\alpha^{\Delta},\beta^{\Delta}) \in \ADelta$. In other
words, the differential $d \Vol_{\Delta}$ is restricted to the
submanifold $\ADelta$, so in general its variables might not be
independent (see formula (\ref{Formula_map_J})). In terms of
independent variables, after pulling back $d \Vol_{\Delta}$ via
map $J_{\Delta}$ given by (\ref{Formula_map_J}), Schl\"afli's
formula becomes
\begin{align} \label{Formula_Schlafli_2}
d \Vol_{1,\Delta} = - \frac{1}{2}\left( \, \sum_{uv \in
E_{\Delta}^1} a_{uv} \, d \tilde{\alpha}^{\Delta}_{uv} \, + \,
\sum_{v \in \tilde{V}_{\Delta}^1} b_v \, d
\tilde{\beta}^{\Delta}_v \, \right),
\end{align}
where $\tilde{V}_{\Delta}^1 = V_{\Delta}^1$ when $\PP =
\hyperbolicplane$ and $\tilde{V}_{\Delta}^1 = V_{k,\Delta}^1$ when
$\PP = \Euclideanplane$. Clearly, it follows from
(\ref{Formula_Schlafli_2}) that $- 2 \, \big(\nabla
\Vol_{1,\Delta}\big)_{(\tilde{\alpha}^{\Delta},\tilde{\beta}^{\Delta})}
=(a,b)_{\Delta} \in \TE_{1,\Delta}$. Since also $(a,b)_{\Delta} =
\Phi_{1,\Delta}^{-1}(\tilde{\alpha}^{\Delta},\tilde{\beta}^{\Delta}),$
it can be concluded that $- 2 \, \big(\nabla
\Vol_{1,\Delta}\big)_{(\tilde{\alpha}^{\Delta},\tilde{\beta}^{\Delta})}
=
\Phi_{1,\Delta}^{-1}(\tilde{\alpha}^{\Delta},\tilde{\beta}^{\Delta})$
for any $(\tilde{\alpha}^{\Delta},\tilde{\beta}^{\Delta}) \in
\ADelta^1$. As $\Phi_{1,\Delta}$ is real analytic, then so is
$\nabla \Vol_{1,\Delta}$, which means that $\Vol_{1,\Delta}$ is
also real analytic.

The proof of the strict concavity of $\Vol_{1,\Delta}$, and
consequently of $\Vol_{\Delta}$, can be found in \cite{Sch2}. One
can also find comments and references in \cite{Sch1}. Another
proof of the current lemma in the case $\PP=\Euclideanplane$ can
be seen in \cite{S}. It works for all hyper-ideal tetrahedra with
at least one ideal vertex. There, one can also find a fairly nice
explicit formula for the volume of such tetrahedra. This formula
however does not work for the most general case of a hyper-ideal
tetrahedron with exactly four hyper-ideal vertices. Nevertheless,
an explicit (and fairly complicated) expression does exist and can
be found in \cite{Weird}.

We do not intend to repeat here the full proof of the strict
concavity of the volume function because we do not want to
overload this anyway lengthy article. However, we would mention
the basic ideas behind the proof, linking it to some of the
constructions we have carried out up to now. According to lemma
\ref{Lem_diffeo_TEdelta_angles} the map $\Phi_{1,\Delta}$ is a
diffeomorphism and since
$\Phi_{1,\Delta}^{-1}(\tilde{\alpha}^{\Delta},\tilde{\beta}^{\Delta})=-
2 \, \big(\nabla
\Vol_{1,\Delta}\big)_{(\tilde{\alpha}^{\Delta},\tilde{\beta}^{\Delta})},$
then for the hessian
$\text{Hess}(\Vol_{1,\Delta})_{(\tilde{\alpha}^{\Delta},\tilde{\beta}^{\Delta})}
= - \frac{1}{2} \big(D
\Phi_{1,\Delta}\big)_{(\tilde{\alpha}^{\Delta},\tilde{\beta}^{\Delta})}.$
Therefore, the rank of
$\text{Hess}(\Vol_{1,\Delta})_{(\tilde{\alpha}^{\Delta},\tilde{\beta}^{\Delta})}$
is the same (maximal) for all
$(\tilde{\alpha}^{\Delta},\tilde{\beta}^{\Delta}) \in \ADelta^1$
because the derivative $D \Phi_{1,\Delta}$ of the diffeomorphism
$\Phi_{1,\Delta}$ has non-zero determinant. Consequently, if we
can show that for one point the hessian of $\Vol_{1,\Delta}$ is
negative definite, then it should be negative definite everywhere.
Indeed, if at one point the hessian is negative definite and at
another point it is not, then somewhere in between the rank should
drop, which is not the case. Now choose
$(\tilde{\alpha}^{\Delta},\tilde{\beta}^{\Delta})$ to be
computationally the most convenient dihedral angles of the most
symmetric hyper-ideal tetrahedron possible with combinatorics
$\Delta$. For instance, if the tetrahedron has four truncating
faces, one can choose all dihedral angles to be $\pi/4$. Or if it
has exactly one vertex at infinity, one can take the betas to be
$\pi/3$ and the alphas to be say $\pi/6$. Finally, by using the
explicit formulas for the principal edge-lengths in terms of the
dihedral angles (see lemma \ref{Lem_diffeo_TEdelta_angles}),
differentiate them and evaluate the derivatives at the chosen
$(\tilde{\alpha}^{\Delta},\tilde{\beta}^{\Delta}) \in \ADelta^1$
to obtain an explicit matrix for $\big(D
\Phi_{1,\Delta}\big)_{(\tilde{\alpha}^{\Delta},\tilde{\beta}^{\Delta})}$.
Finally, one can check that it is negative definite.
\end{proof}

\section{The space of generalized circle patterns revisited} \label{Sec_defining_space_circle_patterns}

In section \ref{Sec_Generalized_space_of_patterns} we started
discussing the space of generalized hyper-ideal circle patterns,
i.e. patterns that do not necessarily satisfy the local Delaunay
property. Recall that we have fixed a closed topological surfaces
$S$ with a cell complex $\cellcomplex = (V,E,F)$ on it (figure
\ref{Fig1}a). Then, $\cellcomplex$ was subdivided into a
triangulation $\Triang=(V,E_T,F_T)$ (figure \ref{Fig3}b). Let us
denote by $\widetilde{\CP}_{\Triang}$ the space of either
hyperbolic or Euclidean generalized hyper-ideal circle patterns on
$S$ with combinatorics $\Triang$, considered up to isometries (and
global scaling in the Euclidean case) which preserve the induced
by $\cellcomplex$ marking on $S$. In section
\ref{Sec_Generalized_space_of_patterns} we saw that the space
$\ER_1$ is a global chart of $\widetilde{\CP}_{\Triang}$. In lemma
\ref{Lem_map_between_TE_and_ER} we constructed a real analytic
diffeomorphism $\Psi \, : \, \TE_0 \, \to \, ER_1$, turning
$\TE_0$ into another global real analytic chart of
$\widetilde{\CP}_{\Triang}$. Thus the two diffeomorphic spaces
$\ER_1$ and $\TE_0$ are two global real analytic charts of
$\widetilde{\CP}_{\Triang}$, so we can simply identify
$\widetilde{\CP}_{\Triang}$ with both of them. Consequently, the
space of generalized hyper-ideal pattern
$\widetilde{\CP}_{\Triang}$ is a real analytic manifold.

These definitions are quite nice and somewhat natural. However,
there is one small subtlety which we are going to address now. It
is the fact that $\widetilde{\CP}_{\Triang}$ is non-empty. Observe
that $\ER_1$ is the interior of a convex polytope, but there is no
guarantee that this interior even exists. However, if one finds at
least one element that belongs to $\widetilde{\CP}_{\Triang}$ then
$\widetilde{\CP}_{\Triang}$ will be an actual manifold of
dimension $|E_1| + |E_{\pi}| + |V_1| - 1$. We start with the
following construction.

\begin{lem} \label{Lem_existence_of_decorated_polygon}
Let $f$ be a topological (combinatorial) $N-$gon with a set of
vertices $V_f=V_f^1 \sqcup V_f^0$ and a set of edges $E_f = E_f^1
\sqcup E_f^0$. Define two real numbers $\check{r}$ and
$\check{\epsilon}$ such that

\smallskip
\begin{itemize}
\item $\check{r} = \sinh^{-1}\Big(\frac{1}{10}\Big)$ and
 $\check{\epsilon} = \sinh^{-1}\Big(\frac{1}{8}\Big)$ in
 the hyperbolic case $\PP=\hyperbolicplane$ and

 \smallskip
\item $\check{r} = 1$ and
 $\check{\epsilon} = \frac{1}{4}$ in the Euclidean case $\PP=\Euclideanplane$.
\end{itemize}

\smallskip
\noindent For $k \in V_f^1$ let $r_k=\check{r}$ and for $k \in
V_f^0$ let $r_k=0$. For $ij \in E_f^1$ let $l_{ij}=2(\check{r} +
\check{\epsilon})$ and for $ij \in E_f^0$ let $l_{ij}=2\check{r}$.
Then there exists a unique up to isometry decorated $N-$gon in
$\PP$ with prescribed edge-lengths $l_{ij}$ and vertex radii
$r_k$.
\end{lem}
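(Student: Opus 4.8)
The plan is to determine the decorated $N$-gon from its face circle and thereby reduce the whole construction to finding a single real parameter, namely the radius of that face circle. I would first observe that in a decorated polygon every vertex circle is orthogonal to the face circle $c_f$; writing $O$ for the center of $c_f$ and $\rho$ for its radius, the orthogonality relation fixes the distance from $O$ to each vertex $i_s$ purely in terms of $\rho$ and $r_{i_s}$. In the Euclidean case $l_{\PP}(Oi_s)=\sqrt{\rho^2+r_{i_s}^2}$, and in the hyperbolic case $\cosh l_{\PP}(Oi_s)=\cosh\rho\,\cosh r_{i_s}$; in particular a vertex with $r_{i_s}=0$ lies on $c_f$ at distance $\rho$. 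Since all vertices of the same type carry the same radius, these distances take only the two values $D_0(\rho)$ and $D_1(\rho)$ according to whether $i_s\in V_f^0$ or $i_s\in V_f^1$.

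Next I would cut the prospective polygon into the $N$ geodesic triangles $Oi_si_{s+1}$. For each edge the two sides $l_{\PP}(Oi_s),\,l_{\PP}(Oi_{s+1})$ are already fixed functions of $\rho$, while the opposite side is the prescribed length $l_{i_si_{s+1}}$, so by the (Euclidean or hyperbolic) law of cosines the triangle exists and is unique exactly when the triangle inequality $l_{i_si_{s+1}}<l_{\PP}(Oi_s)+l_{\PP}(Oi_{s+1})$ holds, and it then determines the central angle $\phi_s(\rho)=\angl i_sOi_{s+1}$. Differentiating the law of cosines shows each $\phi_s$ is strictly decreasing in $\rho$ on the range where its triangle is nondegenerate, with $\phi_s\to\pi$ as the triangle degenerates ($O$ falling onto the edge) and $\phi_s\to 0$ as $\rho\to\infty$. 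The polygon closes up around $O$ precisely when $\Sigma(\rho):=\sum_s\phi_s(\rho)=2\pi$. Because $\Sigma$ is continuous and strictly decreasing on the common domain $(\rho_{\min},\infty)$, where $\rho_{\min}$ is the largest of the thresholds coming from the individual triangle inequalities, and because $\Sigma\to 0$ as $\rho\to\infty$, the existence and uniqueness of the required radius $\rho^{\ast}$ reduce to the single inequality $\lim_{\rho\to\rho_{\min}^+}\Sigma(\rho)>2\pi$.

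I expect this last inequality to be the main obstacle, and this is exactly where the explicit constants $\check{r}$ and $\check{\epsilon}$ are needed. With the given data only a short finite list of local triangle types occurs (indexed by the types of the two endpoints and of the connecting edge), so both $\rho_{\min}$ and the limiting value of $\Sigma$ can be read off case by case; the constants $\check{r}=\sinh^{-1}(1/10),\ \check{\epsilon}=\sinh^{-1}(1/8)$ (resp. $\check{r}=1,\ \check{\epsilon}=1/4$) are chosen small enough that every edge is only slightly longer than the sum of its adjacent radii, which forces the central angles to be large near $\rho_{\min}$ and guarantees $\lim_{\rho\to\rho_{\min}^+}\Sigma>2\pi$ in all cases. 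Granting this, $\rho^{\ast}$ exists and is unique.

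Finally I would assemble and decorate. Laying the $N$ triangles $Oi_si_{s+1}$ for $\rho=\rho^{\ast}$ consecutively around $O$ and gluing them along the radial geodesics $Oi_s$ produces, since the central angles sum to $2\pi$, a geodesic $N$-gon with the prescribed edge lengths; placing the circle of radius $r_{i_s}$ at each vertex yields vertex circles orthogonal to the circle $c_f$ of center $O$ and radius $\rho^{\ast}$, by the distance relations above. Their interiors are disjoint, being tangent exactly along the edges of $E_f^0$ (where $l_{ij}=r_i+r_j$) and strictly separated elsewhere, and the smallness of $\check{\epsilon}$ keeps every interior angle below $\pi$, so the polygon is convex; hence it is a decorated $N$-gon in the sense of Definition \ref{Def_decorated_polygon}. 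Uniqueness up to isometry follows because any decorated $N$-gon realizing the data has a face circle, and triangulating it from the face-circle center recovers the distances and central angles above, so the strict monotonicity of $\Sigma$ forces its radius to equal $\rho^{\ast}$ and the fan of triangles is then determined up to an isometry of $\PP$.
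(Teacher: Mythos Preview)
Your approach is essentially the paper's: triangulate from the face-circle centre, write each central angle as a function of a single real parameter, show the total angle is continuous and strictly decreasing, and apply the intermediate value theorem. The only difference is the choice of parameter --- you use the face-circle radius $\rho$, the paper uses $x$, the distance from the centre to a vertex in $V_f^1$ (your $D_1(\rho)$); these are tied by your orthogonality relation, so the two versions are reparametrisations of each other.

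Where your proposal stops short is the step you flag with ``granting this''. That inequality, $\lim_{\rho\to\rho_{\min}^+}\Sigma(\rho)>2\pi$, is the entire content of the lemma and is where the specific values of $\check r$ and $\check\epsilon$ are actually used. The paper carries it out by splitting into four triangle types (according to whether each endpoint is in $V_f^0$ or $V_f^1$ and whether the edge is in $E_f^0$ or $E_f^1$), writing each central angle $\omega_m(x)$ explicitly, and checking the estimate at a common threshold $\chi_0$. Two boundary cases are handled separately: $N=3$ is dealt with directly via Proposition~\ref{Prop_corr_triangles_ER} without the fan construction, and $N=4$ with $E_f=E_f^0$ is singled out because at the paper's choice of threshold the limiting sum equals $2\pi$ rather than exceeding it. Your outline is correct, but to turn it into a proof you would need to perform this same case-by-case verification.
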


\begin{proof}
\emph{Case $N=3$.} It is straightforward to check that in both
geometries, and for any admissible splitting $V_f=V_f^1 \sqcup
V_f^0$ and $E_f = E_f^1 \sqcup E_f^0$, the prescribed edge-lengths
and vertex radii define a  unique up to isometry decorated
triangle. One just needs to verify that the edge-lengths and
vertex radii satisfy the conditions of the set $\ER_f$. Then
proposition \ref{Prop_corr_triangles_ER} completes the proof.

\emph{Case $N \geq 4$ with $E_f^1\neq\varnothing$ when $N=4$.} We
build the decorated polygon by gluing together appropriate
combinations of elementary pieces. To each edge $ij \in E_f$ we
associate the following geometric triangle $\Delta_{ij}=\triangle
ijO_f$ together with circles centered at its  vertices.

\smallskip
\noindent \emph{Type 1.} Let $ij \in E^1_f$ with $i,j \in V_f^1$.
Then $\Delta_{ij} =\triangle ijO_f$ is an isosceles triangle with
both edges $iO_f$ and $jO_f$ having equal lengths $x$ and $ij$
having length $2(\check{r}+\check{\epsilon}).$ Furthermore, there
are two circles of radius $\check{r}$ centered at the vertices $i$
and $j$ respectively, while a unique third circle of radius $y$ is
centered at the vertex $O_f$ and is orthogonal to the other two
circles. Its angle at vertex $O_f$ is denoted $\measuredangle
iO_fj = \omega_1$. The altitude from the vertex $O_f$ down to the
edge $ij$ splits $\Delta_{ij}$ into two identical right-angled
triangles with angle $\omega_1/2$ at $O_f$. Then, by a hyperbolic
trigonometric formula \cite{Bus}
\begin{align*}
\omega_1 &= 2
\arcsin\left(\frac{\sinh{(\check{r}+\check{\epsilon})}}
{\sinh{x}}\right) = 2 \arcsin\left(\frac{1}
{8\sinh{x}}\right)\,\,\,\,
\text{ for } \,\,\, \PP=\hyperbolicplane, 
\\
\omega_1 &= 2 \arcsin\left(\frac{\check{r}+\check{\epsilon}}
{x}\right) = 2 \arcsin\left(\frac{5} {4 x}\right) \,\,\,\, \text{
for } \,\,\, \PP=\Euclideanplane.
\end{align*}
Since the the circle centered at $O_f$ is orthogonal to the two
circles centered at $i$ and $j$, its radius $y$ satisfies the
equations \cite{Bus}
\begin{align}
\cosh{y} &= \frac{\cosh{x}}{\cosh{\check{r}}} \,\,\,\, \text{ and
} \,\,\,\, \cosh{y} = \frac{\sqrt{\sinh^2{x} -
\sinh^2{\check{r}}}}{\cosh{\check{r}}} \,\,\,\, \text{ when }
\,\,\, \PP=\hyperbolicplane \label{Eqn_coshy_hyperbolic}\\
y &= \sqrt{x^2 - \check{r}^2} =  \sqrt{x^2 - 1} \,\,\,\, \text{
when } \,\,\, \PP=\Euclideanplane \label{Eqn_coshy_Euclidean}
\end{align}

\smallskip
\noindent \emph{Type 2.} Let $ij \in E^0_f$, so $i,j \in V_f^1$.
Then $\Delta_{ij} =\triangle ijO_f$ is an isosceles triangle with
both edges $iO_f$ and $jO_f$ having equal lengths $x$ and $ij$
having length $2\check{r}.$ The two circles of radius $\check{r}$
centered at the vertices $i$ and $j$ respectively touch and are
orthogonal to the third circle of radius $y$, centered at the
vertex $O_f$. The radius $y$ is again determined by formula
(\ref{Eqn_coshy_hyperbolic}) or (\ref{Eqn_coshy_Euclidean}). The
angle at vertex $O_f$ is denoted $\measuredangle iO_fj =
\omega_2$. As before, the altitude from the vertex $O_f$ down to
the edge $ij$ splits $\Delta_{ij}$ into two identical right-angled
triangles with angle $\omega_2/2$ at $O_f$. Then
\begin{align*}
\omega_2 &= 2 \arcsin\left(\frac{\sinh{\check{r}}}
{\sinh{x}}\right) = 2 \arcsin\left(\frac{1} {10\sinh{x}}\right)
\,\,\,\, \text{ for }
\,\,\, \PP=\hyperbolicplane, 
\\
\omega_2 &= 2 \arcsin\left(\frac{\check{r}}{x}\right) = 2
\arcsin\left(\frac{1}{x}\right) \,\,\,\, \text{ for } \,\,\,
\PP=\Euclideanplane.
\end{align*}

\smallskip
\noindent \emph{Type 3.} Let $ij \in E^1_f$ and $i,j \in V_f^0$.
Then $\Delta_{ij} =\triangle ijO_f$ is an isosceles triangle with
both edges $iO_f$ and $jO_f$ having equal lengths $y$ and $ij$
having length $2\check{r}.$ The circles centered at the vertices
$i$ and $j$ respectively have radius zero. The circle centered at
$O_f$ has radius $y$ so it passes through the vertices $i$ and
$j$. The radius $y$ is again determined by formula
(\ref{Eqn_coshy_hyperbolic}) or (\ref{Eqn_coshy_Euclidean}). The
angle at vertex $O_f$ is denoted $\measuredangle iO_fj =
\omega_3$. As before, the altitude from the vertex $O_f$ down to
the edge $ij$ splits $\Delta_{ij}$ into two identical right-angled
triangles with angle $\omega_3/2$ at $O_f$. Then
\begin{align*}
\omega_3 &= 2
\arcsin{\left(\frac{\sinh{\check{r}}\cosh{\check{r}}}
{\sqrt{\sinh^2{x} - \sinh^2{\check{r}}}}\right)}\\
&= 2 \arcsin\left(\frac{\sqrt{101}} {10\sqrt{100
\sinh^2{x}-1}}\right)
 \,\,\,\, \text{ for } \,\, \PP=\hyperbolicplane\\
\omega_3 &= 2 \arcsin\left(\frac{1}{\sqrt{x^2-1}}\right) \,\,\,\,
\text{ for } \,\,\, \PP=\Euclideanplane.
\end{align*}

\smallskip
\noindent \emph{Type 4.} Let $ij \in E^1_f$ and $i \in V_f^1, \, j
\in V_f^0$. Then for the geometric triangle $\Delta_{ij}
=\triangle ijO_f$ edge $iO_f$ has length $x$, edge $jO_f$ has
length $y$ and edge $ij$ has length $2\check{r}.$ The circle
centered at vertex $i$ has radius $\check{r}$ and the one centered
at $j$ has radius zero. The circle centered at $O_f$ has radius
$y$ so it is orthogonal to circle centered at $i$ and passes
through $j$. The radius $y$ is as usual determined by formula
(\ref{Eqn_coshy_hyperbolic}) or (\ref{Eqn_coshy_Euclidean}). The
angle at vertex $O_f$ is denoted $\measuredangle iO_fj =
\omega_4$. Then using the law of cosines
\begin{align*}
\omega_4 &= \arccos{\left(\frac{\sinh^2{x} + 1 -
\cosh{2\check{r}}\cosh{\check{r}}}{\sinh^2{x}\sqrt{\sinh^2{x} -
\sinh^2{\check{r}}}}\right)}\\
&= \arccos{\left(\frac{\sinh^2{x} + 1 -
1.02\sqrt{101}}{\sinh^2{x}\sqrt{\sinh^2{x} - 0.01}}\right)}
\,\,\,\, \text{ for } \,\,\, \PP=\hyperbolicplane
\\
\omega_4 &= \arccos\Big(\frac{2 x^2 - 5}{2 x \sqrt{x^2-1}}\Big)
\,\,\,\, \text{ for } \,\,\, \PP=\Euclideanplane.
\end{align*}
Since the values $\check{r}$ and $\check{\epsilon}$ are fixed by
the conditions of the current lemma, all four types of triangles
with circles described above are defined uniquely, up to isometry,
as long as the edge-length $x$ is also fixed. To geometrize $f$ we
assign to each edge $ij\in E_f$ a triangle $\Delta_{ij}$ whose
type is uniquely determined by the combinatorics of $f$ (which
edges have touching circles and which vertices have no circles
assigned). One can glue together two neighboring triangles along
an edge $iO_f$. The circles at $i$ and at $O_f$ agree by
construction. In this way, one can assemble a decorated polygon,
here denoted by $f_x$, with edge-lengths and vertex radii
prescribed by the current lemma. Observe that as long as the
edge-length $x$ is fixed, the decorated polygon $f_x$ is uniquely
defined, up to isometry. However, in general, the decorated
polygon $f_x$ may have a cone point at $O_f$, while we need $f_x$
to be realizable in $\PP$. All $\omega_m(x)$ are real analytic
functions where defined.

Let $N_m$ be the number of triangles $\Delta_{ij}$ of type
$m=1,2,3,4$ needed in order to geometrize $f$. Clearly, $N =
\sum_{m=1}^{4} N_m$. Furthermore, let $\omega_f(x) =
\sum_{m=1}^{4} N_m \omega_m(x)$. Then $\omega_f(x)$ is the cone
angle of $f_x$ at the point $O_f$ and is a real analytic function
with respect to $x$. We are going to show that there is exactly
one value $x^*$ for which $\omega_f(x^*)=2\pi$. The latter fact
immediately implies that $f_{x^*}$ can be realized in $\PP$
uniquely up to isometry.

For the varying parameter $x$, a triangle $\Delta_{ij}$ of type
$m=1,2,3,4$ can be geometrically realized if and only if its angle
$\omega_m(x) \in (0,\pi)$. Based on this condition, one can find
an interval $(\chi_m, \infty) \subset \reals$ such that $x \in
(\chi_m,\infty)$ exactly when $\omega_m(x) \in (0,\pi)$. A
straightforward computation shows that $\frac{d \omega_m}{dx}(x) <
0$ for any $x \in (\chi_m,\infty)$, where $m=1,2,3,4$. In other
words $\omega_m(x)$ is strictly decreasing on $(\chi_m, \infty)$.
Take $\chi_0=\max{\{\chi_m \, : \, m=1,2,3,4\}}$ and consider the
common interval $(\chi_0,\infty)$ on which $\omega_m(x)$ is well
defined. In fact, each $\omega_m(x)$ is continuous on
$[\chi_0,\infty)$. In the hyperbolic case $\chi_0 =
\sinh^{-1}\Big(\frac{\sqrt{201}}{100}\Big)$, and in the Euclidean
case $\chi_0=\sqrt{2}$. Clearly $\frac{d \omega_f}{dx}(x) =
\sum_{m=1}^{4} N_m \frac{d \omega_m}{dx}(x) < 0$. Thus, we can
conclude that $\omega_f(x)$ is a strictly decreasing continuous
function on $[\chi_0,\infty)$. As $\lim_{x \to
\infty}\omega_m(x)=0$ for $m=1,2,3,4$, we immediately conclude
that $\lim_{x \to \infty}\omega_f(x)=0 < 2\pi$. Furthermore, we
would like to estimate $\omega_f(\chi_0)$. A straightforward check
shows that $\omega_2(\chi_0)=\min{\{\omega_m(\chi_0)\, : \,
m=1,2,3,4\}}$. If $N>4$ then
$$\omega_f(\chi_0)=
\sum_{m=1}^{4} N_m \omega_m(\chi_0) \geq N \, \omega_2(\chi_0)
\geq 5 \, \omega_2(\chi_0) > 2\pi.$$ If $N=4$ and $E_f^1\neq
\varnothing$, then
$$\omega_f(\chi_0)=
\sum_{m=1}^{4} N_m \omega_m(\chi_0) \geq 3 \, \omega_2(\chi_0) +
\min_{m=1,3,4} \omega_m(\chi_0)
> 2\pi.$$
Consequently, in all of these cases there exists a unique $x^* \in
(\chi_0,\infty)$ such that $\omega_f(x^*)=2\pi$.

\emph{Case N=4 and $E_f=E_f^0$.} Just like in the previous case,
we need to glue together four triangles $\Delta_{ij}$ of type 2.
In the Euclidean case if we choose $x=\sqrt{2}$, then we obtain
four identical isosceles triangles with angles $\omega_2=\pi/2$.
Hence they fit together to form a unique decorated square. The
situation in the hyperbolic case is absolutely analogous with
$x=\sinh^{-1}{\big(\frac{\sqrt{2}}{8}\big)}$. \end{proof}

\begin{lem} \label{Lem_nonempty_space_of_patterns}
For any cell complex $\cellcomplex=(V,E,F)$ on the closed surface
$S$ there exists a hyper-ideal circle pattern on $S$ with
prescribed combinatorics $\cellcomplex$. In particular, this is
true for the triangulation $\Triang=(V,E_T,F_T)$.
\end{lem}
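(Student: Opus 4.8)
The plan is to build the required pattern directly, by realizing each face of $\cellcomplex$ as a standardized decorated polygon and then gluing these polygons together. First I would apply Lemma \ref{Lem_existence_of_decorated_polygon} to every face $f \in F$, equipped with the vertex/edge partition it inherits from the global partitions $V = V_0 \cup V_1$ and $E = E_0 \cup E_1$. This produces, for each $f$, a decorated polygon in $\PP$, unique up to isometry, whose vertex radii are $\check{r}$ at the vertices lying in $V_1$ and $0$ at the vertices lying in $V_0$, and whose edge-lengths are $2(\check{r}+\check{\epsilon})$ on the edges of $E_1$ and $2\check{r}$ on the edges of $E_0$.

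The decisive feature is that these edge-lengths and radii depend only on the partition class of an edge or vertex, not on which face it belongs to. Hence, for an edge $ij \in E$ shared by two faces $f$ and $f'$, both decorated polygons assign $ij$ the same length and assign $i,j$ the same radii. The two polygons can therefore be glued along $ij$ by an isometry matching their common geodesic edge, and, because the radii at $i$ and $j$ agree, the vertex circles at $i$ and $j$ coincide after gluing, so $f$ and $f'$ become compatibly adjacent in the sense discussed before Definition \ref{Def_local_Del_property}. Carrying this out over all edges glues the decorated polygons into a closed surface homeomorphic to $S$, carrying a cone-metric $d$ (hyperbolic or Euclidean according to the choice of $\PP$), with cone points contained in $V$, together with a geodesic cell complex $\cellcomplex_d$ whose underlying combinatorics is exactly that of $\cellcomplex$ and whose faces are the decorated polygons just built, each retaining its face circle orthogonal to its vertex circles.

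It remains to verify that every edge $ij$ of $\cellcomplex_d$ satisfies the local Delaunay property of Definition \ref{Def_local_Del_property}; this is the step I expect to be the main obstacle, and it is precisely where the explicit numerical values of $\check{r}$ and $\check{\epsilon}$ chosen in Lemma \ref{Lem_existence_of_decorated_polygon} are used. The shared vertex circles $c_i, c_j$ are orthogonal to both face circles $c_f$ and $c_{f'}$, being vertex circles of both faces, so they meet $c_f$ at the threshold angle $\pi/2$; the real content is therefore that the remaining vertex circles of the adjacent polygon $f'$ stay disjoint from the interior of $c_f$. Here one uses that every nonzero vertex radius equals the small value $\check{r}$, while the face circle $c_f$ is a fixed-radius circle $y$ about the polygon's center $O_f$ with $\cosh y = \cosh x / \cosh \check{r}$ (respectively $y=\sqrt{x^2-1}$ in the Euclidean case), so that $y$ is strictly smaller than the center-to-vertex distance $x$. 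Comparing $y + \check{r}$ with the distance from the far vertices of $f'$ to $O_f$, via the explicit expressions for $y$ and for the angles $\omega_m$ appearing in the proof of Lemma \ref{Lem_existence_of_decorated_polygon}, shows that these far vertex circles lie outside $c_f$; hence the local Delaunay condition holds at every edge. Once this is established, the glued object is a genuine hyper-ideal circle pattern realizing the combinatorics $\cellcomplex$, and specializing the construction to the triangulation $\Triang = (V, E_T, F_T)$, whose faces are all handled by the $N=3$ case of Lemma \ref{Lem_existence_of_decorated_polygon}, yields the final assertion and, in particular, shows that $\widetilde{\CP}_{\Triang}$ is nonempty.
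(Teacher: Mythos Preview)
Your proposal is correct and follows essentially the same approach as the paper: apply Lemma~\ref{Lem_existence_of_decorated_polygon} to each face, observe that the resulting edge-lengths and vertex radii depend only on the partition classes so that adjacent polygons glue compatibly, and then check the local Delaunay property. The paper's proof is in fact much terser than yours---it dispatches the Delaunay verification with the single phrase ``it is straightforward to see''---so your expanded discussion of why the far vertex circles of $f'$ stay outside $c_f$ is additional detail rather than a different argument.
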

\begin{proof}
Based on the combinatorics of $\cellcomplex$, we assign a
decorated polygon from lemma
\ref{Lem_existence_of_decorated_polygon} to each face $f \in F.$
By constriction, all decorated polygons fit together into a
hyper-ideal circle pattern because all pairs of corresponding
edges, along which we glue the polygons together, are of equal
length either $2(\check{r}+\check{\epsilon})$ or $2\check{r}$, and
all vertex circles are of radius either $\check{r}$ or zero. It is
straightforward to see that the corresponding circle pattern
satisfies the local Delaunay property.
\end{proof}
Let us apply lemma \ref{Lem_nonempty_space_of_patterns} to the
triangulation $\Triang=(V,E_T,F_T)$. For each $\Delta \in F_T$ the
corresponding decorated triangle with geometry determined by lemma
\ref{Lem_existence_of_decorated_polygon}, has angles
$(\check{\alpha}^{\Delta},\check{\beta}^{\Delta}) \in \ADelta \neq
\varnothing$. We also denote by $(\check{\theta},\check{\Theta})
\in \reals^{E_T} \times \reals^{V}$ the intersection angles
between pairs of adjacent face circles of the constructed circle
pattern together with its cone angles.

\section{A functional on the space of circle patterns} \label{Sec_functional_on_patterns}

For any triangle $\Delta \in F_T$ from the triangulation
$\Triang=(V,E_T,F_T)$ on $S$ we associate the function $\UDelta \,
: \, \TED \, \to \, \reals$ defined as
\begin{align} \label{Formula_Dual_Schlafli_local_no_one}
\UDelta(a,b) &= \sum_{ij \in E_{\Delta}} \big(\alpha^{\Delta}_{ij}
- \check{\alpha}^{\Delta}_{ij}\big) a_{ij} + \sum_{k \in
V_{\Delta}} \big(\beta^{\Delta}_{ij} -
\check{\beta}^{\Delta}_{ij}\big) b_{k} + 2
\Vol_{\Delta}(\alpha^{\Delta}, \beta^{\Delta})
\end{align}
where the first set of angles
$\alpha^{\Delta}_{ij}=\alpha^{\Delta}_{ij}(a,b)$ for $ij \in
E_{\Delta}$ and $\beta^{\Delta}_k=\beta^{\Delta}_k(a,b)$ for $k
\in V_{\Delta}$ are the $\reals-$invariant real analytic functions
from lemma \ref{Lem_diffeo_TEdelta_angles} depending on the
tetrahedral edge-length variables $(a,b)_{\Delta} \in \TED$. The
second set of angles $\check{\alpha}^{\Delta}_{ij}$ for $ij \in
E_{\Delta}$ and $\check{\beta}^{\Delta}_k$ for $k \in V_{\Delta}$
are the constant angles of a decorated triangle with combinatorics
$\Delta$ from lemma \ref{Lem_existence_of_decorated_polygon}. The
function $\UDelta$ is real analytic and one can check that it is
also $\reals-$invariant, i.e. $\UDelta \circ ACT^{\Delta}_t =
\UDelta$ for all $t \in \reals.$ Notice that formula
(\ref{Formula_Dual_Schlafli_local_no_one}) is relevant even for
terms with indices $ij \in E_{\Delta}^0$, when in this case
$a_{ij} = 0$ and $\alpha^{\Delta}_{ij} \equiv 0$, as well as $k
\in V_{\Delta}^0$, when $b_k=0$ is fixed. Hence we can also write
\begin{align} \label{Formula_Dual_Schlafli_local}
\UDelta(a,b) &= \sum_{ij \in E_{\Delta}^1}
\big(\alpha^{\Delta}_{ij} - \check{\alpha}^{\Delta}_{ij}\big)
a_{ij} + \sum_{k \in V_{\Delta}^1} \big(\beta^{\Delta}_{ij} -
\check{\beta}^{\Delta}_{ij}\big) b_{k} + 2
\Vol_{\Delta}(\alpha^{\Delta}, \beta^{\Delta})
\end{align}
In the case $V_{\Delta}^1 = \varnothing$ for $\PP=\Euclideanplane$
the angles satisfy the identities $\alpha^{\Delta}_{ij} =
\beta^{\Delta}_k$ for $i\neq j \neq k \in V_{\Delta}$.

\begin{lem} \label{Lem_derivatives_Ulocal}
Let $\Delta \in F_T$. Then
$$\frac{\partial\UDelta}{\partial a_{ij}} = \alpha^{\Delta}_{ij} -
\check{\alpha}^{\Delta}_{ij} \,\,\,\,\,\, \text{ and }
\,\,\,\,\,\, \frac{\partial\UDelta}{\partial b_{k}} =
\beta^{\Delta}_{k} - \check{\beta}^{\Delta}_{k}$$ whenever $ij \in
E_{\Delta}^1$ and $k \in V_{\Delta}^1$. Furthermore,
$\frac{\partial\UDelta}{\partial a_{ij}} \equiv 0$ for all $ij \in
E_{\Delta}^0$ and $\frac{\partial\UDelta}{\partial b_{k}} \equiv
0$ for $k\in V_{\Delta}^0$. 
\end{lem}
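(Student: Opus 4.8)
The plan is to obtain both formulas at once by computing the total differential $d\UDelta$ on $\TED$ and then reading off its coefficients; the single analytic input is Schl\"afli's formula (\ref{Formula_Schlafli_1}), which governs the response of the volume term to variations of the principal edge-lengths.

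First I would differentiate the reduced expression (\ref{Formula_Dual_Schlafli_local}) by the product rule, keeping in mind that the angles $\alpha^{\Delta}_{ij}=\alpha^{\Delta}_{ij}(a,b)$ and $\beta^{\Delta}_{k}=\beta^{\Delta}_{k}(a,b)$ are the real analytic functions of $(a,b)_{\Delta}$ supplied by Lemma \ref{Lem_diffeo_TEdelta_angles}, whereas $\check{\alpha}^{\Delta}_{ij}$ and $\check{\beta}^{\Delta}_{k}$ are constants. This produces
\begin{align*}
d\UDelta &= \sum_{ij \in E_{\Delta}^1}\bigl(\alpha^{\Delta}_{ij}-\check{\alpha}^{\Delta}_{ij}\bigr)\,da_{ij} + \sum_{k \in V_{\Delta}^1}\bigl(\beta^{\Delta}_{k}-\check{\beta}^{\Delta}_{k}\bigr)\,db_{k}\\
&\quad + \sum_{ij \in E_{\Delta}^1} a_{ij}\,d\alpha^{\Delta}_{ij} + \sum_{k \in V_{\Delta}^1} b_{k}\,d\beta^{\Delta}_{k} + 2\,d\Vol_{\Delta}.
\end{align*}
Schl\"afli's formula (\ref{Formula_Schlafli_1}), in the form restricted to the free indices, asserts exactly that $2\,d\Vol_{\Delta} = -\bigl(\sum_{ij \in E_{\Delta}^1} a_{ij}\,d\alpha^{\Delta}_{ij} + \sum_{k \in V_{\Delta}^1} b_{k}\,d\beta^{\Delta}_{k}\bigr)$. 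Hence the last three sums cancel identically, leaving
\begin{equation*}
d\UDelta = \sum_{ij \in E_{\Delta}^1}\bigl(\alpha^{\Delta}_{ij}-\check{\alpha}^{\Delta}_{ij}\bigr)\,da_{ij} + \sum_{k \in V_{\Delta}^1}\bigl(\beta^{\Delta}_{k}-\check{\beta}^{\Delta}_{k}\bigr)\,db_{k},
\end{equation*}
and reading off the coefficients of $da_{ij}$ and $db_{k}$ gives the two asserted identities for $ij \in E_{\Delta}^1$ and $k \in V_{\Delta}^1$.

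For the vanishing statement I would argue directly from (\ref{Formula_Dual_Schlafli_local}): its linear part sums only over $E_{\Delta}^1$ and $V_{\Delta}^1$, and the remaining term $2\Vol_{\Delta}(\alpha^{\Delta},\beta^{\Delta})$ is a function of the angles, which by Lemma \ref{Lem_diffeo_TEdelta_angles} are determined by the free coordinates alone. Thus $\UDelta$ does not involve the frozen coordinates $a_{ij}$ $(ij \in E_{\Delta}^0)$ and $b_{k}$ $(k \in V_{\Delta}^0)$ at all, so the corresponding partials are identically zero. For the $E_{\Delta}^0$ edges this is also visible from the unreduced differential, whose coefficient of $da_{ij}$ equals $\alpha^{\Delta}_{ij}-\check{\alpha}^{\Delta}_{ij}=0$, since both the pattern and the reference triangle of Lemma \ref{Lem_existence_of_decorated_polygon} have tangent vertex circles along such edges.

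The computation is otherwise mechanical; the only point demanding care is the bookkeeping of variables. One must restrict Schl\"afli's identity to the genuinely free coordinates of $\TED$ (legitimate because $a_{uv}=0$ on $E_{\Delta}^0$ and $b_{v}=0$ on $V_{\Delta}^0$ make the two lines of (\ref{Formula_Schlafli_1}) coincide on $\TED$), and one must verify that the ``edge-length $\times$ angle-differential'' terms thrown up by the product rule are precisely the ones Schl\"afli cancels. This exact cancellation is the mechanism that turns $\UDelta$ into a primitive whose gradient records the angle discrepancies $\alpha^{\Delta}-\check{\alpha}^{\Delta}$ and $\beta^{\Delta}-\check{\beta}^{\Delta}$.
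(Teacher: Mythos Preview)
Your proof is correct and follows essentially the same approach as the paper: differentiate the defining expression by the product rule, invoke Schl\"afli's formula (\ref{Formula_Schlafli_1}) to cancel the ``length times angle-differential'' terms, and read off the partial derivatives from the resulting expression (\ref{equation_differential_of_local_U}). Your additional remark that for $ij\in E_{\Delta}^0$ the coefficient $\alpha^{\Delta}_{ij}-\check{\alpha}^{\Delta}_{ij}$ vanishes anyway is a nice consistency check not made explicit in the paper.
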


\begin{proof}
By inspecting formula (\ref{Formula_Dual_Schlafli_local}) one sees
that the function $\UDelta$ depends explicitly on the variables
$a_{ij}$ and $b_k$ if and only if $ij \in E_{\Delta}^1$ and $k \in
V_{\Delta}^1$. In all the other cases the partial derivatives are
identically zero. The differential of
(\ref{Formula_Dual_Schlafli_local}) is
\begin{align*} 
d\UDelta = & \, \, d\left(\sum_{ij \in E_{\Delta}^1}
(\alpha^{\Delta}_{ij}-\check{\alpha}^{\Delta}_{ij})\, a_{ij} +
\sum_{k \in V_{\Delta}^1} (\beta^{\Delta}_k -
\check{\beta}^{\Delta}_{k})\, b_k \right) + 2
dV(\alpha^{\Delta},\beta^{\Delta}) = \\
= & \sum_{ij \in E_{\Delta}^1}
(\alpha^{\Delta}_{ij}-\check{\alpha}^{\Delta}_{ij}) \, d a_{ij} +
\sum_{k \in V_{\Delta}^1}
(\beta^{\Delta}_k-\check{\beta}^{\Delta}_{k}) \, d b_k +\\
& \,\,\,\,\,\,\,\,\,\,\,\,\,\, + \sum_{ij \in E_{\Delta}^1}
a_{ij}\, d \alpha^{\Delta}_{ij} + \sum_{k \in V_{\Delta}^1} b_k \,
d \beta^{\Delta}_k + 2 dV
\end{align*}
After applying Schl\"afli's formula (\ref{Formula_Schlafli_1}),
one is left with the total differential
\begin{equation} \label{equation_differential_of_local_U}
d\UDelta = \sum_{ij \in E_{\Delta}^1}
(\alpha^{\Delta}_{ij}-\check{\alpha}^{\Delta}_{ij})\, d a_{ij} +
\sum_{k \in V_{\Delta}^1}
(\beta^{\Delta}_k-\check{\beta}^{\Delta}_{k}) \, d b_k.
\end{equation}
Thus, one can read off the partial derivatives from
(\ref{equation_differential_of_local_U}). They are the
coefficients of the differential $d \UDelta$.
\end{proof}
\begin{lem} \label{Lem_convexity_of_local_U}
Let $\Uglobal_{1,\Delta}$ be the restriction of the function
$\UDelta$
$$\Uglobal_{1,\Delta} = \UDelta |_{\TE_{1,\Delta}}
\, : \, \TE_{1,\Delta} \, \to \, \reals.$$ Then
$\Uglobal_{1,\Delta}$ is a locally strictly convex function on
$\TE_{1,\Delta}$. Thus, the function $\UDelta \, : \, \TED \, \to
\, \reals$ is \emph{locally strictly convex on $\TED$
transversally to the orbits of the $\reals-$action on $\TED$}. In
other words, any point of $\TED$ has a convex neighborhood $B
\subset \TED$ such that for any two points $(a^1,b^1)_{\Delta}$
and $(a^2,b^2)_{\Delta} \in \TED$ with the property that
$(a^2,b^2)_{\Delta} \neq ACT^{\Delta}_t(a^1,b^1)$ for all $t \in
\reals$ and for any $\lambda \in [0,1]$
$$\UDelta\Big((1-\lambda) (a^1,b^1)_{\Delta} + \lambda (a^2,b^2)_{\Delta}\Big)
< (1-\lambda) \UDelta(a^1,b^1) + \lambda \UDelta(a^2,b^2).$$
\end{lem}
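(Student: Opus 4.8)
The plan is to prove the positive definiteness of the Hessian of $\Uglobal_{1,\Delta}$ at every point of $\TE_{1,\Delta}$; local strict convexity is then immediate, and the transversal statement for $\UDelta$ on $\TED$ follows from $\reals$-invariance. The starting point is lemma \ref{Lem_derivatives_Ulocal}, which identifies the first derivatives of $\UDelta$ with the angle deviations $\alpha^{\Delta}_{ij}-\check{\alpha}^{\Delta}_{ij}$ and $\beta^{\Delta}_k-\check{\beta}^{\Delta}_k$. Restricting to the slice $\TE_{1,\Delta}$ and reading these derivatives in the independent coordinates that parametrize the slice, one sees that the gradient of $\Uglobal_{1,\Delta}$ is precisely
$$\nabla\Uglobal_{1,\Delta}(a,b) = \Phi_{1,\Delta}(a,b) - (\check{\alpha}^{\Delta},\check{\beta}^{\Delta}),$$
where $\Phi_{1,\Delta}\colon\TE_{1,\Delta}\to\ADelta^1$ is the real-analytic diffeomorphism from section \ref{Sec_Volumes_of_tetrahedra} and $(\check{\alpha}^{\Delta},\check{\beta}^{\Delta})$ denotes the constant vector restricted to the free coordinates of $\ADelta^1$. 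On this slice the volume term $2\Vol_{\Delta}(\alpha^{\Delta},\beta^{\Delta})$ equals $2\Vol_{1,\Delta}(\Phi_{1,\Delta}(a,b))$ through the embedding $J_{\Delta}$, so no hidden dependence on the suppressed coordinate survives. Conceptually, $\Uglobal_{1,\Delta}$ is, up to the linear shift by the constant angles, the Legendre dual of $2\Vol_{1,\Delta}$.

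Since $(\check{\alpha}^{\Delta},\check{\beta}^{\Delta})$ is constant, differentiating once more gives
$$\text{Hess}\big(\Uglobal_{1,\Delta}\big)_{(a,b)} = D\big(\Phi_{1,\Delta}\big)_{(a,b)}.$$
To control this derivative I would invoke lemma \ref{Lem_volume_concavity}, which asserts the identity $\Phi_{1,\Delta}^{-1} = -2\,\nabla\Vol_{1,\Delta}$. Differentiating this and applying the inverse function theorem yields
$$D\big(\Phi_{1,\Delta}\big)_{(a,b)} = \Big[\,D\big(\Phi_{1,\Delta}^{-1}\big)\,\Big]^{-1} = \Big[-2\,\text{Hess}\big(\Vol_{1,\Delta}\big)\Big]^{-1}.$$
Because $\Vol_{1,\Delta}$ is strictly concave (again lemma \ref{Lem_volume_concavity}), its Hessian is symmetric and negative definite, so $-2\,\text{Hess}(\Vol_{1,\Delta})$ is symmetric positive definite and hence so is its inverse. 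Therefore $\text{Hess}(\Uglobal_{1,\Delta})$ is positive definite at every point of $\TE_{1,\Delta}$, proving that $\Uglobal_{1,\Delta}$ is locally strictly convex.

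It remains to transfer this to $\UDelta$ on all of $\TED$. The action $ACT^{\Delta}_t$ is affine with a constant generating direction, so its orbits are parallel straight lines, and the linear projection $PR_1\colon\TED\to\TE_{1,\Delta}$ collapses each orbit to a point; since $\UDelta$ is $\reals$-invariant one has $\UDelta = \Uglobal_{1,\Delta}\circ PR_1$. Given a point of $\TED$, I would choose a convex neighborhood $B$ whose image $PR_1(B)$ lies in a convex neighborhood on which $\Uglobal_{1,\Delta}$ is strictly convex. For $(a^1,b^1),(a^2,b^2)\in B$ on distinct orbits we have $PR_1(a^1,b^1)\neq PR_1(a^2,b^2)$, and since $PR_1$ is affine it commutes with convex combinations. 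Applying the strict convexity of $\Uglobal_{1,\Delta}$ to these images gives
$$\UDelta\big((1-\lambda)(a^1,b^1)+\lambda(a^2,b^2)\big) < (1-\lambda)\UDelta(a^1,b^1)+\lambda\UDelta(a^2,b^2),$$
which is exactly local strict convexity transversal to the orbits.

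I expect the only genuine difficulty to be bookkeeping rather than conceptual. One must verify carefully that the gradient identification $\nabla\Uglobal_{1,\Delta}=\Phi_{1,\Delta}-(\check{\alpha}^{\Delta},\check{\beta}^{\Delta})$ is valid in the Euclidean normalization, where $J_{\Delta}$ eliminates the coordinate $b_k$ (or $a_{ki}$ when $V^1_{\Delta}=\varnothing$) and re-expresses $\beta^{\Delta}_k$ through the remaining angles, so that the free length and angle coordinates pair up correctly. The substantive analytic input, namely strict concavity of the volume, is supplied entirely by lemma \ref{Lem_volume_concavity}; consequently the present lemma reduces to differentiating an already-established gradient formula and inverting a definite symmetric matrix.
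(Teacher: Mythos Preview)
Your proposal is correct and follows essentially the same route as the paper: identify $\nabla\Uglobal_{1,\Delta}$ with $\Phi_{1,\Delta}$ up to a constant, differentiate to get $\text{Hess}(\Uglobal_{1,\Delta})=D\Phi_{1,\Delta}$, invoke lemma~\ref{Lem_volume_concavity} to write this as $-\tfrac{1}{2}\big(\text{Hess}(\Vol_{1,\Delta})\big)^{-1}$, and conclude positive definiteness; the transversal statement is then obtained exactly as you do, via $\UDelta=\Uglobal_{1,\Delta}\circ PR_1$ with $PR_1$ affine. The only cosmetic discrepancy is that the paper writes the constant shift as $(\check{\tilde{\alpha}}^{\Delta},\check{\tilde{\beta}}^{\Delta})\in\ADelta^1$ rather than $(\check{\alpha}^{\Delta},\check{\beta}^{\Delta})$, but you already flag this as the restriction to the free coordinates, which is exactly right.
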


\begin{proof}
By lemma \ref{Lem_derivatives_Ulocal} and the construction of the
diffeomorphism $\Phi_{1,\Delta} \, : \, \TE_{1,\Delta} \, \to \,
\ADelta^1$ presented in section \ref{Sec_Volumes_of_tetrahedra},
one sees that for any $(a,b)_{\Delta} \in \TE_{1,\Delta}$ the
identity $\Phi_{1,\Delta}(a,b) =
\big(\nabla\Uglobal_{1,\Delta}\big)_{(a,b)} +
(\check{\tilde{\alpha}}^{\Delta},\check{\tilde{\beta}}^{\Delta})$
holds for the fixed set of angles
$(\check{\tilde{\alpha}}^{\Delta},\check{\tilde{\beta}}^{\Delta})
\in \ADelta^1$, where
$J_{\Delta}(\check{\tilde{\alpha}}^{\Delta},\check{\tilde{\beta}}^{\Delta})
=(\check{\alpha}^{\Delta},\check{\beta}^{\Delta}) \in \ADelta$ are
the angles of the decorated triangles constructed in lemma
\ref{Lem_existence_of_decorated_polygon}. For the definition of
$(\check{\alpha}^{\Delta},\check{\beta}^{\Delta})$ see section
\ref{Sec_defining_space_circle_patterns} and for the definition of
the map $J_{\Delta}$ see section \ref{Sec_Volumes_of_tetrahedra}.
By taking the derivative of $\Phi_{1,\Delta}$, one obtains the
hessian of $\Uglobal_{1,\Delta}$
$$\Big(D\Phi_{1,\Delta}\Big)_{(a,b)_{\Delta}}=\text{Hess}(\Uglobal_{1,\Delta})_{(a,b)_{\Delta}}.$$
By lemma \ref{Lem_volume_concavity}
$$\Big(D \Phi_{1,\Delta}^{-1}\Big)_{\Phi_{1,\Delta}(a,b)}=
-2 \text{Hess}(\Vol_{1,\Delta})_{\Phi_{1,\Delta}(a,b)}.$$ Since
$\Big(D \Phi_{1,\Delta}^{-1}\Big)_{\Phi_{1,\Delta}(a,b)}=
\Big(D\Phi_{1,\Delta}\Big)_{(a,b)_{\Delta}}^{-1}$, the two
hessians are related by
$$\text{Hess}(\Uglobal_{1,\Delta})_{(a,b)_{\Delta}} =-\frac{1}{2}
\Big(\text{Hess}(\Vol_{1,\Delta})_{\Phi_{1,\Delta}(a,b)}\Big)^{-1}.$$
By lemma \ref{Lem_volume_concavity}, the hessian
$\text{Hess}(\Vol_{1,\Delta})_{\Phi_{1,\Delta}(a,b)}$ is negative
definite, so its inverse matrix multiplied by $-1/2$ is positive
definite. Thus $\text{Hess}(\Uglobal_{1,\Delta})_{(a,b)_{\Delta}}$
is positive definite for each $(a,b)_{\Delta} \in \TE_{1,\Delta}$.
Therefore, the function $\Uglobal_{1,\Delta}$ is strictly locally
convex in the domain $\TE_{1,\Delta}$.

Now, pick an arbitrary point in $\TED$ and choose any convex
neighborhood $B \subset \TED$ around this point. Take two
arbitrary points $(a^1,b^1)_{\Delta}$ and $(a^2,b^2)_{\Delta} \in
B$ not on the same $\reals-$orbit. Then
$PR_1(a^1,b^1)_{\Delta}\neq PR_1(a^2,b^2)_{\Delta}$. Choose any
$\lambda \in [0,1]$. Then by the identity
$\UDelta=\Uglobal_{1,\Delta} \circ PR_1$, by the linearity of
$PR_1$ and by the strict local convexity of $\Uglobal_{1,\Delta}$,
it follows that
\begin{align*}
\UDelta\Big((1-\lambda)(a^1,b^1)_{\Delta} + &\lambda
(a^2,b^2)_{\Delta}\Big) =
\Uglobal_{1,\Delta}\Big(PR_1\big((1-\lambda)(a^1,b^1)_{\Delta} +
\lambda
(a^2,b^2)_{\Delta}\big)\Big)\\
=& \, \Uglobal_{1,\Delta}\Big((1-\lambda)PR_1(a^1,b^1)_{\Delta} +
\lambda
PR_1(a^2,b^2)_{\Delta}\Big) \\
<& (1-\lambda) \Uglobal_{1,\Delta}\Big(PR_1(a^1,b^1)_{\Delta}\Big)
+
\lambda \Uglobal_{1,\Delta}\Big(PR_1(a^2,b^2)_{\Delta}\Big) \\
=& (1-\lambda) \UDelta(a^1,b^1) + \lambda \UDelta(a^2,b^2).
\end{align*}
\end{proof}
\noindent Our next step is to define a global functional for the
triangulation $\Triang=(V,E_T,F_T)$. For any $(a,b) \in \TE$ let
\begin{equation} \label{Eqn_Global_Functional}
\Uglobal_{\Triang}(a,b) = \sum_{\Delta \in F_T} \UDelta(a,b).
\end{equation}
As we will see later, this functional will have a profound impact
on our study. But before that, let us write down a modification of
(\ref{Eqn_Global_Functional}). First, in the case
$\PP=\Euclideanplane$ let us define the affine subspace
$$\GB = \Big\{ (\theta,\Theta) \in \reals^{E_1}\times
\reals^{V_1} \,\, \big{|} \,\, \sum_{k \in V} (2 \pi - \Theta_k) =
2\pi \chi(S) \Big\}.$$ We are interested in this space because the
cone angles $\Theta$ at the vertices of a circle pattern should
always satisfy the global Gauss-Bonnet theorem. In the case
$\PP=\hyperbolicplane$ we simply let $\GB = \reals^{E_1} \times
\reals^{V_1}$. For the specifics of the notations, see the
paragraph right after theorem \ref{Thm_description_of_polytopes}.
Let $(\theta,\Theta) \in \GB$. Define
\begin{equation} \label{Eqn_Global_Functional_Theta}
\Uglobal_{\theta,\Theta}(a,b) = \sum_{\Delta \in F_T} \UDelta(a,b)
- \sum_{ij \in E_1\cup E_{\pi}} (\theta_{ij} -
\check{\theta}_{ij}) a_{ij} -
 \sum_{k \in V_1} (\Theta_{k} - \check{\Theta}_{k}) b_{k}.
\end{equation}
In this formula $(\check{\theta},\check{\Theta}) \in \GB$
represents the fixed angle data extracted from the special circle
pattern constructed in lemma \ref{Lem_nonempty_space_of_patterns}
with the help of lemma \ref{Lem_existence_of_decorated_polygon}
(see section \ref{Sec_defining_space_circle_patterns}). By
proposition \ref{Prop_alpha_alpha_equals_theta}, for any two
triangles $\Delta$ and $\Delta'$ of $\Triang$ that share a common
edge $ij \in E$ we have $\check{\theta}_{ij} =
\check{\alpha}^{\Delta}_{ij} + \check{\alpha}^{\Delta'}_{ij}$.
Furthermore, $\check{\Theta}_k=\sum_{\Delta \in F_k}
\check{\beta}^{\Delta}$. Here, $F_k$ is the set of all faces that
share the same vertex $k \in V$. Similarly, we will use the
notation $E_k$ for the set of all edges adjacent to the same
vertex $k$. With all these properties in mind, one can verify that
both functions $\Uglobal_{\Triang}$ and $\Uglobal_{\theta,\Theta}$
are $\reals-$invarinat, i.e. $\Uglobal_{\Triang} \circ ACT_t =
\Uglobal_{\Triang}$ and $\Uglobal_{\theta,\Theta} \circ ACT_t =
\Uglobal_{\theta,\Theta}$ for all $t \in \reals$.

 \emph{Remark.} We remind the reader that in the case
$\PP=\hyperbolicplane$ the action $ACT_t$ is trivial and thus some
terminology and notations are redundant. For instance the
functions $\Uglobal_{\Triang}$ and $\Uglobal_{\theta,\Theta}$ are
locally strictly convex on $\TE$ and there is no need to introduce
the notion of convexity transverse to the action $ACT_t$.
Nevertheless we keep these notations and terminology since they
allows us to treat both the Euclidean and the hyperbolic case
simultaneously.

\begin{lem} \label{Lem_convexity_of_Uglobal_and_Utheta}
The functions $\Uglobal_{\Triang} \, : \, \TE \, \to \, \reals$
and $\Uglobal_{\theta,\Theta} \, : \, \TE \, \to \, \reals$ are
convex locally strictly convex in $\TE$, transversally to the
orbits of the $\reals-$action on $\TE$. In other words, any point
of $\TE$ has a convex neighborhood $B \subset \TE$ such that for
any two points $(a^1,b^1)$ and $(a^2,b^2) \in \TE$ with the
property that $(a^2,b^2) \neq ACT_t(a^1,b^1)$ for all $t \in
\reals$ and for any $\lambda \in [0,1]$
\begin{align*}
&\Uglobal_{\Triang}\Big((1-\lambda) (a^1,b^1) + \lambda
(a^2,b^2)\Big) < (1-\lambda) \Uglobal_{\Triang}(a^1,b^1) + \lambda
\Uglobal_{\Triang}(a^2,b^2) \,\,\,\, \text{ and}\\
&\Uglobal_{\theta,\Theta}\Big((1-\lambda) (a^1,b^1) + \lambda
(a^2,b^2)\Big) < (1-\lambda) \Uglobal_{\theta,\Theta}(a^1,b^1) +
\lambda \Uglobal_{\theta,\Theta}(a^2,b^2).
\end{align*}
\end{lem}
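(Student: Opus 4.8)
My plan is to reduce the convexity of the two global functionals to the already-established local convexity of $\UDelta$ from Lemma \ref{Lem_convexity_of_local_U}. The key structural observation is that $\Uglobal_{\Triang}$ is simply the sum $\sum_{\Delta \in F_T} \UDelta$, and each summand $\UDelta$ depends only on the variables $(a,b)_{\Delta}$ associated to the single triangle $\Delta$. Thus $\UDelta$, viewed as a function on the whole space $\TE$, is the pullback of a function on $\TED$ under the coordinate projection $\TE \to \TED$ that forgets all edge-lengths and radii not belonging to $\Delta$. Since composing a locally strictly convex function with a linear surjection preserves convexity in the directions transverse to the kernel, each $\UDelta$ is convex on $\TE$ (transversally to the $\reals$-action), and a sum of such functions should inherit the desired strict convexity.

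**The main steps in order.**
First I would make precise the statement that $\UDelta$ is convex on $\TE$ transversally to the $\reals$-orbits. For this I use that $\UDelta = \Uglobal_{1,\Delta} \circ PR_1 \circ \pi_{\Delta}$, where $\pi_{\Delta}$ is the linear projection extracting the six coordinates $(a,b)_{\Delta}$ and $PR_1$ projects onto $\TE_{1,\Delta}$; by Lemma \ref{Lem_convexity_of_local_U} the innermost function $\Uglobal_{1,\Delta}$ has positive-definite Hessian, hence $\UDelta$ is convex along any segment whose endpoints project to distinct points of $\TE_{1,\Delta}$, and constant along the $\reals$-orbit directions. Second, I would sum over $\Delta \in F_T$: a sum of convex functions is convex, and the sum is \emph{strictly} convex in any direction in which at least one summand is strictly convex. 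Third, and this is the crux, I must verify that for any two points $(a^1,b^1)$ and $(a^2,b^2)$ of $\TE$ that do \emph{not} lie on a common global $\reals$-orbit, there is at least one triangle $\Delta$ for which the local projections $PR_1\pi_{\Delta}(a^1,b^1) \neq PR_1\pi_{\Delta}(a^2,b^2)$, so that the corresponding $\UDelta$ is strictly convex along that segment and the strict inequality propagates to $\Uglobal_{\Triang}$. Finally, since $\Uglobal_{\theta,\Theta}$ differs from $\Uglobal_{\Triang}$ by a linear functional (the terms $-\sum(\theta_{ij}-\check{\theta}_{ij})a_{ij} - \sum(\Theta_k - \check{\Theta}_k)b_k$), and adding a linear functional does not affect the Hessian, $\Uglobal_{\theta,\Theta}$ has exactly the same convexity properties — this step is immediate.

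**The main obstacle.**
The delicate point is the third step: translating the hypothesis ``$(a^2,b^2) \neq ACT_t(a^1,b^1)$ for all $t$'' (non-coincidence of \emph{global} $\reals$-orbits on $\TE$) into the existence of a single triangle whose \emph{local} $\reals$-orbits are distinct. The global $\reals$-action on $\TE$ and the local actions $ACT^{\Delta}_t$ on the factors $\TED$ are compatible — the global action restricts to the local one on each triangle — so if the two global points differed by a local $\reals$-shift on every triangle \emph{simultaneously with the same parameter $t$}, they would lie on a common global orbit, contradicting the hypothesis. I must rule out the more subtle possibility that the two points agree up to \emph{different} local shifts $t_{\Delta}$ on different triangles. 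Here I would argue using the connectedness of the dual graph of $\Triang$ together with the shared-edge constraints: in the Euclidean case the action shifts $b_k \mapsto b_k - t$ and $a_{ij} \mapsto a_{ij} + t$ (or $+2t$) in a way that is \emph{consistent across shared vertices and edges}, so if $PR_1\pi_{\Delta}(a^1,b^1) = PR_1\pi_{\Delta}(a^2,b^2)$ for every $\Delta$, the local shift parameters $t_\Delta$ must agree on overlapping triangles and hence, by connectedness, be globally constant — forcing the two points onto one global orbit, a contradiction. Thus at least one $\Delta$ gives distinct local projections, $\UDelta$ is strictly convex in that direction, and the strict inequality for $\Uglobal_{\Triang}$ (and therefore $\Uglobal_{\theta,\Theta}$) follows.
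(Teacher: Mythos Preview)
Your proposal is correct and follows essentially the same route as the paper: decompose $\Uglobal_{\Triang}$ as $\sum_{\Delta}\UDelta$, invoke Lemma~\ref{Lem_convexity_of_local_U} for non-strict convexity of each summand, identify one triangle $\Delta_0$ on which the local $\reals$-orbits of the two points are distinct to obtain a strict inequality, and then note that $\Uglobal_{\theta,\Theta}$ differs from $\Uglobal_{\Triang}$ by a linear term. The only difference is that the paper simply asserts the equivalence ``$(a^2,b^2)\neq ACT_t(a^1,b^1)$ for all $t$ $\iff$ there exists $\Delta_0$ with $(a^2,b^2)_{\Delta_0}\neq ACT^{\Delta_0}_t(a^1,b^1)_{\Delta_0}$ for all $t$'' without comment, whereas you correctly flag this as the delicate step and supply the connectedness argument (shared vertices or edges force the local shift parameters $t_\Delta$ to coincide across adjacent triangles); your treatment of this point is in fact more complete than the paper's.
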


\begin{proof}
For $\Delta \in F_T$ each function $\UDelta$ has variables that
belong to $\reals^{E_{\Delta}^1}\times\reals^{V_{\Delta}^1}$, so
naturally it can be regarded as a function with variables
belonging to the bigger space $\reals^{E}\times\reals^{V}$
depending explicitly only on the variables related to $\Delta$. 

Let $(a^0,b^0) \in \TE$ be an arbitrary point and let
$B\subset\TE$ be an open convex set, containing $(a^0,b^0)$. Take
any two points $(a^1,b^1)$ and $(a^2,b^2) \in B$ such that
$(a^2,b^2) \neq ACT_t(a^1,b^1)$ for all $t \in \reals$. The latter
condition is equivalent to the fact that there exists at least one
$\Delta_0 \in F_T$ such that $(a^2,b^2)_{\Delta_0}\neq
ACT^{\Delta_0}_t(a^1,b^1)_{\Delta_0} \in \TE_{\Delta_0}$ for all
real numbers $t$. For any $\Delta \in F_T$ define
$B_{\Delta}=\big\{(a,b)_{\Delta} \in \TED \,\, | \,\,\, (a,b) \in
B \big\}$ which is simply the projection of $B$ onto $\TED$.
Therefore $B_{\Delta}$ is an open convex subset of $\TED$. In
particular $(a^1,b^1)_{\Delta_0}$ and $(a^2,b^2)_{\Delta_0} \in
B_{\Delta_0}.$ Choose any $\lambda \in [0,1]$. 
For any $\Delta \in F_T$
\begin{equation} \label{Eqn_non_strict_UDelta_convexity_inequality}
\UDelta\Big((1-\lambda)(a^1,b^1) + \lambda (a^2,b^2)\Big) \leq
(1-\lambda) \UDelta(a^1,b^1) + \lambda \UDelta(a^2,b^2).
\end{equation}
Inequality (\ref{Eqn_non_strict_UDelta_convexity_inequality})
comes from the fact that if $(a^1,b^1)_{\Delta}$ and
$(a^2,b^2)_{\Delta}$ do not lie in the same orbit then the
inequality is strict by lemma \ref{Lem_convexity_of_local_U}.
Otherwise, if $ACT^{\Delta}_{t_0}(a^1,b^1)_{\Delta} =
(a^2,b^2)_{\Delta}$ for some $t_0 \in \reals$ then we have an
identity due to the $\reals-$invariance of $\UDelta$. However, for
the triangle $\Delta_0$ inequality
(\ref{Eqn_non_strict_UDelta_convexity_inequality}) is strict.
Therefore
\begin{align*}
&\Uglobal_{\Triang}\Big((1-\lambda)(a^1,b^1) + \lambda
(a^2,b^2)\Big)= \\
= &\Uglobal_{\Delta_0}\Big((1-\lambda)(a^1,b^1)
+ \lambda (a^2,b^2)\Big) + \sum_{\Delta \in
F_T\setminus\{\Delta_0\}}
\UDelta\Big((1-\lambda)(a^1,b^1) + \lambda (a^2,b^2)\Big)\\
\leq &\Uglobal_{\Delta_0}\Big((1-\lambda)(a^1,b^1) + \lambda
(a^2,b^2)\Big) + \sum_{\Delta \in F_T\setminus\{\Delta_0\}}
(1-\lambda)
\UDelta\big(a^1,b^1\big) + \lambda \UDelta\big(a^2,b^2\big)\\
< &(1-\lambda) \Uglobal_{\Delta_0}\big(a^1,b^1\big) + \lambda
\Uglobal_{\Delta_0}\big(a^2,b^2\big) +\\
&\phantom{ 1 - \lambda \Uglobal_{\Delta_0}\big(a^1,b^1\big) +
\lambda \Uglobal_{\Delta_0}\big(a^2,b^2\big) } + \sum_{\Delta \in
F_T\setminus\{\Delta_0\}} (1-\lambda) \UDelta\big(a^1,b^1) +
\lambda \UDelta\big(a^2, b^2\big)\\
= &(1-\lambda) \Uglobal_{\Triang}(a^1,b^1) + \lambda
\Uglobal_{\Triang}(a^2,b^2).
\end{align*}
Consequently, the function $\Uglobal_{\theta,\Theta}$ is also
strictly transversally locally convex on $\TE$
because it 
is a sum of a linear function with the function
$\Uglobal_{\Triang}$.
\end{proof}

\section{The angle extraction map} \label{Sec_angle_map}

We are ready to put into action all the constructions carried out
so far. Define the map
$$ \PhiT \, : \, \CPT \, \to \, \reals^{E_1 \cup E_{\pi}} \times
\reals^{V_1}$$  
which associates to a generalized hyper-ideal circle pattern its
vector $(\theta,\Theta) \in \reals^{E_1 \cup E_{\pi}} \times
\reals^{V_1}$ of angles between adjacent face circles and cone
angles at the vertex points. Clearly, we can take any other circle
pattern with the same combinatorics, which is isometric (and
scaled if applicable) to the initial one, and still obtain the
same angles as a result. Therefore, $\PhiT$ is well defined on the
isometry (or scaling) classes of generalized circle patterns from
$\CPT$. To understand $\PhiT$ better, we think that it is written
in $\TE$ coordinates, i.e. we think that $\CPT = \TE_0$, and
$$\PhiT \, : \, \TE_0 \, \to \, \reals^{E_1 \cup E_{\pi}} \times
\reals^{V_1}.$$ Recall that $\reals^{E_1 \cup E_{\pi}} \times
\reals^{V_1}$ is regarded as an affine subset of $\reals^{E_T}
\times \reals^{V}$ according to the paragraph right after theorem
\ref{Thm_description_of_polytopes}. Due to the global Gauss-Bonnet
theorem it happens so that $\PhiT(\TE_0) \subset \GB$. We also
would consider the extension $\tPhiT : \TE \to \GB$ defined as
$\tPhiT = \PhiT\circ PR$, which is by construction
$\reals-$invariant.

\begin{lem} \label{Lem}
The real analytic functions $\Uglobal_{\Triang}$ and
$\Uglobal_{\theta,\Theta}$ have partial derivatives
\begin{align}
\frac{\partial\Uglobal_{\Triang}}{\partial a_{ij}} &=
\alpha^{\Delta}_{ij} + \alpha^{\Delta'}_{ij} - \check{\theta}_{ij}
&\frac{\partial\Uglobal_{\Triang}}{\partial b_{k}} = \sum_{\Delta
\in F_k} \beta^{\Delta}_{k} -
\check{\Theta}_{k} \label{Formula_partial_derivative_Utriang}\\
\frac{\partial\Uglobal_{\theta,\Theta}}{\partial a_{ij}} &=
\alpha^{\Delta}_{ij} + \alpha^{\Delta'}_{ij} - {\theta}_{ij}
&\frac{\partial\Uglobal_{\theta,\Theta}}{\partial b_{k}} =
\sum_{\Delta \in F_k} \beta^{\Delta}_{k} - \Theta_{k}
\label{Formula_partial_derivatives_Utheta}
\end{align}
where $\Delta$ and $\Delta' \in F_T$ are the two faces having $ij
\in E_1 \cup E_{\pi}$ as a common edge, $k\in V_1$ and $(\theta,
\Theta)$ is an arbitrary vector from $\reals^{E_1\cup
E_{\pi}}\times\reals^{V_1}$. Furthermore,
\begin{align*}
&\tPhiT(a,b) = \big( \nabla \Uglobal_{\Triang} \big)_{(a,b)} +
(\check{\theta}, \check{\Theta})  \,\,\,\, \text{for all} \,\,
(a,b) \in \TE\\
&\PhiT = \tPhiT{|}_{\TE_0}.
\end{align*}
Moreover, for an arbitrary $(\theta,\Theta) \in \reals^{E \cup
E_{\pi}} \times \reals^{V_1}$ there exists $(a,b) \in \TE_0$ such
that $\PhiT(a,b) = (\theta,\Theta)$ if and only if $(a,b)$ is a
critical point of $\Uglobal_{\theta,\Theta}$ inside the domain
$\TE_0.$ The critical point is unique. Finally, $\PhiT$ is a real
analytic diffeomorphism between $\TE_0$ and its image
$\PhiT(\TE_0) \, \subset \, \reals^{E_1\cup
E_{\pi}}\times\reals^{V_1}$.
\end{lem}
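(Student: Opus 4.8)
The plan is to assemble the statement from the local derivative computation in Lemma \ref{Lem_derivatives_Ulocal} and the transverse strict convexity of Lemma \ref{Lem_convexity_of_Uglobal_and_Utheta}. First I would differentiate $\Uglobal_{\Triang} = \sum_{\Delta \in F_T}\UDelta$ term by term. By Lemma \ref{Lem_derivatives_Ulocal} one has $\partial\UDelta/\partial a_{ij} = \alpha^{\Delta}_{ij} - \check{\alpha}^{\Delta}_{ij}$ when $ij \in E^1_{\Delta}$ and $0$ otherwise; since $S$ is closed, each edge $ij \in E_1 \cup E_{\pi}$ is shared by exactly two faces $\Delta, \Delta' \in F_T$, so summing and invoking $\check{\theta}_{ij} = \check{\alpha}^{\Delta}_{ij} + \check{\alpha}^{\Delta'}_{ij}$ yields the first formula of (\ref{Formula_partial_derivative_Utriang}). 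Likewise $\partial\Uglobal_{\Triang}/\partial b_k = \sum_{\Delta \in F_k}\beta^{\Delta}_k - \sum_{\Delta \in F_k}\check{\beta}^{\Delta}_k = \sum_{\Delta \in F_k}\beta^{\Delta}_k - \check{\Theta}_k$, using $\check{\Theta}_k = \sum_{\Delta \in F_k}\check{\beta}^{\Delta}$. The formulas (\ref{Formula_partial_derivatives_Utheta}) follow at once, since $\Uglobal_{\theta,\Theta}$ differs from $\Uglobal_{\Triang}$ only by the linear term $-\sum(\theta_{ij}-\check{\theta}_{ij})a_{ij} - \sum(\Theta_k-\check{\Theta}_k)b_k$. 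Recognizing through Proposition \ref{Prop_alpha_alpha_equals_theta} that $\alpha^{\Delta}_{ij} + \alpha^{\Delta'}_{ij}$ is precisely the angle $\theta_{ij}$ between the adjacent face circles, and that $\sum_{\Delta \in F_k}\beta^{\Delta}_k$ is the cone angle $\Theta_k$, I read off that $\nabla\Uglobal_{\Triang}$ records the current angle data shifted by the constant $(\check{\theta},\check{\Theta})$, i.e. $\tPhiT = \nabla\Uglobal_{\Triang} + (\check{\theta},\check{\Theta})$; restricting to the cross-section $\TE_0$, on which $PR$ is the identity, gives $\PhiT = \tPhiT|_{\TE_0}$.

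For the critical-point characterization I combine this with the linear correction to get $d\Uglobal_{\theta,\Theta}(a,b) = \tPhiT(a,b) - (\theta,\Theta)$ as a covector on $\reals^{E_T}\times\reals^{V}$. If $(a,b)\in\TE_0$ satisfies $\PhiT(a,b) = (\theta,\Theta)$, this covector vanishes identically, so $(a,b)$ is certainly a critical point of $\Uglobal_{\theta,\Theta}|_{\TE_0}$; note this forces $(\theta,\Theta)\in\GB$, because $\PhiT$ takes values in $\GB$. Conversely, let $(a,b)\in\TE_0$ be a critical point of the restriction, so that $d\Uglobal_{\theta,\Theta}(a,b)$ annihilates $T\TE_0$. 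In the hyperbolic case $\TE_0 = \TE$ and the full differential already vanishes. In the Euclidean case I would invoke the $\reals$-invariance of $\Uglobal_{\theta,\Theta}$ (valid on $\GB$): its differential annihilates the infinitesimal generator $X$ of $ACT_t$, and because $\TE_0$ is a cross-section the orbit direction $X$ is transverse to it, so $T\TE_0 \oplus \reals X$ exhausts the relevant tangent space. Hence $d\Uglobal_{\theta,\Theta}(a,b)=0$, which gives $\tPhiT(a,b) = (\theta,\Theta)$ and therefore $\PhiT(a,b) = (\theta,\Theta)$.

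Uniqueness is then immediate from Lemma \ref{Lem_convexity_of_Uglobal_and_Utheta}: $\Uglobal_{\theta,\Theta}$ is strictly convex transverse to the orbits, so its restriction to the cross-section $\TE_0$ is strictly convex and admits at most one critical point. Finally, to show $\PhiT \colon \TE_0 \to \PhiT(\TE_0)$ is a real-analytic diffeomorphism I argue as follows. It is real analytic because the dihedral-angle functions are (Lemma \ref{Lem_diffeo_TEdelta_angles}), and it is injective because two points with the same image are both the unique critical point of the same $\Uglobal_{\theta,\Theta}$, hence coincide. For the immersion property I use $D\PhiT = D(\nabla\Uglobal_{\Triang})|_{T\TE_0} = \mathrm{Hess}(\Uglobal_{\Triang})|_{T\TE_0}$; pairing $D\PhiT(v)$ with $v$ returns $\mathrm{Hess}(\Uglobal_{\Triang})(v,v)$, which is strictly positive for every nonzero $v \in T\TE_0$ by the transverse strict convexity, so $D\PhiT$ is injective. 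Since $\dim\TE_0$ matches the dimension of the target (in the hyperbolic case) respectively of the Gauss--Bonnet subspace $\GB$ containing the image (in the Euclidean case), this injective immersion is a local diffeomorphism onto an open subset, and together with injectivity it is a diffeomorphism onto its image.

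I expect the reverse direction of the critical-point characterization to be the main obstacle: the careful bookkeeping of the $\reals$-action in the Euclidean case, namely verifying that $\TE_0$ is genuinely transverse to the orbit field $X$ and that the $\reals$-invariance of $\Uglobal_{\theta,\Theta}$ on $\GB$ is exactly what upgrades ``critical point of the restriction to $\TE_0$'' to ``vanishing of the full gradient.'' Every remaining step is either direct differentiation or a direct appeal to the already-established convexity and diffeomorphism results.
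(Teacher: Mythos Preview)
Your proposal is correct and follows essentially the same route as the paper: differentiate $\Uglobal_{\Triang}$ term by term via Lemma~\ref{Lem_derivatives_Ulocal}, identify $\nabla\Uglobal_{\Triang}+(\check{\theta},\check{\Theta})$ with $\tPhiT$, deduce the critical-point characterization, and then combine uniqueness-from-convexity with the Hessian being positive definite on $T\TE_0$ to invoke the inverse function theorem. Your treatment of the Euclidean case is actually more explicit than the paper's, which simply asserts that a critical point means the full differential vanishes without spelling out the transversality of $\TE_0$ to the $\reals$-orbit and the $\reals$-invariance of $\Uglobal_{\theta,\Theta}$; your argument there is the right way to fill that in.
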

\begin{proof}
According to the definition (\ref{Eqn_Global_Functional}) of
$\Uglobal_{\Triang}$, there are only two terms that explicitly
contain $a_{ij}$. These are $\UDelta(a,b)$ and
$\Uglobal_{\Delta'}(a,b)$, where $\Delta$ and $\Delta' \in F_T$
are the two faces having $ij \in E_1 \cup E_{\pi}$ as a common
edge. According to lemma \ref{Lem_derivatives_Ulocal},
\begin{align*}
\frac{\partial\Uglobal_{\Triang}}{\partial a_{ij}} &=
\frac{\partial\UDelta}{\partial a_{ij}} +
\frac{\partial\Uglobal_{\Delta'}}{\partial a_{ij}} =
\alpha^{\Delta}_{ij} - \check{\alpha}^{\Delta}_{ij} +
\alpha^{\Delta'}_{ij} - \check{\alpha}^{\Delta'}_{ij} =
\alpha^{\Delta}_{ij} + \alpha^{\Delta'}_{ij} - \check{\theta}_{ij}
\end{align*}
Similarly, the only terms $\UDelta(a,b)$ that explicitly depend on
$b_k$ are the ones for which $\Delta$ has $k$ as its vertex. Then
\begin{align*}
\frac{\partial\Uglobal_{\Triang}}{\partial b_{k}} &= \sum_{\Delta
\in F_k} \frac{\partial\UDelta}{\partial b_{k}} = \sum_{\Delta \in
F_k} \big(\beta^{\Delta}_{k} - \check{\beta}^{\Delta}_{k}\big) =
\sum_{\Delta \in F_k} \beta^{\Delta}_{k} - \check{\Theta}_{k}
\end{align*}
Consequently, the formulas
(\ref{Formula_partial_derivatives_Utheta}) for the partial
derivatives of $\Uglobal_{\theta,\Theta}$ follow easily from the
fact that
$$
\Uglobal_{\theta,\Theta}(a,b) = \Uglobal_{\Triang}(a,b) - \sum_{ij
\in E_1\cup E_{\pi}} \big(\theta_{ij} - \check{\theta}_{ij}\big)\,
a_{ij} -
 \sum_{k \in V_1} (\Theta_{k} - \check{\Theta}_{k}) \,  b_{k}
$$
according to expression (\ref{Eqn_Global_Functional_Theta}).
Formulas (\ref{Formula_partial_derivative_Utriang}) can be written
as
\begin{align*}
\alpha^{\Delta}_{ij} + \alpha^{\Delta'}_{ij} =
\frac{\partial\Uglobal_{\Triang}}{\partial a_{ij}}(a,b) +
\check{\theta}_{ij} \,\,\,\,\,\,\, &\sum_{\Delta \in F_k}
\beta^{\Delta}_{k} = \frac{\partial\Uglobal_{\Triang}}{\partial
b_{k}}(a,b)
 + \check{\Theta}_{k}
\end{align*}
As already discussed, each $(a,b) \in \TE$ gives rise to a
generalized hyper-ideal circle pattern, unique up to isometry (and
scaling). Theorem \ref{Prop_alpha_alpha_equals_theta} implies that
the number $\theta_{ij} = \alpha^{\Delta}_{ij} +
\alpha^{\Delta'}_{ij} \in (0,2\pi)$ for each $ij \in E_T$ is the
intersection angle between two adjacent face circles of the circle
pattern $(a,b)$ (compare with figure \ref{Fig2}a). Analogously,
the cone angle of the pattern $(a,b)$ at any vertex $k \in V_1$ is
$\Theta_k = \sum_{\Delta \in F_k} \beta^{\Delta}_{k} \, > \, 0$.
Consequently,
\begin{align*}
&(\theta,\Theta) = \tPhiT(a,b) = \big( \nabla \Uglobal_{\Triang}
\big)_{(a,b)} + (\check{\theta}, \check{\Theta})  \,\,\,\,
\text{for} \,\, (a,b) \in \TE \,\,\,\,\text{ and } \,\,\, \PhiT =
\tPhiT{|}_{\TE_0}.
\end{align*}
Furthermore, $(a,b) \in \TE_0$ is a critical point of
$\Uglobal_{\theta,\Theta}$ exactly when $\big(d
\Uglobal_{\theta,\Theta}\big)_{(a,b)} = 0,$ which means
$\big(\nabla\Uglobal_{\theta,\Theta}\big)_{(a,b)}=0$.  With the
latter fact in mind, formula
(\ref{Formula_partial_derivatives_Utheta}) reveals that $(a,b) \in
\TE_0$ is a critical point of $\Uglobal_{\theta,\Theta}$ if and
only if for each $ij \in E_1 \cup E_{\pi}$ the a priori assigned
number $\theta_{ij} \in (0, 2\pi)$ is equal to the intersection
angle $ \alpha^{\Delta}_{ij} + \alpha^{\Delta'}_{ij}$ of two
adjacent face circles of the pattern represented by $(a,b)$, as
well as the number $\Theta_k$ is the cone angle $\sum_{\Delta \in
F_k} \beta^{\Delta}_{k} \, > \, 0$ of $(a,b)$ at the cone point $k
\in V$. The function $\Uglobal_{\theta,\Theta}$ restricted to
$\TE_0$ is locally strictly convex, so $(a,b)$ is unique.
Consequently, the map $\PhiT$ is a bijection between $\TE_0$ and
its image $\PhiT(\TE_0)$, due to the fact that the pre-image of
each $(\theta,\Theta) \in \PhiT(\TE_0)$ is the unique critical
point of the functional $\Uglobal_{\theta,\Theta}$ on $\TE_0$. The
derivative of the expression $\tPhiT(a,b) = \big( \nabla
\Uglobal_{\Triang} \big)_{(a,b)} + (\check{\theta},
\check{\Theta})$ is the matrix $\big(D\tPhiT\big)_{(a,b)} =
\text{Hess}\big(\Uglobal_{\Triang}\big)_{(a,b)}$ which restricted
to the tangent space of $\TE_0$ at $(a,b)$ is positive definite by
lemma \ref{Lem_convexity_of_Uglobal_and_Utheta}, and thus
invertible. By the inverse mapping theorem, $\PhiT$ is a local
real analytic diffeomorphism and since it is one-to-one, it is
also a global real analytic diffeomorphism between $\TE_0$ and
$\PhiT(\TE_0)$.
\end{proof}

\section{The space of true hyper-ideal circle patterns} \label{Sec_space_patterns_C}

We focus our attention on the original cell complex
$\cellcomplex=(V,E,F)$ on the surface $S$. Denote by $\CPC$ the
space of either hyperbolic or Euclidean hyper-ideal circle
patterns on $S$ with combinatorics $\cellcomplex$, considered up
to isometries (and global scaling in the Euclidean case) which
preserve the induced by $\cellcomplex$ marking on $S$. Observe
that now we consider only circle patterns satisfying the local
Delaunay property. Our goal is to show that $\CPC$ is a real
analytic manifold.

\begin{prop} \label{Prop_space_of_patterns_C}
The space $\CPC$ is a real analytic manifold of dimension
$|V_1|+|E_1|$ in the hyperbolic case and $|V_1|+|E_1|-1$ in the
Euclidean case. It can be realized as a real analytic submanifold
of $\TE_0$. Consequently the map
$$\PhiC = \PhiT |_{\CPC}\, : \, \CPC \, \to \,
\reals^{E_{\pi} \cup E_1} \times \reals^{V_1}$$ is a real-analytic
diffeomorphism between $\CPC$ and $\PhiC(\CPC)$.
\end{prop}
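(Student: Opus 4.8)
The plan is to realize $\CPC$ as the locus inside $\CPT = \TE_0$ where every auxiliary edge has its two adjacent face circles coinciding, and then to transport that affine description through the diffeomorphism $\PhiT$ of Lemma \ref{Lem}. Recall that the triangulation $\Triang=(V,E_T,F_T)$ refines $\cellcomplex$ by adding the diagonals $E_{\pi}$ inside the faces of $\cellcomplex$. First I would set up the split--merge correspondence. Given a true hyper-ideal circle pattern with combinatorics $\cellcomplex$, each decorated polygon face carries a single face circle orthogonal to all its vertex circles; restricting that circle to any sub-triangle cut off by a diagonal produces, by uniqueness of the orthogonal face circle, the face circle of that sub-triangle. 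Hence all sub-triangles of one face share a common face circle, so across each diagonal $ij \in E_{\pi}$ the two adjacent face circles coincide, which by Proposition \ref{Prop_special_angles_bw_face_circles} is exactly the condition $\theta_{ij}=\pi$. This exhibits a point of $\CPC$ as a point $(a,b)\in\TE_0$ with $\theta_{ij}(a,b)=\pi$ for all $ij\in E_{\pi}$, subject to the local Delaunay property on the remaining edges $E_0\cup E_1$.

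Conversely, I would check that any $(a,b)\in\TE_0$ with $\theta_{ij}(a,b)=\pi$ for all $ij\in E_{\pi}$ reassembles into a pattern with combinatorics $\cellcomplex$: across each diagonal the adjacent sub-triangles share a face circle, so by transitivity all sub-triangles of a given face of $\cellcomplex$ have one common face circle, and — as observed in Section \ref{Sec_def_and_notations} — erasing those diagonals yields a \emph{convex} decorated polygon whose vertex circles (inherited from the triangulated pattern) still have disjoint interiors. Requiring in addition the local Delaunay property on $E_0\cup E_1$ turns this into an element of $\CPC$. Thus, under the identification $\CPT=\TE_0$,
$$\CPC = \big\{\, (a,b)\in\TE_0 \;:\; \theta_{ij}(a,b)=\pi \text{ for all } ij\in E_{\pi} \,\big\} \cap \mathcal{D},$$
where $\mathcal{D}\subset\TE_0$ is the open set on which local Delaunay holds on all edges of $E_0\cup E_1$.

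With this description in hand, the manifold structure follows from Lemma \ref{Lem}. Consider the affine subspace
$$\Pi = \big\{\, (\theta,\Theta)\in\reals^{E_1\cup E_{\pi}}\times\reals^{V_1} \;:\; \theta_{ij}=\pi \text{ for all } ij\in E_{\pi} \,\big\}.$$
Since $\PhiT\colon\TE_0\to\reals^{E_1\cup E_{\pi}}\times\reals^{V_1}$ is a real analytic diffeomorphism onto its image (open in $\GB$) and its $E_{\pi}$-components are precisely the angles $\theta_{ij}$, the locus $\{\theta_{ij}=\pi,\ ij\in E_{\pi}\}$ equals $\PhiT^{-1}(\Pi)$, a real analytic submanifold diffeomorphic to the open set $\Pi\cap\PhiT(\TE_0)$. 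The Gauss--Bonnet equation cutting out $\GB$ constrains only the cone angles $\Theta$, so the $\theta_{ij}$ with $ij\in E_{\pi}$ are among the free affine coordinates on $\GB$; fixing them to $\pi$ therefore imposes $|E_{\pi}|$ independent conditions. A dimension count gives $\dim\PhiT^{-1}(\Pi)=(|E_1|+|E_{\pi}|+|V_1|)-|E_{\pi}|=|E_1|+|V_1|$ in the hyperbolic case and one less in the Euclidean case (where $\TE_0$ already carries the extra Gauss--Bonnet codimension). Intersecting with the open set $\mathcal{D}$ leaves an open, hence equidimensional, subset, nonempty by Lemma \ref{Lem_nonempty_space_of_patterns}. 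This establishes that $\CPC$ is a real analytic submanifold of $\TE_0$ of the asserted dimension, and since $\PhiC=\PhiT|_{\CPC}$ is the restriction of the real analytic diffeomorphism $\PhiT$ to a submanifold, it is a real analytic diffeomorphism onto $\PhiC(\CPC)\subset\Pi$.

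The main obstacle I anticipate is the rigorous verification of the split--merge correspondence of the first two paragraphs: that $\theta_{ij}=\pi$ on the diagonals $E_{\pi}$ is both necessary and sufficient for the sub-triangles to assemble into genuine convex decorated polygons, and that this identification is literally the one underlying $\CPT=\TE_0$ rather than merely a set-theoretic bijection, so that it is compatible with the real analytic structure. Once that is in place, the submanifold claim, the dimension formula, and the diffeomorphism property are all formal consequences of Lemma \ref{Lem}.
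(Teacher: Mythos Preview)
Your proposal is correct and follows essentially the same route as the paper: both identify $\CPC$ inside $\TE_0$ as the preimage under $\PhiT$ of the affine slice $\{\theta_{ij}=\pi\text{ for }ij\in E_{\pi}\}$ intersected with the Delaunay open condition, invoke Proposition \ref{Prop_special_angles_bw_face_circles} for the split--merge correspondence, and then read off the manifold structure, dimension, and diffeomorphism directly from Lemma \ref{Lem} together with non-emptiness from Lemma \ref{Lem_nonempty_space_of_patterns}. The only cosmetic difference is that the paper packages the Delaunay condition on the image side (via the open polytopal set $\mathcal{M}_{\cellcomplex}^D=\{\theta_{ij}\in(0,\pi)\text{ for }ij\in E_1\}$) rather than as an open set $\mathcal{D}$ in the domain, but these are equivalent through $\PhiT$.
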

\begin{proof}
For the sake of this proof let
$$\mathcal{N}_{\cellcomplex} = \Big\{ (\theta,\Theta)
\in \reals^{E_1\cup E_{\pi}}\times \reals^{V_1} \,\, \big{|} \,\,
\, \theta_{ij}=\pi \text{ for } ij \in E_{\pi} \Big\}.$$ One sees
that $\mathcal{N}_{\cellcomplex}$ is an affine subspace of
$\reals^{E_1\cup E_{\pi}}\times \reals^{V_1}$. In its own turn,
the latter is regarded as an affine subspace of
$\reals^{E_T}\times \reals^{V}$ by letting $\theta_{ij}=0$ for $ij
\in E_0$ and $\Theta_k = \sum_{ik \in E_k}  (\pi - \theta_{ik})$
for $k \in V_0$. Whenever $\PP=\hyperbolicplane$ let
$\mathcal{M}_{\cellcomplex}=\mathcal{N}_{\cellcomplex}$ and
whenever $\PP=\Euclideanplane$
$$\mathcal{M}_{\cellcomplex} = \Big\{ (\theta,\Theta)
\in \mathcal{N}_{\cellcomplex} \,\, \big{|} \,\,  \sum_{k \in V}
(2\pi - \Theta_k) = 2\pi\chi(S) \,
 \Big\}.$$ Just like before, $\mathcal{M}_{\cellcomplex}$
can be regarded as an affine subspace of $\reals^{E_1\cup
E_{\pi}}\times \reals^{V_1}$ and is contained in
$\mathcal{N}_{\cellcomplex}$. In terms of dimensions
$\dim{\mathcal{M}_{\cellcomplex}} =
\dim{\mathcal{N}_{\cellcomplex}} = |E_1| + |V_1|$ whenever
$\PP=\hyperbolicplane$ and $\dim{\mathcal{M}_{\cellcomplex}} =
\dim{\mathcal{N}_{\cellcomplex}} - 1 = |E_1| + |V_1| - 1$ whenever
$\PP=\Euclideanplane$. Furthermore, define
$\mathcal{M}_{\cellcomplex}^{D} = \big\{ (\theta,\Theta) \in
\mathcal{M}_{\cellcomplex} \,\, \big{|} \,\, \theta_{ij} \in
(0,\pi) \text{ for } ij \in E_1 \,  \big\}$, which is an open
polytopal subset of the affine space $\mathcal{M}_{\cellcomplex}$.
Let
$$\CPC = \PhiT^{-1}\Big(\PhiT(\TE_0) \cap \mathcal{M}_{\cellcomplex}^{D}\Big)$$
It is a real analytic submanifold of $\TE_0$, because $\PhiT^{-1}$
is a real analytic diffeomorphism between $\PhiT(\TE_0)$ and
$\TE_0$, and $\PhiT(\TE_0) \cap \mathcal{M}_{\cellcomplex}^{D}$ is
an open subset of the affine subspace $\mathcal{M}_{\cellcomplex}$
lying inside $\reals^{E_1\cup E_{\pi}}\times \reals^{V_1}$.

Now, let us have a hyper-ideal circle pattern on $S$ with
combinatorics $\cellcomplex$ (see figure \ref{Fig1}a). Then one
can subdivide its geodesic cell complex to obtain a geodesic
triangulation with combinatorics $\Triang$ (see figure
\ref{Fig3}b). The edge-lengths and the vertex radii associated to
this geodesic triangulation form $(l,r) \in \ER.$ In fact, one can
assume that after rescaling $(l,r) \in \ER_1$. By lemma
\ref{Lem_map_between_TE_and_ER}, one sees that $(a,b) =
\Psi^{-1}(l,r) \in \TE_0$. The angle data of the circle pattern,
regarded as a pattern with combinatorics $\Triang$, is
$(\theta,\Theta) = \PhiT(a,b) \in \PhiT(\TE_0)$. Since the
subdividing edges $E_{\pi} = E_T \setminus E$ of $\Triang$
 are redundant, $\theta_{ij}=\pi$ for $ij \in E_{\pi}$ due to proposition
\ref{Prop_special_angles_bw_face_circles}. For example, such
redundant edges on figure \ref{Fig2}a are $is$ and $us \in
E_{\pi}$. The latter fact combined with the fact that a
hyper-ideal circle pattern satisfies the local Delaunay property
yields $(\theta,\Theta) \in \PhiT(\TE_0) \cap
\mathcal{M}_{\cellcomplex}^{D}$. Hence $(a,b) \in \CPC$.

Conversely, let $(a,b) \in \CPC.$ Then, $(a,b)$ defines a
generalized hyper-ideal circle pattern with combinatorics
$\Triang$. By the nature of the map $\PhiT$ and the definition of
$\CPC$, we have that $(\theta,\Theta) = \PhiT(a,b) \in
\PhiT(\TE_0) \cap \mathcal{M}_{\cellcomplex}^{D}$. Therefore
$\theta_{ij}=\pi$ for each $ij \in E_{\pi}.$ Proposition
\ref{Prop_special_angles_bw_face_circles} implies that all
geodesic edges from $E_{\pi}$ are redundant edges of the
generalized circle pattern $(a,b)$, so in fact its combinatorics
is $\cellcomplex$ (for example, see figure \ref{Fig2}a).
Furthermore, the generalized pattern satisfies the local Delaunay
property so it is a hyper-ideal circle pattern.

We can conclude that $\CPC$ is indeed the space of either
Euclidean or hyperbolic hyper-ideal circle patterns on $S$ with
combinatorics $\cellcomplex$, considered up to isometries (and
global scaling in the Euclidean case) which preserve the induced
by $\cellcomplex$ marking on $S$.

Finally, both spaces $\CPC$ and $\PhiT(\CPC) = \PhiC(\CPC)$ are
nonempty because of lemma \ref{Lem_nonempty_space_of_patterns}.
Therefore, both $\CPC$ and $\PhiC(\CPC)$ are real analytic
manifolds of dimension $|E_1|+|V_1|$ when $\PP=\hyperbolicplane$
and $|E_1|+|V_1|-1$ when $\PP=\Euclideanplane$. \end{proof}

\section{Necessary conditions for existence of circle patterns}
\label{Sec_necessary_conditions}

Before we continue our exposition, we recapitulate and discuss
some notational assumptions. As usual, the surface $S$ has a fixed
cell decomposition $\cellcomplex = (V,E,F)$ which is further
subdivided into a triangulation $\Triang=(V,E_T,F_T)$ as explained
in section \ref{Sec_Generalized_space_of_patterns}. The set of
edges $E_T$ is partitioned into $E_T= E \cup E_{\pi} = E_1 \cup
E_0 \cup E_{\pi}$ and the set of vertices is partitioned into $V =
V_1 \cup V_0$. We assume that the space $\reals^{E_1} \times
\reals^{V_1}$ is an affine subspace of $\reals^{E_1\cup E_{\pi}}
\times \reals^{V_1}$ by taking $\theta_{ij} = \pi$ for all
auxiliary edges $ij \in E_{\pi}$. In its own turn,
$\reals^{E_1\cup E_{\pi}} \times \reals^{V_1}$ is viewed as an
affine subspace of the total space $\reals^{E_T}\times \reals^{V}$
by considering that $\theta_{ij} = 0$ for $ij \in E_0$ and
$\Theta_k = \sum_{ij \in E_k} (\pi - \theta_{ij})$ for $k\in V_0$.
Consequently both spaces $\mathcal{N}_{\cellcomplex}$ and
$\mathcal{M}_{\cellcomplex}$ can be regarded as affine subspaces
of $\reals^{E_T}\times \reals^{V}$. Furthermore, we can think that
the polytopes $\polytope^{e}$ and $\polytope^{h}$ defined in
theorem \ref{Thm_description_of_polytopes} also lie in
$\reals^{E_T}\times \reals^{V}$ as open polytopes of the affine
space $\mathcal{M}_{\cellcomplex} \, \subset \, \reals^{E_T}\times
\reals^{V}$. We adopt the common notation $\polytope$ for
$\polytope^{e}$ and $\polytope^{h}$ whenever a distinction between
the two is not necessary.

Let us summarize our central results up to now. So far we have
established that the space of hyper-ideal circle patterns with
combinatorics $\cellcomplex=(C,E,F)$ is the real analytic manifold
$\CPC$. Moreover, we have a natural map $\PhiC \, : \, \CPC \, \to
\, \mathcal{M}_{\cellcomplex}$ which assigns to a circle pattern
the angle data $(\theta,\Theta) \in \mathcal{M}_{\cellcomplex}$
consisting of the intersection angles $\theta$ between pairs of
adjacent face circles of the pattern as well as the cone angles
$\Theta$ at the vertices $V$. It has been established in
proposition \ref{Prop_space_of_patterns_C} that the map $\PhiC$ is
a real analytic diffeomorphism between $\CPC$ and the image
$\PhiC(\CPC) \subset \mathcal{M}_{\cellcomplex}$. Furthermore
$\dim \CPC = \dim \mathcal{M}_{\cellcomplex}$ which means that
$\PhiC(\CPC)$ is an open subset of $\mathcal{M}_{\cellcomplex}$.
To conclude the proof of theorem \ref{Thm_main} and theorem
\ref{Thm_description_of_polytopes}, we need to show that
$\PhiC(\CPC) = \polytope$. This will be done in two steps. First,
we will establish the inclusion $\PhiC(\CPC) \subseteq \polytope$,
i.e. the conditions of theorem \ref{Thm_description_of_polytopes}
are necessary. Second, we will show that the set $\PhiC(\CPC)$ is
relatively closed subset of the polytope $\polytope$. Since the
image $\PhiC(\CPC)$ is both open and relatively closed subset of
$\polytope$, it has to coincide with a connected component of
$\polytope$. But any convex polytope is connected, so $\polytope$
has only one connected component. Therefore $\PhiC(\CPC) =
\polytope$.

In this section, we establish the inclusion $\PhiC(\CPC) \subseteq
\polytope$. The main tool in the proof of this fact is the famous
Gauss-Bonnet formula, which is quite natural having in mind that
we work with angle data. The approach in \cite{Sch1} is very
similar to ours, but it contains some errors which we will point
out and address in the course of our proof.

Let the vector $(a,b) \in \CPC$ represent a given circle pattern
with combinatorics $\cellcomplex$ and let $(l,r) = \Psi(a,b) \in
\ER_1$ be the corresponding edge-lengths and vertex radii of the
pattern (see lemma \ref{Lem_map_between_TE_and_ER}). Let us denote
by $S_{l,r}$ the geometrizaton of $S$ via $(l,r)$, that is
$S_{l,r} $ is a geometric surface homeomorphic to $S$ together
with a geodesic cell complex with combinatorics $\cellcomplex$ on
which the circle pattern represented by $(l,r)$ is realized.
Observe that $S_{l,r}$ is naturally obtained by first subdividing
$\cellcomplex$ into a triangulation $\Triang$. This subdivision is
achieved by adding the auxiliary edges $E_{\pi}$. Then, one
assigns to each topological triangle from $\Triang$ the
edge-lengths $l$ and the vertex radii $r$ so that each
combinatorial triangle becomes a geometric decorated triangle.
Finally, by construction, the decorated triangles are compatibly
adjacent so they form a geometric surface $S_{l,r}$ with a
geodesic triangulation $\Triang_{l,r}$ combinatorially isomorphic
to $\Triang$, carrying the hyper-ideal circle pattern. However,
since $\theta_{ij}=\pi$ for all $ij \in E_{\pi}$, proposition
\ref{Prop_special_angles_bw_face_circles} guarantees that the face
circles of two decorated triangles that share a common auxiliary
edge from $E_{\pi}$ coincide, making this edge redundant (we can
erase it). Consequently, the combinatorics of the hyper-ideal
circle pattern is actually $\cellcomplex$. We denote by
$\cellcomplex_{l,r}$ the geodesic cell decomposition of $S_{l,r}$
obtained from $\Triang_{l,r}$ by erasing the auxiliary (redundant)
edges of type $E_{\pi}$ on $S_{l,r}$. This situation is shown on
figure \ref{Fig2}a, where the edges $is$ and $us$ are redundant
edges with $\theta_{is}=\theta_{us}=\pi$. The geometric complex
$\cellcomplex_{l,r}$ has the same combinatorics as $\cellcomplex$.

Recall that in section \ref{Sec_angle_data_polytope} we introduced
the dual complex $\cellcomplex^*=(V^*,E^*,F^*)$ (figure
\ref{Fig1}b) as well as the additional triangulation
$\hat{\Triang}=(\hat{V}, \hat{E},\hat{F})$ (figure \ref{Fig3}a)
both associated to the cell complex $\cellcomplex$ (figure
\ref{Fig1}a). The geometric realization $\cellcomplex_{l,r}^*$ of
$\cellcomplex^*$ is achieved by considering the geodesic complex
dual to $\cellcomplex$ whose vertices are the centers $\{ O_f \, |
\, f \in F \}$ of the face circles of the circle pattern on
$S_{l,r}$. This geometrization of $\cellcomplex^*$ is in fact the
$r-$weighted Voronoi diagram on $S_{l,r}$ for the finite set of
points $V$. Hence, the dual geodesic edges are segments lying on
the radical axis of pairs of vertex circles connected by an edge
of $\cellcomplex_{l,r}$. Moreover, the dual faces are the Voronoi
cells $\{W_{l,r}(i) \, | \, i \in V \}$, which means they are
convex geodesic polygons on $S_{l,r}$ with possibly a cone
singularity in their interior. This realization of
$\cellcomplex^*$ is only possible because of the local Delaunay
property of $\cellcomplex_{l,r}$. That is why the one-skeleton of
$\cellcomplex^*_{l,r}$ is an embedded geodesic graph on $S_{l,r}$.
Consequently, the triangulation $\hat{\Triang}$ can also be
realized in a natural way as a geodesic triangulation
$\hat{\Triang}_{l,r}$ on $S_{l,r}$. The vertices of
$\hat{\Triang}_{l,r}$ consist of the usual vertices of
$\cellcomplex_{l,r}$ together with the vertices of
$\cellcomplex^*_{l,r}$ on $S_{l,r}$. Then the edges of
$\hat{\Triang}_{l,r}$ are the geodesic dual edges (the edges of
$\cellcomplex^*_{l,r}$) as well as the geodesic segments
connecting the center of each face circle with all the vertices of
the face, lying inside that face. In section
\ref{Sec_angle_data_polytope} we called these \emph{corner edges}.

To fix some notation, any geodesic triangle from
$\hat{\Triang}_{l,r}$ can be denoted by $\triangle vO_fO_{f'}$
(e.g. figure \ref{Fig4}b point 1), where $f$ and $f'$ are two
decorated polygons from $\cellcomplex_{l,r}$ sharing a common edge
$uv$. The points $O_f$ and $O_{f'}$ are the centers of the face
circles of $f$ and $f'$ respectively. Furthermore, the two face
circles of $f$ and $f'$ intersect at the two points $P_u^{uv}$ and
$P_v^{uv}$ lying on the segment $uv$. Compare to corollary
\ref{Cor_two_adjacent_triangles} and definition
\ref{Def_angle_between_2_face_circles}. Let $\Omega \subset S$ be
an arbitrary admissible domain according to definition
\ref{Def_admissible_domain}. Then, since the topological
triangulation $\hat{\Triang}$ corresponds to the geodesic
triangulation $\hat{\Triang}_{l,r}$ constructed above, the
admissible domain $\Omega$ becomes a geometric domain
$\Omega_{l,r} \subset S_{l,r}$ with piecewise geodesic boundary
consisting entirely of geodesic edges of $\hat{\Triang}_{l,r}$. In
other words, $\Omega_{l,r}$ is the open interior of a union of
geodesic triangles of type $\triangle v O_f O_{f'}$ satisfying
definition \ref{Def_admissible_domain}. As a special case, the
geometrization of the open star $\text{OStar}(k)$ of a vertex $k
\in V$ is the open interior of the Voronoi cell $W_{l,r}(k)$.

To incorporate in our proof the statement of corollary
\ref{Cor_polytopes_general_admissible_domains} we are going to
work mostly with admissible domains of $(S,\cellcomplex)$ in this
section. Denote by $\widetilde{\polytope}$ the polytope from
theorem \ref{Thm_description_of_polytopes} constructed for the
collection of admissible domains, and by $\polytope$ the polytope
constructed from the collection of strict admissible domains
(corollary \ref{Cor_polytopes_general_admissible_domains}). If one
compares definition \ref{Def_admissible_domain} to definition
\ref{Def_strict_admissible_domain}, one sees that the collection
of all admissible domains of $(S,\cellcomplex)$ contains the
collection of all strict admissible domains of $(S,\cellcomplex)$.
Therefore, $\widetilde{\polytope} \subseteq \polytope$.
Furthermore, $\widetilde{\polytope}$ is an open subset of
$\polytope$ because the additional conditions that define
$\tilde{\polytope}$ are only strict inequalities.

\begin{lem} \label{Lem_necessary_conditions_general}
$\PhiC(\CPC) \subseteq \widetilde{\polytope} \subseteq \polytope$.
More precisely, $\PhiC(\CPC)$ is an open subset of both polytopes
$\widetilde{\polytope}$ and  $\polytope$.
\end{lem}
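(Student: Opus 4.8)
The plan is to fix a circle pattern $(a,b)\in\CPC$, pass to its geometric realization $S_{l,r}$ with $(l,r)=\Psi(a,b)$ (as set up above), and verify directly that the angle data $(\theta,\Theta)=\PhiC(a,b)$ satisfies every defining condition of $\widetilde{\polytope}$ from Theorem \ref{Thm_description_of_polytopes}. The openness claim then requires no extra work: by Proposition \ref{Prop_space_of_patterns_C} the map $\PhiC$ is a real-analytic diffeomorphism onto $\PhiC(\CPC)$ and $\dim\CPC=\dim\mathcal{M}_{\cellcomplex}$, so $\PhiC(\CPC)$ is open in the ambient affine space $\mathcal{M}_{\cellcomplex}$; since $\widetilde{\polytope}$ and $\polytope$ are relatively open subsets of $\mathcal{M}_{\cellcomplex}$, once the inclusion $\PhiC(\CPC)\subseteq\widetilde{\polytope}\subseteq\polytope$ is established, $\PhiC(\CPC)$ is automatically open in both.

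The conditions H1/E1, H2/E2 and H3/E3 are local or global but elementary. For $ij\in E_1$ the two adjacent face circles neither coincide (the edge is genuine, not a redundant $E_{\pi}$-edge, so $\theta_{ij}\neq\pi$ by Proposition \ref{Prop_special_angles_bw_face_circles}) nor are tangent (tangency is reserved for $E_0$), so Proposition \ref{Prop_alpha_alpha_equals_theta} together with the local Delaunay property forces $\theta_{ij}=\alpha^f_{ij}+\alpha^{f'}_{ij}\in(0,\pi)$; for $ij\in E_0$ the defining constraints of $\ER$ give tangency and $\theta_{ij}=0$. The equality $\Theta_k=\sum_{ik\in E_k}(\pi-\theta_{ik})$ at a radius-zero vertex $k\in V_0$ is the Gauss--Bonnet identity for the Voronoi cell $\mathrm{OStar}(k)$ in the degenerate case where $k$ is an ideal point of the surrounding tetrahedra and the curvature/deficit term collapses to an equality; this is exactly why $\Omega=\mathrm{OStar}(k)$ for $k\in V_0$ is excluded from H4/E4. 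Finally H3/E3 is the global Gauss--Bonnet theorem for the cone surface $S_{l,r}$, namely $K_0\,\mathrm{Area}(S_{l,r})+\sum_{k\in V}(2\pi-\Theta_k)=2\pi\chi(S)$, which is the equality for $K_0=0$ (Euclidean) and, since $\mathrm{Area}>0$ and $K_0=-1$, the strict inequality in the hyperbolic case.

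The core is condition H4/E4. First I would apply the Gauss--Bonnet theorem to the geometric admissible domain $\Omega_{l,r}$, a region with piecewise-geodesic boundary in $\hat\Triang_{l,r}$, interior cone points at $\Omega\cap V$, and corners at the vertices of $\hat V$ on $\partial\Omega$:
\[
K_0\,\mathrm{Area}(\Omega_{l,r})+\sum_{k\in\Omega\cap V}(2\pi-\Theta_k)+\sum_{p\ \text{corner of}\ \partial\Omega}(\pi-\iota_p)=2\pi\chi(\Omega),
\]
where $\iota_p$ is the interior angle at $p$. Separating corners into original vertices $k\in\partial\Omega\cap V$ and dual vertices $O_f\in\partial\Omega$, and using $-\pi|\partial\Omega\cap V|=-\sum_{k\in\partial\Omega\cap V}\pi$, condition H4/E4 becomes equivalent to the purely boundary statement
\[
\sum_{ij^*\subset\partial\Omega}(\pi-\theta_{ij})+\sum_{k\in\partial\Omega\cap V}\iota_k-\sum_{O_f\in\partial\Omega}(\pi-\iota_{O_f})>K_0\,\mathrm{Area}(\Omega_{l,r}).
\]
To evaluate the left side I would exploit the clean identity $\pi-\theta_{ij}=\angl O_fP_i^{ij}O_{f'}$, which follows from Corollary \ref{Cor_two_adjacent_triangles} (the dual edge $O_fO_{f'}$ is the perpendicular bisector of the chord $P_i^{ij}P_j^{ij}$, so the angle subtended at a crossing point by the two centers is $\pi-\alpha^f_{ij}-\alpha^{f'}_{ij}$), and then account for the boundary turning edge-by-edge according to the three cases of Figure \ref{Fig4}b: a dual edge with one adjacent hat triangle inside $\Omega$, a corner edge with one adjacent hat triangle inside, and a doubly-traced edge of the immersed boundary.

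The main obstacle is precisely this boundary bookkeeping together with extracting the \emph{strict} inequality. In the hyperbolic case the right-hand side is $-\mathrm{Area}(\Omega_{l,r})<0$, so it suffices to show the boundary sum is nonnegative and strictness is then automatic; in the Euclidean case $K_0=0$ offers no area slack, so one must produce genuine strict positivity of the boundary sum from the convexity of the Voronoi cells and the strict crossing $\theta_{ij}<\pi$, with equality occurring exactly for the excluded domains $\Omega=\mathrm{OStar}(k)$, $k\in V_0$. The immersed boundary with its doubly-traced edges, and the configuration in which a boundary vertex lies in $V_0$ (the distinction between admissible and strict admissible domains, Corollary \ref{Cor_polytopes_general_admissible_domains}), must be treated case-by-case; I expect this to be the delicate step, and it is the point corresponding to the inaccuracy in \cite{Sch1} flagged in the introduction, requiring a separate verification of non-negativity and a precise location of the equality configurations in each case.
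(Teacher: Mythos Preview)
Your overall strategy coincides with the paper's: apply Gauss--Bonnet to the geometric realization $\Omega_{l,r}$ and reduce H4/E4 to a boundary-angle estimate, using the identity $\angl O_fP_v^{uv}O_{f'}=\pi-\theta_{uv}$. The openness argument and the handling of E1/H1, E2/H2, E3/H3 are fine and match the paper.

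The gap is that you stop at an outline precisely where the work lies. The paper supplies a concrete mechanism you do not: at every boundary vertex it defines auxiliary angles $\varphi^{(1)},\varphi^{(2)}$ so that the interior angle at that vertex decomposes as $\varphi^{(1)}+\varphi^{(2)}+\delta$ with $\delta\geq 0$. For a dual edge $O_fO_{f'}\subset\partial\Omega$ these angles come from the triangle $\triangle O_fP_v^{uv}O_{f'}$. For a pair of consecutive corner edges $O_fk,\,kO_{f'}$ with $k\in\partial\Omega\cap V$ one needs two separate constructions according to whether $\angl O_{f'}kO_f\geq\pi$ (then use the right-angled triangles $\triangle kT_{k,f}O_f$ and $\triangle kT_{k,f'}O_{f'}$ at the orthogonality points of $c_k$ with the face circles) or $\angl O_{f'}kO_f<\pi$ (then the previous triangles overlap and one must use instead a convex quadrilateral $kO_{f'}O_{f_0}O_f$ inside the Voronoi cell). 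This dichotomy is exactly the ``situation 2.1 vs.\ 2.2'' correction to \cite{Sch1} that the introduction flags; without it the estimate does not close.

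Strictness is then obtained uniformly in both geometries by arguing that at least one $\delta_f>0$ unless $\Omega=\mathrm{OStar}(k)$ for some $k\in V_0$: any corner edge on $\partial\Omega$, or any triangle of $\hat\Triang_{l,r}$ lying in $\Omega$ with no edge on $\partial\Omega$, already forces some $\delta_f>0$. Your remark that in the hyperbolic case the area term $-\mathrm{Area}(\Omega_{l,r})$ supplies strictness for free is correct, but it does not save labor: establishing the mere nonnegativity of your boundary sum still requires the same case split, so the paper's uniform treatment via the $\delta$-terms is both necessary in the Euclidean case and no extra cost in the hyperbolic one.
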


\begin{proof}
Let us fix an orientation on he boundary $\partial\Omega_{l,r}$.
Then the edges on it are also oriented. There could be two types
of (unoriented) edges on $\partial \Omega_{l,r}$: (i) dual edges
$O_fO_{f'} = uv^*$ of $\cellcomplex^*_{l,r}$ and (ii) corner edges
$v O_f$ of $\hat{\Triang}_{l,r}$. As usual we have assumed that
$f$ and $f'$ are two decorated polygons from $\cellcomplex_{l,r}$
that share a common edge $uv$ of $\cellcomplex_{l,r}$. In
particular, $v$ is a vertex of $f$. From now on we let $\epsilon =
0$ whenever $\PP = \Euclideanplane$ and $\epsilon = 1$ whenever
$\PP = \hyperbolicplane$.

\emph{Case 1 (dual edges).} At first, let us focus on a dual edge
$O_fO_{f'} = uv^*$ lying on $\partial\Omega_{l,r}$ (refer to
figure \ref{Fig4}b point 1). Following its orientation, we assume
that $O_f$ comes before $O_{f'}$. Then there is a geodesic
triangle $\triangle O_fO_{f'}P_{v}^{uv}$ lying inside the closure
of $\Omega_{l,r}$. Let angles $$\measuredangle O_{f'} O_f
P_v^{uv}= \varphi^{(1)}_f \,\,\,\,\, \text{and} \, \,\,\,\,
\measuredangle P_v^{uv} O_{f'} O_{f} = \varphi^{(2)}_{f'}.$$
Moreover, it is not difficult to see that $\measuredangle O_f
P_v^{uv} O_{f'} = \pi - \theta_{uv}$. All three angles
$\varphi^{(1)}_{f}, \varphi^{(2)}_{f'}$ and $\theta_{uv}$ belong
to the interval $[0, \pi)$. Furthermore, $\pi -
\big(\varphi^{(1)}_{f} + \varphi^{(2)}_{f'}\big) = \pi -
\theta_{uv} + \epsilon \text{Area}_{uv}$, where $\text{Area}_{uv}$
is the area of the triangle $\triangle O_fO_{f'}P_{v}^{uv}$. There
are two special cases. The first one is $P_v^{uv} \equiv P_u^{uv}
\in O_fO_{f'}$, possible exactly when $uv \in E_0$. Then
$\theta_{uv} = \varphi^{(1)}_{f} = \varphi^{(2)}_{f'} = 0$. The
second case is when $P_v^{uv} \equiv v,$ possible exactly when $v
\in V_0$, i.e. $r_v=0$.

\emph{Case 2 (corner edges).} As a next step, let us focus on a
corner edge $v O_{f'}$ lying on $\partial\Omega_{l,r}$. This means
that $v$ is a vertex of $\cellcomplex_{l,r}$ lying on
$\partial\Omega_{l,r}$ and that there are always two consecutive
corner edges on $\partial\Omega_{l,r}$ adjacent to $v$ (e.g.
figure \ref{Fig4}b points 2.1 and 2.2). Following the boundary's
orientation, these are the edges $O_f v$ and $v O_{f'}$, i.e.
$O_f$ is the first point, then comes $v$ and finally $O_{f'}$.
Denote by $W_{\Omega}(v)$ the closure of the intersection
$W_{l,r}(v) \cap \Omega_{l,r}$. Then $W_{\Omega}(v)$ is a geodesic
polygon on $S_{l,r}$ so we can think that it is isometrically
developed in the plane $\PP$. There are two situations we are
going to consider.


\emph{Situation 2.1} 
Assume that $\measuredangle O_{f'} v O_f < \pi,$ as an angle
measured inside $W_{\Omega}(v)$ (see figure \ref{Fig4}b point
2.1). Then $W_{\Omega}(v)$ is a convex polygon in $\PP$ with at
least four vertices. Let $O_{f_0}$ be any vertex different from
$v, O_f$ and $O_{f'}$. Focus on the convex quadrilateral
$vO_{f'}O_{f_0}O_f \subset W_{\Omega}(v)$. Let
$$\measuredangle v O_f O_{f_0} = \varphi^{(1)}_f, \,\,\, \measuredangle
O_{f_0} v O_f = \varphi^{(2)}_v, \,\,\, \measuredangle O_{f'} v
O_{f_0} = \varphi^{(1)}_v \,\,\,\, \text{and} \,\,\,\,
\measuredangle O_{f_0} O_{f'} v = \varphi^{(2)}_{f'}.$$ By the
convexity of $vO_{f'}O_{f_0}O_f$, there exists $\gamma_v \in [0,
\pi)$ such that $\measuredangle O_fO_{f_0}O_{f'} = \pi -
\gamma_v$. Moreover, $\varphi^{(1)}_{f} + \varphi^{(2)}_{v} +
\varphi^{(1)}_{v} + \varphi^{(2)}_{f'} +   \pi - \gamma_v = 2\pi -
\epsilon \text{Area}_v$, where $\text{Area}_v$ is the area of the
quadrilateral $vO_{f'}O_{f_0}O_f$. In addition to that, we set
$\delta_v = 0.$

\emph{Situation 2.2.} Assume that $\measuredangle O_{f'} v O_f
\geq \pi,$ as an angle measured inside $W_{\Omega}(v)$ (see figure
\ref{Fig4}b point 2.2). Let us start with the assumption that $v
\in V_1$, i.e. $r_v > 0$. Then the face circle centered at $O_f$
and the vertex circle centered at $v$ intersect at a unique point,
denoted by $T_{v,f}$, lying inside $W_{\Omega}(v)$ (the second one
is outside $W_{\Omega}(v)$, see figure \ref{Fig4}b point 2.2).
Analogously, the face circle centered at $O_{f'}$ and the vertex
circle centered at $v$ intersect at a unique point, denoted by
$T_{v,f'}$, lying inside $W_{\Omega}(v)$. Then $\measuredangle O_f
T_{v,f} v = \measuredangle v T_{v,f'} O_{f'} = \pi/2$. Let
$$\measuredangle v O_f T_{v,f} = \varphi^{(1)}_f, \,\,\,
\measuredangle T_{v,f} v O_f = \varphi^{(2)}_v, \,\,\,
\measuredangle O_{f'} v T_{v,f'} = \varphi^{(1)}_v \,\,\,
\text{and} \,\,\, \measuredangle T_{v,f'} O_{f'} v =
\varphi^{(2)}_{f'}.$$ Furthermore, $\varphi^{(1)}_f +
\varphi^{(2)}_v   + \epsilon\text{Area}_{v,f} = \varphi^{(1)}_{v}
+ \varphi^{(2)}_{f'} + \epsilon\text{Area}_{v,f'} = \pi/2$, where
$\text{Area}_{v,f}$ and $\text{Area}_{v,f'}$ are the areas of the
triangles $\triangle T_{v,f} O_f v$ and $\triangle T_{v,f'} v
O_{f'}$. In addition to that let $\text{Area}_v =
\text{Area}_{v,f} + \text{Area}_{v,f'}$. The fact that
$\measuredangle O_{f'} v O_f \geq \pi$ implies that the interiors
of the two triangles $\triangle T_{v,f} O_f v$ and $\triangle
T_{v,f'} v O_{f'}$ are disjoint, so $\measuredangle T_{v,f'} v
T_{v,f} = \delta_v \in [0,2\pi)$. When the vertex $v$ has radius
$r_v=0$, we can simply do as before, thinking that the two
triangles $\triangle T_{v,f} O_f v$ and $\triangle T_{v,f'} v
O_{f'}$ are degenerate so that $T_{v,f} \equiv v$ and
$T_{v,f'}\equiv v$. Then $\varphi^{(1)}_f = 0 =
\varphi^{(2)}_{f'}$ and $\varphi^{(2)}_v = \pi/2 =
\varphi^{(1)}_v$. Hence $\delta_v = \measuredangle O_{f'} v O_f -
\pi \in [0,\pi)$.

\emph{Remark.} In \cite{Sch1} a similar argument to the one used
in situation 2.2 has also been assumed to work in situation 2.1
(i.e. when $\measuredangle O_{f'}vO_f < \pi$). However, this is
incorrect, as for $r_v \neq 0$ the interiors of the two
right-angled triangles $\triangle T_{v,f} O_f v$ and $\triangle
T_{v,f'} v O_{f'}$ overlap making the argument in question
impossible to use for estimating the angle at $v$. That is why, in
the current paper, the approach outlined in situation 2.1 has been
introduced (compare 2.1 and 2.2 from figure \ref{Fig4}b).

Up to now we have been able to associate to each dual vertex $O_f$
lying on the boundary $\partial\Omega_{l,r}$ two angles
$\varphi_f^{(1)}$ and $\varphi_f^{(2)}$. These two angles are
constructed in such a way that their sum $\varphi_f^{(1)} +
\varphi_f^{(2)}$ subtracted from the angle at vertex $O_f,$
measured from inside $\Omega_{l,r}$, gives an angle $\delta_f \in
[0,2\pi)$ (see figure \ref{Fig4}b). Consequently, the angle at
each vertex $O_f \in
\partial\Omega_{l,r}$, measured from inside $\Omega_{l,r}$,
is equal to $ \varphi_f^{(1)} + \varphi_f^{(2)} + \delta_{f}$, and
the angle at each vertex $k \in \Omega_{l,r} \cap V$, measured
from inside $\Omega_{l,r}$, is equal to $ \varphi_k^{(1)} +
\varphi_k^{(2)} + \delta_{k}$ (see figure \ref{Fig4}b points 2.1
and 2.2).

Now let us apply  the Gauss-Bonnet formula to the domain
$\Omega_{l,r}$
\begin{align*}
2 \pi \chi(\Omega) =& 2 \pi \chi(\Omega_{l,r}) = \sum_{k \in
\Omega \cap V} \big(2\pi - \Theta_k \big) + \sum_{O_f \in
\partial\Omega_{l,r}} \Big(\pi - \big( \varphi_f^{(1)} + \varphi_f^{(2)}
+ \delta_{f} \big) \Big) \\
&+ \sum_{k \in \partial\Omega \cap V} \Big(\pi - \big(
\varphi_k^{(1)} + \varphi_k^{(2)} + \delta_{k} \big) \Big) -
\epsilon \text{Area}(\Omega_{l,r}).
\end{align*}
Instead of summing over the vertices of $\partial\Omega$, one can
obtain the same result by summing over the edges of
$\partial\Omega$
\begin{align*}
2 \pi \chi(\Omega) =& \sum_{k \in \Omega \cap V} \big(2\pi -
\Theta_k \big) + \sum_{ij^* \subset \partial \Omega} \Big(\pi -
\big( \varphi_f^{(1)} + \varphi_{f'}^{(2)}\big) \Big) -
\sum_{O_f \in \partial\Omega_{l,r}} \delta_f \\
&+ \sum_{k \in \partial\Omega \cap V} \Big(2\pi - \big(
\varphi^{(1)}_{f} + \varphi^{(2)}_{k} + \varphi^{(1)}_{k} +
\varphi^{(2)}_{f'} \big) \Big) - \epsilon
\text{Area}(\Omega_{l,r}),
\end{align*}
where in the second sum $f$ and $f'$ are the two faces that share
the edge $ij$, and in the forth summand $f$ and $f'$ are such that
$O_f k$ and $k O_{f'}$ are the two consecutive corner edges
adjacent to $k$ on $\partial \Omega_{l,r}$. According to case 1
above, $\pi - \big(\varphi^{(1)}_{f} + \varphi^{(2)}_{f'}\big) =
\pi - \theta_{ij} + \epsilon \text{Area}_{ij}$, so the
Gauss-Bonnet formula becomes
\begin{align*}
2 \pi \chi(\Omega) =& \sum_{k \in \Omega \cap V} \big(2\pi -
\Theta_k \big) + \sum_{ij^* \subset \partial \Omega} \big(\pi -
\theta_{ij}\big) + \sum_{ij^* \subset \partial \Omega} \epsilon
\text{Area}_{ij} -
\sum_{O_f \in \partial\Omega_{l,r}} \delta_f \\
&+ \sum_{k \in \partial\Omega \cap V} \Big(2\pi - \big(
\varphi^{(1)}_{f} + \varphi^{(2)}_{k} + \varphi^{(1)}_{k} +
\varphi^{(2)}_{f'} + \delta_k \big) \Big) - \epsilon
\text{Area}(\Omega_{l,r}),
\end{align*}
We would like to rewrite the summands of the fifth sum from the
formula above, which all fall into case 2. In situation 2.1 we
have $\delta_k=0$. Hence $2\pi - \big( \varphi^{(1)}_{f} +
\varphi^{(2)}_{k} + \varphi^{(1)}_{k} + \varphi^{(2)}_{f'} \big) =
\pi + \epsilon \text{Area}_k =  \pi - \gamma_k + \epsilon
\text{Area}_k,$ where $\gamma_k=0$. In situation 2.2 again
$\delta_k=0$ and thus $2\pi - \big( \varphi^{(1)}_{f} +
\varphi^{(2)}_{k} + \varphi^{(1)}_{k} + \varphi^{(2)}_{f'} \big) =
\pi - \gamma_k + \epsilon \text{Area}_k$, where $\gamma_k \in
[0,\pi)$. In situation 2.3 we have this time $\delta_k \in
[0,\pi)$. Hence $2\pi - \big( \varphi^{(1)}_{f} +
\varphi^{(2)}_{k} + \varphi^{(1)}_{k} + \varphi^{(2)}_{f'} +
\delta_k \big) = 2\pi - \big( \pi/2 - \epsilon \text{Area}_{v,f} +
\pi/2 - \epsilon \text{Area}_{v,f'} + \delta_k \big) = \pi -
\gamma_k + \epsilon \text{Area}_k$, where $\gamma_k = \delta_k$.
As a result, we obtain the equalities
\begin{align*}
2 \pi \chi(\Omega) =& \sum_{k \in \Omega \cap V} \big(2\pi -
\Theta_k \big) + \sum_{ij^* \subset \partial \Omega} \big(\pi -
\theta_{ij}\big) + \sum_{ij^* \subset \partial \Omega} \epsilon
\text{Area}_{ij} -
\sum_{O_f \in \partial\Omega_{l,r}} \delta_f \\
&\phantom{\sum_{k \in \Omega \cap V} \big(2\pi - \Theta_k \big)} +
\sum_{k \in \partial\Omega \cap V} \big(\pi - \gamma_k + \epsilon
\text{Area}_k \big)  - \epsilon
\text{Area}(\Omega_{l,r})\\
=& \sum_{k \in \Omega \cap V} \big(2\pi - \Theta_k \big) +
\sum_{ij^* \subset \partial \Omega} \big(\pi - \theta_{ij}\big) +
\sum_{ij^* \subset \partial \Omega} \epsilon \text{Area}_{ij} -
\sum_{O_f \in \partial\Omega_{l,r}} \delta_f \\
&\phantom{\sum_{k \in \Omega \cap V} \big(2\pi -\big)} + \pi
|\partial\Omega\cap V| - \sum_{k \in \partial\Omega \cap V}
 \gamma_k + \sum_{k \in \partial\Omega \cap V}  \epsilon \text{Area}_k  - \epsilon
\text{Area}(\Omega_{l,r}) \\
=& \sum_{k \in \Omega \cap V} \big(2\pi - \Theta_k \big) +
\sum_{ij^* \subset \partial \Omega} \big(\pi - \theta_{ij}\big) +
\pi |\partial\Omega\cap V|  -
\sum_{O_f \in \partial\Omega_{l,r}} \delta_f \\
&\phantom{\sum_{k \in \Omega \cap V} \big(\big)} - \sum_{k \in
\partial\Omega \cap V}
 \gamma_k - \epsilon \Big(
\text{Area}(\Omega_{l,r}) - \sum_{k \in \partial\Omega \cap V}
\text{Area}_k  - \sum_{ij^* \subset \partial \Omega}
\text{Area}_{ij} \Big)
\end{align*}
Observe that $\delta_f \geq 0$ for all $O_{f} \in \partial
\Omega_{l,r}$ and $\gamma_{k} \geq 0$ for all $k \in
\partial\Omega \cap V$. Moreover, the summands $\text{Area}_k$ and
$\text{Area}_{ij}$ are the areas of a collection of polygonal
subregions of the admissible domain $\Omega_{l,r}$ (actually a
bunch of triangles) with disjoint interiors. Therefore
$\text{Area}(\Omega_{l,r}) - \sum_{k \in \partial\Omega \cap V}
\text{Area}_k  - \sum_{ij^* \subset \partial \Omega}
\text{Area}_{ij} \geq 0$. Now observe that the presence of a
corner edge $vO_f$ on the boundary of $\Omega_{l,r}$ will always
give rise to at least one $\delta_f > 0$. Moreover, the presence
of a triangle from $\hat{\Triang}_{l,r}$ with no edges on the
boundary of $\Omega_{l,r}$ and whose interior is contained in
$\Omega_{l,r}$ also gives rise to $\delta_f > 0$. In particular,
such a triangle with no edges on the boundary exists in $\Omega$
if the set $\Omega \cap V$ has more than one element.
Consequently, the only time when all $\delta_f = \delta_k = 0$ on
the boundary $\partial\Omega_{l,r}$ is when $\Omega =
\text{OStar}(v)$ for some vertex $v \in V_0$. In this case
$\Omega_{l,r} = W_{l,r}(v)$ with $r_v = 0$ and $$2\pi \chi(\Omega)
= 2\pi = \big(2\pi - \Theta_v\big) + \sum_{iv^* \subset
\partial\text{OStar}(v)} \big(\pi - \theta_{iv}\big).$$ For all
the other cases of $\Omega,$ even when $\Omega = \text{OStar}(v)$
but $v \in V_1$, there will always be at least one $\delta_k > 0$.
Therefore in all these cases
\begin{align*}
2 \pi \chi(\Omega) < \sum_{k \in \Omega \cap V} \big(2\pi -
\Theta_k \big) + \sum_{ij^* \subset \partial \Omega} \big(\pi -
\theta_{ij}\big) + \pi |\partial\Omega\cap V|.
\end{align*}
Thus, we have derived conditions E2, H2, E4 and H4 from theorem
\ref{Thm_description_of_polytopes}, while E1 and H1 are true by
construction. Conditions E3 and H3 follow directly from the global
Gauss-Bonnet formula
$$2\pi\chi(S) = \sum_{k \in V} \big(2\pi -
\Theta_k \big) - \epsilon \text{Area}(S_{l,r}).$$
\end{proof}

\section{Sufficient conditions for existence of circle patterns}
\label{Sec_sufficient_conditions}

This section is devoted to the proof of the claim
$\Phi_{\cellcomplex}(\CPC) = \polytope$. Confirming the latter
fact also completes the proof of the two main theorems of this
article, namely theorem \ref{Thm_main} and
\ref{Thm_description_of_polytopes}, including corollary
\ref{Cor_polytopes_general_admissible_domains}. As we commented in
the previous section, it is enough to proof the following

\begin{lem} \label{Lem_closeness_of_image}
The image $\PhiC(\CPC) \subseteq \polytope$ is a relatively closed
subset of $\polytope$. Consequently, $\PhiC(\CPC)
 = \widetilde{\polytope} = \polytope$.
\end{lem}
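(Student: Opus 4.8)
The plan is to combine the structural results already established with a compactness (no--degeneration) argument. By Proposition \ref{Prop_space_of_patterns_C} the map $\PhiC$ is a real analytic diffeomorphism of $\CPC$ onto its image, and by Lemma \ref{Lem_necessary_conditions_general} we have the chain $\PhiC(\CPC)\subseteq\widetilde{\polytope}\subseteq\polytope$, where $\PhiC(\CPC)$ is an \emph{open} subset of $\polytope$. Since $\polytope$ is a convex polytope, hence connected, and $\PhiC(\CPC)\neq\varnothing$ by Lemma \ref{Lem_nonempty_space_of_patterns}, it suffices to prove that $\PhiC(\CPC)$ is relatively \emph{closed} in $\polytope$: an open, closed and nonempty subset of a connected space is the whole space, which forces $\PhiC(\CPC)=\polytope$, and together with $\PhiC(\CPC)\subseteq\widetilde{\polytope}\subseteq\polytope$ this also yields $\widetilde{\polytope}=\polytope$.

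To establish closedness I would take a sequence $(\theta^n,\Theta^n)=\PhiC(x^n)$ with $x^n\in\CPC$ and $(\theta^n,\Theta^n)\to(\theta^\infty,\Theta^\infty)\in\polytope$, and show that $x^n$ subconverges in $\CPC$; continuity of $\PhiC$ then gives $(\theta^\infty,\Theta^\infty)\in\PhiC(\CPC)$. Working in the chart $\CPC\subset\TE_0$, I pass to a subsequence so that for every triangle $\Delta\in F_T$ the local dihedral--angle vector $(\alpha^{\Delta,n},\beta^{\Delta,n})=\Phi_\Delta(x^n)$ converges in the compact closure $\overline{\ADelta}$. If every limit lies in the open cell $\ADelta$, then the diffeomorphisms $\Phi_\Delta$ of Lemma \ref{Lem_diffeo_TEdelta_angles} force $x^n$ to converge to some $x^\infty\in\TE_0$, and because the limit data satisfies E1/H1 ($\theta^\infty_{ij}\in(0,\pi)$ on $E_1$, $\theta^\infty_{ij}=\pi$ on $E_\pi$) the local Delaunay and combinatorial conditions pass to the limit, so $x^\infty\in\CPC$ and we are done. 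Hence the whole problem reduces to excluding the degenerate case in which, for at least one $\Delta$, the limit $(\alpha^{\Delta,\infty},\beta^{\Delta,\infty})$ lies on $\partial\ADelta$.

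Ruling out degeneration is the heart of the argument, and I would carry it out by matching each facet of $\partial\ADelta$ to a geometric collapse and then to a \emph{saturated} defining inequality of $\polytope$. The facet $\alpha^\Delta_{uv}\to 0$ is the tangency of the vertex circles $c_u,c_v$ (Lemma \ref{Lem_tetrahedron_angles_edge_lengths}, item 5); since $a_{uv}$ is shared by the two triangles along $uv$, both adjacent angles vanish and $\theta^\infty_{uv}=0$, contradicting E1/H1. The facet $\beta^\Delta_k+\alpha^\Delta_{jk}+\alpha^\Delta_{ki}\to\pi$ is the collapse $r_k\to 0$ of a vertex circle ($b_k\to+\infty$); summing the limit relations $\beta^\Delta_k=\pi-\alpha^\Delta_{jk}-\alpha^\Delta_{ki}$ over the faces around $k$ and regrouping by edges yields $\Theta^\infty_k=\sum_{ik\in E_k}(\pi-\theta^\infty_{ik})$, which saturates exactly condition E4/H4 for the strict admissible domain $\Omega=\text{OStar}(k)$ (here $\chi(\Omega)=1$ and $\partial\Omega\cap V=\varnothing$), again impossible in the open polytope. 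The remaining facets ($\beta^\Delta_w\to 0$, and in the hyperbolic case the collapse of a triangle's area) are the ``exploding'' degenerations in which principal edge--lengths $a_{uv}\to+\infty$ (equivalently some $r_k\to\infty$, $b_k\to 0$); there I would first use the area/scale control --- in the hyperbolic case the total area $\sum_k(2\pi-\Theta_k)-2\pi\chi(S)$ is bounded by H3 and the convergence of $\Theta^n$, and in the Euclidean case the normalization $\sum l_{ij}=1$ --- to localize the blow--up to a proper subcomplex, collect the relevant open stars into a strict admissible domain $\Omega$, and run the Gauss--Bonnet bookkeeping of Lemma \ref{Lem_necessary_conditions_general} in reverse to show that the defect terms $\delta_f,\gamma_k$ on $\partial\Omega_{l,r}$ all tend to $0$, i.e. that E4/H4 for this $\Omega$ is saturated. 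In each case a strict inequality of $\polytope$ degenerates to an equality at $(\theta^\infty,\Theta^\infty)$, which cannot happen.

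I expect the main obstacle to be precisely this last step: organizing simultaneous and compound degenerations (several vertices collapsing, several edges exploding, possibly at different rates) into a single admissible domain whose associated Gauss--Bonnet gap is genuinely forced to vanish, and verifying that the domain so produced is admissible --- indeed strict, so that its inequality is available at a point known only to lie in $\polytope$. Once every degeneration is excluded, $x^n$ remains in a compact subset of $\CPC$, a subsequence converges to a pattern realizing $(\theta^\infty,\Theta^\infty)$, and relative closedness follows. Combined with openness and the connectedness of $\polytope$ this gives $\PhiC(\CPC)=\widetilde{\polytope}=\polytope$, completing the proofs of Theorems \ref{Thm_main} and \ref{Thm_description_of_polytopes} together with Corollary \ref{Cor_polytopes_general_admissible_domains}.
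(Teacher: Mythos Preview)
Your overall architecture is exactly the paper's: openness (Lemma \ref{Lem_necessary_conditions_general}) plus relative closedness plus connectedness of $\polytope$ forces equality, and closedness is proved by taking a sequence of patterns whose angle data converges in $\polytope$ and excluding geometric degeneration. The non-degenerate case and the single-vertex-collapse case ($r_k\to 0$ saturating E4/H4 for $\text{OStar}(k)$) are handled correctly.

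The genuine gap is the step you yourself flag as the ``main obstacle'': the exploding/compound degenerations. Your proposal to ``localize the blow-up to a proper subcomplex, collect the relevant open stars into a strict admissible domain $\Omega$, and run the Gauss--Bonnet bookkeeping in reverse'' is a description of what needs to happen, not a proof. Concretely, you have not shown (i) why the degenerating locus can always be organized into a \emph{strict} admissible domain (rather than, say, all of $S$ or an empty set), (ii) why the defect terms $\delta_f,\gamma_k$ in the Gauss--Bonnet identity of Lemma \ref{Lem_necessary_conditions_general} actually tend to $0$ on $\partial\Omega$ under the specific degeneration, or (iii) how to control simultaneous degenerations of different types. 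Your facet-by-facet analysis of $\partial\ADelta$ does not obviously assemble into a global argument.

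The paper avoids these difficulties by switching to the $(l,r)$ chart rather than the angle chart. It first proves a priori boundedness of $(l^{(n)},r^{(n)})$ (trivial in the Euclidean case by the normalization $\sum l_{ij}=1$; in the hyperbolic case because an unbounded edge forces a cone angle to $0$), then bounds the face radii $R_f^{(n)}$ above via the Delaunay property. With everything bounded, the only possible degeneration is that some vertex or face circle \emph{shrinks}: the set $\hat V_0\subset\hat V$ of vertices of $\hat\Triang$ whose associated circle radius tends to $0$ is shown to be a proper nonempty subset (unless $\hat V_0=V_0$, the desired non-degenerate case), and a connected component of $\bigcup_{\hat v\in\hat V_0}\text{OStar}(\hat v)$ is the sought strict admissible domain. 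A five-case analysis of how the elementary triangles $\triangle iO_fO_{f'}$ of $\hat\Triang$ behave in the limit then shows that every defect term in the Gauss--Bonnet identity vanishes, saturating E4/H4 for this $\Omega$ and contradicting $(\theta^\star,\Theta^\star)\in\polytope$. Finally, with all radii bounded below, a short argument (strict triangle inequalities and $l_{ij}>r_i+r_j$ persist) shows $(l^\star,r^\star)\in\ER_1$. The key advantage of working in $(l,r)$ is that ``explosion'' is ruled out at the start, so the admissible-domain construction only has to handle collapse, which is exactly what open stars are designed to detect.
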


\begin{proof}
Recall that the inclusion $\PhiC(\CPC) \subseteq \polytope$ was
established in lemma \ref{Lem_necessary_conditions_general}.
According to one definition of a relatively closed subset of an
open set in a metric space, $\PhiC(\CPC)$ is a relatively closed
subset of $\polytope$ exactly when for any sequence
$\big\{(\theta^{(n)},\Theta^{(n)})\big\}_{n=1}^{\infty}$ in
$\PhiC(\CPC)$ that converges in $\polytope$, the limit
$(\theta^{\star},\Theta^{\star})=\lim_{n \to \infty}
(\theta^{(n)},\Theta^{(n)})$ in fact belongs to $\PhiC(\CPC)$.
Since $\PhiC$ is a diffeomorphism, there exists a well-defined
sequence of circle patterns given by
$\big\{(a^{(n)},b^{(n)})\big\}_{n=1}^{\infty}$ such that
$(a^{(n)},b^{(n)}) = \PhiC^{-1}(\theta^{(n)},\Theta^{(n)}) \, \in
\, \CPC$ for all $n \in \naturals$. Our goal is to prove that
$\big\{(a^{(n)},b^{(n)})\big\}_{n=1}^{\infty}$ has a limit
$(a^{\star},b^{\star})$ that belongs to $\CPC$. Consequently, that
would mean
\begin{align*}
\PhiC(a^{\star},b^{\star}) &= \PhiC\big( \lim_{n \to \infty}
(a^{(n)}, b^{(n)})\big) = \lim_{n \to \infty}  \PhiC\big(a^{(n)},
b^{(n)}\big)\\
&= \lim_{n \to \infty} (\theta^{(n)},\Theta^{(n)}) =
(\theta^{\star},\Theta^{\star}) \,\, \in \,\,\, \PhiC(\CPC).
\end{align*}

We begin with fixing some terminology and notations. To simplify
the exposition, most of the time, we will use the term
\emph{convergence} in the sense of \emph{convergence after
selecting a subsequence}. The notation $\text{cl}(U)$ will mean
the closure of a set $U$. For convenience we switch to edge-length
and vertex radii coordinates $(l^{(n)},r^{(n)}) =
\Psi(a^{(n)},b^{(n)})$. As usual $ij \in E$ is an arbitrary edge
of $\cellcomplex$ while $f$ and $f' \in F$ are the two faces of
$\cellcomplex$ that share $ij$ as a common edge. We denote by
$O_f$ and $O_{f'} \in V^*$ the corresponding pair of dual vertices
from $\cellcomplex^*$. 
In general, we will use the same labelling notations for the
objects from both the topological cell complexes $\cellcomplex,
\cellcomplex^*, \Triang$
 and $\hat{\Triang}$ and their geometric counterparts
 $\cellcomplex_{l^{(n)},r^{(n)}}, \cellcomplex^*_{l^{(n)},r^{(n)}},
 \Triang_{l^{(n)},r^{(n)}}$
 and $\hat{\Triang}_{l^{(n)},r^{(n)}}$. 
Recall the definition of the pair of points $P^{ij}_i(n)$ and
$P^{ij}_j(n)$ lying on the geodesic edge $ij$ of
$\cellcomplex_{l^{(n)},r^{(n)}}$, 
which were defined in corollary \ref{Cor_two_adjacent_triangles}
and in definition \ref{Def_angle_between_2_face_circles}. Let
Denote by $R_f^{(n)}$ the radius of the face circle $c^{(n)}_f$ of
the decorated polygon $f$ from the hyper-ideal circle pattern
represented by $(l^{(n)},r^{(n)})$. Denote by $\lambda^{(n)}_{if}$
the length of the geodesic edge $iO_{f}$ from
$\hat{\Triang}_{l^{(n)},r^{(n)}}$ and by $h_{ij}^{(n)}$ the length
of the geodesic edge $O_fO_{f'}$ of
$\cellcomplex^{*}_{l^{(n)},r^{(n)}}$. It follows directly from the
geometry of the two adjacent decorated polygons $f$ and $f'$ that
$\lambda^{(n)}_{if} = \lambda\big(R^{(n)}_f, r^{(n)}_i\big)$ and
$h^{(n)}_{ff'} = h\big(R^{(n)}_f, R^{(n)}_{f'},
\theta^{(n)}_{ij}\big)$, where
\begin{align}
\lambda\big(R^{(n)}_f, r^{(n)}_i\big) &=
\sqrt{\big(R^{(n)}_f\big)^2 - \big(r^{(n)}_i\big)^2} \,\,\,\,
\text{
when } \,\, \PP=\Euclideanplane  \label{Formula_vertex_edge_Eucl} \\
\lambda\big(R^{(n)}_f, r^{(n)}_i\big) &=
\cosh^{-1}\Big(\cosh{R^{(n)}_f}\cosh{r^{(n)}_i}\Big)
 \,\,\,\, \text{ when } \,\,
 \PP=\hyperbolicplane \label{Formula_vertex_edge_hyp}\\
h\big(R^{(n)}_f, R^{(n)}_{f'}, \theta^{(n)}_{ij}\big) &=
\sqrt{\big(R^{(n)}_f\big)^2 + \big(R^{(n)}_{f'}\big)^2 + 2
R^{(n)}_f R^{(n)}_{f'} \cos{\theta^{(n)}_{ij}} } \,\,\,\, \text{
when } \,\, \PP=\Euclideanplane \label{Formula_dual_edge_Eucl}\\
h\big(R^{(n)}_f, R^{(n)}_{f'}, \theta^{(n)}_{ij}\big) &=
\cosh^{-1} \Big(\cosh{R^{(n)}_{f}} \cosh{R^{(n)}_{f'}} +
\sinh{R^{(n)}_{f}} \sinh{R^{(n)}_{f'}}
\cos{\theta^{(n)}_{ij}}\Big) \nonumber\\
& \phantom{Hhhh = \cosh^{-1} \cosh{R^{(n)}_{f}}
\cosh{R^{(n)}_{f'}} - \sinh{R^{(n)}_{f}}} \,\, \text{ when } \,\,
\PP=\hyperbolicplane \label{Formula_dual_edge_hyp}
\end{align}
Notice that in the Euclidean case the sequence
$\big\{(l^{(n)},r^{(n)}) \big\}_{n=1}^{\infty}$ is bounded,
because it belong to the bounded set $\ER_1$ (see section
\ref{Sec_Generalized_space_of_patterns}). Despite the
unboundedness of $\ER_1=\ER$ in the hyperbolic case, the sequence
$\big\{ (l^{(n)},r^{(n)}) \big\}_{n=1}^{\infty}$ is still bounded.
The latter fact however heavily relies on the assumption that the
limit $(\theta^{\star},\Theta^{\star})$ lies in the polytope
 $\polytope$, so all $\theta_{ij}^{\star} \in [0,\pi)$ and all
 $\Theta_k^{\star} > 0$, for $ij \in E$ and $k \in V$. A short outline of how one can confirm
 boundedness goes as follows. The unboundedness of $(l^{(n)},r^{(n)})$ implies that at least one single-component
 sequence  $l^{(n)}_{ij} > 0$, for $ij \in E$, is
unbounded. Hence there will be a subsequence of
$\{l_{ij}^{(n)}\}_{n=1}^{\infty}$ that diverges to $+\infty$,
making one vertex $i \in V$ on $S_{l^{(n)},r^{(n)}}$ degenerate to
a cusp, i.e. roughly speaking $i$ will approach the ideal boundary
of the hyperbolic plane (here is where one applies the fact that
$\theta_{ij}^{\star} \in [0,\pi)$). But that forces the
corresponding sequence of cone angles
$\{\Theta_{ij}^{(n)}\}_{n=1}^{\infty}$ to converge to zero, which
means that $\Theta^{\star}_k=0$. The latter is a contradiction
with the assumption that $\Theta^{\star}_k > 0$ by the definition
of $\polytope$. Thus, one concludes that there exists a positive
real number $L > 0$ with the property that both $r_k^{(n)}$ and
$l_{ij}^{(n)} \in [0, L)$ for all $k \in V,$ all $ij \in E_T$ and
all $n \in \naturals$. Therefore, one can safely assume that
$\big\{ (l^{(n)},r^{(n)})\big\}_{n=1}^{\infty}$ converges to
$(l^{\star},r^{\star}) \in \text{cl}(\ER_1)$.

\begin{claim} \label{Claim_boundedness_of_Rf}
For any $n \in \naturals$ and $f \in F, \,\, R^{(n)}_f \, \in \,
(0,L)$.
\end{claim}

\noindent \emph{Proof of claim \ref{Claim_boundedness_of_Rf}}.
Recall that the centers $\big\{ O_f \, | \, f \in F\big\}$ of the
face circles $\big\{ c^{(n)}_f \, | \, f \in F\big\}$ of the
hyper-ideal circle pattern on $S_{l^{(n)},r^{(n)}}$ are the
vertices of the dual geodesic complex
$\cellcomplex^*_{l^{(n)},r^{(n)}}$, which also happens to be the
$r-$weighted Voronoi diagram on $S_{l^{(n)},r^{(n)}}$. Hence each
$O_f$ lies on $S_{l^{(n)},r^{(n)}} \setminus V$. Fix one arbitrary
$O_f$. Then there exists a decorated triangle $\Delta=ijk$ from
the geodesic subtriangulation $\Triang_{l^{(n)},r^{(n)}}$ of
$\cellcomplex_{l^{(n)},r^{(n)}}$ for which $O_f \in
\text{cl}(\Delta)$. Recall that $O_f$ is the center of the
face-circle $c_{f}^{(n)}$ with radius $R_f^{(n)}$. Note that
$c^{(n)}_f$ is not necessarily the face circle of $\Delta$! The
edge-lengths of $\Delta$ are $l^{(n)}_{ij}, l^{(n)}_{jk}$ and
$l^{(n)}_{ki}$. For convenience develop $\Delta$ together with
$c^{(n)}_f$ and $O_f$ on the plane $\PP$. By the Delaunay property
of $c^{(n)}_f$, none of the three vertices $i, j$ and $k$ lies in
the interior of $c^{(n)}_f$. Therefore $R_f^{(n)} \leq
\lambda^{(n)}_{if}$ since the vertex $i$ is not inside
$c^{(n)}_f$. Furthermore, for the geodesic triangle $\Delta$ the
inequality $\lambda^{(n)}_{if} \leq \max{\big\{l^{(n)}_{ij},
l^{(n)}_{jk}, l^{(n)}_{ki}\big\}} < L$ holds, because $O_f \in
\text{cl}(\Delta)$. Consequently $R_f^{(n)} < L$.
\hfill\(\triangle\)

\smallskip

Now let us consider the augmented sequence $\big\{(l^{(n)},
r^{(n)}, R^{(n)})\big\}_{n=1}^{\infty}$. It has been established
that $(l^{(n)}, r^{(n)}, R^{(n)}) \in [0, L)^{E \cup V \cup F}$
for all $n \in \naturals$, so this sequence is convergent and its
limit is $(l^{\star}, r^{\star}, R^{\star}) \in \text{cl}(\ER_1)
\times [0, L]^F$.

\begin{claim} \label{Claim_Rf_not_zero}
There exists $\e_0 > 0$ such that for any $n \in \naturals$, any
$k \in V_1$ and any $f \in F$, both $r^{(n)}_k \in (\e_0, L)$ and
$R^{(n)}_f \in (\e_0, L)$.
\end{claim}

\noindent \emph{Proof of claim \ref{Claim_Rf_not_zero}}. It has
been already established in claim \ref{Claim_boundedness_of_Rf}
that $0 < r^{(n)}_k < L$ and $0 < R^{(n)}_f < L$ for all $k \in
V_1$, all $f \in F$ and all $n \in \naturals$. Let us focus our
attention on the triangulation $\hat{\Triang} = (\hat{V}, \hat{E},
\hat{F})$ and its geodesic realization
$\hat{\Triang}_{l^{(n)},r^{(n)}}$. For any $\hat{v} \in \hat{V}
\setminus V_0$ there exists a genuine circle $c^{(n)}_{\hat{v}}$
centered at $\hat{v}$. If $\hat{v}=k \in V_1$ then $c^{(n)}_k$ is
a vertex circle of radius $r^{(n)}_k > 0$ and if $\hat{v}=O_f \in
V^*$ then $c^{(n)}_f$ is a face circle of radius $R^{(n)}_f > 0$.
Denote by $\hat{V}_0$ the set of all vertices of $\hat{\Triang}$
for which the limit $r^{\star}_k = 0$ if the vertex is from $V,$
and the limit $R^{\star}_f=0$ if the vertex is from $V^*$. Notice
that $V_0 \subseteq \hat{V}_0$.

\smallskip

We claim that $\hat{V}_0 \neq \hat{V}$.

\smallskip

Indeed, for any edge $ij \in E_1$ and any $n \in \naturals$ the
triangle inequality for $\triangle ijO_f$ leads to $0 <
l^{(n)}_{ij} \leq \lambda_{if}^{(n)} + \lambda_{jf}^{(n)} =
\lambda(R^{(n)}_f,r^{(n)}_i) + \lambda(R^{(n)}_f,r^{(n)}_j)$ where
$\lambda$ is the real analytic function defined either by formula
(\ref{Formula_vertex_edge_Eucl}) or formula
(\ref{Formula_vertex_edge_hyp}), while $r^{(n)}_i \geq0, \,
r^{(n)}_j \geq 0$ and $R^{(n)} > 0$. Furthermore, for any $ij \in
E_0$ the inequality simplifies to $0 < l^{(n)}_{ij} = r^{(n)}_i +
r^{(n)}_j$, where $r^{(n)}_i > 0, \, r^{(n)}_j > 0$. Therefore, if
we assume that $\lim _{n \to \infty} R^{(n)}_f = \lim_{n \to
\infty} r^{(n)}_i = 0$ for all $f \in F$ and $i \in V$, then $\lim
l^{(n)}_{ij} = 0$ for all $ij \in E$ (and thus for all $ij \in
E_T$). First, in the Euclidean case $\sum_{ij \in E_T}
l^{(n)}_{ij} = 1$ for every $n \in \naturals$. Hence the limit of
the latter sum is $\sum_{ij \in E_T} l^{\star}_{ij} = 1$ which
clearly contradicts the earlier conclusion $\lim l^{(n)}_{ij} =
l^{\star}_{ij}= 0$ for all $ij \in E_T$. Second, in the hyperbolic
case the global Gauss-Bonnet formula for the area of
$S_{l^{(n)},r^{(n)}}$ is
$$\text{Area}(S_{l^{(n)},r^{(n)}}) = \sum_{k \in V} \big(2\pi -
\Theta_k^{(n)} \big) -  2\pi\chi(S).$$ Therefore, by the
assumption that $(\theta^{\star}, \Theta^{\star}) \in \polytope^h$
the limit of the positive sequence of surface areas
$\text{Area}(S_{l^{(n)},r^{(n)}})$, $\, n \in \naturals$ exists
and is also positive (non-zero), which clearly cannot happen if
$l_{ij}^{\star}=0$ for all $ij \in E_T$.

Define the open domain $N(\hat{V}_0) = \cup \big\{ \,
\text{OStar}(\hat{v}) \, \, | \,\, \hat{v} \in \hat{V}_0\big\}$.
Take a connected component of $N(\hat{V}_0)$ and call it $\Omega$.
By construction $\partial\Omega$ is composed of edges from
$\hat{\Triang}$. Also, $\partial\Omega \cap V_0 = \varnothing$.

Assume $V_0 \neq \hat{V}_0$. Then without loss of generality
$\Omega \cap V_1 \neq \varnothing$ or $\Omega \cap V^* \neq
\varnothing$. Our goal is to show that this assumption contradicts
the assumption that $(\theta^{\star}, \Theta^{\star}) \in
\polytope$. We can easily convince ourselves that if the radius of
a face circle of a decorated polygon $f$ converges to zero, the
radii of all of its vertex circles, except for possibly two, also
converge to zero. Therefore, $\Omega \cap V^* \neq \varnothing$
implies $\Omega \cap V \neq \varnothing$. Also, $\Omega \neq
\varnothing$ as well as $\Omega \neq S$ since $N(\hat{V}_0) \neq
S$, due to $\hat{V}_0
\neq \hat{V}$ as proved before. 
All these facts show that the open set $\Omega$ is a strict
admissible domain in the sense of definition
\ref{Def_strict_admissible_domain}.


As usual, we denote by $\Omega_{l^{(n)},r^{(n)}}$ the geometric
realization of $\Omega$ on the surface $S_{l^{(n)},r^{(n)}}$ via
the geodesic representation $\hat{\Triang}_{l^{(n)},r^{(n)}}$ of
the topological triangulation $\hat{\Triang}$. We remind the
reader that every face of $\hat{\Triang}_{l^{(n)},r^{(n)}}$ is a
geodesic triangle of type $\triangle iO_fO_{f'}$ with angle
$\measuredangle O_{f} i O_{f'} = \pi - \theta^{(n)}_{ij}$ and
edge-lengths $\lambda^{(n)}_{if},\, \lambda^{(n)}_{if'}, \,
h^{(n)}_{ff'}$. Generically speaking $\triangle iO_fO_{f'}$
corresponds to exactly one point $P^{ij}_i(n)$ (compare with
figure \ref{Fig4}b point 1). Based on that, we distinguish three
types of such geometric triangles.

\smallskip
\noindent \emph{Type 1.} $\, i \in V_1$ and $ij \in E_1$. Then
$P^{ij}_i(n)$ is strictly in the interior of $\triangle
iO_fO_{f'}$ (as shown on figure \ref{Fig4}b point 1) so
$$l_{\PP}\big(iP^{ij}_i(n)\big) < r^{(n)}_i, \,\,\,\,\,\,\,
\measuredangle O_{f'}P^{ij}_i(n)O_f = \pi - \theta_{ij}^{(n)} \,
\in (0, \pi), \,\,\,\,\,\,\, R^{(n)}_f < \lambda^{(n)}_{if},
\,\,\,\,\,\,\, R^{(n)}_{f'} < \lambda^{(n)}_{if'},$$
$$h^{(n)}_{ff'} =h\big(R^{(n)}_f,R^{(n)}_{f'},\theta^{(n)}_{ij}\big);$$

\smallskip
\noindent \emph{Type 2.} $\, i \in V_0$ and $ij \in E_1$. Then
$P^{ij}_i(n) \equiv i$ so
$$l_{\PP}\big(iP^{ij}_i(n)\big)= 0 = r^{(n)}_i, \,\,\,\,\,\,
\measuredangle O_{f'} i O_f = \pi - \theta_{ij}^{(n)} \, \in (0,
\pi), \,\,\,\,\,\,\, R^{(n)}_f = \lambda^{(n)}_{if},
\,\,\,\,\,\,\, R^{(n)}_{f'} = \lambda^{(n)}_{if'},$$
$$h^{(n)}_{ff'} =h\big(R^{(n)}_f,R^{(n)}_{f'},\theta^{(n)}_{ij}\big);$$

\smallskip
\noindent \emph{Type 3.} $\, i, j \in V_1$ and $ij \in E_0$. Then
$P^{ij}_i(n) \equiv H_{ij} = O_fO_{f'} \cap ij$ so
$$l_{\PP}\big(iH_{ij}\big)= r^{(n)}_i, \,\,\,\,\,
P^{ij}_i(n) \in O_{f'}O_f, \,\,\,\,\,\,\, R^{(n)}_f <
\lambda^{(n)}_{if}, \,\,\,\,\,\, R^{(n)}_{f'} <
\lambda^{(n)}_{if'}, \,\,\,\,\,\, h^{(n)}_{ff'} = R^{(n)}_f +
R^{(n)}_{f'}.$$

\smallskip
Our main objective is to explore how the Gauss-Bonnet formula for
$\Omega_{l^{(n)},r^{(n)}}$ behaves when $n \to \infty$. Observe
that $\Omega_{l^{(n)},r^{(n)}}$ is made of triangles $\triangle
iO_fO_{f'}$ that have at least one vertex contained in $\hat{V}_0$
(compare with the depiction of $\Omega$ on figure \ref{Fig3}a).
Depending on how many and which vertices of $\triangle iO_fO_{f'}$
belong to $\hat{V}_0$, there are several cases for $n \to \infty$.

\smallskip
\noindent \emph{Case 1.} Both vertices $O_f$ and $O_{f'} \notin
\hat{V}_0$ but $i \in \hat{V}_0$. Then $\triangle iO_fO_{f'}$
could be of type 1, 2 or 3 and its edge $O_fO_{f'}$ always lies on
the boundary of $\Omega_{l^{(n)},r^{(n)}}$. It is straightforward
to see that when $n \to \infty$ a triangle of type 1 or 2 converge
to a triangle of type 2 ( $ P^{ij}_i(\star) \equiv i$ ), with
angle $\measuredangle O_fiO_{f'} = \pi - \theta^{\star}_{ij} $ and
edge-lengths $R^{\star}_f = \lambda^{\star}_{if}, \,\,\,
R^{\star}_{f'} = \lambda^{\star}_{if'}, \,\,\, h^{\star}_{ff'} =
h\big(R^{\star}_{f}, R^{\star}_{f'}, \theta^{\star}_{ij}\big)$. A
triangle of type 3 converges to a geodesic segment $O_fO_{f'}$ of
length $h^{\star}_{ff'} = R^{\star}_f + R^{\star}_{f'} =
\lambda^{\star}_{if} + \lambda^{\star}_{if'}$ containing the
vertex $i$.

\smallskip
\noindent \emph{Case 2.} Both $i$ and $O_f \notin \hat{V}_0$ but
$O_{f'} \in \hat{V}_0$. Then its edge $iO_f$ always lies on
$\partial\Omega_{l^{(n)},r^{(n)}}$. Furthermore, $\triangle
iO_fO_{f'}$ could be of type 1 or 3 because $r^{\star}_i > 0$ due
to $i \notin \hat{V}_0$ and $R^{\star}_{f'}=0$. Either way, it is
not difficult to observe that when $n \to \infty$ the triangle
$\Delta iO_fO_{f'}$ converges to a right angled triangle with
angles $\measuredangle O_fiO_{f'} = \pi - \theta^{\star}_{ij}$ and
$\measuredangle iO_{f'}O_f = \pi/2$, and edge-lengths
$\lambda^{\star}_{if}, \,\,\, r^{\star}_{i} =
\lambda^{\star}_{if'}, \,\,\, h^{\star}_{ff'} = R^{\star}_{f}$.
Moreover, $P^{ij}_i(\star) \equiv H_{ij}$.

\smallskip
\noindent \emph{Case 3.} Both $i$ and $O_{f'} \in \hat{V}_0$ but
$O_f \notin \hat{V}_0$. Then $\triangle iO_fO_{f'}$ could be of
type 1, 2 or 3. For all three types when $n \to \infty$ the
triangle $\triangle iO_fO_{f'}$ converges to a geodesic segment of
length $R^{\star}_f = \lambda^{\star}_{if} = h^{\star}_{ff'}$
 where $i \equiv O_{f'} \equiv H_{ij} \equiv P^{ij}_i(\star)$.

\smallskip
\noindent \emph{Case 4.} Both $O_f$ and $O_{f'} \in \hat{V}_0$ but
$i \notin \hat{V}_0$. Then $\triangle iO_fO_{f'}$ could be of type
1 or 3. For both types, one can conclude that when $n \to \infty$
the triangle $\Delta iO_fO_{f'}$ converges to a geodesic segment
of length $r^{\star}_i = \lambda^{\star}_{if} =
\lambda^{\star}_{if'}$ where $O_f \equiv O_{f'} \equiv H_{ij}
\equiv P^{ij}_i(\star)$ and $h^{\star}_{ff'} = 0$.

\smallskip
\noindent \emph{Case 5.} All three vertices $i, O_f$ and $O_{f'}
\in \hat{V}_0$. Then $\triangle iO_fO_{f'}$ could be of type 1, 2
or 3. It is straightforward to confirm that when $n \to \infty$
the triangle $\Delta iO_fO_{f'}$ collapses to a point and all
edge-lengths become zero.

\smallskip
Apply the Gauss-Bonnet formula to the geometric domain
$\Omega_{l^{(n)},r^{(n)}}$. By the formulas derived in the proof
of lemma \ref{Lem_necessary_conditions_general}
\begin{align*}
2 \pi \chi(\Omega)  = &\sum_{k \in \Omega \cap V} \big(2\pi -
\Theta_k^{(n)} \big) + \sum_{ij^* \subset
\partial \Omega} \big(\pi - \theta_{ij}^{(n)}\big)  -
\sum_{O_f \in \partial\Omega} \delta_f^{(n)} \\
&+ \sum_{k \in \partial\Omega \cap V} \Big(2\pi - \big(
\varphi^{(1)}_{f}(n) + \varphi^{(2)}_{k}(n) + \varphi^{(1)}_{k}(n)
+
\varphi^{(2)}_{f'}(n) + \delta_k^{(n)} \big) \Big) \\
&+ \sum_{ij^* \subset
\partial \Omega} \epsilon \text{Area}_{ij}^{(n)} - \epsilon
\text{Area}(\Omega_{l^{(n)},r^{(n)}}).
\end{align*}

Take $n \to  \infty$. Then by assumption $\lim_{n \to \infty}
\theta^{(n)}_{ij} = \theta^{\star}_{ij}$ and $\lim_{n \to \infty}
\Theta^{(n)}_{k} = \Theta^{\star}_{k}$. The combination of cases 1
and 3 yields $\lim_{n \to \infty} \delta_f^{(n)} = 0$ for all $O_f
\in \partial \Omega \cap V^*$. Furthermore, the combination of
cases 2 and 4 reveals that for large enough $n \in \naturals$,
every vertex $k \in \partial\Omega \cap V_1$ satisfies situation
2.2 from the proof of lemma \ref{Lem_necessary_conditions_general}
and hence $\delta_k^{(n)} \equiv 0$ for large enough $n$. Then by
cases 2 and 4 $\lim_{n \to \infty} \big(\varphi^{(1)}_{f}(n) +
\varphi^{(2)}_{k}(n)\big) = \lim_{n \to \infty}
\big(\varphi^{(2)}_{f'}(n) + \varphi^{(1)}_{k}(n)\big) = \pi/2$.
Finally, all five cases lead to the conclusion that $$\lim_{n \to
\infty} \sum_{ij^* \subset \partial\Omega} \text{Area}_{ij}^{(n)}
= \lim _{n \to \infty}
\text{Area}\big(\Omega_{l^{(n)},r^{(n)}}\big).$$ Consequently, by
taking $n$ to infinity, we obtain the identity
\begin{align*}
2 \pi \chi(\Omega)  = \sum_{k \in \Omega \cap V} \big(2\pi -
\Theta_k^{\star} \big) + \sum_{ij^* \subset
\partial \Omega} \big(\pi - \theta_{ij}^{\star}\big) + \pi |\partial\Omega \cap V_1|
\end{align*}
However, it is assumed that $(\theta^{\star}, \Theta^{\star}) \in
\polytope$, which implies that for the constructed strict
admissible domain $\Omega$
\begin{align*}
2 \pi \chi(\Omega)  < \sum_{k \in \Omega \cap V} \big(2\pi -
\Theta_k^{\star} \big) + \sum_{ij^* \subset
\partial \Omega} \big(\pi - \theta_{ij}^{\star}\big) + \pi |\partial\Omega \cap V_1|
\end{align*}
This provides the sought out contradiction. \hfill\(\triangle\)

\begin{claim} \label{Claim_pattern_nondegeneracy}
The following statements hold:

\smallskip \noindent \emph{1.} For any edge $ij \in E_T$ from the
triangulation $\Triang$, its edge-length satisfies the inequality
$l^{\star}_{ij}
> 0$.

\smallskip
\noindent \emph{2.} For any triangle $\Delta = ijk \in F_T$ from
the triangulation $\Triang$, the three edge-lengths satisfy the
triangle inequalities $l^{\star}_{uv} < l^{\star}_{vw} +
l^{\star}_{wu}$ where $u \neq v \neq w \in \{i,j,k\}$.

\smallskip
\noindent \emph{3.} For any edge $ij \in E_T \setminus E_0$ from
the triangulation $\Triang$, the edge-length and the vertex radii
satisfy the inequality $l^{\star}_{ij} > r^{\star}_{i} +
r^{\star}_{j}$, where one or both radii are allowed to be zero.
\end{claim}

\noindent \emph{Proof of claim \ref{Claim_pattern_nondegeneracy}.}
Fix an edge $ij \in E_T$. At first, assume $i$ or $j \in V_1$.
Without loss of generality let $i \in V_1$. Then $l_{ij}^{(n)}
\geq r^{(n)}_i + r^{(n)}_j \geq r^{(n)}_i > 0.$ Claim
\ref{Claim_Rf_not_zero} implies that $\lim_{n \to \infty}
r^{(n)}_i = r^{\star}_i
> 0.$ Hence $\lim_{n \to \infty} l^{(n)}_{ij} = l_{ij}^{\star} \geq r^{\star}_i > 0$.
Next, assume that both $i$ and $j \in V_0.$ Consider the
quadrilateral $iO_{f'}jO_f$ which has one pair of edges $iO_f$ and
$jO_f$ of equal length $R^{(n)}_f$, another pair of edges
$iO_{f'}$ and $jO_{f'}$ of length $R^{(n)}_{f'}$ and a pair of
equal angles $\measuredangle O_{f'}iO_f = \measuredangle
O_{f'}jO_f = \pi - \theta^{(n)}_{ij}.$ By claim
\ref{Claim_Rf_not_zero}, the quadrilateral's limit when $n \to
\infty$ is again a quadrilateral with a pair of edges $iO_f$ and
$jO_f$ of equal length $R^{\star}_f>0$, a second pair of edges
$iO_{f'}$ and $jO_{f'}$ of length $R^{\star}_{f'}>0$ and a pair of
equal angles $\measuredangle O_{f'}iO_f = \measuredangle
O_{f'}jO_f = \pi - \theta^{\star}_{ij}<\pi.$ Then $l^{\star}_{ij}$
is the length of the diagonal $ij$ of the limit quadrilateral, so
$l^{\star}_{ij} > 0$.


Take an arbitrary polygonal face $f \in F$ and consider any
triangle $\Delta \in F_T$ such that $\Delta = ijk \subset f$. Then
for all $n \in \naturals$ the triangle inequalities $l^{(n)}_{uv}
< l^{(n)}_{vw} + l^{(n)}_{wu}$ hold, with $u \neq v \neq w \in
\{i,j,k\}$. Therefore $l^{\star}_{uv} \leq l^{\star}_{vw} +
l^{\star}_{wu}$ when $n \to \infty$. Assume that for some triangle
$\Delta = ijk$ contained in a face $f\in F$ one of the three
inequalities becomes $l^{\star}_{ij} = l^{\star}_{jk} +
l^{\star}_{ki}$ when $n \to \infty$. By the preceding paragraph,
$l_{ij}^{\star}
> 0, \, l_{jk}^{\star} > 0, \, l_{ki}^{\star} > 0$. Therefore,
the angle $\measuredangle ikj$ is forced to converge to $\pi$
while the other two angles become zero. Consequently, the sequence
of geodesic triangles $\triangle ijk$ with edge-lengths
$\big(l^{(n)}_{ij}, l_{jk}^{(n)}, l_{ki}^{(n)}\big)$ converges to
a geodesic segment $ij$ of length $l_{ij}^{\star}$ containing the
point $k$ in its interior. But the latter conclusion means that
the sequence of face circle radii $R^{(n)}_f$ has to diverge to
$+\infty$, which contradicts claim \ref{Claim_boundedness_of_Rf}.
Hence, there is a well defined geometric and geodesically
triangulated limit surface $S_{l^{\star},r^{\star}}$.

For any edge $ij \in E_T \setminus E_0$ and any $n \in \naturals$
the inequality $l^{(n)}_{ij}
> r^{(n)}_i + r^{(n)}_j$ holds. Therefore $l^{\star}_{ij}
\geq r^{\star}_i + r^{\star}_j$ when $n \to \infty$. Assume that
$l^{\star}_{ij} = r^{\star}_i + r^{\star}_j$ for some $ij \in
E_T\setminus E_0$. Clearly, both vertex radii cannot be zero
because $l_{ij}^{\star} > 0,$ so either exactly one vertex radius
is zero or both radii are nonzero. Either way, in both cases, the
two face circles (which exist because $S_{l^{\star},r^{\star}}$ is
well defined) on both sides of $ij$ have to be tangent, which
immediately implies that the corresponding angle
$\theta^{\star}_{ij} = 0$, which is not the case.
\hfill\(\triangle\)

\smallskip
Claim \ref{Claim_pattern_nondegeneracy} implies that the limit
$(l^{\star},r^{\star})$ belongs to $\ER_1$ and thus represents a
generalized circle pattern with combinatorics $\Triang$ on a well
defined geometric surface $S_{l^{\star},r^{\star}}$. As usual,
$\Triang_{l^{\star},r^{\star}}$ is the geodesic realization of
$\Triang$ on $S_{l^{\star},r^{\star}}$. Therefore
$$(\theta^{\star},\Theta^{\star}) = \lim_{n\to \infty}
(\theta^{(n)},\Theta^{(n)}) =  \lim_{n\to \infty} \PhiT \circ
\Psi^{-1} (l^{(n)},r^{(n)}) = \PhiT \circ \Psi^{-1}
(l^{\star},r^{\star}).$$ However, $\theta_{ij}^{(n)} = \pi$ for
each $ij \in E_{\pi}$ and all $n \in \naturals$. Hence
$\theta_{ij}^{\star} = \pi$ for all $ij \in E_{\pi}$. On top of
that, by assumption, $\theta^{\star}_{ij} \in (0,\pi)$ for $ij \in
E_1$ and $\theta^{\star}_{ij} = 0$ for $ij \in E_0$. Thus, the
circle pattern represented by $(l^{\star},r^{\star})$ has actual
combinatorics $\cellcomplex$ and is Delaunay. Therefore,
$(a^{\star},b^{\star}) = \Psi^{-1}(l^{\star},r^{\star})$ gives
rise to a hyper-ideal circle pattern on $S$ with combinatorics
$\cellcomplex$, so it belongs to the space $\CPC$ and is the limit
of the sequence $\big\{(a^{(n)},b^{(n)})\big\}_{n=1}^{\infty}$.
Finally, we have concluded the proof of lemma
\ref{Lem_closeness_of_image} and consequently, the proofs of both
theorems \ref{Thm_main} and \ref{Thm_description_of_polytopes}.
\end{proof}


\end{document}